\newif\ifAMS
\AMStrue\usepackage{amssymb}}
\theoremstyle{plain}
\newtheorem*{Guirardel}{Guirardel's Theorem}
\newtheorem{Thm}{Theorem}[section]
\newtheorem*{Main-nest}{Theorem \ref{T:nesting}}
\newtheorem*{Result}{Theorem \ref{endgame}}
\newtheorem*{angles}{Lemma \ref{L:angles}}
\newtheorem*{overl}{Definition \ref{overlap}}
\newtheorem{Cor}[Thm]{Corollary}
\newtheorem{Lem}[Thm]{Lemma}
\newtheorem{Con}[Thm]{Conjecture}
\theoremstyle{definition}
\newtheorem{Def}[Thm]{Definition}
\theoremstyle{remark}
\newtheorem{Rem}{Remark}
\newtheorem{Ex}[Thm]{Example}
\newcommand{\interior}{^{ \kern-5pt ^\circ}}
\newcommand {\bd}{\partial}
\newcommand {\iy}{\infty}
\newcommand {\cl}{\overline}
\newcommand {\N}{{\mathbb N}}
\newcommand {\R}{{\mathbb R}}
\newcommand {\Z}{{\mathbb Z}}
\newcommand {\E}{{\mathbb E}}
\newcommand{\Int}{\text{Int}\,}
\newcommand {\rad}{\text {Rad}\,}
\newcommand {\nb}{\text{Nbh}\,}
\newcommand {\st}{\text{Stab}\,}
\newcommand {\diam}{\text{diam}\,}
\newcommand {\fp}{\text {Fix}\,}
\newcommand {\hu}{\text {Hull}\,}
\newcommand {\cA}{ {\mathcal  A}}
\newcommand {\cM}{{\mathcal  M}}
\newcommand {\cP}{{\mathcal  P}}
\newcommand {\cR}{{\mathcal  R}}
\begin{document}

\title{Finite cuts and CAT(0) boundaries}

\author{ Panos Papasoglu}

\author
{Eric Swenson }

\subjclass{54F15,54F05,20F65,20E08}

\address  [Panos Papasoglu]
{Mathematical Institute, University of Oxford, Andrew Wiles Building, Woodstock Rd, Oxford OX2 6GG, U.K.  }
\email {} \email [Panos Papasoglu]{papazoglou@maths.ox.ac.uk}

\address
[Eric Swenson] {Mathematics Department, Brigham Young University,
Provo UT 84602}
\email [Eric Swenson]{eric@math.byu.edu}

%

\begin{abstract} 
We show that if a 1-ended group $G$ acts geometrically on a CAT(0) space $X$ and $\bd X$ is separated by $m$ points then
either $G$ is virtually a surface group or $G$ splits over a 2-ended group. 
In the course of the proof we study nesting actions on $\R $-trees and we show that nesting actions with
non overlapping translation intervals give rise to isometric actions.

\end{abstract}
\thanks{This work was supported by the Engineering and
Physical Sciences Research Council [grant numbers EP/I01893X/1, EP/K032208/1], and by a grant from the Simons Foundation (209403). The second  author would like to thank the Isaac Newton Institute for Mathematical Sciences, Cambridge, for support and hospitality during the programme npc2017 where work on this paper was undertaken.}

\maketitle

\section{ Introduction}

After Gromov's seminal work on hyperbolic groups \cite{GRO} the study of groups
using their boundary and the dynamics of the action on the boundary has
proved very fruitful. Bowditch \cite{BOW} notably showed that JSJ decompositions
of hyperbolic groups are reflected on the boundary and gave some topological
conditions on the boundary that characterize splittings over 2-ended groups.
These results were partially generalized in the case of CAT(0) groups in \cite{PS1}.

A result of Bowditch that does not generalize in the CAT(0) case is the following:

A one-ended hyperbolic group has local cut points if and only
if either the group splits over a 2-ended group or it is a
hyperbolic triangle group. 

Indeed in the CAT(0) case, boundaries are not necessarily
locally connected (e.g. if $G=F_2\times \Bbb Z$, where $F_2$ is
the free group of rank 2, the boundary is a suspension of a Cantor
set, so it is not locally connected). Hence it does not make sense to
talk about local cut points except in some special cases. For example
Haulmark \cite{HAU} showed recently that Bowditch's result holds in the case
of CAT(0) spaces with isolated flats. 

However the somewhat stronger condition of having a finite
cut makes sense for CAT(0) boundaries too. Clearly a point belonging to a finite cut
is a local cut point. 

To state our main result we recall some terminology. An action of a
group $G$ on a space $X$ is called {\em proper} if for every
compact $K \subset X$, the set $\{g \in G : g(K) \cap K \neq
\emptyset \}$ is finite. An action of a group $G$ on a space $X$
is called {\em co-compact} if the quotient space of $X$ by the action of
$G$,  $X /G$ is compact.   An action of a group $G$ on a metric
space $X$ is called {\em geometric} if $G$ acts properly, co-compactly
by isometries on $X$.  It follows that $G$ is quasi-isometric to
$X$, that is they have the same coarse geometry. If $Z$ is a
compact connected metric space we say that $\{a_1,...,a_n\}$ is a {\em finite cut} of
$Z$ if $Z\setminus \{a_1,...,a_n\}$ is not connected.

\begin{Result} Let $G$ be a one ended group acting geometrically on a CAT(0) space $X$. Assume that $\partial X$ has a finite cut.
Then either $G$ is virtually a surface group or $G$ splits over a 2-ended group.
\end{Result}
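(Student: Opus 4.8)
The plan is to convert the separation data of $\partial X$ into a $G$-tree and then run the Rips machine. Write $Z=\partial X$; since $G$ is one-ended, $Z$ is a compact connected metric space. I would first assemble all finite cuts of $Z$ (cut points, cut pairs, and cuts of larger cardinality alike), record which pairs of them are nested and which cross, and feed the resulting pretree into the Papasoglu--Swenson continuum-to-$\mathbb{R}$-tree construction to obtain an $\mathbb{R}$-tree $T$ on which $G$ acts. The delicate point already here is that $Z$ need not be locally connected, so a finite cut must be used merely as a partition of $Z$ into finitely many ``sides''; inseparable finite cuts then play the role that cut pairs play in the locally connected theory, and they correspond to branch points of $T$. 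Because $Z$ admits genuinely different, non-nested finite cuts, the $G$-action on $T$ has no global fixed point, and after passing to the minimal invariant subtree we may take it to be minimal.

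The action of $G$ on $T$ obtained this way is a priori only a \emph{nesting action}: an element of $G$ carries each arc of $T$ onto an arc of $T$, but need not preserve arc length, since the combinatorics of finite cuts carries no canonical invariant length. Two cases must be separated. If $T$ is simplicial, then $G$ splits over the stabilizer of an edge of $T$; such a stabilizer fixes an inseparable pair of finite cuts together with a side of it, and --- as in Bowditch's theory of JSJ decompositions of hyperbolic boundaries --- one shows it is 2-ended, the one-endedness of $G$ excluding the finite case. This gives the second conclusion, so from now on we assume $T$ is non-simplicial (``thick'', cf. Theorem~\ref{thick}).

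The heart of the argument, and the step I expect to be the main obstacle, is promoting the nesting action to an isometric one. I would verify the hypothesis of Theorem~\ref{T:nesting} by showing that the nesting action has non-overlapping translation intervals: if two hyperbolic-type elements $g,h\in G$ had overlapping translation intervals, their combined north--south type dynamics on $\partial X$ would produce an unboundedly ``fat'' family of pairwise crossing finite cuts, contradicting the uniform bound on their cardinalities. Making this precise seems to require the CAT(0) geometry of $X$, not merely topological input --- in particular the angle estimates of Lemma~\ref{L:angles} --- while carrying the non-locally-connected continuum $Z$ through the entire argument. Granting this, Theorem~\ref{T:nesting} yields a genuine minimal isometric action of $G$ on an $\mathbb{R}$-tree $T'$, whose arc stabilizers are again 2-ended by the analysis above.

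Finally, $G$ is finitely presented (in fact of type $F_\infty$, since it acts geometrically on a CAT(0) space), so I would apply the Rips machine together with Guirardel's theorem to the stable isometric action of $G$ on $T'$, invoking the Zipper Lemma to handle the surface case. This decomposes the action into simplicial, axial, and surface-type pieces. A simplicial or axial piece yields a splitting of $G$ over a 2-ended subgroup; an exotic (Levitt) piece is incompatible with the finite-cut structure of $\partial X$ and so does not occur; and if the action is entirely of surface type, then $G$ acts as the fundamental group of a compact surface on the $\mathbb{R}$-tree dual to a measured lamination, forcing $G$ to be virtually a surface group. In every case one of the two alternatives of the theorem holds.
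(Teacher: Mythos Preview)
Your outline has the right endpoints (cactus tree $\to$ isometric $\mathbb{R}$-tree action $\to$ Rips/Guirardel) but skips or misidentifies the two steps that carry essentially all of the work in the paper.

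First, you never establish that $G$ is rank~1. In the CAT(0) setting $\pi$-convergence (Theorem~\ref{piconverge}) is vacuous unless you know the Tits diameter of $\bd X$ is infinite, equivalently that $G$ contains a rank~1 hyperbolic element. The paper spends Sections~4 and~7 on exactly this: one must first rule out a global fixed point on $T$ (Lemmas~\ref{fixedcut}, \ref{diameter}, \ref{fixwheel}, Theorem~\ref{median}), and then produce a translation axis containing two \emph{disjoint} min cuts (Theorems~\ref{rank1} and~\ref{rank2}, using Neumann's permutation lemma~\ref{permutation} and Theorem~\ref{bigtits}). Without rank~1 you have no mechanism for bounding arc stabilizers, verifying non-overlapping translation intervals (Lemma~\ref{intersection-cat}), or ruling out the Seifert pieces at the end. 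Your sentence ``unboundedly fat family of pairwise crossing finite cuts'' is not how non-overlapping is proved; the actual argument goes through commuting elements and fixed endpoints, which in turn need rank~1.

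Second, you conflate two independent dichotomies. The paper splits on whether the $G$-action on $T$ is \emph{nesting} or \emph{non-nesting}, not on whether $T$ is simplicial. Theorem~\ref{thick} treats the nesting case (arbitrary $T$); the simplicial/non-simplicial split is internal to the non-nesting analysis (Theorem~\ref{nonnest}). More seriously, your endgame is wrong: in Guirardel's output the Seifert-type vertex groups are \emph{not} surface groups but groups with a 2-ended normal subgroup $N_v$ such that $G_v/N_v$ is a 2-orbifold group---virtually $(\text{free})\times\Z$, with limit set a suspension of a Cantor set. An action ``entirely of surface type'' therefore does not make $G$ virtually a surface group. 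Excluding these exceptional pieces is the content of Lemmas~\ref{cutcircle} and~\ref{act} and the case analysis concluding Theorem~\ref{endgame}; it uses the specific geometry of $\bd X_v$ inside $\bd X$ together with $\pi$-convergence (hence again rank~1). The genuine ``virtually a surface group'' alternative arises only at the very start, when $\bd X$ is a circle.
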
 

We note that CAT(0) groups have a richer JSJ-theory than hyperbolic groups as they may split along $\mathbb Z^n$ for $n\geq 2$.
One would expect that such splittings can be detected from the boundary too. We hope that our present work
that builds splittings from cuts is a step in this direction. More specifically we make the following conjectures:

\begin{Con} Let $G$ be a one ended group acting geometrically on a CAT(0) space $X$. Assume that $\partial X$
has no finite cut. Then no arc of Tits length $<2\pi $ separates $\bd X$.
\end{Con}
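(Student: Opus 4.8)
The plan is to prove the contrapositive: if some arc $\alpha\subset\bd X$ of Tits length $<2\pi$ separates $\bd X$, then $\bd X$ has a finite cut (and then the main theorem above upgrades this to the expected dichotomy, so a positive answer to the conjecture would follow from this alone). The number $2\pi$ should enter as the Tits length of the boundary circle of a Euclidean flat: it is exactly the threshold below which a separating arc cannot ``wrap around'' a flat, and hence its $G$-translates are forced to \emph{nest} rather than cross uncontrollably.

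First I would set up a cut system. Fix such an $\alpha$, with endpoints $a,b$, and let $\{U_\alpha,V_\alpha\}$ be the partition of $\bd X\setminus\alpha$ determined by the separation, so that $\cl{U_\alpha}$ and $\cl{V_\alpha}$ meet along $\alpha$. Taking the $G$-orbit of this cut and closing up under the natural betweenness relation on cuts of a continuum, one obtains a $G$-invariant pretree $\cP$ on which $G$ acts, exactly as in the cut-point and cut-pair constructions; completing $\cP$ produces an $\R$-tree $T$ with a $G$-action which \emph{a priori} is only a nesting action (it preserves nesting/betweenness of the arcs but need not be isometric). The point of passing through the Tits metric is to control crossings: I would show that for every $g\in G$, the arcs $\alpha$ and $g\alpha$ (both of Tits length $<2\pi$) cannot cross in the obstructive way, since a pair of transversally crossing separating arcs of Tits length $<2\pi$ would produce, inside the CAT$(1)$ space $\bd_T X$, either a closed geodesic of length $<2\pi$ (impossible in a CAT$(1)$ space) or one of length exactly $2\pi$, which bounds a flat whose boundary circle already separates $\bd X$ --- contradicting a minimal choice of the configuration. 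Consequently the translation intervals of elements of $G$ on $\cP$ are non-overlapping, and the theorem of this paper on nesting actions (stated in the abstract) converts the nesting $G$-action on $T$ into an \emph{isometric} $G$-action on an $\R$-tree $T'$.

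It remains to harvest a finite cut from $T'$. One first checks the $G$-action on $T'$ has no global fixed point: since $G$ is one-ended and acts co-compactly, it cannot fix $\alpha$ together with one of its sides, so the orbit of the point of $\cP$ determined by $\alpha$ is unbounded in $T'$. After subdivision, the edge (arc) stabilizers of the $G$-action stabilize an arc of Tits length $<\pi$, which is the unique Tits geodesic between its endpoints; hence these stabilizers are $2$-ended (or finite). Feeding the isometric $G$-action on $T'$ into the Rips machine together with Guirardel's theorem, $G$ either splits over a $2$-ended group or carries an axial/surface (2-orbifold) piece; in either case the splitting --- respectively the orbifold structure --- is witnessed by a cut pair of $\bd X$, and this cut pair is the required finite cut.

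The main obstacle is the crossing estimate for arcs of Tits length in the range $[\pi,2\pi)$: such an arc is \emph{not} a unique Tits geodesic, so ``crossing'' of two of them is a genuinely topological phenomenon that the CAT$(1)$ geometry of $\bd_T X$ governs only indirectly, and making the step ``two crossing separating $<2\pi$-arcs force a closed geodesic of length $\le 2\pi$'' rigorous --- together with choosing the correct $G$-invariant subfamily of separating arcs so that $\cP$ is non-degenerate and has small edge stabilizers --- is where the real work lies. A secondary difficulty is converting the abstract splitting of $G$ back into an explicit finite cut of $\bd X$ when the vertex groups are of surface type, which should follow the pattern already established in the proof of the main theorem.
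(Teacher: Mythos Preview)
The statement you are attempting to prove is labelled \texttt{Con} in the paper --- it is a \emph{conjecture}, not a theorem, and the paper gives no proof of it. There is therefore no ``paper's proof'' to compare your proposal against; the authors explicitly list this as an open problem motivated by their main result, and remark only that the tools developed in the paper ``might prove useful'' for approaching it.

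As for the proposal itself, you correctly identify its own central gap: the crossing estimate for separating arcs of Tits length in $[\pi,2\pi)$ is simply not established. Your suggested mechanism --- that two transversally crossing separating arcs of Tits length $<2\pi$ force a closed Tits geodesic of length $\le 2\pi$ --- is not a known fact and does not obviously follow from CAT(1) geometry, since these arcs need not be Tits geodesics at all (you acknowledge this). Without that control you cannot build a pretree with the non-crossing property needed to invoke the nesting-to-non-nesting theorem. A second gap is the claim that arc stabilizers in the resulting tree are $2$-ended because they stabilize a Tits arc of length $<\pi$: nothing in your construction forces the relevant sub-arcs to have length $<\pi$, and the stabilizer of a longer Tits arc can be large. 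Finally, the last step --- recovering a finite cut of $\bd X$ from a splitting over a $2$-ended group --- goes in the wrong direction relative to what is known: a splitting over a $2$-ended group gives a cut \emph{pair} in $\bd X$, but the contrapositive you set out to prove only requires a finite cut, so this step is actually fine; the real obstruction remains the crossing analysis upstream.
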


\begin{Con} \label{1.2} Let $G$ be a one ended group acting geometrically on a CAT(0) space $X$. Assume that $\partial X$
has no finite cut and that a circle of Tits length $2\pi $ separates $\bd X$. Then either $G$ is virtually $\pi _1S\times \Z$,
where $S$ is a closed surface, or $G$ splits over virtually $\Z ^2$.
\end{Con}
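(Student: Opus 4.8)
The plan is to realize the separating Tits circle geometrically as a flat plane and then run the cut-to-tree machinery of this paper with the flats playing the role that finite cuts play in Theorem~\ref{endgame}. First I would use the rigidity of minimal closed Tits geodesics: a circle of Tits length $2\pi$ in $\bd_T X$ bounds a flat plane $F\cong\R^2$ in $X$, with $\bd F$ equal to the given separating circle. Properness of the action together with the fact that $\bd F$ essentially separates $\bd X$ should force $\st F$ to act cocompactly on $F$, so that by the Flat Torus Theorem $\st F$ is virtually $\Z^2$. I would then pass to the $G$-orbit of $F$ and, more generally, to the family $\cF$ of all flat planes whose boundary circle separates $\bd X$. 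The hypothesis that $\bd X$ has no finite cut guarantees that each such circle separates $\bd X$ into pieces that are themselves non-degenerate (not reducible to finitely many points), which is what pins the relevant edge groups at the $\Z^2$ level rather than at the $\Z$ level.

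Next I would organize $\cF$ into a betweenness/pretree structure exactly as the separating finite sets are organized in the proof of Theorem~\ref{endgame}: two separating circles are either unlinked (one lies in a single complementary piece of the other) or linked, and the unlinked ones give a nesting relation on which $G$ acts. This yields an action of $G$ on an $\R$-tree $T$, a priori only a nesting action, and here I would invoke the paper's nesting result: after analyzing the translation intervals and the overlap relation of Definition~\ref{overlap}, non-overlapping translation intervals upgrade the nesting action to an honest isometric action on an $\R$-tree. With an isometric $G$-action in hand I would feed it into Guirardel's Theorem and the Rips Machine to obtain a graph-of-groups decomposition of $G$ whose edge groups are the stabilizers $\st F$, i.e.\ virtually $\Z^2$.

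The dichotomy in the conclusion then reflects whether the action on $T$ is degenerate. If all the separating circles share a common pair of antipodal Tits points $\{p_+,p_-\}$ --- equivalently, all the flats in $\cF$ share a common $\R$-direction --- the tree collapses to a line (or a point) and there is no nontrivial splitting. This is the product case, modelled on $X=\HH^2\times\R$, $\bd X=\SSS^2$: the common direction $\{p_+,p_-\}$ is (virtually) $G$-invariant, so by the CAT(0) product decomposition $X$ virtually splits as $Y\times\R$ with $\bd Y=\SSS^1$, and $G$ modulo the central $\Z$ acts on $\SSS^1$ as a uniform convergence group. By the convergence-group characterization of Fuchsian groups, $G$ is then virtually $\pi_1 S\times\Z$ with $S$ a closed surface. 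Otherwise the action on $T$ is nontrivial, the Rips--Guirardel decomposition is a genuine splitting, and $G$ splits over the virtually $\Z^2$ edge group $\st F$, which is the second alternative.

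The hard part will be the interaction of two separating flats, and this is where most of the work lies. Two flats in $\cF$ can be disjoint, can share a single boundary point, or can intersect along a common geodesic line; it is precisely the last case --- many flats sharing a common line direction --- that interpolates between the product case and the genuine splitting, since a shared line can masquerade either as a common central $\Z$ or merely as a $\Z$-overlap of two distinct $\Z^2$'s. Disentangling this requires the angle estimates of Lemma~\ref{L:angles} together with the overlap analysis, to show that \emph{either} there is one global $\R$-direction common to all of $\cF$ (forcing the product/surface alternative) \emph{or} the linked circles can be separated enough to define a non-overlapping pretree with honest $\Z^2$ edge groups. A secondary obstacle is confirming that the edge groups produced by Guirardel's Theorem are full flat stabilizers rather than proper subgroups: here the no-finite-cut hypothesis is used once more, since a splitting over a virtually $\Z$ or finite edge group would produce a local cut point and hence a finite cut, contrary to assumption.
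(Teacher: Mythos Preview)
The statement you are attempting to prove is labeled \texttt{Con} in the paper --- it is Conjecture~\ref{1.2}, explicitly stated as an open problem. The paper does not prove it; the authors write that they ``hope that our present work that builds splittings from cuts is a step in this direction.'' So there is no proof in the paper to compare against, and your proposal should be read as an attack on an open problem rather than a reconstruction of an existing argument.

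That said, your outline has a genuine gap at its second step. You assert that because $\bd F$ separates $\bd X$, properness of the $G$-action forces $\st F$ to act cocompactly on the flat $F$. This is essentially the \emph{periodic flat plane problem}: whether every flat plane in a proper CAT(0) space with a geometric group action is periodic (has cocompact stabilizer). That question is open in general, and the separating hypothesis on $\bd F$ does not obviously resolve it. Without cocompactness you cannot invoke the Flat Torus Theorem to conclude $\st F$ is virtually $\Z^2$, and the entire edge-group analysis downstream collapses. The analogous step in the paper's finite-cut setting works because a finite set has a finite permutation group and one can pass to a subgroup fixing it pointwise; there is no such shortcut for a circle.

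A second, related difficulty is the pretree step. Finite cuts admit the cactus-tree organization of \cite{PS2} because crossing min cuts sit in wheels with a rigid combinatorial structure. Two separating Tits circles can intersect in far more complicated ways (an arc, a Cantor set of points, a pair of antipodal points, etc.), and it is not clear that ``linked versus unlinked'' yields a pretree satisfying the axioms, let alone one that is preseparable with few gaps as required by Theorem~\ref{T:PRtree}. Your appeal to Definition~\ref{overlap} and the non-overlapping machinery presupposes you already have an $\R$-tree with a nesting action, which is exactly what is in doubt. These two obstacles --- periodicity of the flat and the absence of a cactus-tree analogue for separating circles --- are, as far as I can tell, precisely why Conjecture~\ref{1.2} remains a conjecture.
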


We remark in the context of conjecture \ref{1.2} that it is easy to see that if $G$ splits over $\Z^2$ then a circle of Tits length 
$2\pi $ separates $\bd X$. 

One could formulate similar conjectures for splittings over virtually $\Z^n$ for $n>2$ and also stronger versions of the conjectures above stating them for arcs of finite Tits length and for circles of finite Tits length rather than of length $<2\pi $ and $2\pi$ respectively.

We note that there are already some results along the lines of the above conjectures. V. Schroeder \cite{SCH} (see also \cite{BUY}) has shown that if a $C^{\infty}$ smooth non-positively curved manifold has a codimension 1 flat in its universal cover then there is an immersed
$n$-torus in $M$. Kleiner-Kapovich \cite{KK} have shown similar (and stronger) results in the 3-dimensional case using the boundary. It was shown in \cite{PS1} that a 1-ended group $G$ which is not commensurable to a surface group 
and acts geometrically on a CAT(0) space $X$, splits
over a 2-ended group if and only if there is a pair of points that
separates $\bd X$.

In the course of the proof of our main theorem we show several other results that are interesting in their
own right. Specifically we show that in CAT(0) spaces one can define projections to lines for any angle $\theta $ (rather than just orthogonal projections), we investigate convex subsets and subgroups of CAT(0) groups and we study nesting actions on $\R$-trees. We believe the tools we
develop would be useful for approaching the above conjectures and might prove useful in other contexts as well.

We give now an outline of the following sections:\smallskip

In section 2 we recall briefly the results of \cite{PS2} where we show that if a continuum $Z$ has a finite cut then there is an $\mathbb R$-tree $T$ encoding all minimal finite cuts (\textit{min cuts}) of $Z$. A crucial observation that allows to construct $T$ is that crossing min
cuts lie in `wheels', i.e. compact subsets which have a nice cyclic structure. We call $T$ the \textit{cactus tree} of $Z$. We recall basic
definitions about CAT(0) boundaries and we state our main technical tool for studying the dynamics of the action of $G$ on $\bd X$ (theorem
\ref{piconverge}, `$\pi$-convergence').

We further show (theorem \ref{bigtits}) that if $X$ is a CAT(0) space and its boundary $\bd X$ has a `large' cactus tree $T$ then there is a subcontinuum of $\bd X$ lying between two finite cuts that has Tits radius at least equal to $\pi $. Here by large we mean that there is a linearly ordered subset of min cuts of $T$ that has at least 8 elements. The proof is somewhat technical. To illustrate the difficulty we explain in example \ref{not4} that our argument does not work if $T$ contains just 4 linearly ordered min cuts.
We note that subsets of large Tits radius play an important role in the sequel as they allow us to control the dynamics of the action on the boundary. \smallskip

In section 3 we assume that a group $G$ acts geometrically on a $CAT(0)$ space $X$ and we study convex subsets of $X$ and convex subgroups of $G$. 
We use these results later on to control the stabilizers
of the action of $G$ on the cactus tree that we obtain from $\partial X$. Some of these results are interesting in their own right and fill gaps
in the existing literature. In particular we show that a convex subgroup acts geometrically on a boundary minimal convex subset(Corollary \ref{C:min for subgroup}). From the machinery
we develop we obtain also as a corollary that a subgroup containing a finite index subgroup that is convex is convex itself (Corollary \ref{C:convex}).
We also show that for any subset $A$ of $\partial X$ if $\fp A$ is finitely generated then $\fp A$ is convex (Lemma \ref{L:fixfg}). \smallskip

After these sections we assume that a 1-ended group $G$ acts geometrically on a CAT(0) space $X$ and $\bd X$ has a finite cut.
It follows that $G$ acts on the cactus tree $T$ of $\bd X$. Now the proof splits in 2 cases: the case where the action is non-nesting
(sections 4,5) and the case where the action is nesting (sections 6,7). 

A major difficulty in both cases is that the dynamics of the action of $G$ on $\bd X$ is not well understood. Ballmann-Buyalo conjecture in
\cite{BAL-BUY} that if the Tits-diameter of $\bd X$ is greater than $\pi $ then $G$ is rank 1. In our context we show by a case by case
analysis that $G$ is rank 1 using $\pi$-convergence. \smallskip

In section 4 we show that if $G$ acts on $T$ non-nestingly then $G$ is rank 1. As a first step we show that $G$ acts on $T$
without fixed points. To do this we distinguish cases. If $G$ fixes a point $a$ of $T$ that corresponds to a min cut then
$G$ is virtually of the form $H\times \Z$ and in this case our main theorem follows easily. In the case of a point $a$ that is
a wheel we use the results of section 2 to show that $a$ contains subsets of large Tits-radius. Using $\pi$-convergence it follows that
$a$ can not be fixed. The case of median points is somewhat more technical but the proof is similar to arguments in \cite{PS1}. From these it
follows that $G$ acts on $T$ without fixed points. To conclude that $G$ is rank 1 we need a slightly stronger property that implies that the Tits diameter of $\bd X$ is infinite. Namely we show that that
there is an axis of translation of an element $g\in G$ on $T$ containing two min cuts $a,ga$ such that $a\cap ga=\emptyset $. We deduce this using
a result of P.M. Neumann on infinite permutation groups (lemma \ref{permutation} and theorem \ref{rank1}).  \smallskip

The objective of section 5 is to show that, under our assumptions, either $G$ splits over a 2-ended group or $G$ acts non-nestingly
on an $\R$-tree $T$ with virtually cyclic arc stabilizers. This splits naturally in two cases, the cactus tree is either discrete or
non-discrete. In the discrete case vertices of the tree correspond either to min cuts or to wheels. We analyse the stabilizers of both.
To deal with min cuts we generalize orthogonal projections. Specifically we prove the following:
\begin{angles}  Let $L:\R \to X$ be a geodesic line and $\theta \in (0, \pi)$.   We will abuse notation and refer to $L(w)$ as $w$, so we refer to $L(-\iy)$ as $-\iy$.  
If $\phi:X-L \to L$ is defined by $\phi(z) = \sup \{w \in L : \angle_w(z, -\iy) > \theta\}$ then $\phi$ is continuous, and $\angle_{\phi(z)}(z, -\iy) \ge \theta$.
\end{angles}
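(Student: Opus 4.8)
The plan is to fix $z\in X-L$ and study the function $f_z(w):=\angle_{L(w)}(z,-\iy)$ together with $g_z(w):=\angle_{L(w)}(z,+\iy)$ (writing $+\iy$ for $L(+\iy)$), so that $\phi(z)=\sup\{w:f_z(w)>\theta\}$. First I would record the germ identities $f_z(w)=\angle_{L(w)}(z,L(w'))$ for every $w'<w$ and $g_z(w)=\angle_{L(w)}(z,L(w''))$ for every $w''>w$ — since $[L(w),L(w')]$ is an initial subsegment of the ray $[L(w),-\iy)$, and symmetrically. Monotonicity of $f_z$ then follows at once: for $w_1<w_2$, the CAT(0) bound on the angle sum of the geodesic triangle $\triangle(z,L(w_1),L(w_2))$ gives $g_z(w_1)+f_z(w_2)\le\pi$, while the triangle inequality for angles at $L(w_1)$ gives $g_z(w_1)+f_z(w_1)\ge\angle_{L(w_1)}(-\iy,+\iy)=\pi$; subtracting yields $f_z(w_2)\le f_z(w_1)$, so $\{w:f_z(w)>\theta\}$ is a half-line with right endpoint $\phi(z)$. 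To see $\phi(z)\in\R$, let $p=L(w_0)$ be the nearest-point projection of $z$ onto the convex set $L$; from the estimate $d(z,L(w))^2\ge d(z,p)^2+(w-w_0)^2$ one gets by a one-line comparison-angle computation that $g_z(w)\to0$ as $w\to-\iy$ and $f_z(w)\to0$ as $w\to+\iy$, and combined with $f_z+g_z\ge\pi$ this forces $f_z(w)\to\pi$ as $w\to-\iy$; since $\theta\in(0,\pi)$, the half-line $\{w:f_z(w)>\theta\}$ is nonempty and bounded above, so $\phi$ is well defined.

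Two refinements of this picture are what the rest of the argument turns on. The first is that $f_z$ is never constant with value in $(0,\pi)$ on a nondegenerate interval: if $f_z(a)=f_z(b)=c\in(0,\pi)$ with $a<b$, then the two displayed inequalities (with $w_1=a$, $w_2=b$) are forced to be equalities, so $\angle_z(L(a),L(b))=0$ and the angle sum of $\triangle(z,L(a),L(b))$ is exactly $\pi$; by the flat triangle theorem this triangle spans a flat Euclidean triangle, necessarily degenerate here, so $z,L(a),L(b)$ lie on a single geodesic; but then, since $\angle_z(L(a),L(b))=0$, one of $L(a),L(b)$ lies strictly between $z$ and the other, which forces $g_z(a)=\pi$ or $f_z(b)=\pi$, contradicting $c<\pi$. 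The second is that in a CAT(0) space the Alexandrov angle is upper semicontinuous — being, via the monotonicity of comparison angles, an infimum of functions continuous in the three points, and this stays true when one point is at infinity — so both $w\mapsto f_z(w)$ and $z\mapsto f_z(w)$ are upper semicontinuous; applying the first of these together with monotonicity shows $f_z$ is left-continuous, whence $\angle_{\phi(z)}(z,-\iy)=f_z(\phi(z))=\lim_{w\uparrow\phi(z)}f_z(w)\ge\theta$ because $f_z(w)>\theta$ for all $w<\phi(z)$. This gives the second assertion of the lemma, and with the first refinement it also gives $f_z(w)<\theta$ for all $w>\phi(z)$.

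The remaining, and I expect the genuinely delicate, point is continuity of $\phi$: the obstruction is that $f_z(w)$ is not jointly continuous in $(z,w)$ — it can jump in $w$, as it already does in $\R$-trees — so one must play the monotonicity in $w$ off against the one-variable semicontinuities. Fix $z$ and $z_n\to z$ in $X-L$. For the lower bound I would fix $w'<\phi(z)$ (so $f_z(w')>\theta$) and use the angle triangle inequality $f_z(w')\le\angle_{L(w')}(z,z_n)+f_{z_n}(w')$ together with $\angle_{L(w')}(z,z_n)\to0$ (the comparison triangle on $L(w'),z,z_n$ degenerates as $d(z,z_n)\to0$); hence $f_{z_n}(w')>\theta$ for large $n$, so $\phi(z_n)\ge w'$, and letting $w'\uparrow\phi(z)$ gives $\liminf_n\phi(z_n)\ge\phi(z)$. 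For the upper bound, suppose a subsequence has $\phi(z_{n_k})\to c>\phi(z)$ (possibly $c=\iy$), and pick $w'\in(\phi(z),c)$; then $f_z(w')<\theta$ by the previous paragraph, yet for large $k$ the monotonicity of $f_{z_{n_k}}$ and $\phi(z_{n_k})>w'$ give $f_{z_{n_k}}(w')>\theta$, and the upper semicontinuity of $z\mapsto f_z(w')$ then forces $f_z(w')\ge\theta$ — a contradiction. Hence $\phi(z_n)\to\phi(z)$, completing the proof.
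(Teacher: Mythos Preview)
Your proof is correct and follows essentially the same route as the paper's: establish that $f_z(w)=\angle_{L(w)}(z,-\iy)$ is nonincreasing and strictly so on $f_z^{-1}((0,\pi))$ via a flat-triangle/angle-sum argument, deduce $f_z(\phi(z))\ge\theta$ from upper semicontinuity, and prove continuity of $\phi$ by bounding $\liminf$ via continuity of the angle in the non-vertex arguments and $\limsup$ by contradiction with strict monotonicity. Your use of the auxiliary function $g_z$ and your explicit verification that $\phi(z)\in\R$ are additions the paper omits, but the skeleton is the same.
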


Since $G$ is 1-ended the stabilizer of a min cut is not finite. Using results in section 3 we show that if the stabilizer contains a hyperbolic element $g$ then $G$ splits over a 2-ended group (Theorem \ref{T:cut pair}). It turns out that in this case one needs to understand the geometry of $X$. We show using projections that a finite neighborhood of an axis of $g$ separates $X$, so $<g>$ is a codimension 1 subgroup (Theorem \ref{T:cut pair}). The result then follows by the Algebraic Torus Theorem of Dunwoody-Swenson \cite{D-S}. 
Otherwise, using results of section 3 and $\pi$-convergence, we show
that almost all edges adjacent to a an element of $\mathcal R$ are stabilized by virtually cyclic subgorups. In the case $T$ is discrete this implies that $G$ splits over a 2-ended group.
In the case the tree $T$ is non-discrete we show that by collapsing some arcs that are not stabilized by a virtually cyclic group we obtain an $\R$-tree $\bar T$ and $G$ acts on
$\bar T$ non-nestingly with virtually cyclic arc stabilizers.

In section 6 we turn our attention to nesting actions. The theory of isometric actions on $\mathbb R$-trees due to Rips and Bestvina-Feighn \cite{B-F} has been a fundamental tool in recent
developments in Geometric Group Theory. This theory shows that -under some natural conditions- if a group acts on an $\mathbb R$-tree it
also acts on a simplicial tree, so that Bass-Serre theory can be applied to understand the group structure.
In several contexts one obtains an action of a group on an $\mathbb R$-tree  by homeomorphisms rather than isometries
and Levitt \cite{Le} has shown that actions by non-nesting homeomorphisms can be promoted to isometric actions,
so that the general theory applies to this setting as well.

In this section we show that sometimes one can obtain an isometric action on an $\R$-tree
tree even when the group acts on the $\mathbb R$-tree by nesting homeomorphisms. We describe our result now in some detail.

Recall that if $T$ is an $\mathbb R$-tree and
$g:T\to T$ is a homeomorphism we say that $g$ is {\em nesting} if there is some interval $[a,b]$ such that $g([a,b])\subseteq (a,b]$.
If $g$ is a nesting homeomorphism then there is a set of disjoint open intervals of $T$ such that $g$ acts on each such interval
by `translations' (see lemma \ref{L:int}). These intervals are called {\em intervals of translation of } $g$. We have the following:

\begin{overl} Let $G$ be a finitely generated group acting nestingly on an $\R$-tree $T$. If for any two 
 intervals of translation $I,J$ with $I\cap J\ne \emptyset $  no endpoint of $I$ is contained in $J$ we say that $G$ acts on $T$ with \textit{non-overlapping translation intervals}.
\end{overl}
 Assuming that the intervals of translation of the nesting elements of $G$ do not overlap one obtains a non-nesting action action on an $\R$-tree. 
We state now our result-we refer the reader to section 4 for the definition of cross-components.
 \begin{Main-nest} Suppose that a finitely generated group $G$ acts minimally and nestingly on the $\mathbb R$-tree $T$ with non-overlapping translation intervals.
 Then $G$ acts non-nestingly on an $\R$-tree $S$ (without proper $G$-invariant subtree) with every arc stabilizer of $S$  stabilizing the end of a cross-component of $T$.

\end{Main-nest}

In section 7 we assume that the action of $G$ on its cactus tree $T$ is nesting. Our strategy here is similar to the one in section 4:
We show that
there is an axis of translation of an element $g\in G$ on $T$ containing two min cuts $a,ga$ such that $a\cap ga=\emptyset $. It is considerably
more complicated to find this axis in the non-nesting case but we deduce this again using
Neumann's lemma (lemma \ref{permutation}) and $\pi $-convergence (theorem \ref{rank2}). This implies that $diam _T(\bd X)$ is infinite which in turn implies that $G$ is rank 1.
Using this and $\pi $-convergence again we show that the intervals of translations do not overlap and that stabilizers of ends of cross-components are virtually cyclic.
This allows us to obtain a non-nesting action on an $\R$-tree with virtually cyclic arc stabilizers (Theorem \ref{thick}).  \smallskip

In section 8 our starting point is the theorems \ref{nonnest}, \ref{thick} of sections 5 and 7. In both cases we obtained from the action of $G$ on its cactus tree a non-trivial
non-nesting action of $G$ an an $\R$-tree $\bar T$ with 2-ended arc stabilizers. We promote this to an action by isometries using a result of Levitt \cite{Le}. Bestvina-Feighn-Rips theory does not imply directly that $G$ splits over a 2-ended group.
However one may apply a refinement of this theory due to Sela and Guirardel. According to Guirardel's theorem the action of $G$ can be decomposed as a graph of actions of
a relatively simple type: Seifert type, axial type and simplicial type. Clearly if we have non-trivial simplicial components $G$ splits over a 2-ended group. One may rule out
easily axial type action as well so we are left with the case of simplicial pieces that reduce to points and Seifert type pieces (Theorem \ref{T:Rtree}). We note that the stabilizers
of Seifert pieces are of the form $H\times \mathbb Z$ with $H$ virtually free. So their limit set is a suspension. Using this geometric observation, $\pi $-convergence and the way the tree
$\bar T$ was constructed, we show that there are no Seifert type pieces, hence $G$ splits over a 2-ended group (Theorem \ref{endgame}).
 
We would like to thank G. Levitt and V. Guirardel for explaining their works, \cite{Le} and \cite{Gui} respectively, to us.

\section{ Cactus trees and CAT(0) groups}

In this section we collect background and useful facts about cactus trees and boundaries of CAT(0) groups.
We recall first some basic facts about pretrees which will be used throughout this paper. 

\subsection{Pretrees}

Pretrees are tree-like structures used to construct trees or $\R$-trees (see \cite{BOW5}). 
In general every `reasonable'
pretree gives rise to an $\mathbb R$-tree. Unfortunately the definition of `reasonable' is somewhat technical, we make it
precise in this section.
\begin{Def}
Let $\cP$ be a set and let $R\subset \cP\times \cP \times \cP$. We
say then that $R$ is a \emph{betweeness} relation. If $(x,y,z)\in
R$, then we write $xyz$ and we say that $y$ is between $x,z$. $\cP$
equipped with this betweeness relation is called a \emph{pretree}
if the following hold:

1. there is no $y$ such that $xyx$ for any $x\in \cP$.

2. $xzy \Leftrightarrow yzx$

3. For all $x,y,z$ if $y$ is between $x,z$ then $z$ is not between
$x,y$.

4. If $xzy$ and $z\ne w$ then either $xzw$ or $yzw$.

\end{Def}
The obvious example of a pretree is a tree (simplicial or
$\R$-tree). Note that any subset of a pretree is a pretree.

\begin{Ex} Not all pretrees are subsets of $\R$-trees. Indeed
any linearly ordered set $(P,<)$ can be seen as a pretree: we
define betweeness by: $xyz$ if $x<y<z$ or $z<y<x$. Consider now
the first uncountable ordinal $\aleph _1$. Clearly $\aleph _1$ can
not be embedded in an order preserving way to an $\R$-tree.
\end{Ex}

\begin{Def} Let $\cP$ be a pretree and let $x,y\in \cP$.
We define the \emph{open interval} $(x,y)$ to be the set of all
$z\in \cP$ between $x,y$. Similarly we define the \emph{closed
interval} $[x,y]$ and the half open intervals $[x,y),\,(x,y]$.
\end{Def}

\begin{Def} A point $x$ of a pretree $\cP$ is called a terminal point if $x \not \in (a,b)$ for all
$a,b \in \cP$.
\end{Def}

\begin{Def} A subset $I$ of a pretree $\cP$ is called \emph{linearly ordered}, if for each distinct triple of points in $I$,
one of them is contained in the open interval between the other
two.
\end{Def}
It is shown in \cite{BOW5} that every linearly ordered subset
comes with a linear order (and its opposite) defined  using  the
betweenness relation. 

\begin{Rem}\label{R:order} It is useful to note that if $\cP$ is a pretree, one
can recover the betweeness relation on $\cP$ from the set of
maximal linearly ordered subsets of $\cP$. To be precise, $xyz$
holds in $\cP$ if and only if $x,y,z$ lie in a maximal linearly
ordered subset of $\cP$ and $xyz$ holds in this subset. So one way
to define betweeness in a pretree $\cP$ is by specifying all
maximal linearly ordered subsets of $\cP$.
\end{Rem}
\begin{Def} If every maximal linearly ordered subset of a pretree $\cP$ is order isomorphic to an interval of $\R$, 
then we say that $\cP$ is an $\R$-tree.  Notice that this doesn't define a topology on $\cP$.
\end{Def}

\begin{Def} For $I$ a linearly ordered subset of a pretree $\cP$, we say that $I$ is \emph{complete} if $I$ has the supremum property,
that is if every nonempty subset of $I$ with an upper bound in $I$
has a supremum in $I$. If every maximal linearly ordered subset of
$\cP$ is complete, then we say $\cP$ is \emph{complete}.
\end{Def}
\begin {Def} For $\cP$ a pretree and $A \subset \cP$, the convex hull of $A$ is defined by $$\hu(A) = \bigcup\limits_{a,b \in A} [a,b]$$
We say that a set $A\subset \cP$ is {\em convex} if $A = \hu (A)$.  Maximal linearly ordered sets are of course convex.
\end{Def}

\begin{Def} A subset $A$ of a pretree  $X$ is \emph{predense} in $X$ if for every distinct $a,b\in X$,
$[a,b]\cap A \neq \emptyset$.

Let $\cP$ be a pretree; A maximal linearly ordered  $I\subset \cP$
is called {\em preseparable} if $I$ has a countable predense
subset.   A pretree is {\em preseparable} if every maximal
linearly ordered  subset in it is preseparable.
\end{Def}

\begin{Def}
Let $\cP$ be a pretree. We say that $\cP$ is a \textit{median}
pretree if for any three points $x,y,z\in \cP$ the intersection
$[x,y]\cap [y,z]\cap [z,x]$ is non-empty. Note that if this
intersection is non-empty then it consists of a single point
called the \textit{median} of $x,y,z$.
\end{Def}

\begin{Ex} Consider the $\R$-tree $T$ given by the union of
$x,y$-axes in the plane. The subset $\cP$ of $T$ consisting of the
intervals $(-\infty, -1]\cup (0,\infty )$ of the $x$-axis together
with the interval $(0,\infty )$ of the $y$-axis is not median. It
becomes median if we add $0$.
\end{Ex}

Clearly if a pretree $\cP$ embeds in an $\R$-tree, then $\cP$
is preseparable. So being preseparable is a necessary condition
for an embedding to an $\R$-tree. However this condition is
not sufficient (see \cite{PS2}, example 5.3).

%
%

\begin{Def}
Let $(L,<)$ be a linearly ordered set. If $A,B\subset L$, we write
$A<B$ if $a<b$ for all $a\in A,b\in B$.

$(A,B)$ is a \emph{Dedekind cut} of $L$ if $A,B$ are non-empty,
$L=A\cup B$ and $A<B$.
\end{Def}

\begin{Def}
Let $\cP$ be a pretree and let $(L,<)$ be a maximal linearly
ordered subset of $\cP $. We say that $L$ has a \emph{gap} at
$x\in L$ if one of the following two holds:

i) $(A,B)$ is a Dedekind cut of $L$, $x=\sup A$ lies in $A$ and
there is a linearly ordered subset $C$ of $\cP$ such that $A\cup
C$ and $B\cup C$ are maximal linearly ordered subsets of $\cP$.

ii) $(A,B)$ is a Dedekind cut of $L$, $x=\inf B$ lies in $B$ and
there is a linearly ordered subset $C$ of $\cP$ such that $A\cup
C$ and $B\cup C$ are maximal linearly ordered subsets of $\cP$.

If every maximal linearly ordered subset of $\cP$ has at most
countably many gaps, then we say that $\cP$ has \emph{few gaps}.
\end{Def}

We can now state a theorem of \cite{PS2} characterizing pretrees that can be `completed' to give rise
to $\mathbb R$-trees:

\begin{Thm}\label{T:PRtree}
Let $\cP$ be a pretree. Then there is an embedding of $\cP$  into
an $\R$-tree if and only if $\cP$ is preseparable and has few
gaps.
\end{Thm}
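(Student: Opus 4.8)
The plan is to prove both directions of Theorem \ref{T:PRtree} by analyzing the structure of maximal linearly ordered subsets of $\cP$, since by Remark \ref{R:order} the pretree is determined by these subsets.

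\medskip

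\emph{Necessity.} Suppose $\cP$ embeds in an $\R$-tree $T$. Preseparability was already observed above (the separability of $\R$ restricts to any maximal linearly ordered subset, which sits inside a segment of $T$). For the few-gaps condition: if $(A,B)$ is a Dedekind cut of a maximal linearly ordered subset $L$ witnessing a gap at $x$, with the auxiliary linearly ordered set $C$ so that $A\cup C$ and $B\cup C$ are both maximal, then inside $T$ the completion of $L$ is an arc; the point $y$ of this arc realized as $\sup A$ (in $T$) is a branch point of $T$ at which the $C$-direction departs, or else $x$ itself already realizes a branching. A branch point of $T$ has at least three directions, and each gap of each maximal linearly ordered subset of $\cP$ picks out a branch point of $T$ together with a pair of directions; since an $\R$-tree has only countably many branch points of countable valence along any arc... here one must be slightly careful, as $T$ can have branch points of uncountable valence. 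The correct bookkeeping is that the gaps of a \emph{fixed} maximal linearly ordered subset $L$ correspond to distinct points of the arc $\overline{L}\subset T$ at which $\cP$ fails to be "locally the arc", i.e. the points of $\overline{L}\setminus L$ together with points of $L$ that are one-sided limits; an arc in an $\R$-tree is separable, so only countably many such points occur. Hence $\cP$ has few gaps.

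\medskip

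\emph{Sufficiency.} This is the substantive direction. Given that $\cP$ is preseparable with few gaps, the plan is to build the $\R$-tree $T$ by completing each maximal linearly ordered subset $L$: since $L$ is preseparable, $L$ has at most countably many "holes" (Dedekind cuts $(A,B)$ with no $\sup A$ and no $\inf B$), and one inserts a point for each such hole to form a complete linearly ordered set $\widehat L$ that is order isomorphic to a sub-interval of $\R$ (here preseparability is exactly what guarantees the order-completion embeds in $\R$ rather than being some long order). One then takes $T$ to be $\cP$ together with all the newly added gap-points and hole-points, amalgamated appropriately: two maximal linearly ordered subsets $L, L'$ that share an initial segment get their completions glued along the corresponding completed initial segment, and the few-gaps hypothesis ensures this gluing is consistent (a branching can only occur at a gap, of which there are countably many per $L$, so the pretree structure extends without pathology). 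One must then verify that the resulting betweenness relation on $T$ satisfies the pretree axioms and that every maximal linearly ordered subset of $T$ is now order isomorphic to an interval of $\R$ — i.e. $T$ is an $\R$-tree in the sense defined above — and that $\cP \hookrightarrow T$ preserves betweenness.

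\medskip

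The main obstacle, as I see it, is the sufficiency direction and specifically the consistency of the gluing: one needs to check that after inserting gap-points, the various maximal linearly ordered subsets of $\cP$ still have well-defined "branching loci" and that no new maximal linearly ordered subset of $T$ acquires a gap or a hole — otherwise the completion would not terminate. The few-gaps hypothesis is precisely designed to make this a countable, controllable process, but carrying it out requires a careful transfinite (or rather, one-step, using the countable bookkeeping) construction and a verification that the pretree axioms 1–4 survive. Since this is established in \cite{PS2}, I would in the paper simply cite it; were I to reprove it I would isolate a lemma saying "a preseparable linearly ordered set with countably many gaps order-embeds cofinally-and-coinitially densely into an interval of $\R$" and then do the amalgamation over the diagram of maximal linearly ordered subsets of $\cP$ ordered by shared initial segments.
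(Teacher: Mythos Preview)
The paper does not prove this theorem at all: it is stated as a result quoted from \cite{PS2} (the sentence immediately preceding the theorem reads ``We can now state a theorem of \cite{PS2}\dots''), and no proof is given here. So there is no ``paper's own proof'' to compare your proposal against; the correct thing to do in this paper is exactly what you yourself suggest midway through --- cite \cite{PS2} and move on.

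As for your sketch on its own merits: it is a reasonable outline of the argument in \cite{PS2}, but it is not a proof. In the necessity direction your few-gaps argument is vague (you yourself flag that ``one must be slightly careful'' about branch points of uncountable valence, and the subsequent sentence does not actually pin down the injection from gaps of $L$ into a countable set). In the sufficiency direction you correctly identify the main issue --- consistency of the gluing and that no new gaps are created in the completion --- but you do not carry it out; this is the real work, and the details (done in \cite{PS2}) are not trivial. If you want a self-contained proof you would need to actually execute the amalgamation and verify the pretree axioms for the result, not just announce that this is what needs to be done.
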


\subsection{Cactus trees}
\begin{Def} Let $Z$ be a continuum. We say that $Z$ is $m$-{\em thick} if for any $A\subseteq Z$ with $|A|<m$, $Z\setminus A$ is connected
while for some $A$ with $|A|=m$, $Z\setminus A$ is not connected. If $Z$ is $m$-thick and $A\subseteq Z$ is such that $Z\setminus A$ is not connected
and $|A|=m$ then we say that $A$ is a {\em min cut} of $Z$.
\end{Def}
We have used in \cite{PS2} also the terms minimal separators and $m$-cuts for min cuts. However we use the term min cut here because of the 
affinity of the subject with graph theory (see \cite{DKL}, \cite{FF}).

The results of \cite{PS2} assert roughly that the min cuts of an $m$-thick continuum have an $\mathbb R$-tree structure. This is rather easy to see
for $m=1$ (modulo technicalities) however it is more delicate for $m\geq 2$. The difficulty comes from min cuts `crossing' each other and this difficulty
is surmounted by observing that crossing min cuts lie in `wheels'. To obtain an $\mathbb R$-tree one starts from a pretree consisting of min cuts that
do not cross any other min cut, and wheels. We explain now this construction in some detail.

\begin{figure}[h]
\includegraphics[width=4.5in ]{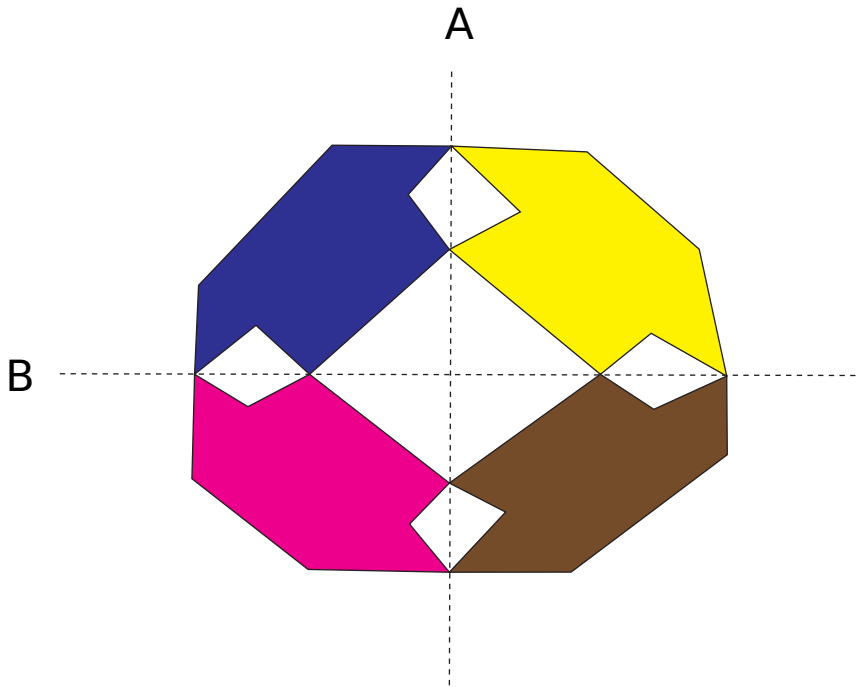}
\vspace{0.1in}
 \caption{Crossing min cuts}
 \end{figure}
 
 \begin{figure}[h]
\includegraphics[width=4.5in ]{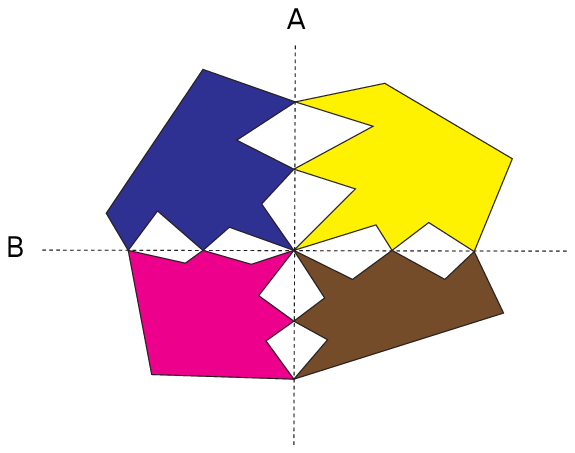}
\vspace{0.1in}
\caption{Crossing min cuts with non-empty intersection}
\end{figure}

\begin{Def} Let $Z$ be a continuum, let $A$ be a min-cut and let $B\subseteq Z$.
We say that  $A$ \textit{separates} $B$
if $B$ intersects at least two components of $Z-A$.
\end{Def}
It turns out that if $A,B$ are min cuts and $A$ separates $B$ then
$B$ separates $A$ too (see lemma 3.4 of \cite{PS2}). We say then that $A$ \textit{crosses} $B$.

\begin{Def}  Let $Z$ be a continuum and $A \subseteq Z$ finite.  If there are non-singleton continua $Y_1, \dots Y_r$ with
\begin{itemize}
\item $\bigcup\limits_i Y_i = Z$. \item $\bigcup\limits_{i\neq
j}Y_i \cap Y_j = A$.
\end{itemize}
Then we say that that $A$ {\em decomposes} $Z$ (into $Y_1, \dots
Y_r$).  Notice that $\bd Y_i= Y_i \cap A$ for all $i$.
\end{Def}

\begin{Def} \label{D:wheel} Let $Z$ be an $m$-thick continuum, where $m\geq 2$.
A finite set $W \subseteq Z$ is called a {\em wheel }
if  $W$ decomposes $Z$ into continua $M_0, \dots M_{n-1}$  with $n >3$ satisfying the following:
\begin{itemize}
\item There is a (possibly empty) $I=\cap M_i$   called the \textit{center} of the wheel with $|I|<m$ and $M_i \cap M_j=I$ for all $i-j \neq \pm 1 \mod n$.
\item For each $i$, $|\bd M_i|= m$.
\end{itemize}

The collection $M_0, \dots M_{n-1}$ is called the  {\em wheel decomposition}
of $Z$ by $W$, and $M_i, M_{i+1}$ are called an {\em adjacent pair} of the wheel decomposition. 
The intersection of an adjacent pair $M_i \cap M_{i+}$ is called a {\em half-cut} of $W$.   This decomposition is unique.

\end{Def}

\begin{figure}[h]
\includegraphics[width=4.5in ]{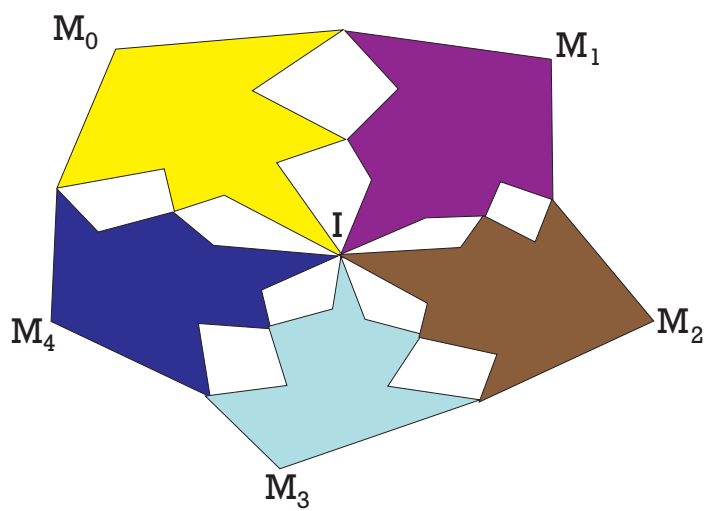}
\vspace{0.1in}
 \caption{A wheel of a 5-thick continuum}
\end{figure}

By
\cite{PS2}
\begin{itemize}
\item  The union of two distinct half-cuts of $W$ is a min cut.
\item Every half cut has the same number of points $q$
 \item $m = 2q-|I|$.
 \end{itemize}

For wheels $W$ and $V$, we say that $W$ is a \emph{sub-wheel} of
$V$ if $W\subseteq V$. By \cite{PS2}
\begin{itemize}
\item Every continuum of the wheel decomposition of $V$ will be contained in one of the continua in the  wheel decomposition of $W$.
\item The center of $W$ is the same as the center of $V$.
\item The half-cuts of $W$ are half-cuts of $V$.
\end{itemize}
If $W$ is an infinite subset of $Z$ and every finite subset of $W$
 is contained in  some  finite  wheel contained in $W$,
then we say that $W$ is {\em a wheel}. The half-cuts of $W$ are all of the half-cuts of all finite sub-wheels of $W$.

Recall that a pretree is a set with a
betweeness relation. The basic example of a pretree is the set of
vertices of a tree where betweeness is defined in the obvious way.
It turns out that given a pretree (satisfying certain conditions)
one can construct a canonical  envelopping tree. 

If a min cut $K$ does not cross any other min cut we say that it is \textit{isolated} or \textit{inseparable},
otherwise it is called \textit{separable}. It is shown in \cite{PS2} that every min cut of $Z$ is either contained in
a maximal wheel or it does not cross any other min cut, so separable min cuts are always contained in wheels.


\begin{figure}[h]
\includegraphics[width=4.5in ]{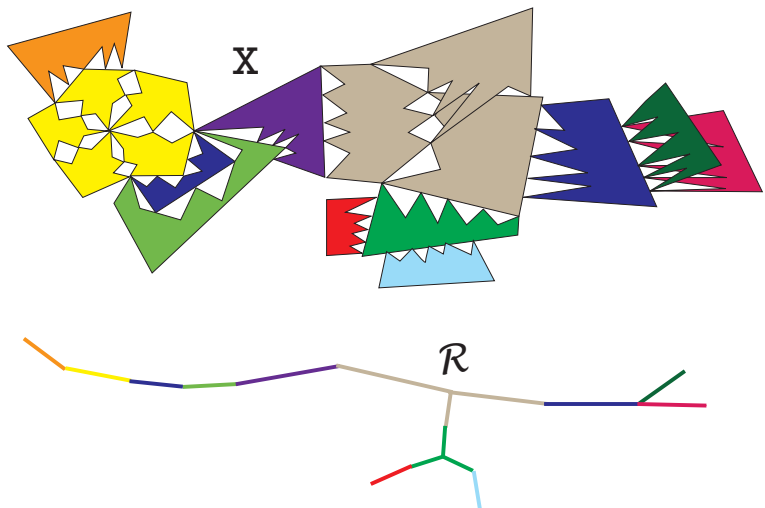}
\vspace{0.1in}
 \caption{A 5-thick continuum containing a wheel, and the corresponding Tree}
\end{figure}

We define a pretree $\cR$ with elements the maximal wheels of
$Z$ and the isolated min cuts of $Z$. We define a
betweeness relation in $\cR$:

Let $x,y,z\in \cR$ distinct. For $y$  a min cut, we say that $y$
is between $x,z$ if there are continua $A,B$ such that
$$x\subseteq A,\, z\subseteq B,\, A\cup B=Z,\,A\cap B=y$$
If $y$ is a maximal wheel we say that $y$ is between $x,z$ if for
some min cut $w\in y$, $w$ is between $x,z$. We can now
state the main theorem of \cite{PS2} 

\begin{Thm} \label{tree} Let $Z$ be a
separable, $m$-thick continuum, where $m>1$. Then the pretree $\cR
$ canonically embeds into an $\R$-tree $T$.  An action on $Z$ by homeomorphisms induces an action on $T$ by homeomorphisms.

\end{Thm}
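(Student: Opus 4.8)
The plan is to deduce the statement from Theorem~\ref{T:PRtree}: it suffices to check that $\cR$, equipped with the betweeness relation defined above, is a \emph{pretree} which is \emph{preseparable} and has \emph{few gaps}. Once this is established, Theorem~\ref{T:PRtree} produces an embedding $\cR\hookrightarrow T$ into an $\R$-tree, and the intrinsic nature of the construction will take care of the statement about homeomorphisms.

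First I would verify the four pretree axioms for $\cR$. When $x,y,z$ are all isolated min cuts, $xyz$ means there are continua $A,B$ with $x\subseteq A$, $z\subseteq B$, $A\cup B=Z$, $A\cap B=y$; axioms 1--3 are then immediate from the definition together with the symmetry of crossing (lemma~3.4 of \cite{PS2}, quoted above) and the fact that an isolated min cut $y$ induces an essentially unique such decomposition. Axiom~4 --- if $xzy$ and $z\ne w$ then $xzw$ or $yzw$ --- is the one that requires work: one must show that $w$ lies entirely inside $A$ or entirely inside $B$, and here the hypothesis that $z$ is isolated (crosses no other min cut) is exactly what prevents $w$ from straddling $z$. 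When one or more of $x,y,z$ is a maximal wheel, one unwinds the convention ``a wheel is between $x,z$ iff one of its min cuts is'' using the uniqueness of wheel decompositions, the fact that half-cuts of a sub-wheel are half-cuts of the wheel, and the dichotomy quoted above that every min cut is either isolated or lies in a unique maximal wheel; each such case reduces to the min-cut situation. I expect no conceptual difficulty here, only bookkeeping.

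The technical heart is showing that $\cR$ is preseparable and has few gaps, and this is where the separability of $Z$ enters. Fix a maximal linearly ordered subset $I\subseteq\cR$ with its linear order (Remark~\ref{R:order}); its elements correspond to a nested family of decompositions $Z=Z^-_\alpha\cup Z^+_\alpha$, oriented so that $\alpha<\beta$ implies $Z^-_\alpha\subseteq Z^-_\beta$. The idea is to attach to each gap of $I$ (of type (i) or (ii)), and to each jump of $I$ (pair of consecutive elements), a nonempty open subset of $Z$ --- morally a component of $Z$ minus the relevant min cut that gets ``pinched off'' at that place, the half-cuts being used when the element of $I$ is a wheel --- in such a way that the open sets attached to distinct gaps and jumps are pairwise disjoint. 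Since $Z$ is separable, hence second countable, such a family is countable; this yields ``few gaps'' and, choosing one endpoint of each jump together with a countable order-dense subset of the jump-free part, gives a countable predense subset of $I$, i.e. preseparability. I expect this to be the main obstacle: making the assignment of pairwise disjoint open sets precise in the presence of wheels and of gaps of both types, and verifying injectivity, is exactly the delicate part of \cite{PS2}. With preseparability and few gaps in hand, Theorem~\ref{T:PRtree} gives the embedding of $\cR$ into an $\R$-tree $T$.

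Finally, canonicity and the induced action. Every ingredient used to build $\cR$ --- the isolated min cuts, the maximal wheels, the crossing relation, the wheel decompositions, and hence the betweeness relation --- is defined purely topologically, so any homeomorphism $h\colon Z\to Z$ carries this data to itself and therefore induces an order-isomorphism of the pretree $\cR$. Since the enveloping $\R$-tree of Theorem~\ref{T:PRtree} can be produced canonically from $\cR$ (by completing its maximal linearly ordered subsets along their Dedekind cuts and gluing), this passage is functorial; hence $h$ extends to a homeomorphism of $T$, and an action of a group on $Z$ by homeomorphisms induces an action on $T$ by homeomorphisms.
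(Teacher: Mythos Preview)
This theorem is not proved in the present paper at all: it is stated as ``the main theorem of \cite{PS2}'' and cited as background, with no proof given here. So there is no proof in the paper to compare your proposal against.

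That said, your outline is the right strategy and matches what one expects from \cite{PS2}: verify the pretree axioms for $\cR$, then establish preseparability and few gaps via separability of $Z$ (by assigning pairwise disjoint nonempty open subsets of $Z$ to gaps and jumps of a maximal linearly ordered subset), and invoke Theorem~\ref{T:PRtree}. Your identification of axiom~4 and the open-set assignment as the places requiring real work is accurate, and the functoriality argument for the induced action is correct in spirit. The only caveat is that your sketch is genuinely only a sketch: the bookkeeping for wheels (especially infinite wheels) in the pretree axioms and in the disjoint-open-set assignment is substantial, and you have not actually carried it out---but since the paper itself defers all of this to \cite{PS2}, that is appropriate here.
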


\subsection{Boundaries of CAT(0) groups}
Recall that a geodesic is an isometric embedding of an interval of $\R$.  
 For $X$ a geodesic metric space and $\Delta(a,b,c)$ a geodesic triangle in $X$ with vertices $a,b,c \in X$ there is a Euclidean  {\em comparison }triangle $\bar \Delta=\Delta(\bar a,\bar b, \bar c) \subseteq \E^2$ with 
$d(a,b) =d(\bar a, \bar b)$, $d(a,c) = d(\bar a, \bar c)$ and $d(b,c)=d(\bar b, \bar c)$.  
We define the Euclidean comparison angle $\bar \angle_a(b,c) =\angle_{\bar a}(\bar b,\bar c)$.  

Each point $z \in \Delta(a,b,c)$ has a unique comparison point, $\bar z \in \bar \Delta$.  We say that the triangle $\Delta(a,b,c)$ is CAT(0) if for any $y, z \in \Delta(a,b,c)$ with comparison points $\bar y, \bar z \in \bar \Delta$, $d(y,z) \le d(\bar y,\bar z)$. The space $X$ is said to be CAT(0) if every geodesic triangle in $X$ is CAT(0).   

For $X$ a proper CAT(0) space, the visual boundary of $X$, $\bd X$ is the set of equivalence classes of geodesic rays,  where two rays $\alpha: [0,\iy) \to X$ and $\beta :[0,\iy) \to X$ are equivalent if $d(\alpha(t), \beta(t))$ is bounded.  Fix a base point $x_0 \in X$.    It is the only easy application of Arzela-Ascoli to show that any  geodesic ray is equivalent to unique geodesic ray $\alpha:[0,\iy) \to X$ from  $x_0$ (that is  $\alpha(0)=x_0$.)
We define the visual topology on $\bar X = X\cup \bd X$ by taking as neighborhoods, the open balls of points in $X$, and for $\alpha$ geodesic ray from $x_0$  and $\epsilon, n >0$,
$U(\alpha, n,\epsilon) =\{ p \in \bar X:\, d(\alpha(n), \beta(n)) < \epsilon ,\text{ where } \beta \text{ is the geodesic from $x_0$ to } p \}.$  It follows that $\bar X$ is a Hausdorff compactification of $X$ and $\bd X$ is compact metrizable.  

There is also a different metric on $\bd X$. For $\alpha, \beta$ geodesic rays from $x_0$, we define
$$\angle(\alpha,\beta) = \lim\limits_{t \to \iy} \bar \angle_{x_0}(\alpha(t),\beta(t))$$ which is independent of $x_0$.  This gives a metric (generally with a finer topology) on $\bd X$, and from this angle metric we define the corresponding path metric $d_T$ which we call the Tits metric.  When $\angle(\alpha,\beta) < \pi, \,\,\angle(\alpha,\beta)= d_T(\alpha,\beta)$. However  the angle is bounded above by $\pi$, while generically the image of $d_T$ is $[0,\iy]$.

An isometry $g$ of a proper CAT(0) space $X$ is called {\em hyperbolic} if $g$ acts by translation on a geodesic line $L$, which we call an \textit{axis} of $g$.  The ends of the axis $L$ represent two points in $\bd X$ which we denote $g^+$ and $g^-$, where  $g^+$ is the end of $L$ in the direction of $g$ translation.
Ballmann's dichotomy \cite{BAL} tells us that $d_T(g^+,g^-) = \pi$ or $g^\pm$ are isolated (at infinite distance from every other point) in the Tits metric.  In the latter case we refer to $g$ as a rank 1 hyperbolic element.  If a group of isometries $G$ of $X$ contains a rank 1 hyperbolic element then we say $G$ is a rank 1 group of isometries.  
For $G$ acting geometrically on the proper CAT(0) space $X$, then either $G$ is rank 1 and the Tits diameter $\diam_T \bd X = \iy$ or  $\diam_T \bd X < \frac {3\pi}2$  (see \cite{PS1}, \cite{Gu-S}).

\begin{Def} For $H$ acting properly by isometries on the proper CAT(0) space $X$, we define the limit set of $H$, $\Lambda H$ to be limit points of an orbit of $H$ in $\bd X$.  That is for any $x \in X$,  $\Lambda H = \bd X \cap \cl{Hx}$
\end{Def}

Let $X$ be a  proper CAT(0) space and let $G$ be a group acting properly on $X$. Then
$G$ acts on the compactum $\bd X$ by homeomorphisms. Even though the action of $G$ on $\bd X$ could be trivial
e.g. when $G$ is abelian, it has a convergence type property similar (but weaker) to an action
of a hyperbolic group on its boundary.  This convergence property is called $\pi$-convergence (see \cite{PS1}, \cite{Gu-S}). We will use in this paper only a special case of this that we state below:

\begin{Thm}\label{piconverge}  Let $X$ be a proper CAT(0) space and $G$ a group acting
properly on $X$.  For any sequence of distinct group elements of
$G$, there exists a subsequence $(g_i)$ and points $n,p \in \bd X$
such that for any  $x \in \bd X$ with $d_T(x,n)\geq \pi$
, $g_i(x) \to p$.\end{Thm}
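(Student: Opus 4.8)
The plan is to reduce the statement to a single comparison-angle estimate followed by a fellow-traveling argument in the cone topology.

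\emph{Setup and poles.} Fix a basepoint $x_0 \in X$. First I would record that, since $G$ acts properly and the $g_i$ are distinct, $d(x_0, g_i(x_0)) \to \iy$: for each $R$ only finitely many $g \in G$ satisfy $d(x_0, g(x_0)) \le R$, because such a $g$ has $g(\cl{B}(x_0,R)) \cap \cl{B}(x_0,R) \neq \emptyset$. Using compactness of $\cl X = X \cup \bd X$, I pass to a subsequence (still called $g_i$) with $g_i(x_0) \to p$ and $g_i^{-1}(x_0) \to n$ in $\bd X$; these are the points $p,n$ of the statement. Now fix $x \in \bd X$ with $d_T(x,n) \ge \pi$. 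The first observation is that this forces the angular distance $\angle(n,x) = \pi$: one always has $\angle(n,x) \le \pi$ and $\angle = d_T$ whenever $\angle < \pi$, so $\angle(n,x) < \pi$ would give $d_T(x,n) = \angle(n,x) < \pi$, a contradiction. Let $\alpha$ be the geodesic ray from $x_0$ to $x$, $r_n$ the ray to $n$, and $\rho$ the ray to $p$; the goal is to show that the rays $[x_0, g_i(x))$ converge to $\rho$, i.e. $g_i(x) \to p$.

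\emph{The key angle estimate.} Write $a_i = g_i^{-1}(x_0) \to n$. I claim that for every $\delta > 0$ there exist $S$ and an index threshold so that $\bar\angle_{x_0}(a_i, \alpha(t)) > \pi - \delta$ for all large $i$ and all $t \ge S$. Indeed, since the comparison angles $\bar\angle_{x_0}(r_n(s), \alpha(s))$ increase to $\angle(n,x) = \pi$, I choose $S$ with $\bar\angle_{x_0}(r_n(S), \alpha(S)) > \pi - \delta/2$. Because $a_i \to n$ in the visual topology, the point of $[x_0,a_i]$ at distance $S$ from $x_0$ converges to $r_n(S)$, and continuity of the comparison angle in its arguments gives $\bar\angle_{x_0}([x_0,a_i](S), \alpha(S)) > \pi - \delta$ for large $i$. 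Monotonicity of the comparison angle along the geodesics $[x_0,a_i]$ and $\alpha$ (a standard CAT(0) fact) then upgrades this to $\bar\angle_{x_0}(a_i, \alpha(t)) > \pi - \delta$ for all $t \ge S$. Now I apply the isometry $g_i$: comparison angles depend only on the pairwise distances, so in the triangle $\Delta(x_0, g_i(x_0), g_i(\alpha(t)))$ the comparison angle at the vertex $g_i(x_0)$ equals $\bar\angle_{x_0}(a_i, \alpha(t)) > \pi - \delta$. Since the three comparison angles of a Euclidean comparison triangle sum to $\pi$, the comparison angle at $x_0$ satisfies $\bar\angle_{x_0}(g_i(x_0), g_i(\alpha(t))) < \delta$.

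\emph{Fellow-traveling and conclusion.} By the CAT(0) inequality in the triangle $\Delta(x_0, g_i(x_0), g_i(\alpha(t)))$, whose comparison angle at $x_0$ is $<\delta$, the two sides issuing from $x_0$ fellow-travel: $d\big([x_0,g_i(x_0)](s),\, [x_0,g_i(\alpha(t))](s)\big) \le 2s\sin(\delta/2) < s\delta$ for all $s$ not exceeding the two side lengths. Letting $t \to \iy$, so that $g_i(\alpha(t)) \to g_i(x)$ and $[x_0, g_i(\alpha(t))](s) \to [x_0, g_i(x))(s)$, the estimate persists as $d\big([x_0,g_i(x_0)](s),\, [x_0, g_i(x))(s)\big) \le s\delta$. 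Given a radius $s$ and $\eta > 0$, I choose $\delta$ with $s\delta < \eta/2$; then for large $i$ this term is $<\eta/2$, while $[x_0, g_i(x_0)](s) \to \rho(s)$ because $g_i(x_0) \to p$, so $[x_0, g_i(x))(s)$ lies within $\eta$ of $\rho(s)$ once $i$ is large. This is exactly visual convergence $g_i(x) \to p$.

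\emph{Main obstacle.} The crux is the key angle estimate: the hypothesis $d_T(x,n) \ge \pi$ only controls the angular metric at infinity, and converting it into a uniform lower bound $\pi-\delta$ on the \emph{comparison} angle $\bar\angle_{x_0}(a_i,\alpha(t))$, valid simultaneously for the moving point $a_i \to n$ and for all large $t$, is where the monotonicity of CAT(0) comparison angles and the careful order of limits (first fix $\delta$ and $S$, then send $i\to\iy$, then $t\to\iy$, and only at the end compare at a fixed radius $s$) must be handled with care. Everything after that is a routine degeneration of nearly-straight comparison triangles.
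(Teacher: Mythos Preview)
The paper does not prove this theorem: it is stated as a known result with references to \cite{PS1} and \cite{Gu-S}, and only the remark that swapping $g_i$ with $g_i^{-1}$ interchanges $n$ and $p$ is added. Your argument is correct and is essentially the standard proof found in those references: extract the poles $n,p$ by compactness of $\cl X$, convert $d_T(x,n)\ge\pi$ into $\angle(n,x)=\pi$, use monotonicity of comparison angles to force $\bar\angle_{x_0}(g_i^{-1}(x_0),\alpha(t))$ near $\pi$, transport by $g_i$ so the comparison angle at $x_0$ in $\Delta(x_0,g_i(x_0),g_i(\alpha(t)))$ is near $0$, and conclude by thin-triangle fellow-traveling. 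The order of limits you flag as the ``main obstacle'' is handled correctly.
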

In fact in the theorem above  replacing $g_i$ by $g_i^{-1}$ reverses the roles
of $n,p$.

{\bf For the remainder of this exposition $X$ will be a proper one ended CAT(0) space, $G$ will be a group acting geometrically on $X$. We set}  $Z = \bd X$ with the visual topology (not the Tits topology).  Since $X$ is one-ended, $Z$ is a metrizable continuum.

\begin{Def}Let $U \subseteq Z$. We denote by $d_T^U$ the path metric on $U$ associated to $d_T$ restricted to $U$.  For $a\in U$, $B_T^U(a,\epsilon)$ is the open  $d_T^U$ ball in $U$ about $a$ of radius $\epsilon$, and $\bar B_T^U(a,\epsilon)$ is the closed $d_T^U$ ball in $U$ about $a$ of radius $\epsilon$. 
\end{Def}

For $A \subseteq Z$,  will denote by Int $A$ the interior  of $A$ (as a subset of $Z$) and by $\partial A$ its frontier, so $\partial A=\bar A \setminus $Int $A$.

\begin{Lem}\label{L:limit path} Let $A$ be a closed subset of $\bd X$ and $\gamma_n:[0, \epsilon_n] \to A$ a sequence of paths from $a = \gamma_n(0)$ to $b_n =\gamma_n(\epsilon_n)$, with $b_n \to b $ and $ \epsilon_n \to \epsilon$.  Then there is a 1-Lipschitz path 
$\gamma:[0,\epsilon] \to A$ from $a$ to $b$.  
\end{Lem}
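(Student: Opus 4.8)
The plan is to extract a convergent subsequence of the paths $\gamma_n$ using an Arzelà–Ascoli argument with respect to the Tits metric, being careful that the convergence is happening in the (compact) visual topology while the Lipschitz estimate is in the Tits metric. The key technical point is that each $\gamma_n$, being a path in the path metric $d_T^A$, can be reparametrized by arclength, and after reparametrization becomes $1$-Lipschitz as a map from an interval of $\R$ into $(A, d_T^A)$. Since $d_T \le d_T^A$ and $d_T$ dominates the visual metric in a suitable sense (more precisely: a $d_T$-short path stays in a small visual ball, because the identity $(\bd X, d_T) \to (\bd X, \text{visual})$ is continuous), the reparametrized paths are also equicontinuous for the visual metric on the fixed interval $[0,\epsilon]$ (extending each $\gamma_n$ constantly past $\epsilon_n$ if $\epsilon_n < \epsilon$, or restricting, as needed).

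First I would reparametrize: replace $\gamma_n$ by its arclength parametrization $\tilde\gamma_n : [0, \ell_n] \to A$, where $\ell_n = \text{length}_{d_T}(\gamma_n) \le \epsilon_n$; note $\tilde\gamma_n$ is $1$-Lipschitz for $d_T$. The lengths $\ell_n$ are bounded, so after passing to a subsequence $\ell_n \to \ell \le \epsilon$. Second, I would observe that continuity of $\mathrm{id}\colon (\bd X, d_T) \to (\bd X, d_{\mathrm{vis}})$ gives, for each $\delta > 0$, an $\eta > 0$ so that $d_T(x,y) < \eta \Rightarrow d_{\mathrm{vis}}(x,y) < \delta$; combined with the $1$-Lipschitz bound this makes $\{\tilde\gamma_n\}$ equicontinuous for $d_{\mathrm{vis}}$ on a common interval (rescale all of them to $[0,1]$, say, which only changes the Lipschitz constant to $\sup_n \ell_n < \infty$). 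Third, since $(\bd X, d_{\mathrm{vis}})$ is compact, Arzelà–Ascoli yields a subsequence converging uniformly (visually) to a continuous path $\sigma : [0,1] \to \bd X$; since $A$ is closed in the visual topology, $\sigma$ lands in $A$. Fourth, I would check that $\sigma$ has the right endpoints: $\sigma(0) = \lim \tilde\gamma_n(0) = a$ and $\sigma(1) = \lim \tilde\gamma_n(\ell_n/\ell_n) = \lim b_n = b$ (taking care if some $\ell_n = 0$, i.e. $b_n = a$ in $d_T$, which is a degenerate easy case).

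The remaining, and main, step is the Lipschitz estimate in the Tits metric: I want $\sigma$, reparametrized on $[0,\epsilon]$, to be $1$-Lipschitz for $d_T$. For this I use lower semicontinuity of Tits distance under visual convergence — if $x_n \to x$ and $y_n \to y$ visually, then $d_T(x,y) \le \liminf d_T(x_n, y_n)$ — which follows from the fact that $d_T$ is the supremum of comparison angles along geodesics to basepoints and hence lower semicontinuous. Applied to $x_n = \tilde\gamma_n(s)$, $y_n = \tilde\gamma_n(t)$ we get $d_T(\sigma(s),\sigma(t)) \le \liminf |s - t| = |s-t|$ after the appropriate common rescaling, so the map $[0,\ell] \to A$, and then by affine reparametrization the map $[0,\epsilon] \to A$ (note $\ell \le \epsilon$, so stretching to length $\epsilon$ keeps the Lipschitz constant $\le 1$), is $1$-Lipschitz. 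The hard part will be managing the bookkeeping of the three parametrization domains ($[0,\epsilon_n]$, then arclength $[0,\ell_n]$, then the rescaled common interval) simultaneously with the two topologies, and in particular handling the degenerate cases $\ell_n \to 0$ or $\ell_n = 0$ cleanly so that the endpoint conditions survive.
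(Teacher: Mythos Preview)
Your approach is correct and follows essentially the same idea as the paper's proof: visual compactness of $\bd X$ to extract a limit, plus lower semicontinuity of $d_T$ under visual convergence to carry the Lipschitz bound to the limit. The paper streamlines the argument by taking a pointwise ultralimit $\gamma(t)=\lim_\omega \gamma_n(t)$ along a non-principal ultrafilter instead of invoking Arzel\`a--Ascoli, and it skips your arclength-reparametrization step entirely since the $\gamma_n$ are already (implicitly) assumed $1$-Lipschitz on $[0,\epsilon_n]$.
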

\begin{proof}
Choose a non-principal ultra filter $\omega$ on $\N$.  Now $\forall t \in [0,\epsilon)$,
we can define $$\gamma(t) =\lim\limits_{n \to \omega} \gamma_n(t)\in \bd X.$$ This defines $\gamma:[0,\epsilon) \to \bd X$.  Since for each $n$, $\gamma_n$ was 1-Lipschitz, so is $\gamma$.    Since $A$ is closed  $\gamma:[0,\delta) \to A$, and since $\gamma$ is 1-Lipschitz we can extend  to 1-Lipschitz $\gamma:[0,\epsilon] \to A$ .   Clearly $\gamma(0)=a$.   It follows from 1-Lipschitz, that $\gamma(\epsilon) = b$.
\end{proof}

\begin{Lem}\label{L:finite} Let $Y$ be a sub continuum of $Z$ with $\bd Y$ finite.  If $G$ is not rank 1, then $d_T^Y < 3\pi(|\bd Y| +1)$. 
\end{Lem}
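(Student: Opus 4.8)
The plan is to bootstrap from the case $\bd Y = \emptyset$, which forces $Y = Z = \bd X$, and then add back frontier points one at a time using the separating structure. First I would record the base case: if $\bd Y$ is empty then $Y$ is all of $\bd X$, and the statement that $\diam_T \bd X < 3\pi$ when $G$ is not rank 1 is exactly the dichotomy quoted from \cite{PS1}, \cite{Gu-S} (it gives $\diam_T \bd X < \tfrac{3\pi}{2} < 3\pi$). Since $Y = Z$ is all of $\bd X$, the intrinsic path metric $d_T^Y$ coincides with $d_T$ (there is no obstruction to the path staying in $Y$), so $d_T^Y = d_T = \diam_T \bd X < 3\pi = 3\pi(0+1)$, as claimed.

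Next I would set up the inductive step on $n = |\bd Y|$. Suppose the bound holds for all subcontinua with frontier of size $< n$, and let $Y$ be a subcontinuum with $|\bd Y| = n \ge 1$. The key geometric input is $\pi$-convergence (Theorem \ref{piconverge}): if some pair of points $x,y \in Y$ had $d_T(x,y) \ge \pi$ and also $d_T(x',y') \ge \pi$ for a ``transverse'' pair, one could produce a rank 1 element — so the idea is that large Tits distances inside $Y$ are obstructed, but we must convert this into a bound on the \emph{intrinsic} metric $d_T^Y$, which can be much larger than $d_T$ because admissible paths must avoid leaving $Y$ and, in fact, we are measuring distances within $Y$ using paths in $Y$. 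The mechanism for the induction should be: pick a point $p \in \bd Y$; then $Y$ is a subcontinuum of $\bd X$ and (since $p$ lies on the frontier) there is a larger subcontinuum, or rather we look at $\bd X$ and the component structure of $\bd X \setminus \{p\}$, using $p$ together with the other frontier points to cut $Y$ into pieces $Y_1, \dots, Y_k$ each of which is a subcontinuum of $\bd X$ with strictly smaller frontier. Applying the inductive hypothesis to each piece and adding the (at most $\pi$) cost of crossing through each frontier point yields the telescoped bound; the factor $3\pi$ per frontier point is exactly what accommodates one inductive bound of $3\pi(|\bd Y_i|+1)$ plus the bounded ``jump'' across a cut point, provided $\sum (|\bd Y_i| + 1)$ telescopes correctly against $|\bd Y|+1$.

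The main obstacle I anticipate is precisely the bookkeeping that makes the constant $3\pi(|\bd Y|+1)$ come out, together with the fact that $d_T^Y$ is an \emph{intrinsic} path metric: a path realizing (nearly) the $d_T^Y$-distance between two points of $Y$ may wander, and one must argue that it can be taken to decompose along the frontier points of $Y$ in a controlled way, so that its length is the sum of lengths of sub-paths lying in subcontinua with smaller frontier, plus controlled ``detour'' costs near the frontier points where Lemma \ref{L:limit path} (taking limits of paths) is used to keep the sub-paths inside the relevant closed subcontinua. The $\pi$-convergence theorem enters to rule out that any of these pieces, or the whole of $\bd X$, has two points at Tits distance $\ge \pi$ in a transverse configuration; formally one would argue by contradiction — if $d_T^Y \ge 3\pi(n+1)$ then along a long path in $Y$ one finds either a sub-configuration forcing $G$ to be rank 1, or a subcontinuum violating the inductive bound. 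I would organize the argument so that the $\pi$-convergence step is invoked only in the base case (through the cited dichotomy) and the inductive step is purely combinatorial-topological, decomposing $Y$ at a frontier point and summing.
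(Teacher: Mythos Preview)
Your inductive strategy has a genuine gap in the decomposition step. You write ``cut $Y$ into pieces $Y_1, \dots, Y_k$ each of which is a subcontinuum of $\bd X$ with strictly smaller frontier,'' but a frontier point $p \in \bd Y$ need not be a cut point of $Y$ at all, so there may be no decomposition; and even when $Y \setminus \{p\}$ does disconnect, each piece $Y_i$ picks up $p$ itself as a new frontier point, so $|\bd Y_i|$ is not obviously smaller than $|\bd Y|$. The telescoping you need (``$\sum(|\bd Y_i|+1)$ against $|\bd Y|+1$'') does not follow from anything you have set up.

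The paper's argument bypasses induction entirely with a single covering observation you are missing: since $\diam_T Z < \tfrac{3\pi}{2}$, for any $z \in Z - Y$ the Tits geodesic from $z$ to a point $y \in Y$ must cross $\bd Y$, so $y$ lies in $\bar B_T^Y(a,\tfrac{3\pi}{2})$ for some $a \in \bd Y$. Thus $Y$ is covered by $|\bd Y|$ closed balls of $d_T^Y$-radius $\tfrac{3\pi}{2}$; since $Y$ is connected the nerve of this cover is connected, and walking through at most $|\bd Y|$ balls of $d_T^Y$-diameter $3\pi$ gives the bound. This is the key geometric fact---every point of $Y$ is intrinsically close to $\bd Y$---and it is what makes the constant come out with no bookkeeping. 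Your proposal never uses that geodesics from $Y$ to $Z-Y$ must cross $\bd Y$, which is really the whole content of the lemma.
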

\begin{proof}We may assume $Y \neq Z$.  Since $G$ is not rank 1, $\diam_T(Z) < \frac {3\pi} 2$.  Every Tits geodesic from the interior of $Y$ to the exterior of $Y$ must pass through $\bd Y$. Since for $z \in Z-Y$, $Y \subseteq B_T(z, \frac {3\pi} 2)$, it follows that $$Y \subseteq \bigcup\limits_{a \in \bd Y} \bar B_T^Y(a, \frac {3\pi} 2).$$  Since $Y$ is connected and $\bar B_T^Y(a, \frac {3\pi} 2)$ is compact for all $a \in \bd Y$, the nerve of the cover $$\{\bar B_T^Y(a, \frac {3\pi} 2)|\, a \in \bd Y \}$$ must be connected and it follows that 
$d_T^Y < 3\pi(|\bd Y| +1)$.  
\end{proof}

\subsection{Big sets and cactus trees}
 If $Z$ has a finite cut then we can associate a cactus tree $T$ to $Z$.
The main tool that we use to understand the action of $G$ on $T$ is $\pi$-convergence.  In order to apply $\pi$-convergence we need
sets with big Tits radius. The most obvious example of such a set is a non-contractible simple closed curve. 
A similar example
is a subcontinuum $N$ where some finite cut of $N$, which is minimal and contains at least 2 points, is contained in a min-cut of $Z$.  By minimal here we mean that no
subset of the cut is a cut.

We are able to show that
such a subcontinuum exists between `sufficiently many' linearly ordered min cuts in any $m$-thick continuum for $m\geq 3$. We apply this somewhat technical result about continua
to obtain sets of big Tits radius in $CAT(0)$ boundaries.

\begin{Def} For $A \subseteq Z$,  the (extrinsic)  {\em Tits radius }of $A$ is,   $$\rad A = \inf\{\epsilon : A \subseteq B_T(z,\epsilon)\,\text{for some}\, z\in \bd X\}$$
\end{Def}
\begin{Lem}\label{C:Stab} Let  $A$ is be a closed subset of $Z$ with $H<G$ stabilizing $A$.  
If $\rad A \ge \pi$ then $\Lambda H \subseteq A$. 
\end{Lem}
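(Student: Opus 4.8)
The plan is to show that every limit point of $H$ lies in $A$, arguing by contradiction. Suppose $p \in \Lambda H \setminus A$. Since $A$ is closed, there is a positive Tits-distance... but wait, the Tits topology is finer than the visual topology, so I cannot immediately separate $p$ from $A$ in the Tits metric; instead I should work with $\pi$-convergence, which is stated in terms of the Tits metric but yields convergence in the visual topology. So first I would pick a sequence of distinct elements $h_i \in H$ realizing $p$ as a visual limit of an orbit, i.e.\ $h_i(x_0) \to p$ for some (equivalently any) basepoint, refining to a subsequence so that $\pi$-convergence (Theorem \ref{piconverge}) applies: there are points $n, q \in \bd X$ with $h_i(z) \to q$ for every $z$ with $d_T(z,n) \ge \pi$. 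A standard fact about $\pi$-convergence (noted after Theorem \ref{piconverge}, that $h_i^{-1}$ reverses the roles of $n$ and $q$) is that $q$ is the limit of the orbit $h_i(x_0)$, so $q = p$.

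The key step is then to exploit $\rad A \ge \pi$. Because the Tits radius of $A$ is at least $\pi$, for the point $n$ produced above there must exist some $a \in A$ with $d_T(a, n) \ge \pi$: otherwise $A \subseteq B_T(n, \pi)$ and we could slightly shrink... more carefully, if $d_T(a,n) < \pi$ for all $a \in A$ then, since $A$ is closed in the (compact) visual topology and $d_T$ is lower semicontinuous for the visual topology, one still needs a compactness argument — but the cleanest route is: the hypothesis $\rad A \ge \pi$ says $A$ is not contained in any open Tits ball of radius strictly less than... rather, for any $z$, $A \not\subseteq B_T(z, \epsilon)$ for $\epsilon < \pi$, hence there is $a \in A$ with $d_T(a, n) \ge \pi$ (taking $z = n$ and letting $\epsilon \to \pi$ produces such a point, using that the relevant balls are nested and $A$ closed). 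Fixing such an $a$, $\pi$-convergence gives $h_i(a) \to p$. But $H$ stabilizes $A$, so $h_i(a) \in A$ for all $i$, and $A$ is closed in the visual topology; therefore the visual limit $p$ lies in $A$, contradicting $p \notin A$. Hence $\Lambda H \subseteq A$.

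The main obstacle I anticipate is the interplay between the two topologies: $\pi$-convergence is a statement about the visual topology with a hypothesis phrased in the Tits metric, while ``$\rad A \ge \pi$'' and ``$A$ closed'' refer to the Tits metric and the visual topology respectively. The delicate point is extracting, from $\rad A \ge \pi$, an element $a \in A$ with $d_T(a,n)\ge \pi$; one must be careful that the infimum defining $\rad A$ need not be attained and that Tits balls are not visually closed. I would handle this by using that for each $\epsilon<\pi$ there is $a_\epsilon\in A$ with $d_T(a_\epsilon,n)\ge\epsilon$, passing to a visual limit $a$ of the $a_\epsilon$ as $\epsilon\to\pi$ (possible since $A$ is a visually closed subset of the compact space $\bd X$), and invoking lower semicontinuity of $d_T(\cdot,n)$ with respect to the visual topology to conclude $d_T(a,n)\ge\pi$. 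Everything else is a routine application of Theorem \ref{piconverge} together with the $H$-invariance and visual closedness of $A$.
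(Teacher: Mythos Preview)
Your proposal is correct and follows essentially the same route as the paper's proof: pick a sequence in $H$ whose orbit limits to $p\in\Lambda H$, pass to a subsequence so that $\pi$-convergence applies with attracting/repelling points $p,n$, use $\rad A\ge\pi$ to find $a\in A$ with $d_T(a,n)\ge\pi$, and conclude $h_i(a)\to p\in A$ since $A$ is $H$-invariant and visually closed. The paper's version is simply terser---it asserts the existence of $a$ without the lower-semicontinuity argument you supply, and it does not frame things by contradiction---but the content is identical.
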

\begin{proof} Let $p\in \Lambda H$ and  $(h_k) \subseteq H$ with $h_kx \to p $ for some $x \in X$, and with $h_k^{-1}(x) \to n \in \bd X$.  Since $\rad A \ge \pi$, there exists $a \in A$ with $d_T(a,n) \ge \pi$.
By $\pi$-convergence $h_k a \to p$.  Since $H$ stabilizes the closed set $A$,  $p \in A$.   Thus by definition $\Lambda H \subseteq A$. 
\end{proof}


\begin{Lem}\label{L: bigpi} Let $Q$ be a subcontinuum  in $Z $. If $M_1\cup M_2$ is a decomposition of $Z$ by the finite cut $J$ which contains no cut point of $Q$, and  with $Q \not \subseteq M_i$  for $i =1,2$, 
then $\rad Q \geq \pi $.
\end{Lem}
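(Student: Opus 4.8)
The plan is to argue by contradiction: suppose $\rad Q < \pi$, so there is a point $z\in \bd X$ with $Q \subseteq B_T(z,\pi)$, i.e. every point of $Q$ is joined to $z$ by a Tits geodesic of length strictly less than $\pi$. The idea is to use these Tits geodesics to build a path in $Q$ connecting a point of $Q\cap M_1\setminus M_2$ to a point of $Q\cap M_2\setminus M_1$ that avoids the cut $J$ — which is impossible, since $J$ separates $M_1$ from $M_2$ in $Z$ (and $J$ contains no cut point of $Q$, so we should actually work with the separation induced on $Q$). First I would record that since $Q\not\subseteq M_i$, $Q$ meets both $\mathrm{Int}(M_1)\setminus J$ and $\mathrm{Int}(M_2)\setminus J$; pick $q_i$ in each. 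Since $z$ lies in one of the $M_i$, say $z\in M_1$, the Tits geodesic $[z,q_2]_T$ must cross $J$; let $j\in J\cap Q$... but wait, the geodesic need not stay in $Q$. So the cleaner route is: the finite set $J$ does not separate $Q$ (because no point of $J$ is a cut point of $Q$, and more to the point $J\cap Q$ is finite and we are told $J$ contains no cut \emph{point} of $Q$ — so $Q\setminus J$ is connected, or at least $q_1,q_2$ lie in the same component of $Q\setminus J$).

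The cleanest formulation: since $J$ contains no cut point of $Q$ and $|J\cap Q|$ is finite, $Q\setminus (J\cap Q)$ is connected (a continuum minus finitely many non-cut-points is connected — this is standard, and is exactly the kind of statement used elsewhere in the paper). Hence there is a path, or at least a connected set, in $Q$ joining $q_1$ to $q_2$ and missing $J$. On the other hand, $q_1\in\mathrm{Int}(M_1)\setminus M_2$ and $q_2\in\mathrm{Int}(M_2)\setminus M_1$ lie in the two sides of the decomposition $Z=M_1\cup M_2$, $M_1\cap M_2=J$, so \emph{any} connected subset of $Z$ joining them must meet $J$. This is a contradiction — but it does not use the Tits radius hypothesis at all, so I have the logic backwards.

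So in fact the correct contradiction is the reverse. Assume $\rad Q < \pi$. The point is that $\rad Q<\pi$ forces $Q$ to be \emph{small} in an unexpected way; more precisely I will show $Q\subseteq M_i$ for some $i$, contradicting the hypothesis $Q\not\subseteq M_i$. Here is the mechanism: fix $z$ with $Q\subseteq B_T(z,\pi)$, and say $z\in M_1$ (if $z\in J$ argue similarly, or perturb). For any $q\in Q$, the Tits geodesic $\gamma_q=[z,q]_T$ has length $<\pi$. Now $J$ is a finite cut, say with $|J|=k$. The family of geodesics $\{\gamma_q : q\in Q\}$ all emanate from $z$; I claim $Q' = \bigcup_q \gamma_q$ is connected (union of connected sets through $z$) and hence so is its closure, and $Q'$ — having finite "width" at $J$ in a sense — cannot meet both $\mathrm{Int}(M_1)$ and $\mathrm{Int}(M_2)$ unless it passes through $J$. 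The key extra input is that the Tits geodesics from $z$ to points of $Q$ that reach into $M_2$ must all cross $J$, but $J$ contains no cut point of $Q$; combined with the fact that a Tits ball of radius $<\pi$ is "tree-like" near its center (Tits geodesics from a common point are unique and vary continuously, by the CAT(1) nature of $\bd X$ at scales $<\pi$), one deduces that $Q\cap M_2\setminus J$ and $Q\cap M_1\setminus J$ cannot both be nonempty without a cut point of $Q$ in $J$.

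The main obstacle, and what I would spend the most care on, is exactly this last step: making rigorous the passage from "$Q$ lies in a Tits ball of radius $<\pi$, so it is spanned by a family of short unique geodesics from $z$" to "therefore $J$ (a finite set not containing a cut point of $Q$) cannot separate $Q$ into a piece in $M_1$ and a piece in $M_2$." I expect this is handled by using Lemma \ref{L:limit path} to produce, for a point $q_2\in Q\cap\mathrm{Int}(M_2)\setminus J$, a $1$-Lipschitz Tits path inside $Q$ from $q_2$ around through $z$'s "cone" to a point of $Q\cap\mathrm{Int}(M_1)$ that avoids $J$ — using that each individual geodesic $\gamma_q$, while not in $Q$, can be replaced by a path in $Q$ (e.g. via the continuity of $\phi$-type projections or by a limiting argument over $q$ near the crossing locus) — thereby contradicting that $J$ separates the two interiors. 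In short, the hypothesis $\rad Q<\pi$ gives a "Tits-geodesic bigon/fan" structure forcing a path in $Q$ across $J$, while $J$ being a genuine separator with no cut point of $Q$ forbids it; that tension is the heart of the proof and the technical crux.
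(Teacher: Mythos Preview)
Your proposal has a genuine gap and also contains a false claim along the way. The assertion ``a continuum minus finitely many non-cut-points is connected --- this is standard'' is simply wrong: a circle minus two points is disconnected, yet neither point is a cut point. So your first attempted contradiction collapses for the right reason (it did not use the Tits hypothesis) but your diagnosis of why is off.

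More importantly, your second attempt never supplies the missing mechanism. You correctly isolate the key fact --- Tits geodesics of length $<\pi$ are unique --- but you do not find a way to convert ``$Q$ sits in a Tits ball of radius $<\pi$'' into ``$Q\subseteq M_i$''. The vague appeal to a ``tree-like'' or ``fan'' structure, and to producing a path in $Q$ across $J$, is exactly where the real work lies, and you have not done it. In particular, the geodesics $\gamma_q$ from $z$ do not lie in $Q$, there is no reason a limiting path built from them should lie in $Q$, and the map $q\mapsto \gamma_q$ is not even well-defined once $d_T(z,q)$ may equal $\pi$ somewhere nearby.

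The paper's argument is direct rather than by contradiction, and supplies precisely the combinatorics you are missing. Fix $a\in Z$; for each $c\in Q$ decompose a Tits geodesic $[a,c]$ into maximal subarcs lying in $M_1$ or $M_2$, and record its \emph{type}: the side it starts on together with the ordered list of points of $J$ where it switches sides. Since $J$ is finite there are only finitely many types, so $Q$ is covered by finitely many closed sets $M(s,c_1,\dots,c_k)$ (closedness uses the limit-path lemma). Connectivity of $Q$ forces these sets to meet; the hypothesis that $J$ contains no cut point of $Q$ is used exactly here, to rule out the possibility that a set of maximal type meets the others only at its terminal crossing point $c_k$. One then finds a point $b\in Q$ lying in two sets of genuinely different type, hence reached from $a$ by two distinct Tits geodesics, so $d_T(a,b)\ge\pi$. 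This finite ``type'' decomposition is the idea your proposal is missing.
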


\begin{figure}[h]
\includegraphics[width=6.0 in ]{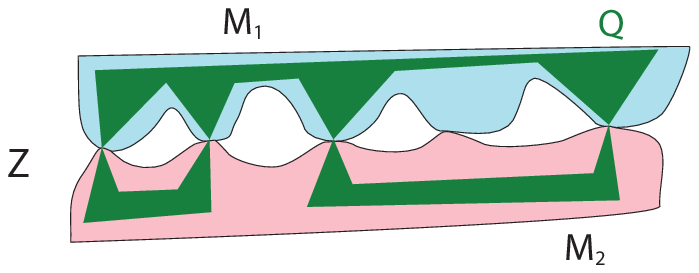}
\vspace{0.1in}
 \caption{}
\end{figure}
 
\begin{proof} Fix  $a\in Z$. 
 It suffices to show that there is $b \in Q$ with $d_T(a,b) \ge \pi$.    Let $c\in Q$ and  $p$ be  a Tits geodesic  from  $a$ to $c$.  We can write $p$ as a concatenation of Tits geodesics
$p=p_1\star p_2...\star p_n$ where each $p_i$ is contained in $M_1$ or $M_2$ and if $p_i\subseteq M_j$ then $p_{i+1}\subseteq M_{3-j}$ and $p_i$ is not the constant geodesic for $i <n$.
If $c_1,...,c_{n-1}$ are the last points  of $p_1,...,p_{n-1}$ respectively and $p_1\subseteq M_s$ we say that  $p$ is of type $(s,c_1,...,c_{n-1})$. (If $p$ is entirely contained in $M_s$ then $p$ is of type  $(s)$.) Note that all $c_i$ lie in the finite cut $J$.
Let $M(s,c_1,...,c_k)$ be the set of points of $Q$ that can be joined to $a$ by a Tits geodesic of type $(s,c_1,...,c_k)$ and let $$\cM=  \{M(s,c_1,...,c_k):\,  |M(s,c_1,...,c_k)|>1 \text{ for }k>0 \}.$$ By lemma \ref{L:limit path} the elements of $\cM$ are closed sets of $Z$.  Since $(s,c_1,...,c_k)$ can contain no repeated elements of $J$,  $|\cM| \le 2^{m+1}$. We may assume that  $Q \subseteq \cup \cM$.
 Since $Q \not  \subseteq M_i$ for $i=1,2$ and $Q$ has no cut point in $J$, it follows that $|\cM|>1$.     Since $Q$ is connected, every element of $\cM$ meets some other element of $\cM$.

If  there exists $M(s,c_1,...,c_k) \in \cM$ with the property it only intersects other  elements of $\cM$ only in $c_k$ ( or $c \in J$ for $k=0$), then $c_k$(or $c$) is a cut point of $Q$ in $J$ which is not allowed.  
Thus we may assume that each $M(s,c_1,...,c_k) \in \cM$ intersects some other element of $\cM$ in a point other than $c_{k}$ (or $c$ for $k=0$).  Choose $M(s,c_1,...,c_k) \in \cM$ with $k$ maximal.  Choose some other $N \in \cM$ with $$b \in N \cap  M(s,c_1,...,c_k)$$ so that  $b \neq c_{k}$ (or $b \neq c$ for $k=0$).  Since $k$ is maximal, a Tits geodesic from $a $ to $b$ of type $(s,c_1,...,c_k)$  doesn't have the type given by $N$.  Thus there are at least two different Tits geodesics from $a$ to $b$.  Tits geodesics of length less than $\pi$ are unique, so $d_T(a,b) \ge \pi$.  
\end{proof}
\begin{Cor}\label{C:cut cicrle} Let $S\subseteq \bd X$ be a simple closed Tits curve which passes through a finite cut, then for any $b \in \bd X$, there is $a \in S$ with $d_T(a,b) \ge \pi$.
\end{Cor}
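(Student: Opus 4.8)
The plan is to obtain the corollary as an immediate consequence of Lemma \ref{L: bigpi} applied with $Q = S$, so the real work is just checking that lemma's hypotheses and noting that its proof gives a bit more than its stated conclusion.

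\textbf{Step 1: the ``no cut point'' hypothesis is free.} Since $S$ is a simple closed curve, removing a single point from it leaves an arc; hence $S$ has no cut point at all, and so for \emph{every} finite cut $J$ of $Z = \bd X$ the requirement of Lemma \ref{L: bigpi} that $J$ contain no cut point of $Q$ holds automatically when $Q = S$.

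\textbf{Step 2: extract a decomposition from the hypothesis.} I would read ``$S$ passes through a finite cut'' as asserting (and in any case it is exactly what is needed) that there is a finite cut $J$ of $\bd X$ such that $S$ meets at least two components of $\bd X \setminus J$. Picking one such component $C$ met by $S$ and setting $M_1 = \overline C$, $M_2 = \overline{\bd X \setminus C}$ gives a decomposition of $\bd X$ by the finite cut $J' := M_1 \cap M_2 \subseteq J$; since $S$ meets $C \subseteq M_1 \setminus M_2$ and also meets some further component of $\bd X\setminus J$ contained in $M_2 \setminus M_1$, we get $S \not\subseteq M_1$ and $S \not\subseteq M_2$. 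If one insists that the two sides be genuine continua as in the definition of a decomposition, one instead partitions the components of $\bd X \setminus J$ met by $S$ into two nonempty families and takes closures; this is routine point-set bookkeeping. Thus all the hypotheses of Lemma \ref{L: bigpi} hold for $Q = S$.

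\textbf{Step 3: upgrade to the pointwise statement.} Lemma \ref{L: bigpi} directly yields $\rad S \geq \pi$, but the assertion we want is the pointwise one: for each $b \in \bd X$ there is $a \in S$ with $d_T(a,b) \geq \pi$. This is precisely what the \emph{proof} of Lemma \ref{L: bigpi} delivers — that argument fixes an arbitrary point of $Z$ and exhibits a point of $Q$ joined to it by two distinct Tits geodesics, which forces the Tits distance to be at least $\pi$ because Tits geodesics of length less than $\pi$ are unique. So one simply runs that argument with the fixed point taken to be $b$ and reads off the required $a \in S$. The only place where any care is needed is Step 2 (correctly unwinding the hypothesis and, if desired, arranging the two sides to be continua); everything else is formal.
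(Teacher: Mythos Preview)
Your proposal is correct and follows essentially the same approach as the paper: the paper's entire proof is the single line ``We apply lemma \ref{L: bigpi} to $S$,'' and your three steps are exactly the verifications a careful reader would make to justify that line (that a simple closed curve has no cut point, that ``passes through a finite cut'' yields the required decomposition, and that the proof of Lemma \ref{L: bigpi} in fact fixes an arbitrary point of $Z$ and produces the pointwise conclusion).
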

\begin{proof} We apply lemma \ref{L: bigpi}  to $S$. \end{proof}

\begin{Cor}\label{L: notcontractible} Let $Z$ be a continua with $A, B $ sub continua of $Z$ with $|A \cap B|\ge 2$. If $M\cup N$ is a decomposition of $Z$ by the minimal cut $J$ with $A \subseteq M$ and $B \subseteq N$
then $\rad (A \cup B)\geq \pi$.
\end{Cor}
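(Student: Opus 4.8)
The plan is to reduce Corollary~\ref{L: notcontractible} to Lemma~\ref{L: bigpi}. The obvious candidate for the subcontinuum in the hypothesis of that lemma is $Q = A \cup B$ itself, and the decomposition of $Z$ is the given $M \cup N$ with cut $J$. So I need to check the two hypotheses of Lemma~\ref{L: bigpi}: first that $J$ contains no cut point of $Q$, and second that $Q \not\subseteq M_i$ for $i=1,2$ (here $M_1 = M$, $M_2 = N$).

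The second hypothesis is immediate: since $|A \cap B| \ge 2$ and $A \subseteq M$, $B \subseteq N$, we have $|M \cap N| \ge 2$; but $M \cap N = J$, so $|J| \ge 2$. More to the point, $A \subseteq M$ and $A$ meets $B \subseteq N$ in at least two points, so $A$ is not contained in $N$ unless $A \subseteq M \cap N = J$, in which case $A$ would be finite with at least two points. I should be slightly careful here: $A$ and $B$ are non-singleton continua (indeed $|A\cap B|\ge 2$ forces each to be infinite), so $A \not\subseteq J$ and $B \not\subseteq J$; hence $Q = A\cup B$ is contained in neither $M$ nor $N$. (If $Q \subseteq M$ then $B \subseteq Q \cap N \subseteq M \cap N = J$, contradiction; symmetrically for $N$.)

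The first hypothesis — that $J$ contains no cut point of $Q$ — is the one that needs the word ``minimal'' in the statement. Suppose $c \in J$ were a cut point of $Q = A \cup B$, so $Q \setminus \{c\}$ is disconnected. Since $A$ and $B$ are each connected and $|A \cap B| \ge 2$, the set $(A \cap B)\setminus\{c\}$ is nonempty; pick a point $d$ in it. Then $A \setminus \{c\}$ and $B\setminus\{c\}$ are connected sets (as $A,B$ are continua and removing one point from a continuum with more than one point... actually this requires care: a point of a continuum need not be a non-cut point, but here I can argue differently) — in any case $A\setminus\{c\}$ and $B\setminus\{c\}$ both contain $d$, so their union $Q\setminus\{c\}$ would be connected once I know each of $A\setminus\{c\}$, $B\setminus\{c\}$ is connected, which fails in general. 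The cleaner route: the separation of $Q\setminus\{c\}$ must separate $J \cap Q = J$ (restricted appropriately), and then $J \setminus \{c\}$, together with $c$ removed from $M$ or $N$ side, would exhibit a proper subset of $J$ as a cut of $Z$, contradicting minimality of $J$. I expect \textbf{this step — ruling out cut points of $A\cup B$ inside $J$ using minimality of the cut $J$} — to be the main (and really the only) obstacle; it amounts to showing that if some $c \in J$ separated $A \cup B$, then $J \setminus \{c\}$ would already separate $Z$, contradicting minimality.

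Once both hypotheses are verified, Lemma~\ref{L: bigpi} applied to $Q = A \cup B$ with the decomposition $M \cup N$ and cut $J$ gives $\rad(A \cup B) \ge \pi$ directly, completing the proof. I should write this up as: ``We apply Lemma~\ref{L: bigpi} to $Q = A\cup B$. Since $A,B$ are non-singleton continua and $A\subseteq M$, $B\subseteq N$ with $|A\cap B|\ge 2$, neither $A$ nor $B$ lies in $J=M\cap N$, so $Q$ is contained in neither $M$ nor $N$. If some $c\in J$ were a cut point of $Q$, then [minimality argument] $J\setminus\{c\}$ would separate $Z$, contradicting minimality of $J$; hence $J$ contains no cut point of $Q$. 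Lemma~\ref{L: bigpi} now gives $\rad(A\cup B)\ge\pi$.''
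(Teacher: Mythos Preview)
Your overall plan---reduce to Lemma~\ref{L: bigpi} with $Q=A\cup B$---is right, and the check that $Q\not\subseteq M$ and $Q\not\subseteq N$ is fine. The gap is precisely where you flagged it: your proposed argument that a cut point $c\in J$ of $A\cup B$ would force $J\setminus\{c\}$ to separate $Z$ simply does not hold. The cut-point structure of the subcontinuum $A\cup B$ places no constraint on whether $c$ is essential for the separation of $Z$. For instance, take $J=\{p,q\}$ a min cut of a $2$-thick $Z$, let $A\subseteq M$ be a figure-eight with crossing point $p$ and with $q$ on one loop, and let $B\subseteq N$ be any subcontinuum through $p$ and $q$. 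Then $p$ is a cut point of $A\cup B$ (the loop not containing $q$ falls off), yet $\{q\}=J\setminus\{p\}$ does not separate $Z$ since $Z$ is $2$-thick. So minimality of $J$ is a red herring here.

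The paper's proof does not use minimality of $J$ for this step at all. Instead it first replaces $A$ and $B$ by \emph{minimal} subcontinua (with respect to the three properties $A\subseteq M$, $B\subseteq N$, $|A\cap B|\ge 2$). Under this reduction, if $c\in J$ were a cut point of $A\cup B$, one writes $A\cup B=X\cup Y$ with $X\cap Y=\{c\}$; since $|A\cap B|\ge 2$ one of $A,B$---say $A$---must meet both $X$ and $Y$, giving $A=A_1\cup A_2$ with $A_1\cap A_2=\{c\}$. Minimality of $A$ then forces each $A_i$ to meet $B$ in at most one point, and a short case check shows $B$ lies entirely in $X$ or $Y$, so some $A_i$ misses $B$ entirely---contradicting minimality of $A$. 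Once $J$ contains no cut point of (the reduced) $A\cup B$, Lemma~\ref{L: bigpi} gives $\rad(A\cup B)\ge\pi$, and since $\rad$ is monotone under inclusion this passes back to the original $A\cup B$. Your first instinct (use a second point $d\in A\cap B$ to connect $A\setminus\{c\}$ to $B\setminus\{c\}$) was on the right track; the minimality reduction is exactly what repairs the failure of $A\setminus\{c\}$ to be connected.
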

\begin{proof} Clearly $ A\cap B \subseteq J$. We may assume that $A$ and $B$ are minimal continua with respect to the properties:
\begin{itemize}
\item $A \subseteq M$
\item $B \subseteq N$
\item $|A\cap B| \ge 2$.
\end {itemize}

If $J$ contains a cut point $c$ of $A\cup B$ then there are continua $X,Y$ such that $$X\cup Y=A\cup B,\, X\cap Y=\{c\}.$$
At least one of $A,B$ intersects non-trivially both $X,Y$. Let's say $A$ does and let $A_1=X\cap A,\, A_2=Y\cap A$.

Then $A=A_1\cup A_2$ with $A_1,A_2$ non-empty continua and
$A_1\cap A_2=\{c\}$. By our assumption above that $A$ is minimal $A_1$ and $A_2$ intersect $B$ at exactly one point. Moreover if one of them, say $A_1$, intersects
$B$ at $c$ then $A_2$ interects $B$ also only at $c$ contradicting $|A\cap B| \ge 2$. It follows that $c\notin B$. 

Since $B$ is a continuum it is contained either in $X$ or in $Y$. It follows that $B$ does not intersect one of $A_1,A_2$ contradicting again the minimality of $A$.

%
\end{proof}
%

\begin{Def} Let $A_1,A_2$ be min-cuts of the $m$-thick continuum $W$. We say that a subset $S\subseteq W$ is \textit{between} $A_1,A_2$ if for any decomposition
$W=M\cup N$ with $M,N$ continua such that $M\cap N=A_i$, $(i\in \mathbb Z_2 )$, either $A_{i+1}\cup S \subseteq M$ or  $A_{i+1}\cup S \subseteq N$.


  \end{Def}

\begin{Lem}\label{L:choosy}  Let $A$ be a min cut of the continuum $W$.  If $B,C$ is a decomposition of $W$ by $A$, and $D,E$ is a decomposition of $W$ by $A$, and $C\cap D \not \subseteq A$ then $[C\cap D], [B\cup E]$ is a decomposition of $W$ by $A$
\end {Lem}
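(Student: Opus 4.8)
The plan is to pass to the connected components of $W\setminus A$ and reduce the statement to elementary set bookkeeping, the only nontrivial input being one structural property of min cuts. Write $\mathcal U$ for the set of components of $W\setminus A$; since $A$ is a min cut, $|\mathcal U|\ge 2$.

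First I would establish the structural property: \emph{every point of $A$ lies in $\overline U$ for every $U\in\mathcal U$}, so that $\overline U=U\cup A$. Indeed, for $a\in A$ the set $A\setminus\{a\}$ has fewer than $m$ points, so $W\setminus(A\setminus\{a\})=(W\setminus A)\cup\{a\}$ is connected; if $a\notin\overline U$ for some $U\in\mathcal U$, then $U$ would be a proper nonempty clopen subset of $(W\setminus A)\cup\{a\}$ ($U$ is open in $W$, and $\overline U\cap((W\setminus A)\cup\{a\})=U$), contradicting connectedness. (This is essentially contained in \cite{PS2}.) The consequence I need is: whenever $S$ is a nonempty union of members of $\mathcal U$, the set $S\cup A=\bigcup_{U\subseteq S}\overline U$ is a union of non-singleton continua all containing $A$, hence a non-singleton continuum, and it is closed.

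Next I would identify the relevant pieces in terms of $\mathcal U$. Since $B,C$ are closed with $B\cup C=W$ and $B\cap C=A$, the sets $B\setminus A$ and $C\setminus A$ are complementary clopen subsets of $W\setminus A$, hence unions of members of $\mathcal U$; similarly for $D\setminus A$, $E\setminus A$. Therefore $(C\cap D)\setminus A=(C\setminus A)\cap(D\setminus A)$ and $(B\cup E)\setminus A=(B\setminus A)\cup(E\setminus A)$ are again unions of members of $\mathcal U$, and they partition $W\setminus A$. The hypothesis $C\cap D\not\subseteq A$ makes the first of these nonempty; the second contains $B\setminus A$, which is nonempty because $B$ is a non-singleton continuum in a metrizable continuum, hence infinite, while $A$ is finite. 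Since $A=B\cap C=D\cap E$ we have $A\subseteq C\cap D$ and $A\subseteq B\cup E$, so $C\cap D=A\cup((C\cap D)\setminus A)$ and $B\cup E=A\cup((B\cup E)\setminus A)$; by the previous paragraph $[C\cap D]$ and $[B\cup E]$ are both non-singleton continua (each of these sets is already closed, so the bracket changes nothing here).

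Finally I would verify the two identities defining a decomposition. For the union, any $x\in W$ lies in $B$ or $C$: if $x\in B$ then $x\in B\cup E$, and if $x\in C$ then $x\in D$ or $x\in E$, giving $x\in C\cap D$ or $x\in B\cup E$; hence $(C\cap D)\cup(B\cup E)=W$. For the intersection, $(C\cap D)\cap(B\cup E)=(B\cap C\cap D)\cup(C\cap D\cap E)=(A\cap D)\cup(C\cap A)=A$, using $B\cap C=D\cap E=A$ together with $A\subseteq C\cap D$. This exhibits $\{[C\cap D],[B\cup E]\}$ as a decomposition of $W$ by $A$, and that the frontier of each piece equals $A$ then follows from the structural property of the first paragraph. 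The only real obstacle is that structural property — without it there is no reason for $C\cap D$ to be connected, and the whole point of the hypothesis $C\cap D\not\subseteq A$ is to feed it a nonempty union of components; the remaining worry, that $[C\cap D]$ or $[B\cup E]$ might degenerate to $A$, is handled by $C\cap D\not\subseteq A$ and by the non-singleton clause in the definition of a decomposition.
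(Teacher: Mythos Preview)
Your overall strategy is sound and in fact a little more elementary than the paper's: you work with \emph{components} of $W\setminus A$ and use that the closure of a connected set is connected, whereas the paper works with \emph{quasi}-components and must invoke the Freudenthal compactification of $W\setminus A$ (via \cite{PS2}) to see that $\bar\alpha$ is connected for each quasi-component $\alpha$. Your route sidesteps that machinery. (For $B\cup E$ you could also just note it is the union of the two continua $B$ and $E$, which share $A$.)

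There is, however, a genuine error in your justification of the structural property. You assert that a component $U\in\mathcal U$ ``is open in $W$'', and use this to conclude $U$ is clopen in $(W\setminus A)\cup\{a\}$. Components of an open subset of a continuum need not be open unless the continuum is locally connected: in the cone over a Cantor set with the apex removed, the components are the individual half-open arcs, none of which is open. So your clopen argument does not go through. Note also that the result you cite from \cite{PS2} is stated there for quasi-components, not components.

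The structural property is nevertheless true, and here is a short fix using only minimality. Set $P=\overline U$ and $Q=\overline{(W\setminus A)\setminus U}$. Since $U$ and its complement are each closed in $W\setminus A$, one has $P\cap(W\setminus A)=U$ and $Q\cap(W\setminus A)=(W\setminus A)\setminus U$, hence $P\cap Q\subseteq A$; and $P\cup Q=W$ because every point of the finite set $A$ is a limit point of $W\setminus A$. Then $W\setminus(P\cap Q)$ is the disjoint union of the nonempty open sets $P\setminus Q\supseteq U$ and $Q\setminus P\supseteq (W\setminus A)\setminus U$, so $P\cap Q$ is a cut; minimality of $A$ forces $P\cap Q=A$, whence $A\subseteq P=\overline U$. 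With this correction the rest of your argument stands.
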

\begin{proof} Since $W = B \cup C= D \cup E$,  it follows $$W =[C\cap D]\cup [B\cup E].$$  Since $A = B\cap C = D\cap E$,  it follows that $$A =[C\cap D]\cap [B\cup E].$$

 Let $\alpha$ be a quasi-component of $W-A$.  By \cite[Lemma 2.8]{PS2} $A \subseteq \bar \alpha$.  Let $Y = F(W-A)$, the Freudenthal compactification   of $W-A$, and let $f: Y \to W$ be the continuous surjection which is the identity on $W-A$.  By \cite[Observation following Lemma 2.6]{PS2} $f$ maps the component of $\alpha$ in $Y$ onto $\bar \alpha \subseteq W$.  If follows that $\bar \alpha$ is a connected subset of $W$.  
 
  Notice that  since $C\cap D\not \subseteq A$ $$[C\cap D]= \bigcup \{ \bar \alpha :\, \alpha \subseteq [C\cap D]\, \text{is a quasi-component of}\, W-A\}$$  is a nonempty union and $C\cap D$ is a continuum as required. 
\end{proof}

\begin{Lem}\label{L:middle} 
Let  $A_1, A_2$ be two distinct finite min cuts of the continuum $W$ that do not lie both in the same wheel, with $B_i,C_i$ a decomposition of $W$ by $A_i$ for $i=1,2$ and with $A_{3-i} \subseteq C_i$.    Then 
\begin{itemize}
 \item $C_1 \cap C_2$ is connected 
 \item$[A_1 \cup A_2] -[A_1 \cap A_2] \subseteq  \bd [C_1 \cap C_2] \subseteq  A_1 \cup A_2$
 \item $|C_1\cap C_2\cap A_1|= |C_1\cap C_2\cap A_2| $
 \end{itemize} \end{Lem}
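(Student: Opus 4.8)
\textbf{Proof plan for Lemma \ref{L:middle}.}

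The plan is to build the decomposition of $W$ by $A_1$ and the decomposition by $A_2$ so that they are ``compatible'', apply Lemma \ref{L:choosy} twice, and then read off the three conclusions. First I would observe that $C_1 \cap C_2 \not\subseteq A_1$: indeed $A_2 \subseteq C_1$, and if $A_2 \subseteq C_1 \cap C_2 \subseteq A_1$ we would get $A_2 \subseteq A_1$, and since the $A_i$ are min cuts of the same cardinality $m$ this forces $A_1 = A_2$, contradicting distinctness. Symmetrically $C_1 \cap C_2 \not\subseteq A_2$. So Lemma \ref{L:choosy}, applied with $A = A_1$ to the decomposition $B_1, C_1$ and the decomposition (obtained by intersecting the $A_2$-decomposition with the two sides of the $A_1$-decomposition, or more directly by noting that $D, E := $ the two components arising on the $A_2$ side) gives that $C_1 \cap C_2$ is a continuum; applying it again with $A = A_2$ and using symmetry gives the same set from the other side. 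This yields the first bullet, and moreover identifies $C_1 \cap C_2$ as a union of closures of quasi-components of $W - A_1$ (and of $W - A_2$), which is the crucial structural fact.

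Next I would pin down the frontier of $Y := C_1 \cap C_2$. Since $Y$ is cut out by $A_1$ on one side and $A_2$ on the other, any boundary point of $Y$ in $W$ must lie in $A_1 \cup A_2$, giving $\bd Y \subseteq A_1 \cup A_2$. For the lower inclusion, I would argue that a point $x \in (A_1 \cup A_2) - (A_1 \cap A_2)$, say $x \in A_1 \setminus A_2$, is genuinely in the frontier of $Y$: on one hand $x \in A_1 \subseteq C_1$ and (since $x \notin A_2$ and $A_1 \subseteq C_2$ because the $A_i$ do not lie in a common wheel, so $A_1$ meets only the $C_2$-side nontrivially --- this is where the ``not in the same wheel'' hypothesis enters to guarantee $A_1 \subseteq C_2$ rather than $A_1$ straddling $A_2$) we get $x \in Y$; on the other hand $x \in A_1$ means $x$ is a limit of points in $B_1$, which are not in $C_1 \supseteq Y$, so $x \notin \Int Y$. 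Hence $x \in \bd Y$. I expect the delicate point here to be justifying $A_1 \subseteq C_2$ (not $A_1 \subseteq B_2$, and not $A_1$ split between them): this should follow from the hypothesis that $A_1, A_2$ are not contained in a common wheel together with the crossing/wheel dichotomy recalled from \cite{PS2} (crossing min cuts lie in wheels), so that $A_1$ does not cross $A_2$ and therefore lies on one side; the side is $C_2$ because $A_1 \subseteq C_1$'s complement relationship and $A_2 \subseteq C_1$ are set up consistently.

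Finally, for the third bullet I would count intersection with $A_1$ versus $A_2$ using a ``flow'' or cardinality-balancing argument of the kind used in \cite{PS2} for half-cuts. The set $Y = C_1 \cap C_2$ is a continuum whose frontier sits inside $A_1 \cup A_2$; write $P = Y \cap A_1$ and $Q = Y \cap A_2$. Collapsing $W$ along the decomposition, $Y$ together with the ``outside'' continuum $W \setminus \Int Y$ exhibits $P$ as a cut separating (a piece containing the rest of $A_1$) and $Q$ as a cut on the other end; since $A_1$ and $A_2$ are both min cuts of cardinality $m$, and passing from one end of the ``tube'' $Y$ to the other cannot change the size of a minimal separator (any strictly smaller $P$ would let one route around, contradicting that $A_1$ is a min cut, and symmetrically for $Q$), we get $|P| = |Q|$, i.e. $|C_1 \cap C_2 \cap A_1| = |C_1 \cap C_2 \cap A_2|$. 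The main obstacle, as indicated, will be making the wheel hypothesis do exactly the work of forcing $A_1 \subseteq C_2$ and $A_2 \subseteq C_1$ cleanly, and then being careful that the frontier computation does not accidentally include points of $A_1 \cap A_2$; once the structural description of $Y$ as a union of closed quasi-components is in hand (from Lemma \ref{L:choosy}), the three conclusions follow without further hard work.
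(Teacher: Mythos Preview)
There is a genuine gap in your plan. Lemma \ref{L:choosy} takes as input two decompositions of $W$ by the \emph{same} min cut $A$; here $B_1,C_1$ is a decomposition by $A_1$ while $B_2,C_2$ is a decomposition by $A_2$, and these are distinct cuts. Your parenthetical about ``intersecting the $A_2$-decomposition with the two sides of the $A_1$-decomposition'' does not manufacture a second decomposition of $W$ by $A_1$, so the lemma does not apply and you have no argument for connectedness of $C_1\cap C_2$. Relatedly, you spend effort trying to deduce $A_1 \subseteq C_2$ from the wheel hypothesis, but the containments $A_{3-i}\subseteq C_i$ are part of the \emph{hypotheses} of the lemma; the wheel hypothesis is not used there at all.

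The paper's argument is quite different and shows where the wheel hypothesis actually enters. In the case $A_1\cap A_2=\emptyset$ one supposes $C_1\cap C_2=E\cup D$ with $E,D$ nonempty disjoint closed sets. Then $\bd E$ and $\bd D$ are disjoint subsets of $A_1\cup A_2$, each separating $W$, hence each of size exactly $m$; one then observes that $(E\cap A_1)\cup(D\cap A_2)$ is a min cut separating the min cut $(E\cap A_2)\cup(D\cap A_1)$, so by \cite[Lemma 3.4]{PS2} the union $A_1\cup A_2$ is a wheel, contradicting the hypothesis. The frontier statement and the equality $|C_1\cap C_2\cap A_1|=|C_1\cap C_2\cap A_2|=m$ are then immediate in this case. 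The case $A_1\cap A_2=I\neq\emptyset$ is reduced to the disjoint case by passing to the Freudenthal compactification $F(W-I)$, where $A_1-I$ and $A_2-I$ become disjoint min cuts not lying in a common wheel, and then pushing the conclusion back via the continuous surjection $f:F(W-I)\to W$. Your ``flow'' heuristic for the third bullet is in the right spirit but is not needed once the disjoint case is handled directly.
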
 
\begin{proof} 
The containment $ \bd [C_1 \cap C_2] \subseteq [\bd C_1 \cup \bd C_2]=  A_1 \cup A_2$ is obvious.  
\hfill\break{\bf Case I:}  $A_1\cap A_2 = \emptyset$.  \hfill\break
 Since the set  $A_1 \cup A_2$ consists of limit points of $C_1 \cap C_2$, $$A_1 \cup A_2 = \bd[C_1 \cap C_2].$$
 
 If $ C_1\cap C_2$ is not connected then  $C_1 \cap C_2 =E\cup D$ where $E,D$ are nonempty disjoint closed subsets of $W$.  So $\bd E$ and $\bd D$ are disjoint subsets of $A_1\cup A_2$. 
 Since the set  $A_1 \cup A_2$ consists of limit points of $C_1 \cap C_2$, $E$ and $D$ are both infinite.    Recall that for any subset $P$ of a topological space,  $\bd P$ separates the space provided the interior and exterior of  $P$ are nonempty.  Thus  both $\bd E$ and $\bd D$ separate $W$, and it follows that $$|\bd D| =|\bd E| = m =|\bd A_i|$$ and that $E$ is connected (as is $D$).  
 
 Notice now that $(E\cap A_1)\cup (D\cap A_2)$ is a min cut which separates the min cut $(E\cap A_2)\cup (D\cap A_1)$. It follows from \cite[Lemma 3.4]{PS2} that $$[(E\cap A_1)\cup (D\cap A_2)]\cup [(E\cap A_2)\cup (D\cap A_1)]= A_1\cup A_2$$ is a wheel, which is a contradiction.  Thus $C= C_1\cap C_2$ is a continuum as required.   Notice that $C_1\cap C_2 \cap A_i = A_i$ for $i=1,2$, so 
 $$|C_1\cap C_2\cap A_1|= |C_1\cap C_2\cap A_2| =m.$$
 \hfill\break{\bf Case II}  $A_1\cap A_2 =I \neq \emptyset$. \hfill\break
 Let $Y= F(W -I)$, the Freudenthal compactification of $W-I$ (see \cite[2.1]{PS2}).  So $A_1-I$ and $A_2-I$ are min-cuts of $Y$ and since $A_1$ and $A_2$ are not contained in a wheel of $W$, it follows from \cite[Lemma 2.7]{PS2} that $A_1-I$ and $A_2-I$  are not contained in a wheel of $Y$.  In $Y$, for $i=1,2$, let $$\hat B_i = \overline{(B_i-I)}\,\, \text{and} \,\,  \hat C_i = \overline{(C_i-I)}.$$  Notice that for $i=1,2$ $\hat B_i, \hat C_i$ is a decomposition of $Y$ by $A_i-I$.  Thus by Case I, $\hat C_1\cap \hat C_2$ is connected, $$\bd [\hat C_1\cap \hat C_2]= [A_1\cup A_2] -I,$$ and  $$|\hat C_1\cap \hat C_2\cap [A_1-I]|= |\hat C_1\cap \hat C_2\cap [A_2-I]| .$$
  Let $f: Y \to W$ be continuous surjection as in \cite[Corollary 2.5]{PS2}. 
 Notice that $f(\hat C_i) =C_i$ and $f(\hat B_i) =B_i$.  Thus $C_1 \cap C_2$  is connected and 
 $$[A_1 \cup A_2] -[A_1 \cap A_2] = f(\bd [\hat C_1\cap \hat C_2]) \subseteq \bd [C_1 \cap C_2].$$  Also $$| C_1\cap  C_2\cap [A_1-I]|= |C_1\cap  C_2\cap [A_2-I]| ,$$ implying that  $$| C_1\cap  C_2\cap A_1|= |C_1\cap  C_2\cap A_2| .$$
 
 \end{proof}

\begin{Lem}\label{L:between} Let $A_1,A_2,...,A_k$ be linearly ordered min cuts of the $m$-thick continuum $W$ such that no two of them lie in the same wheel. 
 Then there are continua $M,O,N_1,N_2,...,N_{k-1}$  such that $$M\cup N_1...\cup N_{i-1}, N_i\cup ...\cup N_{k-1}\cup O$$ is a decomposition of $W$ by $A_i$ for all $i=1,\dots k$ (where $N_j=\emptyset$ for $j \neq 1,2,\dots k$) .
 Moreover $$[A_i\cup A_{i+1}] -[A_i \cap A_{i+1}] \subseteq  \bd N_i \subseteq  A_i\cup A_{i+1}$$ and $$|N_i \cap A_i|= |N_i \cap A_{i+1}| $$ for all $i$.
  \end{Lem}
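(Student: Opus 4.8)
\emph{Plan.} The plan is to construct, for each $i$, a decomposition $W=\hat B_i\cup\hat C_i$ of $W$ by $A_i$ adapted to the given linear order, to show that these decompositions automatically nest, and then to read off $M,O$ and the $N_i$, invoking Lemma \ref{L:middle} for the properties of the $N_i$.

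First I would fix the oriented decompositions. Since crossing min cuts lie in a common wheel (\cite{PS2}), the hypothesis that no two of the $A_i$ lie in a common wheel implies no two of them cross, so for each $i$ the set $W-A_i$ has a well-defined collection of components and each $A_j$ ($j\ne i$) meets only one of them. The pretree axioms --- axiom~4 applied with middle term $A_i$, together with axiom~3 to discard the wrong alternative --- then force $A_{i+1},\dots,A_k$ to lie in a single component of $W-A_i$ and $A_1,\dots,A_{i-1}$ in another. Taking closures of the appropriate components (which are continua containing $A_i$, by \cite{PS2}) one obtains a decomposition $W=\hat B_i\cup\hat C_i$ into continua with $\hat B_i\cap\hat C_i=A_i$, $A_j\subseteq\hat B_i$ for all $j<i$ and $A_j\subseteq\hat C_i$ for all $j>i$ (the cases $i=1$ and $i=k$ being the easy ones, where only one side carries any $A_j$'s and the other is made nonempty using that $W-A_i$ has at least two components). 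In particular $A_{i+1}\subseteq\hat C_i$ and $A_i\subseteq\hat B_{i+1}$ for every $i$.

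Next I would observe that these decompositions nest automatically: $\hat B_1\subseteq\cdots\subseteq\hat B_k$, equivalently $\hat C_1\supseteq\cdots\supseteq\hat C_k$. Suppose $z\in\hat B_i\cap\hat C_{i+1}$ with $z\notin A_{i+1}$. Then $z\notin A_i$, for otherwise $z\in A_i\subseteq\hat B_{i+1}$ would put $z$ into $\hat B_{i+1}\cap\hat C_{i+1}=A_{i+1}$; hence $z\in\hat B_i-A_i\subseteq W-A_i$. Let $\alpha$ be the quasi-component of $z$ in $W-A_i$. Since $A_{i+1}\cap\hat B_i\subseteq\hat C_i\cap\hat B_i=A_i$, we get $\alpha\cap A_{i+1}=\emptyset$, so $\alpha$ lies in the clopen subsets $\hat B_i-A_i$ of $W-A_i$ and $\hat C_{i+1}-A_{i+1}$ of $W-A_{i+1}$, both of which contain $z$; therefore $\overline\alpha\subseteq\hat B_i\cap\hat C_{i+1}$. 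By \cite[Lemma~2.8]{PS2}, $A_i\subseteq\overline\alpha$, so $A_i\subseteq\hat C_{i+1}$; combined with $A_i\subseteq\hat B_{i+1}$ this gives $A_i\subseteq\hat B_{i+1}\cap\hat C_{i+1}=A_{i+1}$, forcing $A_i=A_{i+1}$ since $|A_i|=|A_{i+1}|=m$ --- a contradiction. Hence $\hat B_i\cap\hat C_{i+1}\subseteq A_{i+1}$, i.e. $\hat B_i\subseteq\hat B_{i+1}$.

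Finally I would set $M=\hat B_1$, $O=\hat C_k$, and $N_i=\hat C_i\cap\hat B_{i+1}$ for $1\le i\le k-1$. Using $\hat B_j\cup\hat C_j=W$, $\hat B_j\cap\hat C_j=A_j$ and the nesting, a short Boolean computation gives $M\cup N_1\cup\cdots\cup N_{i-1}=\hat B_i$ and $N_i\cup\cdots\cup N_{k-1}\cup O=\hat C_i$, which is exactly the decomposition of $W$ by $A_i$ demanded by the statement; moreover $M$, $O$ and all these partial unions are continua, being decomposition pieces. The remaining assertions about the $N_i$ follow from Lemma \ref{L:middle} applied to the pair $A_i,A_{i+1}$ (which lie in no common wheel) with the decomposition $\hat B_i,\hat C_i$ of $W$ by $A_i$ and the decomposition of $W$ by $A_{i+1}$ taken in the order $(B_2,C_2)=(\hat C_{i+1},\hat B_{i+1})$: its hypothesis then reads $A_{i+1}\subseteq\hat C_i$ and $A_i\subseteq\hat B_{i+1}$, both established above, and its conclusion is precisely that $N_i=\hat C_i\cap\hat B_{i+1}$ is connected with $[A_i\cup A_{i+1}]-[A_i\cap A_{i+1}]\subseteq\bd N_i\subseteq A_i\cup A_{i+1}$ and $|N_i\cap A_i|=|N_i\cap A_{i+1}|$. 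The step I expect to require the most care is the first one --- extracting from the pretree axioms that every $A_j$ lies on the correct side of every $A_i$ --- since once the orientations are fixed the nesting is forced and the rest is bookkeeping.
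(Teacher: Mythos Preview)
Your approach is exactly the paper's: choose oriented decompositions $M_i,O_i$ (your $\hat B_i,\hat C_i$), set $M=M_1$, $O=O_k$, $N_i=O_i\cap M_{i+1}$, and read off the properties of the $N_i$ from Lemma~\ref{L:middle}. The paper's own proof is three lines and simply asserts that ``the result now follows by lemma~\ref{L:middle}''; you are attempting to fill in what the paper leaves implicit, namely that the oriented decompositions nest so that $M\cup N_1\cup\cdots\cup N_{i-1}=\hat B_i$.

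There is, however, a gap in your nesting argument. You take $\alpha$ to be the quasi-component of $z$ in $W-A_i$ and then assert that $\alpha$ lies in $\hat C_{i+1}-A_{i+1}$ because the latter is clopen in $W-A_{i+1}$ and contains $z$. But $\alpha$ is a quasi-component of $W-A_i$, not of $W-A_{i+1}$; a set clopen in $W-A_{i+1}$ need not restrict to a clopen set in $W-A_i$ (indeed $\hat B_{i+1}-A_i=(\hat B_{i+1}-A_{i+1})\cup(A_{i+1}-A_i)$ is generally not open in $W-A_i$, since $A_{i+1}-A_i$ is a nonempty finite set). So the containment $\alpha\subseteq\hat C_{i+1}$ is not justified as written, and passing to $\overline\alpha$ does not help directly either, since $\overline\alpha\supseteq A_i$ and $A_i-A_{i+1}\subseteq\hat B_{i+1}-A_{i+1}$. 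Your instinct that the delicate step is elsewhere is misplaced: the pretree bookkeeping in step one is routine (axiom~3 plus non-crossing does exactly what you say), whereas the nesting step is where care is actually needed --- for instance in how you assign to $\hat B_i$ or $\hat C_i$ those components of $W-A_i$ meeting no $A_j$.
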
 
  \begin{proof} Since the set of cuts $A_i$ is linearly ordered there are decompositions of $X$, $M_i,O_i$ by $A_i$ such that $$A_{i-1}\subseteq M_i,\, A_{i+1}\subseteq O_i$$ for all
  $i$. Now we set $$M=M_1,\, N_i=O_i\cap M_{i+1},\, O=O_k$$ for $i=1,...,k-1$. The result now follows by lemma \ref{L:middle}.
  \end{proof}

 \begin{Ex} \label{not4} There is a $4$-thick continuum $X$ with 4 linearly ordered min cuts $A_1,A_2,A_3,A_4$ such that none of the
 $A_i$ lies in a wheel, $A_i$ is between $A_{i-1},A_{i+1}$ and there is no continuum between $A_1,A_4$ that is $k$-thick for some $k\geq 2$.
 
  Indeed take $5$ disks $D_1,D_2,D_3,D_4,D_5$ and pick distinct points 
  $$a_1,a_2,a_3,a_4\in D_1 \text { and } b_1,b_2,b_3,b_4\in D_5.$$ We also pick 4 distinct
  points in each of $D_2,D_3,D_4$ that we label as follows: $$a_1,a_2,x,b_1\in D_2,\,  a_3,x,y,b_2 \in D_3,\, a_4,y,b_3,b_4\in D_4.$$
  We finally identify all points with the same label in different disks to obtain a 4-thick space $X$.
  
If $A_1=\{a_1,a_2,a_3,a_4\},  A_2=\{a_3,a_4,x,b_1\},  A_3=\{a_4,y,b_1,b_2\},  A_4=\{b_1,b_2,b_3,b_4\}$ then the cuts $A_i$ are linearly ordered and $X$
has the required property (see figure \ref{F:small}).

\begin{figure}[h]
\includegraphics[width=2.5in ]{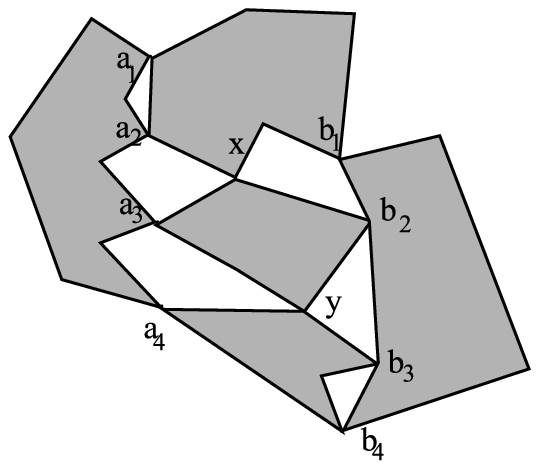}
\vspace{0.1in}
 \caption{}
 \label{F:small}
\end{figure}

 \end{Ex}

 We will need the following technical lemma:
 \begin{Lem}\label{linear1} Let $m\geq 2$ and let $A_1,A_2,...,A_6$ be linearly ordered min cuts of the $m$-thick continuum $W$ such that no two of them lie in the same wheel. 
 Let $M,O,N_1,N_2,...,N_5$ be continua such that $$M\cup N_1...\cup N_{i-1}, N_i\cup ...\cup N_5\cup O$$ is a decomposition of $W$ by $A_i$ for all $i=1,...,6$ (where $N_j=\emptyset$ for $j \neq 1,2,\dots 5$).
 Then there are $i<j$ and some $k$, $i\leq k\leq j$ such that $$|(N_i\cup N_{i+1}\cup...\cup N_{k})\cap (N_{k+1}\cup...\cup N_j)|\geq 2.$$
 
  \end{Lem}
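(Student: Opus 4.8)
The statement is a pigeonhole-type result about how the "gluing sets" $N_i\cap N_{i+1}$ (which by Lemma \ref{L:between} sit inside $A_i\cup A_{i+1}$ and have controlled cardinality) must eventually force a large overlap somewhere along the chain. I would first reduce to the case where every consecutive overlap $|N_i\cap N_{i+1}|\le 1$, since otherwise we are done immediately with $j=i+1$, $k=i$. So assume $|N_i\cap N_{i+1}|=1$ (it cannot be $0$ since $W$ is connected and $N_i, N_{i+1}$ are consecutive pieces of a decomposition) for all $i=1,\dots,4$, and similarly we may assume $|N_i\cap N_j|\le 1$ whenever $|i-j|\ge 2$ as well (otherwise take that pair directly). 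The goal then is to derive a contradiction, i.e. to show that under these hypotheses $m$-thickness with $m\ge 2$ is violated, or that one of the $A_i$ fails to be a min cut.

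**Key steps.** By Lemma \ref{L:between} we have $|N_i\cap A_i|=|N_i\cap A_{i+1}|$ for each $i$; call this common value $q_i$. Since $|A_i|=m$ and $A_i\subseteq N_{i-1}\cup N_i$ (with $\bd N_{i-1}, \bd N_i$ together covering $A_i$), we get $m = |A_i| \le |N_{i-1}\cap A_i| + |N_i\cap A_i| = q_{i-1}+q_i$, and in fact since $[A_{i-1}\cup A_i]-[A_{i-1}\cap A_i]\subseteq \bd N_{i-1}$ we control this more tightly: $A_i = (N_{i-1}\cap A_i)\cup(N_i\cap A_i)$ so $m \le q_{i-1}+q_i$ and the two sets overlap exactly in $N_{i-1}\cap N_i\cap A_i$. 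Under our assumption $|N_{i-1}\cap N_i|\le 1$, this overlap is at most a point, so $q_{i-1}+q_i \in \{m, m+1\}$. Now I would track the point set: each $N_i$ carries its cut $A_i$ on one side and $A_{i+1}$ on the other, the $A_i$ are all distinct, and the only "recycling" of points from $A_i$ into $A_j$ for $j$ far from $i$ must pass through the tiny gluing sets. The heart of the argument is a counting/connectivity contradiction: the sets $N_1,\dots,N_5$ are continua glued in a line by single points, and each contributes $m$ boundary points on each end with at most one shared with its neighbor; stringing six distinct min cuts $A_1,\dots,A_6$ of size $m$ this way, while keeping all pairwise gluings $\le 1$, forces too many distinct points to live in too few boundary sets, or forces some $A_i$ to not actually separate $W$ (contradicting that it is a min cut), or produces a wheel among the $A_i$ (contradicting the hypothesis that no two lie in the same wheel). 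I would organize this by considering the graph whose vertices are the blocks $N_i$ and edges record nonempty intersections, noting it must be connected, and comparing the total "mass" $\sum q_i$ against the mass needed to realize six distinct size-$m$ cuts.

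**Main obstacle.** The delicate point — and the reason the lemma needs six cuts rather than four (cf. Example \ref{not4}) — is ruling out the configuration where points are reused across the chain in a "braided" pattern: a point of $A_1$ reappearing in $A_4$, etc., as in the explicit five-disk example. The bookkeeping must show that with single-point gluings such reuse cannot be sustained over six cuts without creating a min cut that crosses another (hence a wheel, via \cite[Lemma 3.4]{PS2}) or collapsing one of the $A_i$. I expect the cleanest route is: suppose all $|N_i\cap N_{i+1}|=1$ and all $|N_i\cap N_j|\le 1$ for $|i-j|\ge 2$; then show $\bd(N_i\cup\cdots\cup N_j)$ for a suitable sub-run has cardinality $\ge 2$ but also that the union separates $W$, and locate inside it two min cuts crossing each other — these must then lie in a common wheel by \cite[Lemma 3.4]{PS2}, and pushing this wheel structure back onto the $A_i$ (using that half-cuts of a wheel are shared by sub-wheels and $A_i\subseteq \bd N_{i-1}\cup\bd N_i$) contradicts the hypothesis that no two of the $A_i$ lie in the same wheel. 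The numeric slack "$6$" should be exactly what makes the sub-run long enough to force the crossing pair.
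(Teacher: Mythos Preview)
Your proposal has the right instinct (a counting argument) but counts the wrong quantities, and as a result never closes. You track $q_i=|N_i\cap A_i|=|N_i\cap A_{i+1}|$ and derive $q_{i-1}+q_i\in\{m,m+1\}$; but these $q_i$ have no useful global constraint, so the inequality leads nowhere, and you end up reaching for an unspecified crossing-cut/wheel argument that is never carried out.

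The paper's move is to count against the \emph{extreme} cuts $A_1$ and $A_6$, not the adjacent ones. Assuming by contradiction that $|(N_1\cup\cdots\cup N_k)\cap(N_{k+1}\cup\cdots\cup N_5)|\le 1$ for every $k$, one gets $|N_i\cap\bigcup_{j\neq i}N_j|\le 1$ for $i=1,5$ and $\le 2$ for $i=2,3,4$. Now set $a_i=|(N_i\cap A_1)\setminus\bigcup_{j\neq i}N_j|$ and $b_i=|(N_i\cap A_6)\setminus\bigcup_{j\neq i}N_j|$. The point is that the $\alpha_i$'s are pairwise disjoint subsets of $A_1$, so $\sum a_i\le m$, and likewise $\sum b_i\le m$; this is the global constraint your $q_i$ lack. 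Since every frontier point of $N_i$ lies in $M\cup O\cup\bigcup_{j\neq i}N_j$, and $N_i\cap M\subseteq A_1$, $N_i\cap O\subseteq A_6$, one bounds $m\le|\bd N_i|\le a_i+b_i+|N_i\cap\bigcup_{j\neq i}N_j|$. Summing over $i=1,\dots,5$ gives $5m\le 8+\sum a_i+\sum b_i\le 8+2m$, hence $m\le 8/3$, contradicting $m\ge 3$.

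You also omit the case $m=2$ entirely, and your framework does not handle it: the counting above genuinely fails there. The paper treats $m=2$ by a separate short argument using the no-common-wheel hypothesis directly (if $A_1\cap A_3=\emptyset$ one is done immediately; otherwise $A_1\cap A_2\cap A_3=\{x\}$ and one shows $\bd(N_1\cup N_2)\supset A_3$ by ruling out the alternative, which would produce a cut pair crossing $A_2$ and hence a wheel containing $A_2$). Your proposed wheel-contradiction endgame is therefore relevant only for $m=2$, and even there it is a one-line observation rather than the elaborate sub-run analysis you sketch.
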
 
  \begin{proof}
  
  We distinguish two cases $m=2$ and $m\geq 3$. Suppose that $m=2$. If $A_1\cap A_3=\emptyset$ for some $i<j$ then by lemma \ref{L:between} and lemma \ref{L:middle}, 
  $$\partial (N_1\cup N_{2})=A_1\cup A_3.$$  Thus $$|(N_1\cup N_2) \cap N_3 |=| A_3|=2$$ as required. 
  
  Otherwise $A_1\cap A_2\cap A_3=\{x\}$. Say $$A_1=\{a_1,x\},\,A_2=\{a_2,x\},\,A_3=\{a_3,x\}.$$ If $\partial N_1=\{a_1,a_2\}$ and
  $\partial N_2=\{a_2,a_3\}$ then $\{a_1,a_3\}$ is a cut pair separating $\{x,a_2\}$ contradicting our assumption that $A_2$ does not lie in a wheel. It now follows from lemma \ref{L:between} that 
  $\partial (N_1\cup N_{2})\supset A_3$. Arguing in the same way with $A_3,A_4,A_5$ we have that $\partial (N_3\cup N_{4})\supset A_3$. So we may take $i=1,k=2,j=4$ and the lemma holds
  in this case.

We now settle the  case where $m\geq 3$.  By way of contradiction suppose that  $$|(N_1...\cup N_{k})\cap (N_{k+1}\cup...\cup N_5)|\le  1$$ for all $k =1,\dots 4$.
It follows that $$| N_i  \cap \bigcup\limits_{j \neq i} N_j| \le  \begin{cases}1, &\text{for } i=1,5\\ 2, &\text{for }  i=2,3,4 \end{cases}$$

   Let $$\alpha _i=(N_i\cap A_1)\setminus \bigcup\limits_{j \neq i} N_j \quad \text{and} \quad \beta_i=(N_i\cap A_6)\setminus \bigcup\limits_{j \neq i} N_j $$
 and notice that $$\alpha_i \cap \alpha_j= \emptyset =\beta_i \cap \beta_j \text{ for }i \neq j.$$ Set $a_i=|\alpha _i|$ and  $b_i=|\beta _i|$. 

Since $|A_1|=|A_5|=m$ we have
 $$ \sum a_i \le m\quad \text{and} \quad   \sum b_i \le m\ \ \ \  (*)$$
 
 Since $m \le |\partial N_i|$ for all $i$ we have:
 
 $$ m \leq \begin{cases}1+a_i +b_i, & \text{for } i=1,5\\ 2+a_i +b_i, &\text{for } i=2,3,4 \end{cases}$$

  Summing these inequalities we get
  
  $$5m \le8+ \sum a_i+ \sum b_i $$
  
  so by $(*)$ we obtain
  
  $$ 5m\le 8+ 2m  \quad \Rightarrow  \quad m\le   \frac 8 3.$$ Contradiction.  
  
  \end{proof}

 \begin{Thm} \label{bigtits}Let $m\geq 2$ and let $A_1,A_2,...,A_8$ be linearly ordered min cuts of the $m$-thick continuum $W$ such that no two of them lie in the same wheel. Then  there is a continuum $Q$ between $A_1,A_8$ such that $Q$ is separated by some $A_i$  to continua $Q_1,Q_2$ and $|Q_1\cap Q_2|\geq 2 $. In particular if $W$ is the boundary of a CAT(0) space, then
 $\rad Q\geq \pi $.
 
  \end{Thm}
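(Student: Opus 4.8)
The plan is to spread out all the separations of $W$ by $A_1,\dots ,A_8$ at once, use Lemma~\ref{linear1} to pin down a short sub‑range of pieces whose two halves overlap in at least two points, and then feed that configuration into Corollary~\ref{L: notcontractible}. First I would apply Lemma~\ref{L:between} to $A_1,\dots ,A_8$ to obtain continua $M,O,N_1,\dots ,N_7$ for which $M\cup N_1\cup\dots\cup N_{i-1}$ and $N_i\cup\dots\cup N_7\cup O$ form a decomposition of $W$ by $A_i$ for every $i$, with $\bd N_i\subseteq A_i\cup A_{i+1}$, $[A_i\cup A_{i+1}]-[A_i\cap A_{i+1}]\subseteq\bd N_i$ and $|N_i\cap A_i|=|N_i\cap A_{i+1}|$. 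Using this structure together with Lemma~\ref{L:middle}, any union $N_a\cup\dots\cup N_b$ of consecutive pieces is again a subcontinuum of $W$; it meets $M\cup\dots\cup N_{a-1}$ (resp.\ $N_{b+1}\cup\dots\cup O$) only in $A_a$ (resp.\ $A_{b+1}$), and hence its frontier in $W$ is contained in $A_a\cup A_{b+1}$.

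Next I would restrict attention to the six linearly ordered min cuts $A_2,\dots ,A_7$ (still no two in a common wheel). Taking $M'=M\cup N_1$, $O'=N_7\cup O$ and the inner continua $N_2,\dots ,N_6$, one checks that $M'\cup N_2\cup\dots\cup N_{j-1}$, $N_j\cup\dots\cup N_6\cup O'$ is a decomposition of $W$ by $A_j$ for $j=2,\dots ,7$, so the hypotheses of Lemma~\ref{linear1} are met. That lemma supplies indices $2\le p\le r<q\le 6$ with $|(N_p\cup\dots\cup N_r)\cap(N_{r+1}\cup\dots\cup N_q)|\ge 2$. Put $Q_1=N_p\cup\dots\cup N_r$, $Q_2=N_{r+1}\cup\dots\cup N_q$ and $Q=Q_1\cup Q_2$; by the previous paragraph these are subcontinua of $W$ and $|Q_1\cap Q_2|\ge 2$. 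Since $M\cup N_1\cup\dots\cup N_r$, $N_{r+1}\cup\dots\cup N_7\cup O$ is a decomposition of $W$ by the min cut $A_{r+1}$, we have $Q_1$ in the first piece and $Q_2$ in the second; as $N_r$ and $N_{r+1}$ are non‑singleton they are not contained in the finite set $A_{r+1}$, so $Q_1$ and $Q_2$ meet distinct components of $W-A_{r+1}$. Thus $A_{r+1}$ separates $Q$ into the continua $Q_1,Q_2$ with $|Q_1\cap Q_2|\ge 2$. (Example~\ref{not4} shows that such a configuration need not exist from only four linearly ordered cuts; the count $8=6+2$ is exactly what this argument consumes, six cuts for Lemma~\ref{linear1} and two as ``bookends''.)

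It then remains to see that $Q$ lies between $A_1$ and $A_8$, and this is the delicate step. Here $Q\subseteq N_2\cup\dots\cup N_6$ and $\bd Q\subseteq A_p\cup A_{q+1}\subseteq A_2\cup\dots\cup A_7$, so $Q$ sits well inside the range of the $A_i$. Since no two of the $A_i$ lie in a common wheel they are pairwise non‑crossing, so any decomposition of $W$ by $A_1$ (resp.\ by $A_8$) amounts to a regrouping of the pieces cut out by $A_1$ (resp.\ by $A_8$), with the connected set $Q$ and the non‑crossing min cut $A_8$ (resp.\ $A_1$) each confined to a single such piece; from this one deduces that $A_8\cup Q$ (resp.\ $A_1\cup Q$) always lands on one side, i.e.\ $Q$ is between $A_1$ and $A_8$. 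I expect the care needed to make this work when the cuts $A_i$ meet one another to be the main obstacle.

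Finally, suppose $W=\bd X$ for a CAT(0) space $X$. I would apply Corollary~\ref{L: notcontractible} with $Z=W$, $A=Q_1$, $B=Q_2$, the minimal cut $J=A_{r+1}$, and the decomposition $M\cup N_1\cup\dots\cup N_r$, $N_{r+1}\cup\dots\cup N_7\cup O$ of $W$ by $A_{r+1}$ — which contains $Q_1$ and $Q_2$ in its two pieces and satisfies $|Q_1\cap Q_2|\ge 2$. This yields $\rad Q=\rad(Q_1\cup Q_2)\ge\pi$, completing the proof.
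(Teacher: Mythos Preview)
Your argument is essentially the paper's own proof: apply Lemma~\ref{L:between} to get the pieces $N_1,\dots,N_7$, feed the inner six cuts into Lemma~\ref{linear1} to locate $Q_1,Q_2$ with $|Q_1\cap Q_2|\ge 2$, observe that $\bd Q\subseteq A_2\cup\dots\cup A_7$ forces $Q$ to lie between $A_1$ and $A_8$, and finish with the radius estimate. The only cosmetic differences are that you make the index shift to $A_2,\dots,A_7$ explicit (the paper writes the conclusion of Lemma~\ref{linear1} as ``$1<i<j<8$'' without spelling this out) and that you invoke Corollary~\ref{L: notcontractible} rather than Lemma~\ref{L: bigpi} for $\rad Q\ge\pi$; the corollary is in fact the cleaner citation here, since it absorbs the ``no cut point of $Q$ in $J$'' hypothesis that Lemma~\ref{L: bigpi} would otherwise require you to verify.
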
 
 
 \begin{proof} By lemma \ref{L:between}  there are  continua $M,O,N_1,N_2,...,N_{7}$  such that $$M\cup N_1...\cup N_i, N_{i+1}\cup ...\cup N_{7}\cup O$$ is a decomposition of $W$ by $A_i$ for all $i=0,1,...,7$. By lemma \ref{linear1} there are $1<i<j<8$ so that $$Q=N_i\cup...\cup N_j ,\, Q_1=N_i\cup N_{i+1}\cup...\cup N_{k},\, Q_2=N_{k+1}\cup...\cup N_j$$ are continua such that $$|Q_1\cap Q_2 |\geq 2,\, Q=Q_1\cup Q_2.$$ Clearly $Q$ is between $A_1,A_8$ since $$\partial Q\subseteq A_2\cup....\cup A_7.$$ If $W$ is a CAT(0) boundary, we have $\rad Q\geq \pi$ by lemma \ref{L: bigpi}  .
 \end{proof}
 
 In fact we may apply the same argument to show that there are sets of big Tits radius in wheels with `suffciently many' half-cuts. Note that if $F$ is a wheel of a continuum then
 the half-cuts (cf def. \ref{D:wheel}) are cyclically ordered. So if $A$ is a half-cut of $F$ then the half-cuts of $F\setminus A$ are linearly ordered.
 
 We have the following.
 
 \begin{Cor} \label{C:bigtits} Let $m\geq 3$ and let $F$ be a wheel with empty center of the $m$-thick continuum $W$. Let $B,C$ be two disjoint half-cuts of $F$
 and let $W_1,W_2$ be subcontinua of $W$ such that
 $$W=W_1\cup W_2 \text{ and } W_1\cap W_2=B\cup C.$$
 
Let $A_1,A_2,...,A_8$ be linearly ordered half-cuts of $F$ in $W_1$. Then  there is a continuum $Q$ between $A_1,A_8$ such that $Q$ is separated by some $A_i$  to continua $Q_1,Q_2$ and $|Q_1\cap Q_2|\geq 2 $. In particular if $W$ is the boundary of a CAT(0) space, then
 $\rad Q\geq \pi $.
\end{Cor}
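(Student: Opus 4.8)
The plan is to specialize the argument of Theorem~\ref{bigtits}. The half-cuts $A_1,\dots,A_8$ are \emph{not} min cuts of $W$ (each has only $q$ points, where $m=2q$ because the center of $F$ is empty), so Lemmas~\ref{L:between} and~\ref{linear1} do not apply to them directly; but the emptiness of the center is exactly what rescues us. It forces the half-cuts of $F$ to be pairwise \emph{disjoint} and of common size $q=m/2\ge 2$ (using $m\ge 3$ and $m$ even), and this makes the delicate counting of Lemma~\ref{linear1} unnecessary: consecutive chunks of the wheel decomposition already overlap in at least two points.

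Concretely, I would first relabel so that $A_1<A_2<\dots<A_8$ in the linear order on the half-cuts of $F$ lying in $W_1$ (see Definition~\ref{D:wheel}). Let $M_0,\dots,M_{n-1}$ be the wheel decomposition of $W$ by $F$; the pieces contained in $W_1$ form an arc $M_a,\dots,M_b$ of this cyclic list, and each $A_k$ is a half-cut $M_{j_k}\cap M_{j_k+1}$ occurring along this arc, with $j_1<\dots<j_8$. For $k=1,\dots,7$ put $N_k=M_{j_k+1}\cup\dots\cup M_{j_{k+1}}$. Each $N_k$ is a subcontinuum of $W$ (a chain of continua overlapping consecutively in half-cuts), with $\bd N_k=A_k\cup A_{k+1}$; and since the center is empty, $N_k\cap N_{k+1}=A_{k+1}$, a set of $q\ge 2$ points. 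Now set $Q=N_4\cup N_5$, $Q_1=N_4$, $Q_2=N_5$. Then $Q$ is a continuum, $A_5$ separates it into $Q_1,Q_2$ with $|Q_1\cap Q_2|=q\ge 2$, and $\bd Q\subseteq A_4\cup A_6$, so $Q$ lies between $A_1$ and $A_8$ in the same sense as in Theorem~\ref{bigtits}.

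For the Tits-radius statement, suppose $W=\bd X$. The set $J:=A_1\cup A_5$ is a min cut of $W$ (the union of two distinct half-cuts of a wheel is a min cut, and it has $m$ points, hence is minimal), and because $A_1,A_5$ sit at positions $j_1<j_4<j_5$ along the cyclic list, the decomposition of $W$ by $J$ into two chains of wheel pieces places $N_4$ in one piece and $N_5$ in the other. Since $|N_4\cap N_5|\ge 2$, Corollary~\ref{L: notcontractible}, applied with $Z=\bd X$, yields $\rad Q=\rad(N_4\cup N_5)\ge\pi$. The one part needing care --- the ``obstacle'' --- is this last bookkeeping around the cyclic structure of the wheel: one must be sure that $N_4$ and $N_5$ genuinely land on opposite sides of the \emph{global} min cut $J$ of $W$ (the half-cut $A_5$ by itself does not separate $W$), and that $\bd Q$ lies among the half-cuts strictly between $A_1$ and $A_8$; both follow by tracking how the arc $M_a,\dots,M_b$ is severed at $A_1$ and $A_5$. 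Invoking Corollary~\ref{L: notcontractible} rather than Lemma~\ref{L: bigpi} directly also absorbs the verification that $J$ contains no cut point of $Q$.
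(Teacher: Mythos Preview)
Your argument is correct, but it takes a different route from the paper's.  The paper observes that since the center of $F$ is empty, $m=2q$ with $q\ge 2$, and then that the subcontinuum $W_1$ is itself a $q$-thick continuum in which the half-cuts $A_1,\dots,A_8$ become \emph{bona fide} min cuts, no two of which lie in a common wheel of $W_1$ (else $W$ would be only $q$-thick).  Theorem~\ref{bigtits} then applies to $W_1$ verbatim.  Your approach instead stays inside $W$ and reads the continua $N_k$ directly off the wheel decomposition: because the center is empty, consecutive $N_k$'s meet exactly in a half-cut of size $q\ge 2$, so the delicate counting of Lemma~\ref{linear1} is bypassed entirely, and any two adjacent $N_k$'s already furnish the desired $Q_1,Q_2$.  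This is more elementary and in fact uses far fewer than eight half-cuts --- the hypothesis of eight is there only to match the statement of Theorem~\ref{bigtits}, whose necessity (cf.\ Example~\ref{not4}) evaporates in the presence of an empty center.  The paper's reduction has the virtue of being a one-line appeal to a result already in hand; your argument has the virtue of showing that the corollary is really much easier than the theorem it nominally follows from.  One small cleanup: for the Tits-radius claim you may as well take $J=A_4\cup A_5$ rather than $A_1\cup A_5$, which makes it immediate that $N_4$ and $N_5$ lie on opposite sides of $J$ without any cyclic bookkeeping.
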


\begin{proof} Since $F$ has an empty center $m=2k$ for some $k\geq 2, k\in \mathbb N$. Clearly the continuum $W_1$ is $k$-thick and $A_1,A_2,...,A_8$ are linearly ordered min-cuts of $W_1$.
Moreover no two lie in the same wheel of $W_1$-if they did $W$ would be $k$-thick rather than $m$-thick. The Corollary now follows from theorem \ref{bigtits}.

 \end{proof}
 
 \section{ Convex sets}  
Let $X$ be a CAT(0) space. We will define the parallel set of certain closed convex subsets of $X$.
\begin{Def} For $C$ a closed convex subset of $X$ we define projection $\pi_C: X \to C$ to be closest point projection which by
 \cite{BRI-HAE}   is a  Lipschitz one function.    We say that a close convex subset $C \subseteq X$ is {\em boundary minimal} if for any closed convex $B \subseteq C$ with $\bd B = \bd C$, $B=C$ (see \cite{C-M}).   If $A$ is boundary minimal and $\bd A =Z$ then we say that $A$ is {\em boundary minimal with respect to} $Z$.  
 \end{Def}
 Clearly geodesic lines are boundary minimal.  

\begin{Def}  For $A,B$ closed convex subsets of $X$, we say $A$ and $B$ are {\em parallel}  (denoted $A||B$) if $\pi_A:B \to A$ and $\pi_B:A\to B $ are inverse to one another.  (Since $\pi_A$ and $\pi_B$ are Lipschitz one,  It follows that $\pi_A:B \to A$ and $\pi_B:A\to B $ are isometries.)
For $A$ closed an convex, we define the {\em parallel set} of $A$, $P(A) = \bigcup\limits_{B|| A} B$.
\end{Def} 
Decent god fearing  convex sets have closed convex parallel sets that split as products (not all convex sets are decent or god fearing, but boundary minimal closed convex sets are ).  


\begin{Lem}\label{L:easy} Let   $A$ be closed and convex and $a \in A$.  For any $\alpha \in \bd A$, the ray $[a,\alpha)$ is contained in $A$.  
\end{Lem}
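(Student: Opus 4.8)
The plan is to show the ray $[a,\alpha)$ stays in $A$ by using the closest-point projection $\pi_A$ together with the fact that $\alpha$ is the endpoint of a ray already known to be asymptotic to something in $A$, and then invoking CAT(0) convexity of the distance-to-$A$ function.

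First I would fix a geodesic ray $\rho:[0,\infty)\to X$ with $\rho(0)=a'$ for some $a'\in A$ representing the boundary point $\alpha\in\bd A$; such a ray exists by definition of $\bd A$ (there is a ray in $A$ converging to $\alpha$, and since $A$ is convex and closed, it can be taken to lie in $A$ — but to be careful I want the ray based at the specific point $a$). The cleaner route: let $\rho$ be a ray in $A$ with $\rho(0)=a_0\in A$ and $\rho(\infty)=\alpha$. Let $\sigma=[a,\alpha)$ be the ray from $a$ asymptotic to $\rho$ (it exists and is unique in a proper CAT(0) space). Now consider the function $t\mapsto d(\sigma(t),A)$. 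By convexity of the metric in a CAT(0) space, the function $t\mapsto d(\sigma(t),\rho(t'))$ behaves well; more to the point, $d(\sigma(t),A)\le d(\sigma(t),\rho)$, and since $\sigma$ and $\rho$ are asymptotic rays, the distance $d(\sigma(t),\rho(\mathbb R_{\ge0}))$ is bounded (in fact the function $t\mapsto d(\sigma(t),\rho)$ is non-increasing once $t$ is large, by the flat-strip-type convexity estimates). Hence $d(\sigma(t),A)$ is a bounded convex function of $t$ on $[0,\infty)$, therefore non-increasing.

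Next I would use that a bounded non-increasing convex function on $[0,\infty)$ which starts at $d(\sigma(0),A)=d(a,A)=0$ (since $a\in A$) must be identically zero: a convex function starting at $0$ that is ever positive must be non-decreasing from that point on by convexity, contradicting boundedness unless it is eventually constant — and being non-increasing and starting at $0$ forces it to stay $0$. Wait: more directly, $d(\sigma(0),A)=0$ since $\sigma(0)=a\in A$; a non-negative convex function vanishing at $0$ and bounded on $[0,\infty)$ is identically zero, because if $f(t_0)>0$ then convexity gives $f(t)\ge \frac{t-0}{t_0-0}f(t_0)\to\infty$, contradicting boundedness. Since $A$ is closed, $d(\sigma(t),A)=0$ means $\sigma(t)\in A$ for all $t$, i.e. $[a,\alpha)\subseteq A$.

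The main obstacle I expect is justifying that $d(\sigma(t),A)$ is \emph{bounded}, i.e. that the ray $\sigma=[a,\alpha)$ from $a$ genuinely stays within bounded distance of $A$. This is where asymptoticity is used: $\sigma$ and $\rho\subseteq A$ are asymptotic rays, so $d(\sigma(t),\rho(t))$ is bounded by the definition of the equivalence relation on rays (reparametrizing so both have the same basepoint-time parametrization — one needs the standard fact that asymptotic rays in a CAT(0) space stay at bounded, indeed non-increasing, distance, which follows from convexity of $t\mapsto d(\sigma(t),\rho(t))$ being bounded hence non-increasing). Then $d(\sigma(t),A)\le d(\sigma(t),\rho(t))$ is bounded, and convexity of $t\mapsto d(\sigma(t),A)$ comes from the general CAT(0) fact that $x\mapsto d(x,A)$ is convex (as $A$ is convex) composed with the geodesic $\sigma$. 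Everything else is routine; the one subtlety is making sure we are comparing $\sigma$ to a ray lying \emph{inside} $A$, which is available because $\alpha\in\bd A$ by hypothesis.
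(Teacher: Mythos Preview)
Your approach works in principle but contains a near-circularity and is far more elaborate than needed. The problematic step is ``let $\rho$ be a ray in $A$ with $\rho(\infty)=\alpha$ \dots\ which is available because $\alpha\in\bd A$ by hypothesis.'' With $\bd A$ understood as the set of limit points of $A$ in $\bd X$ (which is how the paper uses it), the existence of such a ray inside $A$ is not immediate from the definition --- it is essentially the statement of the lemma itself, just for a different basepoint. To justify it you would take a sequence $a_k\in A$ with $a_k\to\alpha$, note $[a_0,a_k]\subseteq A$ by convexity, and pass to a limiting ray using closedness of $A$. But that is exactly the paper's proof.

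The paper's argument is the direct one: choose $a_k\in A$ with $a_k\to\alpha$; then $[a,a_k]\subseteq A$ by convexity, $[a,a_k]\to[a,\alpha)$, and closedness of $A$ gives $[a,\alpha)\subseteq A$. Three lines. Your convexity-of-$d(\cdot,A)$ argument is correct and is a genuinely useful technique in other contexts (e.g.\ showing a ray asymptotic to a convex set eventually enters every neighbourhood of it), but here it only kicks in \emph{after} you have already produced a ray in $A$ to $\alpha$, at which point the work is done.
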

\begin{proof} Choose a sequence $(a_k) \in A$ with $a_k \to \alpha$.  Then $[a,a_k] \subseteq A$ by convexity and
$[a,a_k] \to [a,\alpha)$.  Since $A$ is closed $[a,\alpha) \subseteq A$.  
\end{proof}
For our working definition of decent god fearing,  we now state what was actually proven in Proposition 3.6 of \cite{C-M}.
\begin{Thm}\label{T:C-M}\cite{C-M} 
Let $Z$ be a boundary minimal closed convex subset of $X$. The union of all closed convex sets boundary minimal with respect to $\bd Z$  is closed and convex and splits as a product $Z \times K$ where $K$ is a convex subset of $X$. Furthermore $Z\times K = P(Z)$.  

Let $Y \subseteq X$ be a closed convex subset such that $\rad (\bd Y)> \frac \pi 2$.  Then there exists a closed convex subset $Z \subseteq X$ boundary minimal with respect to $\bd Y$.  \end{Thm}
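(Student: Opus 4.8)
This is Proposition 3.6 of \cite{C-M}, so strictly speaking the proof is a reference; I will indicate how the argument runs and which parts I would reprove versus cite. It is convenient to treat the existence statement first, since that half is essentially self-contained.

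\emph{Existence of $Z$.} I would order by reverse inclusion the set $\mathcal P$ of closed convex $B\subseteq Y$ with $\bd B=\bd Y$; then $Y\in\mathcal P$, and a maximal element of $\mathcal P$ is exactly a closed convex set boundary minimal with respect to $\bd Y$. By Zorn's lemma it suffices to show that the intersection $C=\bigcap_i B_i$ of a chain in $\mathcal P$ again lies in $\mathcal P$. Now $C$ is closed and convex with $\bd C\subseteq\bd Y$, and the only real point is that $C\ne\emptyset$: granting that, for any $\xi\in\bd Y$ and $p\in C$ one has $\xi\in\bd B_i$ and $p\in B_i$ for all $i$, hence $[p,\xi)\subseteq B_i$ for all $i$ by Lemma \ref{L:easy}, so $[p,\xi)\subseteq C$ and $\bd C=\bd Y$. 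To prove $C\ne\emptyset$, fix $o\in Y$ and put $p_i=\pi_{B_i}(o)$ (we may assume $o\notin C$). For $i\le j$ we have $B_i\supseteq B_j\ni p_j$, so the projection property gives $\angle_{p_i}(o,p_j)\ge\frac\pi2$ and hence $d(o,p_i)^2+d(p_i,p_j)^2\le d(o,p_j)^2$. If $\sup_i d(o,p_i)<\infty$ then $(d(o,p_i))_i$ is an increasing bounded, hence Cauchy, net, so $(p_i)$ is a Cauchy net converging to a point of $\bigcap_i B_i=C$, a contradiction; thus $d(o,p_i)\to\infty$, and along a cofinal sequence $[o,p_i]$ subconverges (Arzela--Ascoli, $X$ proper) to a ray $[o,\eta)$ with $\eta\in\bd Y$.

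It remains to reach a contradiction with $\rad(\bd Y)>\frac\pi2$. Fix $\xi\in\bd Y$, and let $\rho_i$ be the ray in $B_i$ from $p_i$ to $\xi$ (Lemma \ref{L:easy}). From $\angle_{p_i}(o,\rho_i(t))\ge\frac\pi2$ we get $d(o,\rho_i(t))^2\ge d(o,p_i)^2+t^2$, and a law-of-cosines computation in the Euclidean comparison triangle yields $\bar\angle_o(p_i,\rho_i(t))\le\arccos\!\big(d(o,p_i)/(d(o,p_i)+t)\big)$. Sliding the first vertex in along $[o,p_i]$ (comparison angles at $o$ are monotone) and letting $t\to\infty$ gives $\bar\angle_o(q,\xi)\le\frac\pi2$ for every $q\in[o,p_i)$; letting $i\to\infty$ gives $\bar\angle_o(\eta(s),\xi)\le\frac\pi2$ for the point $\eta(s)$ at distance $s$ on $[o,\eta)$, for every $s$. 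Hence $\angle(\eta,\xi)\le\frac\pi2$, so $d_T(\eta,\xi)=\angle(\eta,\xi)\le\frac\pi2$, giving $\bd Y\subseteq\bar B_T(\eta,\tfrac\pi2)$ and contradicting $\rad(\bd Y)>\frac\pi2$. Therefore $C\ne\emptyset$ and Zorn's lemma applies.

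\emph{The product splitting.} Here I would lean on \cite{C-M}. Write $W=\bd Z$. If $B$ is closed convex and boundary minimal with respect to $W$, then $\pi_Z(B)$ is convex and, pushing rays toward points of $W$ via Lemma \ref{L:easy}, has boundary $W$, hence equals $Z$ by boundary minimality of $Z$; the same with the roles of $B$ and $Z$ interchanged, together with a Sandwich/flat-strip argument (\cite{BRI-HAE}, II.2), shows that $\pi_Z\colon B\to Z$ and $\pi_B\colon Z\to B$ are inverse isometries, i.e. $B||Z$. Conversely, if $B||Z$ then $\bd B=W$ and, by boundary minimality of $Z$ again, $B$ is itself boundary minimal with respect to $W$. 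Hence the union of all such $B$ equals $\bigcup_{B||Z}B=P(Z)$, and the assertion that $P(Z)$ is closed, convex and splits as $Z\times K$ with $K$ convex is the structure theorem for parallel sets of \cite{C-M} (generalizing the flat strip theorem of \cite{BRI-HAE}). The genuinely delicate step is precisely this splitting: that distinct boundary-minimal sets with a common boundary lie at \emph{finite} distance and assemble into a metric product is the ``decent, god-fearing'' phenomenon and needs the CAT(0) machinery of \cite{C-M}; by contrast the existence half uses only elementary comparison geometry plus $\rad(\bd Y)>\frac\pi2$, the one point there requiring care being the passage from comparison-angle estimates at finite points to the Tits bound on $\angle(\eta,\xi)$.
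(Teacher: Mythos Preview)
Your existence argument via Zorn's lemma is essentially the Caprace--Monod argument and is fine; the paper simply cites \cite{C-M} for this half, so you have in fact supplied more detail than the paper does. (The phrasing ``a contradiction'' in the bounded case is awkward---you mean that this case already produces a point of $C$, so only the unbounded case remains---and the passage from comparison-angle estimates at finite points to the Tits bound $\angle(\eta,\xi)\le\pi/2$ deserves one more line, but the skeleton is correct.)

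The genuine gap is in your treatment of the parallelism step, and this is exactly the piece the paper singles out: the authors note that the claim ``any two closed convex sets boundary minimal with respect to the same boundary are parallel'' is missing from \cite{C-M} (and from the Leeb reference cited there), and they supply their own proof. Your sketch asserts that $\pi_Z(B)$ is convex with $\bd\pi_Z(B)=W$, hence equals $Z$ by minimality. But projection to a closed convex set in CAT(0) is only $1$-Lipschitz; it need not send geodesics to geodesics, so $\pi_Z(B)$ has no reason to be convex, nor closed, nor to have boundary $W$ (the ``pushing rays'' idea fails because $\pi_Z$ of a ray to $\xi$ is not itself a ray). The subsequent appeal to a Sandwich/flat-strip lemma then has nothing to stand on.

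The paper's argument is short and avoids all of this. To show $d_Z$ is constant on $\hat Z$: if not, choose $\epsilon$ so that $\hat Z\cap\nb(Z,\epsilon)$ is a proper subset of $\hat Z$. Since $\nb(Z,\epsilon)$ is closed and convex with $\bd\nb(Z,\epsilon)=\bd Z=\bd\hat Z$, the intersection $\hat Z\cap\nb(Z,\epsilon)$ is a proper closed convex subset of $\hat Z$, and Lemma~\ref{L:easy} (applied from a basepoint in the intersection) shows its boundary is still $\bd\hat Z$---contradicting boundary minimality of $\hat Z$. Symmetrically $d_{\hat Z}$ is constant on $Z$, and the projections are then automatically inverse bijections. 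You should replace your projection sketch with this neighborhood argument.
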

\begin{proof}  The proof that any two closed convex sets boundary minimal with respect to the same subset of $\bd X$ are parallel is sadly lacking from \cite{C-M}.  There is a reference to \cite[page 10]{Leeb}, but the proof isn't there either, or at least not in the version that is publicly available on the archive.  Thus in the interest of completeness we provide an easy  proof of it here. 

Let $Z, \hat Z$ closed convex and boundary minimal with $\bd \hat Z =\bd Z$. We must show that the distance to $Z$, $d_Z$ is a constant function on $\hat Z$.  Suppose not, then there is an $\epsilon >0$ such that $\hat Z \cap \nb (Z, \epsilon)$ is a proper subset of $\hat Z$.  However, $\nb(Z,\epsilon)$ is closed and convex and $$\bd \nb(Z,\epsilon)= \bd Z =\bd \hat Z.$$  
It follows that $\hat Z \cap \nb (Z, \epsilon)$ is a proper closed convex subset of $\hat Z$.  Choosing a base point in $\hat Z \cap \nb (Z, \epsilon)$ and applying Lemma \ref{L:easy}, we see that  
$$\bd \left[\hat Z \cap \nb (Z, \epsilon)\right] =\bd \hat Z$$ contradicting minimality of $\hat Z$.  Similarly, $d_{\hat Z}$ is a constant function on $Z$.  It follows that for each $z \in Z$ there is a unique $\hat z \in \hat Z$ with $d(z,\hat z) = d(Z,\hat Z)$ and visa versa.  Thus $\pi_Z: \hat Z \to Z$ and $\pi_{\hat Z}:Z \to \hat Z$ are inverse functions as required.  
\end{proof}
The above neighborhood argument together with the Theorem also proves the following:
\begin{Cor}\label{C:projecting min} If $Z$ is a closed convex boundary minimal set of $X$ and $Y$ is convex with $\bd Z \subseteq \bd Y$, then there exists 
$\hat Z \subseteq Y$ parallel to $Z$.
\end{Cor}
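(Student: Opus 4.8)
The plan is first to reduce to the case that $Y$ is closed (replace $Y$ by $\bar Y$, which has the same visual boundary; note that $\pi_Y$ is in any case only defined for closed convex sets, and one can check at the end that the parallel copy produced actually lands in $Y$). The heart of the matter is the claim that $d(z,Y)$ takes the constant value $\delta := d(Z,Y)$ for every $z\in Z$. Granting this, the desired parallel copy will simply be $\hat Z = \pi_Y(Z)$.

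To prove the claim I would argue as in the neighbourhood argument above, using asymptotic rays. Fix $z_0\in Z$, put $c=d(z_0,Y)$ and $y_0=\pi_Y(z_0)$. For each $\alpha\in\bd Z$ we have $\alpha\in\bd Y$, so Lemma \ref{L:easy} supplies rays $[z_0,\alpha)\subseteq Z$ and $[y_0,\alpha)\subseteq Y$; these are asymptotic, so $t\mapsto d([z_0,\alpha)(t),[y_0,\alpha)(t))$ is convex and bounded, hence non-increasing, and therefore $d([z_0,\alpha)(t),Y)\le c$ for all $t$. Thus the convex function $d(\,\cdot\,,Y)$ on $Z$ is bounded by $c$ along every ray $[z_0,\alpha)$, $\alpha\in\bd Z$. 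Since $Z$ is boundary minimal, the smallest closed convex set containing $z_0$ together with all these rays must be $Z$ itself (it is contained in $Z$ and its visual boundary is $\bd Z$), so by convexity and continuity $d(\,\cdot\,,Y)\le c$ on all of $Z$. Letting $z_0$ range over $Z$ gives $d(\,\cdot\,,Y)\le\delta$ on $Z$, hence $d(\,\cdot\,,Y)\equiv\delta$.

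Granting the claim, set $\hat Z=\pi_Y(Z)\subseteq Y$. For $z\in Z$ the point $w=\pi_Y(z)$ satisfies $d(w,z)=\delta=d(w,Z)$, so $\pi_Z(w)=z$; hence $\pi_Z\circ\pi_Y=\mathrm{id}_Z$ and $\pi_Y\circ\pi_Z=\mathrm{id}_{\hat Z}$. Two mutually inverse $1$-Lipschitz maps are isometries, so $\pi_Z\colon\hat Z\to Z$ is an isometry; consequently $\hat Z$ is complete, hence closed, and $\pi_Y$ carries each geodesic $[z,z']$ onto a geodesic, which by uniqueness of geodesics in a CAT(0) space is $[\pi_Y z,\pi_Y z']$, so $\hat Z$ is convex. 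Finally, since $d(z,\hat Z)=\delta$ is attained only at $\pi_Y(z)$ we get $\pi_{\hat Z}=\pi_Y$ on $Z$, and symmetrically $\pi_{\hat Z}$ and $\pi_Z$ restrict to mutually inverse isometries between $Z$ and $\hat Z$, i.e. $Z\,||\,\hat Z$.

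I expect the claim to be the only real obstacle, and boundary minimality of $Z$ is essential there: the asymptotic-ray estimate only bounds $d(\,\cdot\,,Y)$ along the rays from $z_0$ to points of $\bd Z$, and one needs boundary minimality to know that these rays span $Z$ (otherwise, for instance with $Z$ a horoball and $Y$ a geodesic ray to its single boundary point, the conclusion fails outright). Once the distance function is known to be constant on $Z$, everything else is routine manipulation of nearest-point projections.
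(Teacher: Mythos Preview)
Your argument is correct and follows essentially the same route as the paper: the paper's ``neighbourhood argument'' (intersecting $Z$ with $\nb(Y,\epsilon)$ and invoking Lemma~\ref{L:easy}) is just a repackaging of your asymptotic-ray estimate that $d(\cdot,Y)$ is bounded by $d(z_0,Y)$ along each ray $[z_0,\alpha)$ with $\alpha\in\bd Z\subseteq\bd Y$, and both conclude via boundary minimality of $Z$ that $d(\cdot,Y)$ is constant, after which the projection $\pi_Y|_Z$ yields the parallel copy. One small caveat: your claim that the parallel copy produced in $\bar Y$ can be checked to land in $Y$ is not justified and can fail (take $X=\R^2$, $Z=\{0\}\times\R$, $Y=(0,1)\times\R$, where $\pi_{\bar Y}(Z)=Z\not\subseteq Y$), but the paper only ever applies the Corollary with $Y$ closed, so this is harmless.
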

\begin{Def}
 For $A$ a subset of a metric space $X$, the intrinsic circumradius of $A$ is $\inf \{ r:\, A \subset B(a,r)$ for some $a \in A\}$
\end{Def}
\begin{Cor} \label{C:min for subgroup} Let $H$ be a convex subgroup of $G$.  Then there exists a closed convex $Z$, boundary minimal with respect to $ \Lambda H$, on which $H$ acts geometrically. 
\end{Cor}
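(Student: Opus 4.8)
The plan is to pass to a convex set on which $H$ acts cocompactly, then apply Theorem \ref{T:C-M} to replace it with a boundary minimal set, and finally check that the boundary minimal set can be chosen $H$-invariant so that $H$ still acts geometrically on it. First I would use the hypothesis that $H$ is a convex subgroup of $G$: by definition there is a closed convex $H$-invariant subset $Y \subseteq X$ on which $H$ acts cocompactly (and properly, since $H < G$ acts properly on $X$), so $H$ acts geometrically on $Y$ and $\bd Y = \Lambda H$. Since $X$ is one-ended we are in the range where $\Lambda H$ is reasonably large; in any case, if $\rad(\bd Y) = \rad(\Lambda H) > \frac \pi 2$ we may invoke the second part of Theorem \ref{T:C-M} to obtain a closed convex $Z_0 \subseteq X$ boundary minimal with respect to $\bd Y = \Lambda H$. (The small-radius case where $\rad \Lambda H \le \frac\pi2$ has to be handled separately, but then $\Lambda H$ is contained in a small Tits ball and one argues directly — e.g. $Y$ itself is already boundary minimal, or $H$ is virtually cyclic/virtually $\Z^n$ and the statement is easy; I expect the paper intends $H$ to be one-ended or at least that this degenerate case is either excluded or trivial in context.)

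Next I would make $Z_0$ into an $H$-invariant set. By the first part of Theorem \ref{T:C-M}, the union $P(\bd Y)$ of all closed convex sets boundary minimal with respect to $\bd Y$ is closed, convex, and splits as a product $Z_0 \times K$ with $K$ convex, and equals $P(Z_0)$. Since $H$ preserves $\bd Y = \Lambda H$ (as $H$ acts on $\bd X$ fixing its own limit set setwise), $H$ preserves the collection of convex sets boundary minimal with respect to $\bd Y$, hence $H$ preserves $P(\bd Y) = Z_0 \times K$. The product decomposition is canonical, so the $H$-action respects it: $H$ acts on $Z_0 \times K$ by isometries preserving the splitting, inducing actions on $Z_0$ and on $K$ (after possibly composing the $K$-factor action — but for a genuine metric product $Z_0 \times K$ every isometry preserving the splitting is a product of isometries of the factors, or at worst this holds after passing to a subgroup of index $\le 2$ or noting the factors are non-isometric; here $\bd Z_0 = \Lambda H$ while $\bd K$ is "orthogonal", so no swapping occurs). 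Pick any point $k_0 \in K$; then $Z := Z_0 \times \{h \cdot k_0 : h \in H\}$ need not be convex, so instead I would argue as in Corollary \ref{C:projecting min} / the neighborhood argument: project $H\cdot p$ (for $p \in Z_0\times K$) to get that $H$ preserves a "diagonal" copy, or more cleanly, let $H$ act on the convex set $K$ — choose an $H$-minimal (or $H$-cocompact) closed convex subset $K_0 \subseteq K$, which exists since $H$ acts properly on $K$ and $Y$ was $H$-cocompact forcing $K$ to have bounded "width" relative to the $H$-action — and set $\hat Z = Z_0 \times K_0$. Then $\hat Z$ is closed convex $H$-invariant, and $Z_0 || (Z_0\times\{k\})$ for each $k$, so $Z_0 \times \{k_0\}$ is parallel to $Z_0$ hence boundary minimal with respect to $\Lambda H$; take $Z = Z_0 \times \{k_0\}$ only if it's $H$-invariant, otherwise keep track that $H$ acts on $\hat Z$ cocompactly and $\hat Z$ contains a parallel copy of the desired boundary minimal set.

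Finally I would verify the geometric action: $H$ acts properly on $X$ hence on any subset, so properness on $Z_0$ (or $\hat Z$) is automatic; cocompactness follows because $H$ acts cocompactly on $Y$, $Y$ and $\hat Z$ are both convex sets with the same boundary $\Lambda H$ up to the $K$-factor, and a standard Svarc–Milnor / closest-point-projection argument shows the quotients have comparable diameter. Concretely, $\pi_{\hat Z} : Y \to \hat Z$ is $H$-equivariant, $1$-Lipschitz, and coarsely surjective (its image is $H$-cocompact and $\bd$ of the image is all of $\bd \hat Z$), so $H \backslash \hat Z$ is compact; restricting further to the factor $Z_0$ via the product projection keeps cocompactness. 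Hence $H$ acts geometrically on a closed convex set boundary minimal with respect to $\Lambda H$, which is the claim.

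The main obstacle I anticipate is the bookkeeping around the product splitting $P(\bd Y) = Z_0 \times K$: one must ensure the $H$-action respects the two factors (ruling out factor-swapping, which is fine here since the factors have very different boundaries) and that the induced $H$-action on the $K$-factor is cocompact so that one can cut $K$ down to an $H$-invariant convex piece without destroying properness or the product structure — and then confirm that a single parallel fiber $Z_0 \times \{k_0\}$, which is genuinely boundary minimal, can legitimately be taken (or that $H$ preserves such a fiber after passing to a finite-index subgroup, which suffices since finite-index subgroups of $H$ are still convex by Corollary \ref{C:convex} and the conclusion is insensitive to this). The degenerate small-Tits-radius case is a second, more minor, thing to dispose of.
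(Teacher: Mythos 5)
Your overall architecture matches the paper's: start from the convex set $Y$ on which $H$ acts geometrically, apply Theorem \ref{T:C-M} to get a boundary-minimal $\check Z$ and the product parallel set $\check Z \times K$, and then try to locate an $H$-invariant boundary-minimal fiber. However, two steps that you explicitly leave open are exactly the two places where the paper has to do real work, and your proposed workarounds for them would not go through.

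First, the hypothesis $\rad(\bd Y) > \frac{\pi}{2}$ for the second part of Theorem \ref{T:C-M}. You defer this as a ``degenerate case'' and suggest it is either trivial or excluded (``$Y$ itself is already boundary minimal, or $H$ is virtually cyclic''); neither claim is justified, and no case split is needed. The paper invokes Geoghegan--Ontaneda \cite{GEA-ONT}: since $H$ acts geometrically on the convex set $Y$, every $z \in \bd Y$ is an endpoint of a geodesic line in $Y$, so $\bd Y$ has intrinsic Tits circumradius at least $\pi$, and the hypothesis holds unconditionally. Second, and more seriously, you never actually produce a single $H$-invariant fiber: your final candidates are either $\hat Z = Z_0 \times K_0$ (not boundary minimal unless $K_0$ is a point) or the vague instruction to ``keep track that $\hat Z$ contains a parallel copy,'' and passing to a finite-index subgroup does not help since the conclusion is about $H$ itself. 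The missing idea is this: by Lemma \ref{L:easy} and minimality, any parallel copy of $\check Z$ that meets $Y$ lies entirely in $Y$, so $Y \cap (\check Z \times K) = \check Z \times (K \cap Y)$; because $\bd \check Z = \bd Y$, the convex set $K \cap Y$ has empty boundary and is therefore compact; its unique circumcenter $c$ is fixed by $H$ (which preserves $\check Z \times (K\cap Y)$ and the product structure), and $Z = \check Z \times \{c\}$ is the required $H$-invariant boundary-minimal set. This also disposes of your worries about factor-swapping and about cocompactness of the induced action on the $K$-factor, since everything reduces to the compact set $K \cap Y$.
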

\begin{proof} Let $Y$ be the given closed convex subset of $X$ on which $G$ acts geometrically. By \cite{GEA-ONT}, for every $z \in \bd Y$, there is $w \in \bd Y$ with $z,w$ the endpoints of a geodesic line in $Y$.  Thus $\bd Y$ has intrinsic Tits circumradius $\ge \pi $  By Theorem \ref{T:C-M}, there are closed convex subsets $\check Z \subseteq Y$ and $K \subseteq X$ with $\check Z$ minimal among all closed convex subsets of $X$ with the property $\bd \check Z = \bd Y(=\Lambda H)$, and with the parallel set $P(\check Z)= \check Z \times K$.

For $\hat Z$ parallel to $\check Z$ suppose that $z\in \hat Z \cap Y $.  For any $\alpha \in \bd Y=\bd \hat Z$, the ray $[z,\alpha)\subseteq \hat Z \cap Y$ by Lemma \ref{L:easy}.   Since $\hat Z \cap Y$ is convex and $\bd [\hat Z \cap Y] =\bd \hat Z$, it follows by minimality of $\hat Z$ that $\hat Z \cap Y=\hat Z$.  

Thus $$Y \cap (\check Z\times K) = \check Z\times (K \cap Y).$$  
Since $$H(\bd \check Z) = \bd \check Z,\quad H(\check Z\times K) =H(P(\check Z))= P(\check Z)=\check Z\times K,$$ and thus $$H (Y \cap [\check Z\times Y]) =Y \cap (\check Z\times K)=\check Z\times (K \cap Y).$$
Since $\bd \check Z = \bd Y$, it follows  that $K  \cap Y$ is compact.  Since $K\cap Y$ is a compact CAT(0) space, it has a unique circumcenter $c$.  Thus $Z= \check Z \times \{c\}$ has the property that $H(Z) =Z$, and it follows that $H$ acts on $Z$ geometrically.  

\end{proof}

\begin{Cor} \label{C:convex} Let $H$ subgroup of $G$. If a finite index subgroup $K$ of $H$ is convex then $H$ is convex as well. 
\end{Cor}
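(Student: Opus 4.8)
The plan is to promote the geometric action of $K$ on a boundary minimal convex set to a geometric action of $H$ on a parallel copy of that set, where the copy is chosen $H$-equivariantly inside the parallel set. First I would apply Corollary \ref{C:min for subgroup} to the convex subgroup $K<G$: this produces a closed convex $Z_0\subseteq X$, boundary minimal with respect to $\Lambda K$, on which $K$ acts geometrically; in particular $\bd Z_0=\Lambda K$. Since $[H:K]<\iy$, choosing coset representatives $h_1,\dots,h_n$ and setting $D=\max_i d(x_0,h_i x_0)$ gives $Kx_0\subseteq Hx_0\subseteq\nb(Kx_0,D)$, so the $K$-orbit and the $H$-orbit are at finite Hausdorff distance and hence $\Lambda K=\Lambda H$. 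Thus $\bd Z_0=\Lambda H$, and Theorem \ref{T:C-M} gives a closed convex splitting $P(Z_0)=Z_0\times F$ with $F$ convex, in which the slices $Z_0\times\{f\}$ are exactly the closed convex subsets boundary minimal with respect to $\bd Z_0$ (each slice is parallel to $Z_0$, hence boundary minimal; conversely any boundary minimal $B$ with $\bd B=\bd Z_0$ lies in $P(Z_0)$, is parallel to $Z_0$, and a short computation in the product metric forces $B$ to be a slice).

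Next I would check that $H$ respects this product. Because $\Lambda H$ is $H$-invariant and equals $\bd Z_0$, each $h\in H$ carries a set boundary minimal with respect to $\bd Z_0$ to another such set, hence permutes the slices and preserves $P(Z_0)$; writing $h(z,f)=(\tau_h(z),\sigma_h(f))$, preservation of the distances $d((z,f),(z,f'))=d_F(f,f')$ and of slice-to-slice distances forces $\sigma_h\in\mathrm{Isom}(F)$ and $\tau_h\in\mathrm{Isom}(Z_0)$ to be well defined, so $H$ acts diagonally. Since $K$ acts geometrically on $Z_0=Z_0\times\{f_0\}$ it fixes that slice, so $\sigma_h(f_0)=f_0$ for all $h\in K$; therefore the $\sigma(H)$-orbit of $f_0$ has at most $[H:K]$ points, and its circumcenter $c\in F$ is fixed by $\sigma(H)$. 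Then $Z_0\times\{c\}$ is a closed convex $H$-invariant subset of $X$, since $h(Z_0\times\{c\})=\tau_h(Z_0)\times\{\sigma_h(c)\}=Z_0\times\{c\}$; the induced $H$-action on $Z_0\times\{c\}\cong Z_0$ is $h\mapsto\tau_h$, which restricted to $K$ is the original geometric action, so $K$ and hence the larger group $H$ act cocompactly on it, while $H$ acts properly as a subgroup of $G$ and by isometries. Thus $H$ acts geometrically on the closed convex set $Z_0\times\{c\}$, i.e.\ $H$ is convex.

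The step I expect to be the main obstacle is the canonicity of the splitting $P(Z_0)=Z_0\times F$: one must know that the slices are precisely the convex sets boundary minimal with respect to $\bd Z_0$, so that an isometry preserving $\bd Z_0$ necessarily acts diagonally. This is exactly where Theorem \ref{T:C-M}, together with the product-metric computation identifying parallel copies of $Z_0$ with slices, does the real work; once it is in place, the finite-index/circumcenter trick and the cocompactness comparison are routine.
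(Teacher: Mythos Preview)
Your proof is correct and follows essentially the same route as the paper's: apply Corollary \ref{C:min for subgroup} to get a boundary minimal $Z_0$ for $K$, use the product structure $P(Z_0)=Z_0\times F$ from Theorem \ref{T:C-M} together with $\Lambda K=\Lambda H$ to see that $H$ permutes the slices, and then take the circumcenter of the resulting finite orbit in $F$ to obtain an $H$-invariant slice. The paper's version is terser---it simply observes that the finitely many $H$-translates of $Z_0$ form $Z_0\times\{\text{finite set}\}$ inside the parallel set and takes the centroid---whereas you spell out the diagonal-action verification more carefully; but the argument is the same.
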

\begin{proof} By corollary \ref{C:min for subgroup} $K$ acts on a closed convex set $Z$ boundary minimal with respect to $ \Lambda H=\Lambda K$.
All translates of $Z$ by $H$ are boundary minimal with respect to $ \Lambda H$ and there are finitely many such translates. By theorem \ref{T:C-M}
the union of all these translates is $Z\times F \subseteq Z \times A$  where $F$ is a finite subset of some convex set $A$.  $F$ has a unique centroid $p \in A$, so $H$ acts geometrically on the convex set $Z \times \{p\}$.  
\end{proof}
\begin{Def} For  for $H <G$, we define $$Z_H= \{g \in G: gh=hg\quad\forall h \in H\},$$ the centralizer of $H$ in $G$, and for $g\in G$ we define
$Z_g= Z_{\langle g \rangle}$.
  For $H< G$ we define $$\fp H = \{\beta \in \bd X : h(\beta) =\beta\quad \forall h \in H\},$$ and for $g\in G$ we define $\fp g = \fp\langle g \rangle$.
\end{Def}
\begin{Lem} \label{L:zent} For any $H < G$ if $g \in Z_H$  then $\Lambda H \subseteq \fp g$.  
\end{Lem}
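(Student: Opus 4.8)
The plan is to show that if $g$ centralizes $H$, then $g$ fixes every limit point of $H$ in $\bd X$, by a straightforward dynamical argument: $g$ acts on $\bd X$ as a homeomorphism, and limit points of $H$ are approached along $H$-orbits, which $g$ commutes with. Concretely, fix a base point $x_0 \in X$ and let $\beta \in \Lambda H$. By definition there is a sequence $(h_k) \subseteq H$ with $h_k x_0 \to \beta$ in $\bar X$. The key computation is that for each $k$, since $g h_k = h_k g$, we have $g(h_k x_0) = h_k(g x_0)$. The point $g x_0$ lies in $X$ at bounded distance from $x_0$, namely $d(x_0, g x_0) = d(x_0, g x_0)$ is a fixed constant $D$ independent of $k$.

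The main step is then to argue that $h_k(g x_0) \to \beta$ as well. This follows because the isometries $h_k$ move the pair of points $x_0$ and $g x_0$ together: $d(h_k x_0, h_k(g x_0)) = d(x_0, g x_0) = D$ for all $k$. Since $h_k x_0 \to \beta \in \bd X$ and the points $h_k(g x_0)$ stay within distance $D$ of $h_k x_0$, a standard fact about the visual compactification (two sequences that stay a bounded distance apart and one of which converges to a boundary point, the other converges to the same boundary point) gives $h_k(g x_0) \to \beta$. Hence $g(h_k x_0) = h_k(g x_0) \to \beta$. On the other hand, $g$ is continuous on $\bar X$ and $h_k x_0 \to \beta$, so $g(h_k x_0) \to g(\beta)$. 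By uniqueness of limits in the Hausdorff space $\bar X$, $g(\beta) = \beta$, i.e. $\beta \in \fp g$.

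Since $\beta \in \Lambda H$ was arbitrary, $\Lambda H \subseteq \fp g$, as claimed. The only point requiring a little care is the boundedness lemma about the visual topology — that a sequence $(y_k)$ in $X$ with $d(y_k, z_k) \le D$ and $z_k \to \beta \in \bd X$ forces $y_k \to \beta$ — but this is immediate from the definition of the neighborhoods $U(\alpha, n, \epsilon)$ via comparison geometry in the proper CAT(0) space $X$, or can simply be quoted as a standard property of $\bar X$. There is no serious obstacle here; the lemma is a soft consequence of the commutation relation and continuity of the boundary action.
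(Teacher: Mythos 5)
Your proof is correct and is essentially the same argument as the paper's: both use that $g(h_k x_0)=h_k(g x_0)$, that the limit of $h_k y$ in $\bar X$ is independent of the base point $y$ (which you justify via the bounded-distance fact), and that $G$ acts on $\bar X$ by homeomorphisms to conclude $g(\beta)=\beta$. No issues.
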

\begin{proof} Let $p \in \Lambda H$, then there exists a sequence $(h_k)\subseteq H$ with $h_k(x) \to p$ for any $x \in X$.  
Since $g(x)$ is a fixed element of $X$, $h_k(g(x)) \to p$.  However $g(h_k(x)) \to g(p)$ since $G$ acts on $\bar X$, the compactification of $X$ by $\bd X$, by homeomorphisms.  Since $h_kg =gh_k$ it follows that $p = g(p)$.  Thus $\Lambda H \subseteq \fp g$.  
\end{proof}
\begin{Lem}\label{L:SWE} \cite{SWE}  Let $H < G$ be finitely generated.  Then $\fp H = \Lambda Z_H$, and $Z_H$ is a convex subgroup of $G$.
Furthermore if $\fp H\neq \emptyset$ then there is a hyperbolic element in $Z_H$. \end{Lem}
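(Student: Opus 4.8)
The plan is to prove the three assertions in order — the identity $\fp H = \Lambda Z_H$, the convexity of $Z_H$, and (when $\fp H \neq \emptyset$) the existence of a hyperbolic element in $Z_H$ — with the first supplying most of the leverage. The inclusion $\Lambda Z_H \subseteq \fp H$ is immediate from Lemma \ref{L:zent}: every $h \in H$ commutes with all of $Z_H$, so $h$ lies in the centralizer of $Z_H$, and Lemma \ref{L:zent} (applied to the subgroup $Z_H$) gives $\Lambda Z_H \subseteq \fp h$; intersecting over $h \in H$ yields $\Lambda Z_H \subseteq \fp H$.

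For the reverse inclusion $\fp H \subseteq \Lambda Z_H$, fix $\beta \in \fp H$, a basepoint $x_0$, and the geodesic ray $r\colon [0,\iy) \to X$ from $x_0$ to $\beta$. Using cocompactness of the $G$-action, choose $g_n \in G$ and $t_n \to \iy$ with $d(g_n r(t_n), x_0) \le D$ for a fixed constant $D$. The key point is that conjugating $H$ by $g_n$ moves the basepoint a uniformly bounded amount: writing $q_n = g_n^{-1} x_0$ we have $d(q_n, r(t_n)) \le D$, and since each $h \in H$ fixes $\beta$ the rays $r$ and $h \circ r$ are asymptotic, so the convex function $t \mapsto d(r(t), h r(t))$ is non-increasing and hence $\le d(x_0, h x_0)$; therefore $d(g_n h g_n^{-1} x_0, x_0) = d(h q_n, q_n) \le 2D + d(x_0, h x_0)$, independently of $n$. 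Fixing a finite generating set $S$ of $H$, properness forces each set $\{g_n h g_n^{-1} : n\}$ ($h \in S$) to be finite, so after passing to a subsequence $g_n h g_n^{-1}$ is independent of $n$ for every $h \in S$. Then $g_1^{-1} g_n$ commutes with $S$, hence with $H$, so $g_n^{-1} g_1 \in Z_H$; and since $q_n = g_n^{-1} x_0$ stays within $D$ of $r(t_n) \to \beta$, we get $g_n^{-1} g_1 x_0 \to \beta$, so $\beta \in \Lambda Z_H$.

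For the convexity of $Z_H$: if $\fp H = \emptyset$ then $\Lambda Z_H = \emptyset$ by the identity just proved, so $Z_H$ is finite and trivially convex; otherwise $\Lambda Z_H = \fp H \neq \emptyset$ and $Z_H$ is infinite. Here I would follow \cite{SWE}: the orbit estimates above — fellow-travelling of the rays to the fixed point $\beta$, controlled by the $H$-displacement, together with properness — are to be upgraded to show that $Z_H$ acts cocompactly on a closed convex subset $Y \subseteq X$ (equivalently, that $Z_H$ is undistorted in $G$). I expect this cocompactness/quasiconvexity step to be the main obstacle; it is essentially the statement that centralizers of finitely generated subgroups in CAT(0) groups are convex, and is the substantive content attributed to \cite{SWE}. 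Granting it, the final clause is quick: when $\fp H \neq \emptyset$ the subgroup $Z_H$ is infinite and the convex set $Y$ on which it acts geometrically is unbounded, and a group acting geometrically on an unbounded CAT(0) space contains a hyperbolic isometry; hence $Z_H$ contains a hyperbolic element.
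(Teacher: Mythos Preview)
Your argument for the identity $\fp H = \Lambda Z_H$ is correct and complete, but it takes a different route from the paper's. The paper does not argue directly: it writes $H = \langle h_1,\dots,h_n\rangle$, cites \cite{SWE} for the single-element facts $\fp h_i = \Lambda Z_{h_i}$ and $Z_{h_i}$ convex, and then uses two further facts from \cite{SWE} --- that $\bigcap_i \Lambda Z_{h_i} = \Lambda\bigl(\bigcap_i Z_{h_i}\bigr)$ and that an intersection of convex subgroups is convex --- together with the trivial observation $Z_H = \bigcap_i Z_{h_i}$, to obtain both $\fp H = \Lambda Z_H$ and the convexity of $Z_H$ simultaneously. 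Your pull-back-along-the-ray argument is essentially the proof in \cite{SWE} of the cyclic case, carried out for all generators at once; it gives the identity without appealing to the limit-set-intersection fact, which is a small gain in self-containment. On convexity the two proofs are on the same footing: the paper cites \cite{SWE}, and you explicitly defer the cocompactness step to \cite{SWE} as well. The final clause (hyperbolic element in an infinite CAT(0) group) is handled the same way in both.
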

\begin{proof}
Let $h_1,\dots h_n$ be generators of $H$.  Clearly $\fp H = \bigcap\limits_{i=1}^n \fp h_i$.    By \cite{SWE}: 
\begin{itemize}
\item $ Z_{h_i}$ is convex for each $i$,
\item $\fp h_i = \Lambda Z_{h_i}$,
\item $\cap \Lambda Z_{h_i} = \Lambda \left[\cap Z_{h_i}\right]$,
\item $\cap Z_{h_i}$ is a convex subgroup of $G$.  
\end{itemize} Clearly $Z_H = \cap Z_{h_i}$ so $Z_H$ is a convex subgroup and  $$\fp H = \cap \fp h_i = \cap \Lambda Z_{h_i} = \Lambda \left[\cap Z_{h_i}\right] = \Lambda Z_H.$$
If $\fp H= \Lambda Z_H \neq \emptyset$ then $Z_H$ is an infinite CAT(0) group, and so contains a hyperbolic element by \cite{SWE}.
\end{proof}
\begin{Def} For $\Delta, \Gamma, \Theta \in \bd X$ we say that $\Theta$ is a $\bd$-join of $\Delta$ and $\Gamma$ denoted $\Theta = \Delta*_\bd \Gamma$ if $\Theta$ is the topological join of $\Delta$ and $\Gamma$ in the cone topology and $\Theta$ is the spherical join of $\Delta$ and $\Gamma$ in the Tits metric (taking $\Delta$ and $\Gamma$ as subsets of $\bd_T X$ for the latter).
\end{Def}
\begin{Lem} \label{L:limfixed join}   Let $H, K <G$ infinite,  at least one of which is finitely generated  with $K = Z_H$ and $H=Z_K$ and let $p$ be the rank of the abelian group $H \cap K$, so $\Lambda [H \cap K] = S^{p-1}$.  
Then there exists  $\Gamma,\Delta \subseteq \bd X$ with
\begin{enumerate}
\item $\Lambda \langle H, K \rangle \subseteq \Delta*_\bd\Gamma*_\bd S^{p-1}$
\item $\dim \Delta =  \dim \Lambda H -p +1$
\item $\dim \Gamma = \dim \Lambda K -p +1$
\item
 If $K$ is virtually a subgroup of $H$ or visa versa, then one of $\Delta$ or $\Gamma$ will be empty, but otherwise both $\Delta$ and $\Gamma$ are nonempty non singleton sets.
 
\item If   $\Lambda \langle H, K \rangle$  can be cut by a finite cut $C$ then
\begin{itemize}
\item either $H$ and $K$ are commensurable and both are virtually $\Z^2$, and $\Lambda H = \Lambda K$ is separated by $C$.
\item or there is a hyperbolic $g \in G$ with $H$  virtually $\langle g \rangle$ and $K$ virtually  $Z_g$ or visa versa, and $\Lambda Z_g$ is separated by $C$.
\end{itemize}
 In the latter case, every such finite cut must contain the endpoints $g$.
\end{enumerate}
 \end{Lem}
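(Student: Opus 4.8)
The plan is to build the sets $\Delta$ and $\Gamma$ from the product decompositions supplied by Theorem~\ref{T:C-M} applied to boundary-minimal convex sets for $H$ and $K$, and then to control the topology/Tits geometry of $\Lambda\langle H,K\rangle$ using $\pi$-convergence and the finite-cut hypothesis. First I would invoke Lemma~\ref{L:SWE}: since (say) $H$ is finitely generated, $K=Z_H$ is a convex subgroup, $\fp H=\Lambda K$, and symmetrically $H=Z_K$ is convex with $\fp K=\Lambda H$. Using Corollary~\ref{C:min for subgroup}, pass to closed convex sets $Z_H\subseteq X$ boundary minimal with respect to $\Lambda H$ on which $H$ acts geometrically (similarly $Z_K$ for $K$). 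By Lemma~\ref{L:zent}, $K$ fixes $\Lambda H$ pointwise, so $K$ (virtually, or after passing to the relevant stabilizer) preserves the parallel set $P(Z_H)=Z_H\times W$ of Theorem~\ref{T:C-M}; the $H\cap K$ direction is the maximal Euclidean factor common to both, giving the $S^{p-1}$ join factor with $p=\operatorname{rank}(H\cap K)$. Splitting off this common flat from each of $\Lambda H$ and $\Lambda K$ produces the complementary pieces $\Delta$ (with $\dim\Delta=\dim\Lambda H-p+1$) and $\Gamma$ (with $\dim\Gamma=\dim\Lambda K-p+1$), and the spherical-join structure of the Tits boundary of a product gives (1), namely $\Lambda\langle H,K\rangle\subseteq\Delta*_\bd\Gamma*_\bd S^{p-1}$. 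For (4): if $K$ is virtually contained in $H$ then $\Lambda K\subseteq\Lambda H$ forces $\Gamma$ to degenerate; conversely if neither contains the other virtually, a centralizer argument (an isometrically embedded flat strictly larger than the common one would enlarge $H\cap K$) shows both $\Delta,\Gamma$ are nonempty and non-singleton.

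For part (5) I would argue by cases on $\dim\Delta$ and $\dim\Gamma$. The join $\Delta*_\bd\Gamma*_\bd S^{p-1}$ is separated by a finite set only if it is very small: a spherical join $A*B$ of two nonempty sets is a (topological) suspension-type object and cannot have a finite cut unless one of the factors is itself disconnected into small pieces or one factor is a pair of points. Concretely, if both $\Delta$ and $\Gamma$ are nonempty and infinite, the join contains a $2$-sphere's worth of circles through the $S^{p-1}$ and is not cut by any finite set; so a finite cut of $\Lambda\langle H,K\rangle$ forces either $\Delta=\emptyset$ (hence $K$ virtually $\le H$) or one of the factors to be a $0$-sphere. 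Tracking this through: either $H$ and $K$ are commensurable with $\Lambda H=\Lambda K$ and $p=2$ giving both virtually $\Z^2$, or one of them, say $H$, has $\Lambda H$ a single pair $g^{\pm}$, which by Ballmann's dichotomy and Lemma~\ref{L:SWE} (the hyperbolic element in the centralizer) means $H$ is virtually $\langle g\rangle$ for a hyperbolic $g$ and $K$ is virtually $Z_g$, with $\Lambda Z_g$ the thing being separated. The final clause — that such a cut must contain $g^{\pm}$ — follows because $g$ translates along its axis, so $\langle g\rangle$ acts on $\Lambda Z_g\setminus\{g^\pm\}$ with all orbits escaping to $\{g^+\}$ or $\{g^-\}$; a $\langle g\rangle$-invariant (up to finite index) finite cut in the complement of $\{g^\pm\}$ would have to be finite and $g$-invariant, impossible unless it lies in $\{g^\pm\}$, and one checks the cut can be taken invariant after replacing $g$ by a power, using that min cuts pushed around by $g$ either stabilize or escape, together with $\pi$-convergence to pin the limit points at $g^\pm$.

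The main obstacle I expect is part (5), specifically ruling out finite cuts of the join $\Delta*_\bd\Gamma*_\bd S^{p-1}$ when the factors are positive-dimensional: this requires a clean topological statement that a spherical/topological join of compacta, at least one of which is connected and non-degenerate, has no finite cut, and then carefully matching the degenerate cases to the group-theoretic conclusions (commensurability versus the hyperbolic-centralizer picture). A secondary subtlety is that the $G$-stabilizers of $\Lambda H$ and of the parallel set $P(Z_H)$ may only contain $K$ with finite index rather than equal it, so throughout one works "virtually" and must check that passing to finite-index subgroups does not disturb the dimension counts or the limit sets — here Corollary~\ref{C:convex} ensures convexity is preserved under commensurability, which keeps the parallel-set machinery applicable.
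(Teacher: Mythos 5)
Your proposal follows the paper's proof essentially step for step: convexity via Lemma \ref{L:SWE}, boundary-minimal sets and parallel sets via Corollary \ref{C:min for subgroup} and Theorem \ref{T:C-M}, the flat torus theorem to split off the common $\E^p$ factor, $\pi$-type/centralizer arguments for (4), and the "$S^1$ or suspension" dichotomy for part (5). The only step you compress is the actual construction of $\Gamma$: the paper places a copy of $X_K=B\times\E^p$ inside the parallel set $P(X_H)=A\times\E^p\times D_H$ and takes $\Gamma=\rho(\bd B)$ for the join projection $\rho$ onto $\bd D_H$, using $\bd A\cap\bd B=\emptyset$ to obtain a uniform angle bound $\theta<\frac{\pi}{2}$ that makes $\rho$ a topological embedding on $\bd B$ — it is this, rather than simply taking $\Gamma=\bd D_H$, that secures the dimension count in (3) and the non-singleton claim in (4).
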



\begin{proof}   Say $H$ is finitely generated.
By Lemma \ref{L:SWE}, $Z_H =K$ is a convex subgroup and so finitely generated.  Now $H$ is convex by the same argument. 

By Corollary \ref{C:min for subgroup}  there exists $X_H \subseteq X$  a boundary minimal closed convex subset on which $H$ acts geometrically, 
 and we define $X_K$ similarly. 
 
  Now consider the CAT(0) space $X_H$ on which $H$ acts geometrically.  Since $H$ and $K$ commute,  $H\cap K$ is abelian and  $\Z^p$ is a finite index subgroup of $H\cap K$.  Since $H\cap K$ is the center of $H$, by Lemma \ref{L:zent} $H\cap K$ fixes $\Lambda H =\bd X_H$.  
 Restricting to $H$ acting on $X_H$, for  $ g \in H\cap K$, $$\bd \min g = \fp g  =\bd X_H $$ by \cite{SWE}.  Thus by the flat torus theorem \cite[7.1]{BRI-HAE} in $X_H$, and \cite{SWE} :
 \begin{itemize}
 \item  $\min \Z^p= X_H = A\times \E^p$ for some closed convex $A \subseteq X_H$,
 \item $H$   preserves this product decomposition,
 \item $H/ \Z^p$ acts geometrically on $A$. 
 \end{itemize} 
 Since $X_H$ was boundary minimal, so is $A$.
 
 Arguing similarly for $K$ we have in $X_K$:
  \begin{itemize}
 \item $ \min \Z^p=X_K = B\times \E^p$ for some closed convex $B \subseteq X_K$,
 \item $K$   preserves this product decomposition,
 \item  $K/ \Z^p$ acts geometrically on $B$. 
 \end{itemize} 
 Since $X_K$ was boundary minimal, so is $B$.
 
 Now in $X$:
 \begin{itemize}
 \item
 $\min \Z^p =  D \times \E^p$ for some closed convex $D \subseteq X$
 \item $Z_{\Z^p}> \langle H,K\rangle$ preserves this product decomposition.
 \item $Z_{\Z^p}/ \Z^p$  acts geometrically on $D$.
 \end{itemize}
  Clearly these product structures all match  up since each of the $\{\text{ point }  \} \times \E^p$ is parallel to every other.  If one of $H$ or $K$ is virtually a subgroup of the other
  then $A$ or $B$ is a point and  we are done.  We now assume that neither is virtually a subgroup of the other, and so $A$ will contain a geodesic line (being a CAT(0) space on which an infinite group acts geometrically), and similarly for $B$.
  
{ \bf Until further notice, we now restrict ourselves to the CAT(0) space} $\min \Z^p=D \times \E^p$.  
 By Theorem \ref{T:C-M}, we have for the parallel sets $$P(X_H)= X_H \times D_H=  A\times \E^p \times D_H$$
 $$P(X_K)= X_K \times D_K=  B\times \E^p \times D_K$$ where $D_H$ and $D_K$ are closed convex subsets of $\min \Z^p$. 
 Let $\Delta = \bd A$.  
 Then $$\Lambda H = \bd X_H= \bd (A \times \E^p) = (\bd A) *_\bd \bd (\E^p)=\Delta *_\bd S^{p-1}$$ and  we have (2).

 For any $k \in K$, by Lemma \ref{L:zent}  $k(\Lambda H) = \Lambda H$ and so $\bd k(X_H) =\Lambda H$.   $k(X_H)$ is a closed convex set of $\min \Z^p$ and using $k^{-1}$ we see that $k(X_H)$ is boundary minimal.  Thus $k(X_H)$ is parallel to $X_H$  and so $$k(X_H) \subseteq P(X_H)$$ for all $k \in K$.   It follows that $P(X_H)$ contains a $K$-orbit (lots of them in fact) and so $$\Lambda K \subseteq \bd P(X_H) .$$    
  
 By Corollary \ref{C:projecting min}, there is a set parallel to $X_K$ contained in $P(X_H)$.  Thus by rechoosing $X_K$, we may assume that $X_K \subseteq P(X_H)$.  Similarly by rechoosing $X_H$, $A$ and $B $ we may assume that
 \begin{itemize}
 \item $X_H \subseteq  P(X_K)$.
 \item $X_K \cap X_H \neq \emptyset$
 \item  $A \cap B \neq \emptyset$ 
 \item $D_H \cap B \neq \emptyset$.  
 \end{itemize}

Notice that since $X_K \subseteq P(X_H)$, $B \subseteq A \times D_H$. Since $\Z^p$ acts geometrically on  $X_H\cap X_K = \E^p\times [B\cap A]$, $B\cap A$ is compact by the flat torus theorem.  In fact $B \cap [A\times\{x\}]$ will  be compact by the same argument for any $x \in D_H$.
Let $$\rho : A \times D_H \to D_H$$ be the projection, and  by abuse of notation, we will also write
$$\rho: \bd A *\bd D_H- \bd A \to \bd D_H$$ for the corresponding projection along join arcs.

 For $L$  a geodesic   in $B$, then $\rho \circ L$ must be a constant speed geodesic in  $D_H$.  Fix a base point $b \in B$ and for each $\beta \in \bd B$ let $L_\beta$ be the geodesic ray from $b$ to $\beta$, and $$\theta_\beta = \angle_b(L_\beta, \rho \circ L_\beta).$$ Since $\bd B \cap \bd A = \emptyset$, there is a uniform bound $\theta< \frac \pi 2$ with $\theta_\beta \le \theta$ for all $\beta \in \bd B$.  We leave it to the reader to show that neighborhoods of $\bd B$ based at $b$ are cofinal under $\rho$ to neighborhoods based at $b$ in $\rho(\bd B)$ and so  $$\rho (\beta) = \rho \circ L_\beta(\iy)$$ defines a topological embedding $\rho: \bd B \to \bd D_H $.  
 Also $\rho(B)$ will be  a closed convex subset of $D_H$ by \cite[I 5.3(3)]{BRI-HAE}.
 Clearly $\bd \rho(B) = \rho (\bd B)$ and  $$B \subseteq A \times \rho(B) \subseteq A  \times D_H.$$  Thus $$(A \times \E^p) \cup (\E^p \times B) \subseteq A \times \E^p \times \rho(B)$$ which in both $H$ and $K$ invariant, and so $\langle H ,K \rangle $ invariant.   Let $ \Gamma = \rho(\bd B)= \bd \rho(B)$ so  $$\Lambda \langle H , K\rangle \subseteq \bd A *_\bd \bd \E^p *_\bd \bd \rho(B)=\Delta*_\bd S^{p-1} *_\bd \Gamma$$
 
, $\dim \Gamma = \dim \bd B$ we have proven (3).
  Since $\rho :\bd B \to \Gamma$ is injective (in fact a homeomorphism), $\Gamma$ contains at least two points as required by (4).

Now consider the case where $C$ separates $\Lambda \langle H, K\rangle$. In the case when $$\Delta*_\bd\Gamma*_\bd S^{d-1}= S^1,$$  $H$ and  $K$ are commensurable and virtually $\Z^2$ and $\Lambda H= \Lambda K$ are separated by $C$
Now  we may assume    $$S^1 \neq\Delta*_\bd\Gamma*_\bd S^{d-1}.$$ 
So for there to be even one finite cut, $\Delta*_\bd\Gamma*_\bd S^{d-1}$
must be a suspension.   Thus one of $\Delta$ or $\Gamma$ will be empty and $d =1$.
So say $H$ will be two ended and taking $g \in H$ hyperbolic, $K$ will be virtually $Z_g$ as required.  Clearly every finite cut cutting $\Lambda \langle H, K\rangle $ must contain the only finite cut $\{g^\pm\}$ of $\Delta*_\bd\Gamma*_\bd S^{d-1}$  and will separate $\Lambda Z_g$.

\end{proof}

\begin{Lem} \label{L:fixfg}
  Let $A\subseteq \bd X$, if $\fp A$ is finitely generated then $\fp A$  is a convex subgroup of $G$
\end{Lem}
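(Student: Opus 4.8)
The plan is to identify $\fp A$ with a double centralizer and then apply Lemma~\ref{L:SWE} twice. Write $H=\fp A$, which by hypothesis is finitely generated. Since by the definition of the pointwise stabilizer every element of $H$ fixes every point of $A$, we have $A\subseteq \fp H$. Applying Lemma~\ref{L:SWE} to the finitely generated group $H$ yields $\fp H=\Lambda Z_H$ together with the fact that $Z_H$ is a convex subgroup of $G$; being convex, $Z_H$ acts geometrically on a closed convex (hence CAT(0)) subset of $X$, and is therefore itself finitely generated.

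The heart of the argument is the claim $H=Z_{Z_H}$. One inclusion is purely formal: every element of $H$ commutes with every element of $Z_H$, so $H\le Z_{Z_H}$. For the reverse inclusion, take $g\in Z_{Z_H}$. By Lemma~\ref{L:zent}, applied with $Z_H$ playing the role of ``$H$'', we get $\Lambda Z_H\subseteq \fp g$; that is, $g$ fixes every point of $\Lambda Z_H=\fp H$. Since $A\subseteq\fp H$, the element $g$ fixes $A$ pointwise, so $g\in\fp A=H$. Hence $Z_{Z_H}\le H$, and $H=Z_{Z_H}$.

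To finish, apply Lemma~\ref{L:SWE} a second time, now to the finitely generated group $Z_H$: its centralizer $Z_{Z_H}$ is a convex subgroup of $G$. Since $H=Z_{Z_H}$, this shows $\fp A=H$ is convex. The degenerate case $A=\emptyset$, where $\fp A=G$, needs no special treatment, as $G$ acts geometrically on $X$ (and the argument above applies verbatim with $Z_H=Z_G$).

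I expect the only genuine subtlety to be the inclusion $Z_{Z_H}\le H$: one must notice that ``commuting with $Z_H$'' forces ``fixing $A$ pointwise'', which is exactly what Lemma~\ref{L:zent} supplies once combined with the two identities $\fp H=\Lambda Z_H$ and $A\subseteq\fp H$. Beyond this bookkeeping no new geometry is required --- everything geometric is already packaged in Lemmas~\ref{L:SWE} and~\ref{L:zent} and in the observation (from Corollary~\ref{C:min for subgroup}) that convex subgroups are finitely generated.
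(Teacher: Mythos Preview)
Your proof is correct and follows essentially the same strategy as the paper: identify $H=\fp A$ with $Z_{Z_H}$ and invoke Lemma~\ref{L:SWE} for convexity. The only minor difference is in the reverse inclusion $Z_{Z_H}\le H$: the paper uses the algebraic identity $Z_{Z_{Z_H}}=Z_H$ together with the $\fp$ formula from Lemma~\ref{L:SWE} applied to $Z_H$, whereas you obtain it more directly from Lemma~\ref{L:zent}; both routes yield $\fp Z_{Z_H}\supseteq \Lambda Z_H=\fp H\supseteq A$.
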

\begin{proof} Let $H= \fp A$;  by Lemma \ref{L:SWE}, $Z_H$ is a convex subgroup and  $\fp H  = \Lambda Z_H$.   Clearly $H <Z_{Z_H}$ and
by Lemma \ref{L:SWE} ${Z_{Z_H}}$ is a convex subgroup and $$\fp Z_{Z_H} = \Lambda Z_{Z_{Z_H}}.$$  But it is always the case that 
$Z_{Z_{Z_H}} =Z_H$, so $$\fp Z_{Z_H}= \Lambda Z_H = \fp H \supseteq A.$$ So $\fp A =H\subseteq Z_{Z_H} \subseteq \fp A$ and so $H = Z_{Z_H}$.
\end{proof}
This argument also yields:
\begin{Cor}  If $H $ is a finitely generated subgroup of $\fp A$ then $H <Z_{Z_H}< \fp A$ and $Z_{Z_H}$ is convex.
\end{Cor}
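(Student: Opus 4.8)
The plan is to repeat the argument of Lemma~\ref{L:fixfg}, now keeping only the hypothesis $H < \fp A$ rather than $H = \fp A$. Since $H$ is finitely generated, Lemma~\ref{L:SWE} applies to $H$ and gives that $Z_H$ is a convex subgroup of $G$ with $\fp H = \Lambda Z_H$. A convex subgroup acts geometrically on a closed convex subset of $X$ by Corollary~\ref{C:min for subgroup}, hence is finitely generated, so Lemma~\ref{L:SWE} applies again, this time to $Z_H$, and yields that $Z_{Z_H}$ is convex. The inclusion $H < Z_{Z_H}$ holds for free, as every element of $H$ commutes with every element of $Z_H$.

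The one thing left to verify is $Z_{Z_H} < \fp A$. From $H < \fp A$ every $h \in H$ fixes $A$ pointwise, so $A \subseteq \bigcap_{h\in H}\fp h = \fp H = \Lambda Z_H$. Now for each $g \in Z_{Z_H}$, Lemma~\ref{L:zent} applied with the subgroup $Z_H$ in the role of ``$H$'' gives $\Lambda Z_H \subseteq \fp g$, hence $A \subseteq \fp g$, i.e.\ $g \in \fp A$. Therefore $Z_{Z_H} \subseteq \fp A$, and combined with the previous paragraph we get $H < Z_{Z_H} < \fp A$ with $Z_{Z_H}$ convex.

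I do not anticipate a real obstacle: everything reduces to Lemmas~\ref{L:SWE} and \ref{L:zent}, with the only point requiring attention being that convexity forces finite generation (Corollary~\ref{C:min for subgroup}), which is precisely what allows Lemma~\ref{L:SWE} to be applied a second time to $Z_H$. Note that, in contrast to the end of the proof of Lemma~\ref{L:fixfg}, one does not need the identity $Z_{Z_{Z_H}} = Z_H$ here; invoking Lemma~\ref{L:zent} directly on $Z_H$ is slightly more economical.
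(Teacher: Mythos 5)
Your proof is correct and follows the paper's own argument (the paper derives this corollary with the remark ``This argument also yields'' after Lemma \ref{L:fixfg}): apply Lemma \ref{L:SWE} to $H$ and then to the convex, hence finitely generated, subgroup $Z_H$, and conclude that $Z_{Z_H}$ fixes $A$ because $A \subseteq \fp H = \Lambda Z_H$. Your substitution of Lemma \ref{L:zent} for the identity $Z_{Z_{Z_H}} = Z_H$ is an immaterial variation, since the containment $\Lambda Z_H \subseteq \fp g$ for $g \in Z_{Z_H}$ is exactly the content the paper extracts from that identity.
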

\begin{Lem}\label{L: finite cutter} Let $X$ be a $m$-thick continuum $m\ge2$.  Let $H < G$ be an infinite finitely generated group with $\fp H \neq \emptyset$.   Then $$\fp H \cup \Lambda H\subseteq \Lambda \langle  Z_H, Z_{Z_H} \rangle$$ is contained in a join/spherical-join in $\bd X$/ $\bd_T X$. 
If $\Lambda \langle  Z_H , Z_{Z_H}\rangle$ is separated by a finite cut $C$, then  either
\begin{itemize}
\item $H$ is  virtually a subgroup of a $\Z^2$ with $\fp H \cup \Lambda H \subseteq \Lambda \Z^2$   separated by $C$  
\item or there is a hyperbolic $g\in G$ with  $g^\pm \in C$ and $\Lambda \Z_g$ separated by $C$.
Also $H$ is virtually a subgroup of $ \Z_g$  and $Z_H$ is commensurable with $\langle g \rangle$ or $H$ is commensurable with  $\langle g \rangle$ and $Z_H$ is commensurable with $Z_g$. 

\end{itemize}
\end{Lem}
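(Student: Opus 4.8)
The plan is to deduce this from Lemma \ref{L:limfixed join} by applying that lemma not to the pair $(H,Z_H)$ but to the pair $(Z_H,Z_{Z_H})$. First I would record the relevant algebra. Put $K=Z_H$. Since $H$ is finitely generated, Lemma \ref{L:SWE} gives that $K=Z_H$ is a convex subgroup of $G$ with $\fp H=\Lambda K$, and since $\fp H\neq\emptyset$ the group $K$ contains a hyperbolic element; in particular $K$ is infinite, and being convex it is finitely generated. Applying Lemma \ref{L:SWE} once more to $K$, the group $Z_K=Z_{Z_H}$ is again a convex (hence finitely generated) subgroup, and it is infinite because $H\le Z_{Z_H}$ (every element of $H$ commutes with every element of $Z_H$). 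Finally I would invoke the purely group-theoretic identity $Z_{Z_{Z_H}}=Z_H$, so that with $H':=Z_H$ and $K':=Z_{Z_H}$ we have $K'=Z_{H'}$ and $H'=Z_{K'}$: the pair $(H',K')$ therefore satisfies the hypotheses of Lemma \ref{L:limfixed join} (both infinite, both finitely generated, each the centralizer of the other).

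Applying Lemma \ref{L:limfixed join} to $(H',K')=(Z_H,Z_{Z_H})$ produces subsets $\Delta,\Gamma\subseteq\bd X$, with $p$ the rank of the abelian group $Z_H\cap Z_{Z_H}$, such that $\Lambda\langle Z_H,Z_{Z_H}\rangle\subseteq\Delta*_\bd\Gamma*_\bd S^{p-1}$, a topological join in $\bd X$ which is a spherical join in $\bd_T X$. Since $\fp H=\Lambda Z_H$ and $\Lambda H\subseteq\Lambda Z_{Z_H}$ (using $H\le Z_{Z_H}$), both are contained in $\Lambda\langle Z_H,Z_{Z_H}\rangle$, and this proves the first assertion.

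For the second assertion, suppose $C$ is a finite cut of $\Lambda\langle Z_H,Z_{Z_H}\rangle$ and apply part (5) of Lemma \ref{L:limfixed join} to $(Z_H,Z_{Z_H})$. In the first case $Z_H$ and $Z_{Z_H}$ are commensurable and both virtually $\Z^2$, with $\Lambda Z_H=\Lambda Z_{Z_H}$ separated by $C$; then $H\le Z_{Z_H}$ is virtually a subgroup of $\Z^2$ and $\fp H\cup\Lambda H\subseteq\Lambda Z_H=\Lambda Z_{Z_H}=\Lambda\Z^2$ is separated by $C$, giving the first alternative. In the second case there is a hyperbolic $g\in G$ with $g^\pm\in C$, with $\Lambda Z_g$ separated by $C$, and with $\{Z_H,Z_{Z_H}\}$ equal up to commensurability to $\{\langle g\rangle,Z_g\}$ in one of the two orders. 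If $Z_H$ is commensurable with $\langle g\rangle$ and $Z_{Z_H}$ with $Z_g$, then $H\le Z_{Z_H}$ is virtually a subgroup of $Z_g$ and $Z_H$ is commensurable with $\langle g\rangle$. If instead $Z_H$ is commensurable with $Z_g$ and $Z_{Z_H}$ with $\langle g\rangle$, then $H\le Z_{Z_H}$ is virtually a subgroup of the two-ended group $\langle g\rangle$ and, being infinite, is commensurable with $\langle g\rangle$, while $Z_H$ is commensurable with $Z_g$. Either way we land in the second alternative.

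The argument is essentially bookkeeping once the substitution is made; the points that need care are the identity $Z_{Z_{Z_H}}=Z_H$ (so that Lemma \ref{L:limfixed join} genuinely applies to $(Z_H,Z_{Z_H})$), the fact that $Z_H$ is infinite (which is exactly where the hyperbolic element of Lemma \ref{L:SWE} coming from $\fp H\neq\emptyset$ is used), and the passage in the last subcase from ``$H$ is virtually a subgroup of $\langle g\rangle$'' to ``$H$ is commensurable with $\langle g\rangle$'', which uses only that $H$ is infinite and that infinite subgroups of $\Z$ have finite index. I expect no serious obstacle beyond keeping straight which group plays which role.
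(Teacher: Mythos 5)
Your proposal is correct and follows essentially the same route as the paper: both apply Lemma \ref{L:limfixed join} to the pair $(Z_H, Z_{Z_H})$, using Lemma \ref{L:SWE} to see that $Z_H$ is infinite, convex, and finitely generated, together with the identities $H < Z_{Z_H}$ and $Z_H = Z_{Z_{Z_H}}$, and then read off the two alternatives from part (5). Your extra care in the last subcase (upgrading ``virtually a subgroup of $\langle g\rangle$'' to ``commensurable with $\langle g\rangle$'' using that $H$ is infinite) is a detail the paper leaves implicit.
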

\begin{proof}

By Lemma \ref{L:SWE}, $Z_H$ is a convex subgroup and  $\emptyset \neq \fp H  = \Lambda Z_H$.  Thus $Z_H$ is infinite.
Now since $Z_H$ is convex, it is finitely generated.  It follows by Lemma \ref{L:SWE} that  $\fp Z_H = \Lambda Z_{Z_H}$.  It is always the case that  $$H< Z_{Z_H}\text{ and }Z_H= Z_{Z_{Z_H}}.$$
Applying Lemma \ref{L:limfixed join} to $Z_H$ and $Z_{Z_H}$, $$\Lambda \langle H, Z_H\rangle \subseteq \Lambda \langle Z_{Z_H} Z_H\rangle  \subseteq \Delta*_\bd\Gamma*_\bd S^{p-1}$$  where $p$ is the rank of the abelian group $Z_H \cap Z_{Z_H}$.

If $ \Lambda \langle Z_{Z_H} Z_H\rangle$ is separated by a finite cut and either
\begin{itemize} 
\item $Z_H$ and $Z_{Z_H}$ are commensurable and virtually $\Z^2$ in which case $H$ is a virtually a subgroup of this $\Z^2$ 
\item or there is a hyperbolic $g\in G$ with  $g^\pm \in C$ and $\Lambda \Z_g$ separated by $C$.
Either $Z_{Z_H}$ is commensurable with  $Z_g$  (so $H$ is virtually a subgroup of $Z_g$) and $Z_H$ is commensurable with  $\langle g \rangle$ 
or $H$ is commensurable with  $\langle g \rangle$  and  $Z_H$ is commensurable with $Z_g$. 
\end{itemize}
\end{proof}

\begin{Thm}\label{T: torsion fixer} Let $X$ be a proper CAT(0) space and $G$ a group acting geometrically on $X$.  If $H<G$ is infinite torsion then
$H$ doesn't fix a point in $\Lambda H$. (See  \cite{SWE} )
\end{Thm}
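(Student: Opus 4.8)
The plan is to argue by contradiction, so suppose that $H$ is infinite and torsion and that $\xi \in \fp H \cap \Lambda H$. First recall that torsion elements of the isometry group of a complete CAT(0) space are elliptic: a finite group of isometries has bounded orbits, hence a unique circumcenter which it fixes. So each $h\in H$ fixes a point $y_h\in X$. On the other hand $H$ itself cannot fix a point of $X$, since point stabilizers are finite by properness while $H$ is infinite; in particular the $y_h$ do not all coincide, and, again by properness, for any bounded $B\subseteq X$ only finitely many $h\in H$ satisfy $y_h\in B$, i.e. the fixed points escape every bounded set.

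Next I would exploit that $\xi$ is fixed. Let $b_\xi$ be the Busemann function at $\xi$ based at $x_0$. For $h\in H$, since $h(\xi)=\xi$ we have $b_\xi\circ h = b_\xi + c(h)$ for a constant $c(h)$, and $c\colon H\to\R$ is a homomorphism; as $h$ has finite order $m$, $0=c(h^m)=m\,c(h)$, so $c\equiv 0$. Hence $H$ preserves every horosphere centered at $\xi$; in particular the orbit $Hx_0$ lies in the single horosphere $S_0=b_\xi^{-1}(b_\xi(x_0))$, which sits inside the closed convex horoball $HB_0=\{b_\xi\le b_\xi(x_0)\}$. Taking closures in $\bar X$ gives $\Lambda H\subseteq \bd HB_0$, and the boundary of a horoball centered at $\xi$ is contained in the closed Tits ball $\bar B_T(\xi,\tfrac\pi2)$; so in particular $\diam_T\Lambda H\le\pi$.

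Now choose distinct $h_k\in H$ with $h_kx_0\to\xi$ (possible since $\xi\in\Lambda H$); then $d(x_0,h_kx_0)\to\infty$. Passing to a subsequence, $\pi$-convergence (Theorem \ref{piconverge}) yields $n,p\in\bd X$ with $h_k(x)\to p$ whenever $d_T(x,n)\ge\pi$ and $h_k^{-1}(x)\to n$ whenever $d_T(x,p)\ge\pi$, and with $p=\lim h_kx_0=\xi$ and $n=\lim h_k^{-1}x_0\in\Lambda H$ (so $d_T(n,\xi)\le\tfrac\pi2$).

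The final step, which is where the real work lies, is to contradict this picture using the ellipticity of the $h_k$. Let $y_k=y_{h_k}$; by the first paragraph $d(x_0,y_k)\to\infty$, and after a further subsequence $y_k\to q\in\bd X$. Since $h_k$ fixes both $y_k$ and $\xi$ it fixes the ray $[y_k,\xi)$ pointwise, and since it fixes $y_k$ it preserves the angle metric $\angle_{y_k}$ on $\bd X$. A CAT(0) comparison computation, using that $b_\xi(y_k)\le b_\xi(x_0)$ (because $y_k$, the circumcenter of an $H$-orbit contained in $S_0$, lies in the convex set $HB_0$) together with $d(y_k,h_kx_0)=d(y_k,x_0)$, constrains the position of $q$ relative to $\xi$ and to $n$; one then argues that these "rotations" $h_k$ about the escaping centers $y_k$ cannot simultaneously fix $\xi$, push almost all of $\bd X$ toward $\xi$ (as $\pi$-convergence forces, since $p=\xi$), and keep the orbit $Hx_0$ trapped in the horosphere $S_0$ — the concrete obstruction being that this configuration would force some point of a fixed bounded region of $X$ to be moved a bounded amount by infinitely many distinct $h_k$, violating properness. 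Turning the heuristic "a horospherical orbit accumulating at its own center precludes torsion" into this rigorous contradiction is the main obstacle; everything preceding it is routine.
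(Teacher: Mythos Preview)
Your proposal has a genuine gap, and you acknowledge it yourself: the final paragraph is a heuristic, not a proof. The observations preceding it are correct and pleasant --- torsion implies the Busemann cocycle vanishes, so the orbit sits in a single horosphere, and $\Lambda H$ lies in the closed Tits ball $\bar B_T(\xi,\tfrac\pi2)$ --- but these facts do not by themselves produce a contradiction. In particular, $\pi$-convergence gives information only about points at Tits distance $\ge\pi$ from $n$, and since $n\in\Lambda H\subseteq\bar B_T(\xi,\tfrac\pi2)$ there is no tension with $p=\xi$. Your attempt to extract a properness violation from the escaping fixed points $y_k$ and the fixed rays $[y_k,\xi)$ is not made precise, and it is not clear what the ``fixed bounded region moved a bounded amount by infinitely many $h_k$'' would be.

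The paper's argument is entirely different and avoids this difficulty. It proceeds by induction on $\dim\bd X$. Assuming a minimal counterexample and reducing to finitely generated $H$, Lemma~\ref{L:SWE} produces a hyperbolic $g\in Z_H$; then $H<Z_g$ acts on $\min g = Y\times\R$ preserving the product, and being torsion it acts trivially on the $\R$ factor, so $H$ acts on $Y$ with $\Lambda H\subseteq\bd Y$ and $q\in\bd Y$. Since $Z_g/\langle g\rangle$ acts geometrically on $Y$ and $H$ injects into this quotient, one has reproduced the situation with $\dim\bd Y<\dim\bd X$ (by \cite{CH-SW}), contradicting minimality. The key input you are missing is this dimension-reduction via the centralizer; your horoball-and-dynamics approach, while natural, does not seem to close without substantial further ideas.
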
 
\begin{proof}  Suppose not.  We may assume the $\dim \bd X$ is minimal for all such counter examples, and that $H$ fixes the point $q \in \Lambda H$.  There is a bound on the size of a finite subgroup of $G$, so we may assume that $H$ is finitely generated. 
By Lemma \ref{L:SWE}   there is a hyperbolic $g \in  Z_H$.  Thus $ H < Z_g$.  
Now the minset $M_g$ decomposes as a product $Y \times \R$ where $Y$ is a convex subset of $X$ $\R$ "is" an axis of $g$, and $Z_g$ acts geometrically on $M_g$ preserving the product structure by \cite{BRI-HAE}.  Since $H$ is torsion, it doesn't translation in $\R$ and so $H(Y) =Y$.  It follows that $\Lambda H \subseteq \bd Y$, and $q \in \bd Y$.

 Since $\langle g \rangle \triangleleft Z_g$,  by \cite{SWE}  the quotient  $Z_g/\langle g \rangle $ acts geometrically on the quotient $$M_g /\langle g \rangle=
Y \times S^1.$$  Thus $Z_g/\langle g \rangle $ acts geometrically on $Y$.  Since $H$ is torsion $H \cap \langle g\rangle = \{e\}$ and so the projection $$\rho:Z_g \to Z_g/\langle g \rangle $$ restricts to monomorphism on $H$ and we may write $H<  Z_g/\langle g \rangle $. 

By \cite{CH-SW}  $$\dim \bd (M_g) =\dim\bd Y +1$$ and so $\dim \bd Y < \dim \bd X$.   Since $H$ fixes $q \in \bd Y$,  this contradicts the minimality of $\dim \bd X$.  

\end{proof}
\begin{Cor}\label{C: finite limit set} \cite[lemma 16]{PS1}   If $H < G$ and $\Lambda H$ is finite then $H$ is virtually cyclic.  
\end{Cor}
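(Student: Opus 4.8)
The plan is: dispose of the trivial cases, reduce (up to finite index) to an $H$ that fixes $\Lambda H$ pointwise, extract a hyperbolic element $g\in H$, and then analyse $H$ through the parallel set of an axis of $g$, recursing on $|\Lambda H|$. First I would note that if $\Lambda H=\emptyset$ then every $H$-orbit in $X$ is bounded, so $H$ is finite by properness; so assume $\Lambda H$ finite and nonempty and $H$ infinite. Since $\Lambda H$ is finite and $H$-invariant, the kernel $H'$ of $H\to\mathrm{Sym}(\Lambda H)$ has finite index, satisfies $\Lambda H'=\Lambda H$, and fixes $\Lambda H$ pointwise; as being virtually cyclic passes up finite-index overgroups I may replace $H$ by $H'$. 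Now $\fp H\supseteq\Lambda H\neq\emptyset$, so by Theorem~\ref{T: torsion fixer} $H$ is not an infinite torsion group; since elements of $G$ are semisimple and point stabilizers are finite, an infinite-order element of $H$ must be hyperbolic. Fix such a $g$; then $g^{+},g^{-}\in\Lambda\langle g\rangle\subseteq\Lambda H$ are fixed by $H$.

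The rank-one case I would treat first, as it is clean. Suppose some hyperbolic $g_0\in H$ is rank one, so $g_0^{+},g_0^{-}$ are isolated in the Tits metric. Applying $\pi$-convergence (Theorem~\ref{piconverge}) to a sequence of distinct elements of $H$ gives attracting/repelling points $n,p\in\Lambda H$, and since every $g_i$ fixes $\Lambda H$ pointwise, any $x\in\Lambda H$ with $d_T(x,n)\ge\pi$ satisfies $x=g_i x\to p$, i.e.\ $x=p$. Taking $x=g_0^{\pm}$ and using isolation forces $\{n,p\}=\{g_0^{+},g_0^{-}\}$, and then every point of $\Lambda H$ other than $n$ equals $p$; hence $\Lambda H=\{g_0^{+},g_0^{-}\}$ has exactly two points.

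For the general/recursive step: since $H$ fixes $g^{+},g^{-}$ it preserves the parallel set of an axis of $g$, which by \cite{BRI-HAE} splits as $Y\times\R$ with $Y$ closed convex (hence proper CAT(0)); $H$ preserves the splitting and the orientation of the $\R$-factor, yielding homomorphisms $\rho\colon H\to\mathrm{Isom}(Y)$ and $\tau\colon H\to\R$ with $\tau(g)=|g|\neq0$. The kernel $H_1=\ker\tau$ preserves $Y\times\{0\}$, so $\Lambda H_1\subseteq\bd Y$, which avoids $g^{\pm}$; thus $\Lambda H_1\subseteq\Lambda H\setminus\{g^{+},g^{-}\}$ is strictly smaller, and by induction on $|\Lambda H|$ the group $H_1$ is virtually cyclic. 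To finish I would (a) arrange, after replacing $g$ by a power, that $H$ centralizes $g$ — equivalently that $\rho\langle g\rangle$ is finite — so that $H<Z_g$ and, by \cite{SWE}, $Z_g/\langle g\rangle$ acts geometrically on $Y$ with $\rho(H)$ a subgroup whose limit set lies in the finite set $\{\text{projections to }\bd Y\text{ of }\Lambda H\setminus\{g^{\pm}\}\}$; a secondary induction on $\dim\bd X$ (using $\dim\bd Y<\dim\bd X$ when $Y$ is unbounded) then makes $\rho(H)$ virtually cyclic; and (b) combine: $H$ is (virtually cyclic)-by-$\tau(H)$ with $\tau(H)\le\R$, and if either $H_1$ were infinite or $\tau(H)$ had rank $\ge2$, then $H$ would contain a virtually-$\Z^2$ subgroup, forcing $\Lambda H$ to contain the Tits circle $\bd\E^2$ of a flat (flat torus theorem) and hence be infinite — a contradiction; so $H_1$ is finite and $\tau(H)\cong\Z$, whence $H$ is virtually cyclic.

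I expect the main obstacle to be keeping the recursion inside the class of \emph{geometric} actions. The key point needed is that $\rho\langle g\rangle$ is finite (so some power of $g$ is central in $H$); I would try to prove this from discreteness and cocompactness of the action of $G$ on $X$ together with the standard fact that translation lengths of infinite-order elements of $G$ are bounded away from $0$ (only finitely many conjugacy classes lie below any given length). A closely related subtlety is ruling out that $\tau(H)$ is a \emph{dense} rank-one subgroup of $\R$ (equivalently, that $H$ is not finitely generated): here one again uses the translation-length lower bound and the fact that $\Z^2$-subgroups of $G$ have infinite Tits limit set, and one clean workaround is to reduce to finitely generated $H$ at the very outset, which is legitimate since every finitely generated subgroup has limit set contained in $\Lambda H$.
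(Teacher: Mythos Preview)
Your overall shape---pass to finite index so $H$ fixes $\Lambda H$ pointwise, reduce to $H$ finitely generated, produce a hyperbolic element, split off an $\R$-factor, and recurse on dimension---matches the paper's. The divergence, and the source of the ``main obstacle'' you flag, is that you take the hyperbolic element $g$ inside $H$ (via Theorem~\ref{T: torsion fixer}), whereas the paper takes it inside $Z_H$. Since you already have $\fp H\supseteq\Lambda H\neq\emptyset$ and $H$ finitely generated, Lemma~\ref{L:SWE} gives a hyperbolic $g\in Z_H$ directly; then $H<Z_g$ for free, $\min g=Y\times\R$ with $Z_g/\langle g\rangle$ acting \emph{geometrically} on $Y$, and the image $\rho(H)$ has limit set in $\bd Y$ equal to the projection of $\Lambda H\setminus\{g^{\pm}\}$, hence finite. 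One then inducts on $\dim\bd X$ (using $\dim\bd Y<\dim\bd X$). With this choice your separate rank-one case, the induction on $|\Lambda H|$, and the step~(b) analysis all become unnecessary.

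Your proposed fix for the obstacle---translation-length lower bounds---does not obviously work: it shows $\rho(g)$ is elliptic on $Y$ (it fixes the $Y$-coordinate of any axis), but elliptic need not mean finite order absent a geometric action on $Y$, which is precisely what you are trying to arrange. If you insist on starting from $g\in H$, the gap can still be closed by a result you don't cite: since $H$ is finitely generated and fixes $g^{+}$, the description $\st(g^{+})=\bigcup_{n>0}Z_{g^n}$ from \cite[Theorem~8]{SWE} gives $H<Z_{g^n}$ for some $n$; replacing $g$ by $g^n$ puts you back in the paper's situation. Either way, the missing idea is to land $H$ inside a centralizer $Z_g$ so that the quotient $Z_g/\langle g\rangle$ supplies the geometric action on $Y$ needed for the recursion.
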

\begin{proof} We may assume $\Lambda H \neq \emptyset$.   Passing to a finite index subgroup of $H$, we may assume that $H$ fixes $\Lambda H$. 
If every finitely generated subgroup of $H$ is virtually cyclic, then $H$ is virtually cyclic, so we may assume that 
$H $ is finitely generated. By Lemma \ref{L:SWE}  there is a hyperbolic $g \in Z_H$.  Thus $ H < Z_g$.
As above,  $Z_g/\langle g \rangle $ acts geometrically on the quotient $M_g /\langle g \rangle=
Y \times S^1$ or just $Y$.  Let $$\rho: Z_g \to Z_g/\langle g \rangle $$ be the projection map.  
 If $\rho(H)$  is finite then $H$ is clearly virtually cyclic.  

If $\rho(H)$ is infinite, them every point of the limit set of $H$ in $\bd Y$, is the image of a point of $\Lambda H \subseteq \Lambda Z_g$ under the obvious projection from the  suspension  $\sum \bd Y$ minus the suspension points  to $\bd Y$.  Thus the limits set of $H$ in $\bd Y$ is finite.  We now argue by induction on $\dim \bd X$. Since $\dim \bd Y < \dim \bd X$ and the result is clearly true when $\dim \bd X = -1$ (where $G$ itself is finite) we are done.
\end{proof}
\section{Rank 1}
We will show in this section that if $\bd X$ is $m$-thick for $m>2$, and the tree is non-nesting, then $G$ (or $X$) is rank 1.

Let $T$ be the cactus tree of $\bd X$. We will show first that the action of $G$ on $T$ has no fixed points.
We recall that the cactus tree $T$ is an $\R$-tree obtained (via a canonical procedure, see
\cite {PS2}) from the pretree $\mathcal R$ consisting of the inseparable cuts (isolated min cuts) of $\bd X$ and the maximal wheels of $\bd X$.
The pretree $\mathcal R$ might not be median. To obtain $T$ one first turns $\mathcal R$ into a median pretree $\mathcal P$. In particular
if $x,y,z\in \mathcal R$ are such that none of the three separates the two others and $$[x,y]\cap [y,z]\cap [x,z]=\emptyset $$ then there is
a point $M\in \mathcal P$ which is between any two of $x,y,z$. If $M$ is a median point then $M$ corresponds to a subset of $\partial X$ which
we will also denote by $M$. We explain how one obtains this subset. If $M$ is a median point then there are disjoint linearly ordered subsets
$R_i,\, (i\in I)$ of $\mathcal R$ where $|I|\geq 3$ such that $\bigcup _{i\in I}R_i=\mathcal R$ and $M$ is between $R_i,R_j$ for any $i\neq j$ in $\mathcal P$.
Note that since $\mathcal R$ is countable $I$ is countable. For each isolated min cut $x\in R_i$ and for each decomposition
$\bd X=A\cup B$ with $A,B$ continua and $A\cap B=x$ either $A$ or $B$ contains the union $\bigcup _{j\ne i} R_j$. Let's denote
the continuum that contains this union by $S_{x,A,B}$. Then $M$ corresponds to the intersection of all $S_{x,A,B}$. As these sets are continua
and the intersection of any finitely many of them is clearly non empty the intersection of all of them is a non-empty as well. We will
denote this intersection by $M$ as well and we will call these sets the \textit{median sets} of $\bd X$. 

We show now that, more generally, if $K$ is a connected subset of $T$ then there is a subset of $\bd X$ which we will denote also by $K$ which corresponds to it.
Assume first that $K$ is closed with endpoints min cuts $x_i, i\in I$. For each min cut $x$ which either lies in $I$ or does not lie in $K$ and for each decomposition
$\bd X=A\cup B$ with $A,B$ closed and $A\cap B=x$ either $A$ or $B$ contains all min cuts of $K$. Let's denote
the set that contains this union by $S_{x,A,B}$. Then $K$ corresponds to the intersection of all $S_{x,A,B}$. 
More generally a connected set $K\subseteq T$ can be written as an ascending union of sets which have min cuts as endpoints and (possibly)
a union of maximal wheels and median sets which are endpoints of $K$. So again it corresponds to a subset of $\bd X$. 
In the sequel it will be convenient to use this, so we will pass often from subsets of $T$ to subsets of $\bd X$.

Since the action of $G$ on $T$ is non-nesting it is enough to show that $G$ does not fix any points of $\mathcal P$. The points of $\mathcal P$ are
of three types: inseparable min cuts, median sets and maximal wheels.

 We deal now with the first type.
 
\begin{Lem}\label{fixedcut}
$G$ fixes no point of $T$ corresponding to an inseparable min cut.
\end{Lem}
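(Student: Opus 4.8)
The plan is to argue by contradiction, reducing everything to the elementary fact that the suspension of a non-degenerate continuum has no finite cut. Suppose $G$ fixes a point $a \in T$ corresponding to an inseparable min cut $A$ of $\bd X$; since in this section $\bd X$ is $m$-thick with $m > 2$, we have $|A| = m \ge 3$. The canonical embedding $\cR \hookrightarrow T$ of Theorem \ref{tree} is $G$-equivariant and injective, and every homeomorphism of $\bd X$ preserves the crossing relation and hence carries inseparable min cuts to inseparable min cuts; so $g(a) = a$ forces $g(A) = A$ for all $g \in G$. Thus $G$ permutes the finitely many points of $A$, and the pointwise stabilizer $G_0 = \fp A$ has finite index in $G$. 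In particular $G_0$ is infinite and finitely generated, $\Lambda G_0 = \bd X$, and $A \subseteq \fp G_0 \ne \emptyset$.

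First I would feed $H = G_0$ into Lemma \ref{L: finite cutter}. Since $G_0 < Z_{Z_{G_0}}$, we get $\Lambda \langle Z_{G_0}, Z_{Z_{G_0}}\rangle \supseteq \Lambda G_0 = \bd X$, so this set equals $\bd X$ and is separated by the finite cut $A$; hence one of the two alternatives of that lemma holds with $C = A$. In the first alternative $\bd X \subseteq \Lambda \Z^2$, a circle; but a closed connected subset of a circle is a point, an arc, or the whole circle, and none of these is $m$-thick for $m > 2$ — contradiction. In the second alternative there is a hyperbolic $g \in G$ with $\Lambda Z_g$ separated by $A$, and either $G_0$ is commensurable with $\langle g \rangle$, whence $G$ is virtually cyclic and not one-ended — contradiction — or $G_0$ is virtually a subgroup of $Z_g$. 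So a finite-index subgroup of $G$ lies in $Z_g$; thus $Z_g$ has finite index in $G$ and $\Lambda Z_g = \bd X$.

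The heart of the argument is then to identify $\bd X$. By \cite{BRI-HAE} the minset $M_g$ is a closed convex subset of $X$ splitting isometrically as $Y \times \R$, with $Y$ convex and the $\R$-factor an axis of $g$, and $Z_g$ acts geometrically on $M_g$ preserving this product; hence $\bd X = \Lambda Z_g = \bd M_g$ is the suspension of $\bd Y$, with suspension points the axis endpoints $g^\pm$. Since $\langle g \rangle$ is central in $Z_g$ and acts by translation on the $\R$-factor, $Z_g/\langle g\rangle$ acts geometrically on $M_g/\langle g\rangle = Y \times S^1$, hence on $Y$, and $Z_g/\langle g\rangle$ is infinite (else $Z_g$, and so $G$, would be virtually cyclic). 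Therefore $\bd Y \ne \emptyset$, and by \cite{GEA-ONT} there is a geodesic line in $Y$, so $|\bd Y| \ge 2$. If $\bd Y$ were disconnected then $\bd X \setminus \{g^+, g^-\} = \bd Y \times (0,1)$ would be disconnected, so $\{g^+, g^-\}$ would be a $2$-cut of $\bd X$, again contradicting $m > 2$. Hence $\bd Y$ is a non-degenerate continuum.

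To finish, I would prove that the suspension $\Sigma(Y_0)$ of a non-degenerate continuum $Y_0$ has no finite cut; applied to $Y_0 = \bd Y$ this contradicts the existence of the min cut $A$ of $\bd X = \Sigma(\bd Y)$, completing the proof. Given a finite $B \subseteq \Sigma(Y_0)$, discard the at most two points of $B$ at the poles and view the rest inside $Y_0 \times (0,1)$; they occupy finitely many heights $0 < t_1 < \dots < t_k < 1$. Each open slab $Y_0 \times (t_j, t_{j+1})$ is connected and disjoint from $B$, and since $Y_0$ is infinite there is, at each $t_j$, a point $(y_j, t_j) \notin B$ lying in the closure of both adjacent slabs; so consecutive slabs lie in a common connected set, and the union of all slabs (together with the floors $Y_0 \times \{t_j\}$ with $B$ removed, each attaching to an adjacent slab) is connected. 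Thus $(Y_0 \times (0,1)) \setminus B$ is connected, and any pole not in $B$ attaches to the extreme slab, so $\Sigma(Y_0) \setminus B$ is connected. The step I expect to need the most care is the identification $\bd X = \Sigma(\bd Y)$ — matching the minset product structure, checking that $Z_g$ has finite index, and controlling $\bd Y$ — but this is routine given \cite{BRI-HAE} and the material of Section 3; the remaining point-set topology is bookkeeping.
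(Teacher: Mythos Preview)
Your proof is correct, and its overall architecture matches the paper's: reduce to a virtual product with $\Z$, identify $\bd X$ as a suspension, and observe that such a suspension admits no finite cut.

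The difference is in how you obtain the product structure. The paper simply invokes \cite[Lemma~26]{PS1}, which says directly that a CAT(0) group virtually fixing a boundary point is virtually $H\times\Z$; this is a one-line citation. You instead route through the machinery of Section~3, applying Lemma~\ref{L: finite cutter} to $H=G_0=\fp A$ and using $\Lambda G_0=\bd X$ to force the second alternative, then finite index of $Z_g$ in $G$, and finally the minset splitting from \cite{BRI-HAE}. This is longer but more self-contained: it avoids an external black box and exercises the convex-subgroup results the paper has just developed. Your endgame is also more explicit than the paper's --- you spell out the slab argument showing $\Sigma(Y_0)\setminus B$ is connected, whereas the paper compresses this into the single sentence ``in this case $\bd X$ is not separated by a finite set of points.'' One minor point: your appeal to \cite{GEA-ONT} for $|\bd Y|\ge 2$ is a slight detour; it is more direct to note (via \cite{SWE}) that an infinite CAT(0) group contains a hyperbolic element, whose axis endpoints give two boundary points.
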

\begin{proof}

If $G$ leaves an
inseparable min cut invariant, then $G$ virtually fixes each element of the cut.
 By \cite[Lemma 26]{PS1} 
 if $G$ virtually fixes a point of  $\bd X $  then
$G$ is virtually $ H \times \Z$ for some finitely presented group.  Since $\bd X$ is not separated by 2 points $H$ is one ended. However in this case
$\bd X$ is not separated by a finite set of points, a conradiction.

\end{proof}

\begin{Lem} \label{diameter}If the Tits diameter of $\bd X$ is more than $\frac {3\pi} 2$,
then $G$ leaves no median set invariant.
\end{Lem}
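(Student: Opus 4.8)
The plan is to argue by contradiction: suppose $G$ leaves a median set $M$ invariant. Recall that a median set arises from a partition $\bigcup_{i\in I}R_i=\mathcal R$ of the pretree into at least three disjoint linearly ordered pieces, and that $M$ is the intersection of continua $S_{x,A,B}$ over the relevant min cuts $x$ and decompositions. The first step is to extract from this structure either enough linearly ordered min cuts inside $M$ to feed into Theorem \ref{bigtits}, or else a wheel with sufficiently many half-cuts inside $M$ to feed into Corollary \ref{C:bigtits}. Since $G$ is $1$-ended, $\bd X$ is $m$-thick for some $m\ge 2$ (if $m=1$ then $\bd X$ has a cut point and we are in a degenerate case handled separately), and since a median set genuinely separates the pretree into $\ge 3$ branches, each branch contributes min cuts that are linearly ordered among themselves but $M$ lies ``between'' all of them; choosing representatives in two or three branches far enough out we can find a linearly ordered family of at least $8$ min cuts no two of which lie in a common wheel, all of which are separated from each other by the region corresponding to $M$.

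The second step is to conclude, via Theorem \ref{bigtits} (or Corollary \ref{C:bigtits} in the wheel subcase), that there is a subcontinuum $Q$ lying between two of these min cuts — hence contained, up to the subset-of-$\bd X$ identification, in (a neighborhood of) the median set $M$ — with $\rad Q \ge \pi$. Because $M$ is $G$-invariant, the subcontinuum $Q$ can be taken so that its $G$-orbit stays within a bounded Tits neighborhood of $M$, or more simply: let $A\subseteq \bd X$ be the closed $G$-invariant set corresponding to $M$ together with its finitely many frontier points; then $\rad A\ge \rad Q\ge \pi$.

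The third step invokes Lemma \ref{C:Stab}: since $H=G$ stabilizes the closed set $A$ and $\rad A\ge \pi$, we get $\Lambda G\subseteq A$, i.e. $\bd X=\Lambda G\subseteq A$. But $A$ is a proper subcontinuum of $\bd X$ lying ``between'' min cuts with nonempty exterior branches (the other $R_j$'s give points of $\bd X$ outside $A$), a contradiction. Alternatively, and this is the cleaner route matching the statement's hypothesis $\diam_T\bd X>\tfrac{3\pi}2$: since $\diam_T \bd X > \tfrac{3\pi}{2}$, by the dichotomy quoted in the preliminaries $G$ is rank $1$ and $\diam_T\bd X=\infty$; then pick $p\in\bd X$ with $d_T(p,q)>\tfrac{3\pi}2$ for suitable $q\in Q\subseteq M$, apply Lemma \ref{L:finite} to the proper subcontinuum $Y$ corresponding to $M$ (which has finite frontier) to bound $d_T^Y$, and derive a contradiction with $\rad Q\ge\pi$ forcing points of $Q$ at Tits distance $\ge\pi$ from every point of $\bd X$ including points outside $Y$ that are Tits-close to $q$ — the point being that the large-Tits-radius set inside the $G$-invariant bounded set $M$ cannot coexist with the diameter bound structure.

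The main obstacle I expect is the first step: carefully verifying that a $G$-invariant median set really does contain $8$ linearly ordered min cuts no two in a common wheel, as required by Theorem \ref{bigtits}. One must rule out the degenerate possibilities where $M$ sits in a wheel with too few half-cuts, or where the branches $R_i$ are ``short'' (only finitely many, small min cuts near $M$); this is presumably where the $1$-endedness of $G$ and Lemma \ref{fixedcut} (so $G$ fixes no inseparable min cut, hence the branches cannot be trivially short) get used, possibly together with a pigeonhole/co-compactness argument showing the orbit of min cuts accumulating at $M$ is infinite and linearly ordered. Once the requisite linearly ordered family (or large wheel) is produced, the rest is a direct application of Theorem \ref{bigtits}/Corollary \ref{C:bigtits} and Lemma \ref{C:Stab}.
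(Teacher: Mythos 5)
Your proposal has a genuine gap at its central step. Theorem \ref{bigtits} produces a continuum $Q$ \emph{between} two of the linearly ordered min cuts $A_1,\dots,A_8$, and those min cuts necessarily lie in the branches $R_i$ of the pretree adjacent to the median point, not in the median set $M$ itself: $M$ is by definition the intersection of the sides $S_{x,A,B}$ over \emph{all} min cuts in \emph{all} branches, i.e.\ it is what remains after every branch is cut away. A linearly ordered family straddling $M$ therefore yields a $Q$ that contains pieces of two branches and properly contains $M$, rather than being contained in it; so $\rad Q\ge\pi$ tells you nothing about $\rad M$, and Lemma \ref{C:Stab} cannot be applied with $A=\overline{M}$. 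This is not a technicality: the reason the subsequent Theorem \ref{median} is delicate is precisely that a median set need not have Tits radius $\ge\pi$, and there the authors are forced to locate the points at Tits distance $\ge\pi$ from $n$ in components of $T\setminus \{D\}$, i.e.\ \emph{outside} the median set. Your fallback route is also broken: Lemma \ref{L:finite} has the hypothesis that $G$ is \emph{not} rank 1, whereas $\diam_T \bd X>\frac{3\pi}2$ forces $G$ to be rank 1 by the quoted dichotomy, so that lemma is unavailable; and the closing sentence misreads $\rad Q\ge\pi$, which says that for each $z\in\bd X$ \emph{some} point of $Q$ is at Tits distance $\ge\pi$ from $z$, not that points of $Q$ are far from everything.

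The paper's proof is much shorter and uses the rank-1 consequence you correctly identified, but differently. Let $A_1,A_2$ be min cuts with $M$ between them, and write $\bd X - A_i=U_i\cup V_i$ with $M\subseteq V_1\cap V_2$. Since $\diam_T\bd X>\frac{3\pi}2$, $G$ is rank 1, and by Ballmann's theorem (as invoked in Lemma \ref{L: rank1gap}) there is a rank-1 hyperbolic $g$ with $g^-\in U_1$ and $g^+\in U_2$. Because $g$ is rank 1, every point of $\bd X$ other than $g^-$ is at infinite Tits distance from $g^-$, so $\pi$-convergence applies to every point of $M$ and $g^n(M)$ eventually lies in $U_2$, which is disjoint from $V_2\supseteq M$; hence $g^n(M)\neq M$, contradicting invariance. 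No appeal to Theorem \ref{bigtits} is needed, and no lower bound on $\rad M$ is ever claimed.
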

\begin{proof} Let $M$ be a median set and let $A_1,A_2$ be min cuts such that $M$ is between $A_1,A_2$. There  are nonempty disjoint
open subsets $U_i$ and $V_i$ ($i=1,2$) such that $$U_i\cup V_i = \bd X-A_i .$$ We may assume that $M\subseteq V_1\cap V_2$. Choose a
rank 1 hyperbolic element $g \in G$ with $g^- \in U_1$ and $g^+ \in
U_2$.   By $\pi$-convergence, it follows that $g^n(p) \subseteq U_1$  for sufficiently big $n \in \mathbb N$, so $g^n(p)\ne p$.
\end{proof}

%

\begin{Thm}\label{median}
$G$ doesn't leave a median set invariant.
\end{Thm}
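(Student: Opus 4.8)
The statement to prove is Theorem \ref{median}: $G$ does not leave a median set invariant. By Lemma \ref{diameter}, this is already known whenever $\diam_T \bd X > \frac{3\pi}{2}$, so the plan is to reduce to that case — i.e. to rule out the remaining possibility that $G$ is \emph{not} rank 1 (equivalently $\diam_T \bd X < \frac{3\pi}{2}$) while still fixing a median set $M$. So first I would assume for contradiction that $G$ leaves the median set $M$ invariant and that $G$ is not rank 1, so that $\diam_T \bd X < \frac{3\pi}{2}$ and in particular every subcontinuum $Y$ with finite boundary has finite $d_T^Y$ by Lemma \ref{L:finite}.

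The next step is to extract from the median data a linearly ordered family of min cuts along which one of the ``sides'' of $M$ must grow. Recall $M$ being median means $\cR = \bigcup_{i\in I} R_i$ with $|I|\ge 3$, each $R_i$ linearly ordered, and $M$ between $R_i$ and $R_j$ for $i\ne j$. Since $\bd X$ has a finite cut, $\cR$ is infinite (otherwise $T$ would be a finite tree and Lemma \ref{fixedcut} already handles the leaves; more carefully, if all $R_i$ were finite then $T$ would be a tripod-like finite tree and one sees $\bd X$ would be separated by a single finite set in a way that contradicts $G$ being one-ended exactly as in Lemma \ref{fixedcut}). So some $R_i$, say $R_1$, is infinite, and picks out an infinite linearly ordered chain of min cuts $A_1, A_2, A_3, \dots$ converging toward $M$ inside $T$. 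Now invoke Theorem \ref{bigtits} (or Corollary \ref{C:bigtits} if the relevant cuts happen to lie in a wheel): among any $8$ of these linearly ordered min cuts, no two in a common wheel, there is a subcontinuum $Q$ between $A_1$ and $A_8$, separated by some $A_i$ into $Q_1, Q_2$ with $|Q_1\cap Q_2|\ge 2$, hence $\rad Q \ge \pi$. The point is that this $Q$ lies, in $T$-terms, on the segment between $A_1$ and $M$, i.e. on the $R_1$-side of $M$ — so $Q$ is disjoint, as a subset of $\bd X$, from the median set $M$ and from the $R_j$-sides for $j\ne 1$.

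Now I would run a $\pi$-convergence / ping-pong argument exactly as in the proof of Lemma \ref{diameter}, but feeding in the set $Q$ of large Tits radius instead of a single rank-1 axis. Since $M$ is between $R_1$ and $R_2$ we get disjoint decompositions $\bd X - A = U\cup V$ (with $A$ a min cut separating $M$ from the relevant $R_j$) so that $M$ and the $R_2$-side lie in $V$ while $Q$ lies in $U$. Take a sequence of distinct $g_k\in G$ (such exists since $\st M$, being all of $G$, is infinite — $G$ is one-ended, hence infinite). By Theorem \ref{piconverge} pass to a subsequence with attracting/repelling points $p, n\in \bd X$ so that every $x$ with $d_T(x,n)\ge \pi$ satisfies $g_k x\to p$. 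Because $\rad Q\ge \pi$, there is a point $q\in Q$ with $d_T(q,n)\ge \pi$, so $g_k q\to p$; similarly applying it to $g_k^{-1}$ there is $q'\in Q$ with $g_k^{-1}q'\to n$... Actually more cleanly: $\rad Q\ge\pi$ forces $Q$ to meet $\{x: d_T(x,n)\ge\pi\}$, so some point of $Q$ is pushed to $p$; but if infinitely many $g_k$ fixed $M$ and moved everything compatibly with fixing $M$, then since $Q$ is in the complement $U$ of a neighborhood $V\supseteq M$, one shows $p\in \cl U$, whereas playing the symmetric game (the roles of $n,p$ reverse for $g_k^{-1}$) forces a point of $Q$ into $\cl{(\text{the }R_2\text{-side})}$, and these can be arranged to contradict the disjointness of the two sides of $M$. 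The cleanest packaging: since $\rad Q\ge \pi$, Lemma \ref{C:Stab} would apply if $Q$ were $\st Q$-invariant — but here instead I want the ``moving'' version: for any infinite sequence $g_k$, some $g_k$ must move $M$, because $g_k$ cannot simultaneously fix $M$ and respect the constraint that the $\pi$-convergence limit point $p$ of the image of $Q$ lands on the $R_1$-side. The contradiction is that $G$ fixing $M$ means every $g\in G$ preserves the partition of $\bd X\setminus(\text{stuff})$ into the sides $R_1,R_2,\dots$ of $M$, in particular preserves the side containing $Q$; but $\pi$-convergence says the images $g_k q$ accumulate at a single point $p$ regardless, and by choosing the sequence so that $n$ lies on the $R_1$-side (possible since that side, containing $Q$, has infinite Tits diameter... wait, need $G$ large enough to steer $n$).

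**Main obstacle.** The genuinely delicate point is steering the $\pi$-convergence limit points $n,p$ — the conclusion of Theorem \ref{piconverge} only hands us \emph{some} $n,p$ for \emph{some} subsequence, with no control over where they sit. In Lemma \ref{diameter} this control came for free by explicitly choosing a rank-1 hyperbolic $g$ with prescribed endpoints $g^\pm$; here $G$ need not be rank 1, so that tool is unavailable, which is precisely why this case is being handled separately. So the crux will be: show that if $G$ fixes $M$, one can still produce a sequence $(g_k)$ whose $\pi$-limit points $n,p$ are forced onto one fixed side of $M$, deriving the contradiction with $G$ preserving the side-partition. I expect this is done by noting $\st M = G$ acts on the infinite linearly ordered chain $R_1$ (a set of min cuts on one side of $M$), hence on that side $\cS_1\subseteq \bd X$ which is a proper subcontinuum with finite boundary; by Lemma \ref{L:finite} $d_T^{\cS_1}<\infty$, so $\cS_1$ has finite extrinsic Tits radius — but $Q\subseteq \cS_1$ has $\rad Q\ge \pi$, and more to the point $\cS_1$ cannot contain points at Tits distance $\ge\pi$ from \emph{every} point unless... hmm. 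The real resolution, I believe, mirrors the ``median'' argument in \cite{PS1}: one shows the chain $R_1$ must, under the $G$-action, contain cuts $A$ with $gA$ arbitrarily far out (Neumann's lemma on the infinite permutation action of $G$ on the min cuts in $R_1$), producing effectively an infinite-diameter direction and hence, after combining with $\rad Q\ge\pi$ and $\pi$-convergence, an element moving $M$ — contradiction. The technical heart is this permutation-group input plus bookkeeping of which side of $M$ each relevant point lands on.
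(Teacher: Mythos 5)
Your overall frame is right — assume $G$ fixes a median set $D$, use Lemma \ref{diameter} to reduce to finite Tits diameter, and then aim for a contradiction via $\pi$-convergence applied to a set of Tits radius $\ge \pi$ — but the argument as written does not close, and you say so yourself in the ``main obstacle'' paragraph. The missing idea is not Neumann's lemma (that is used in Theorem \ref{rank1}, not here). It is this: since $D$ is $G$-invariant, every $g_i$ \emph{permutes} the components of $T\setminus\{D\}$, so a single point $q$ with $g_i q \to p$ yields no contradiction — the component containing $q$ is simply carried to whichever component contains $p$. What the paper proves (Lemma \ref{two}) is that there are \emph{two} points $x_1,x_2$ lying in \emph{distinct} components of $T\setminus\{D\}$, both at Tits distance $\ge \pi$ from $n$. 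Then $\pi$-convergence forces $g_ix_1\to p$ and $g_ix_2\to p$, so for large $i$ two distinct components are both sent to the component of $p$, contradicting that $g_i$ acts bijectively on components. Your construction produces only one big set $Q$ on one side, and the sketched contradiction (``$p\in\cl U$ whereas \dots forces a point of $Q$ into the $R_2$-side'') does not follow from anything you have established.

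There is also a problem with the tool you use to manufacture $Q$. Theorem \ref{bigtits} needs $8$ linearly ordered min cuts \emph{no two of which lie in the same wheel}, and you have not shown that the chain $R_1$ contains such a configuration: the $R_i$ need not be infinite, and even when $R_1$ is infinite its min cuts may all be separable (living in wheels). The paper avoids this by working instead with simple closed Tits curves through finite cuts: in each component one joins two points of an inseparable min cut $A$ by a shortest Tits path on the far side of $A$ and closes it up on the near side, then applies Corollary \ref{C:cut cicrle} to get a point at distance $\ge\pi$ from $n$; a case analysis (whether the component contains infinitely or finitely many inseparable cuts, and whether the resulting curves re-enter the same component) is then needed to guarantee the two points land in distinct components. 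That case analysis is the technical heart of the paper's proof and is absent from your proposal.
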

\begin{proof}

Let's assume that $G$ leaves invariant a median subset of
$\partial X$, say $D$. Clearly then there is a minimal invariant set $I$ for the
action of $G$ on $\bd X$ that is contained in $D$. Note that by definition $D$ is closed.
By lemma \ref{diameter} $\bd X$ has finite Tits diameter.

$D$ is a point of $T$. Let $R$ be a component of
$T-D$.  We can view $R$ as a subset of $\bd X$ in the appropriate way.  With this viewpoint,  the closure of $R$ in the Tits topology (i.e. the topology defined by $d_T$)
intersects $D$ at at most $m$ points. Indeed note that if $x_1,...,x_n\in R\cap D$ then there are disjoint Tits geodesics $p_1,...,p_n$ joining points of $R$ to $x_1,...,x_n$.
However every point of $R$ is separated from $D$ by some min cut. So there is a single min cut separating the endpoints of $p_1,...,p_n$ that lie in $R$ from $D$. It follows that
$n\leq m$. Since $\bd X$ has finite Tits diameter the closure of $R$ intersects $D$ at at least one point.

Since $G$ does not virtually fix a point the orbit $g\cdot R$ is infinite so $T\setminus D$ has infinitely many components.

%

\begin{figure}[h]
\includegraphics[width=2.5in ]{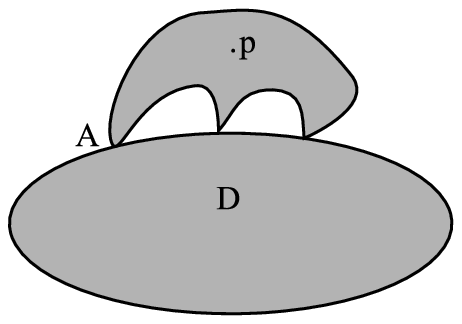}
\vspace{0.1in}
 \caption{}
\end{figure}

Let $R$ be a component of $T\setminus \{D\}$ and let $A$ be an inseparable min cut in $R$.
 Let $p$ be a point of $\bd X$ separated from $D$
by $A$,  and let $g_i\in G$ be such that for some (hence for
all) $x\in X$, $g_i(x)$ converges to $p$. By passing to a
subsequence we may assume that $g_i^{-1}x$ converges to some $n\in
\partial X$.

With this notation we have the following
\begin{Lem}\label{two} There are $x_1,x_2$ lying in distinct components of $T\setminus \{D\}$ such that $d_T(n,x_i)\geq \pi $.
\end{Lem}

\begin{proof} Let $R$ be a component of $T\setminus \{D\}$. We distinguish two cases:

\textit{Case 1}. $R$ contains infinitely many inseparable min cuts. 

Let $A_1,A_2$ be inseparable min cuts lying in $R$. Say that $A_2$ is between $A_1$ and $D$. 

There are continua
$M_i,N_i$ with $$M_i\cap N_i=A_i \text{ and } M_i\cup N_i=\bd X\, (i=1,2).$$ Let's say $D\subseteq M_i$ $(i=1,2)$. Let $Y=M_2\cap N_1$. Then $Y$ is compact
and it has a connected component $Y_1$ such that $|Y_1\cap A_2|\geq 2$.  

Let $w_1$ be a Tits path of minimal length in $Y_1$ joining two points $a,b$ of $A_2$. Let $w_2$ be a Tits path of minimal length in $N_2$ joining $a,b$.
We homotope $w_1\cup w_2$ to a (local) Tits geodesic in $\bd X$
and we denote it by $w$. By Corollary \ref{C:cut cicrle} there is some $x_1\in w$ such that $d_T(n,x_1)\geq \pi $. By $\pi $-convergence $g_ix_1\to p$ so $x_1\notin D$.

Clearly there is a translate $R'=gR$ of $R$ disjoint from $R$. We pick a point $x_2\in R'$ as we did for $R$. In this case the lemma is proven.

\begin{figure}[h]
\includegraphics[width=2.5in ]{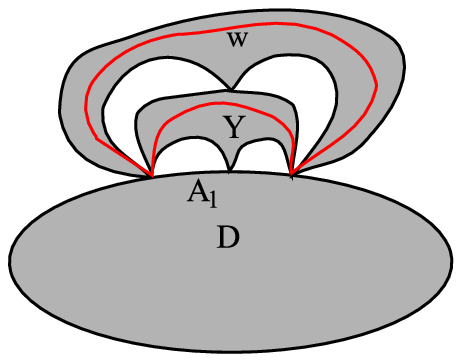}
\vspace{0.1in}
 \caption{}
\end{figure}

\textit{Case 2}. $R$ contains finitely many inseparable min cuts. Then clearly $\bar R \cap D$ contains an inseparable min cut $A$.
There are continua
$M,N$ with $M\cap N=A$ and $M\cup N=\bd X$. Let's say $D\subseteq M$. Let $w_1$ be a Tits geodesic of minimal length in $N$ joining two points of $A$.
Denote these points by $a,b$. There is a Tits path $w_2$ of finite length in $M$ joining $a,b$. 

We homotope $w_1\cup w_2$ to a (local) Tits geodesic in $\bd X$
and we denote it by $w$. By Corollary \ref{C:cut cicrle} there is some $x_1\in w$ such that $d_T(n,x_1)\geq \pi $. By $\pi $-convergence $g_ix_1\to p$ so $x_1\notin D$.
It follows that $x_1$ lies in a component of $T\setminus \{D\}$, say $R'$. Let's say that $w=v_1\cup v_2$ with $v_1=\bar R'\cap w$.

\begin{figure}[h]
\includegraphics[width=3.0in ]{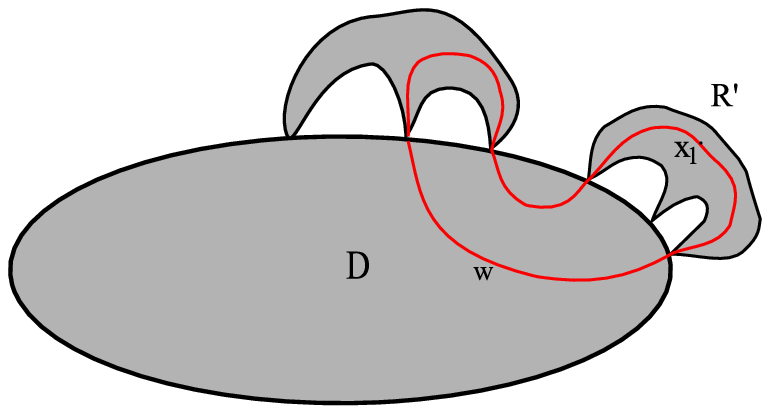}
\vspace{0.1in}
 \caption{}
\end{figure}

We remark that the set $\{gR: g\in G\}$ is infinite as otherwise $G$ virtually fixes a point. Since $w$ has finite Tits length it does not intersect infinitely many translates $gR$. Let $R_1$ be a component of $T\setminus \{D\}$  that does not intersect $w$.
Let $A_1$ be an min cut lying in the closure of $R_1$.
There are continua
$M_1,N_1$ with $M_1\cap N_1=A_1$ and $M_1\cup N_1=\bd X$. Let's say $D\subseteq M_1$. Let $u_1$ be a Tits geodesic of minimal length in $N_1$ joining two points of $A_1$.
Denote these points by $a_1,b_1$. There is a Tits path $u_2$ of finite length in $M$ joining $a_1,b_1$. We homotope $u_1\cup u_2$ to a (local) Tits geodesic in $\bd X$
and we denote it by $u$. Assume that $u$ does not intersect $R'$.
By Corollary \ref{C:cut cicrle} there is some $x_2\in u$ such that $d_T(n,x_2)\geq \pi $ and the lemma is proven.

If $u$ intersects $R'$ we write $$u=u_1'\cup u_2'\text{ with }u_1'=\bar R'\cap u.$$ Note that the endpoints of $u_1'$ lie in $A$. If the endpoints of $u_1'$ are the same as the endpoints of $v_1$
we consider $u'=u_2'\cup v_2$.We homotope it to a (local) Tits geodesic in $\bd X$ which we denote still by $u'$. We note that $u'$ is either a simple closed curve or contains
a simple closed curve that is separated by a min-cut. So by
corollary \ref{C:cut cicrle} there is some $x_2\in u'$ such that $d_T(n,x_2)\geq \pi $. As we argued before $x_2$ does not lie in $D$ and the lemma is proven.

\begin{figure}[h]
\includegraphics[width=3.0in ]{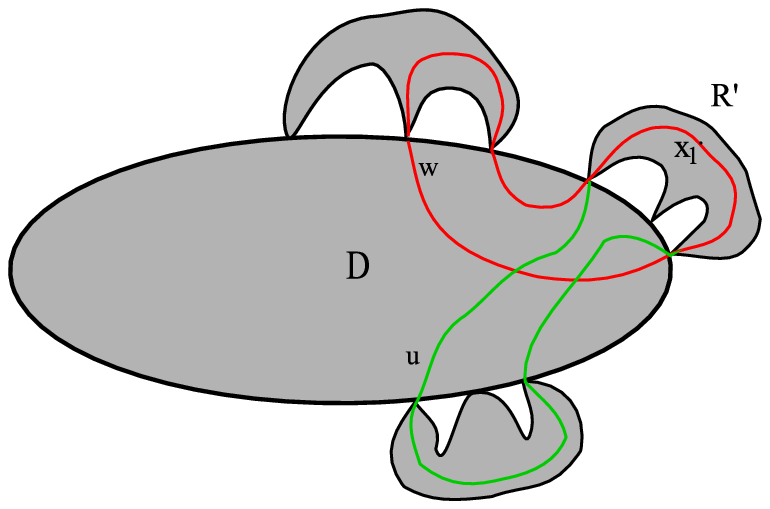}
\vspace{0.1in}
 \caption{}
\end{figure}

If the endpoints of $u_1'$ are not the same as the endpoints of $v_1$ we repeat the procedure starting with a component $R_2$ which does not intersect $w,u$.
In this way we obtain a new non-contractible closed curve as before. Again the lemma is proven unless this curve intersects $R'$ and $A$. Since $A$ is finite this procedure will
eventually produce two arcs $u_1',v_1'$ intersecting distinct components of $T\setminus \{D\}$ with the same endpoints lying in $A$. We pick then as before
$x_2\in u_1'\cup v_1'$ such that $d_T(n,x_2)\geq \pi $ and the lemma is proven.

\end{proof}

Let $x_1,x_2$ be the points given in lemma \ref{two}. Then by $\pi $-convergence $g_ix_1\to p$ and $g_ix_2\to p$. However this is impossible
as $x_1,x_2$ lie in distinct components $R,R_1$ of $T\setminus \{D\}$. If $p$ lies in a component $\tilde R$ of $T\setminus \{D\}$ since 
$g_ix_1\to p$ and $g_ix_2\to p$ for sufficiently large $i$ we have $g_iR'=\tilde R$ and $g_iR_1=\tilde R$ which is clearly impossible.
This proves the theorem.

\end{proof}

\begin{Lem} \label{fixwheel}
$G$ leaves no maximal wheel invariant.
\end{Lem}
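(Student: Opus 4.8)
The plan is to suppose $G$ leaves a maximal wheel $F$ invariant and derive a contradiction, in the spirit of Lemma \ref{diameter} and Theorem \ref{median}. First reduce: as in Lemma \ref{fixedcut}, $G$ cannot virtually fix a point of $\bd X$ (a virtually fixed point makes $G$ virtually $H\times\Z$ with $\bd X$ a suspension of an infinite continuum, which has no finite cut). The center $I$ of $F$ is a finite $G$-invariant set, so $I=\emptyset$ and $m=2k$ with $k\ge 2$. Also $G$ permutes the half-cuts of $F$ preserving their cyclic order, and each half-cut is finite, so $F$ must have infinitely many half-cuts (otherwise a finite index subgroup fixes one setwise, hence fixes a point). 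Consequently $G$ permutes the infinitely many components of $T\setminus\{F\}$, and $\overline F\subsetneq\bd X$ (e.g. by a Baire category argument applied to the petals of a finite sub-wheel, which cover $\bd X$ by non-degenerate continua with pairwise disjoint interiors). Since $\overline F$ is closed and $G$-invariant and $\Lambda G=\bd X$, Lemma \ref{C:Stab} gives $\rad\overline F<\pi$; fix $z_0\in\bd X$ and $r<\pi$ with $\overline F\subseteq B_T(z_0,r)$.

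If $G$ is rank $1$, I would finish as in Lemma \ref{diameter}: the components of $T\setminus\{F\}$ are open in $\bd X$ (each is a side of a min cut), and rank-one hyperbolic endpoints are dense, so there is a rank-one hyperbolic $g\in G$ with $g^{+}$ in one component $R_1$ and $g^{-}$ in another component $R_2$; then $g^{n}z\to g^{+}$ for all $z\neq g^{-}$. Applying this to a point of a third component $R_3$ forces $g^{n}(R_3)=R_1$ for large $n$, and applying the same to $g^{-1}$ and a point of $R_1$ forces $g^{-n}(R_1)=R_2$ for large $n$, so $R_3=g^{-n}(R_1)=R_2$, a contradiction.

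If $G$ is not rank $1$, so $\diam_T\bd X<\frac{3\pi}{2}$, pick disjoint half-cuts $B,C$ of $F$ with infinitely many half-cuts of $F$ strictly between them on each side, let $\bd X=W_1\cup W_2$ be the decomposition by the min cut $B\cup C$, and apply Corollary \ref{C:bigtits} on each side to get continua $Q_j\subseteq W_j$ ($j=1,2$) with $\rad Q_j\ge\pi$ whose frontiers lie in half-cuts of $F$ strictly inside $W_j$, so $Q_j\cap(B\cup C)=\emptyset$. Choose $f_i\in G$ with $f_ix\to z_0$, put $g_i=f_i^{-1}$, and pass to a subsequence so that (Theorem \ref{piconverge}) $g_ix\to p$ and $g_iq\to p$ whenever $d_T(q,z_0)\ge\pi$. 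By the proof of Lemma \ref{L: bigpi} (which underlies Corollary \ref{C:bigtits}), each $Q_j$ contains a point $a_j$ with $d_T(a_j,z_0)\ge\pi$; since $\overline F\subseteq B_T(z_0,r)$ with $r<\pi$, we have $a_j\notin\overline F$, hence $a_j$ lies in a component $R_j$ of $T\setminus\{F\}$, with $R_1\neq R_2$ since $a_1\in W_1\setminus W_2$ and $a_2\in W_2\setminus W_1$. Now $g_ia_1\to p$ and $g_ia_2\to p$, while $g_ia_j$ lies in the component $g_i(R_j)$ and $g_i(R_1)\neq g_i(R_2)$ for every $i$; hence $p$ cannot lie in any component of $T\setminus\{F\}$ (else $g_i(R_1)=g_i(R_2)$ for large $i$), so $p\in\overline F$ and $g_i^{-1}p\in\overline F$ for all $i$. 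Repeating the construction for a second pair $B',C'$ of half-cuts of $F$ with $(B\cup C)\cap(B'\cup C')=\emptyset$ gives, from the same sequence, that $g_i^{-1}p$ lies in $B\cup C$ and in $B'\cup C'$ for all large $i$ --- impossible. The main obstacle is this last step: one must show that $p$ actually lands on the moving min cut $g_i(B\cup C)$ for large $i$ (so that $g_i^{-1}p\in B\cup C$), using that each $g_i$ is a $d_T$-isometry carrying $\overline F$ --- which sits in a Tits ball of radius $<\pi$ --- to itself and carrying $B\cup C$ to a min cut whose two sides contain $g_ia_1$ and $g_ia_2$ respectively.
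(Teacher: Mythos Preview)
Your approach differs substantially from the paper's, and the obstacle you flag at the end is a genuine gap that does not close as you suggest. From $g_ia_1\to p$ and $g_ia_2\to p$ with $a_1,a_2$ on opposite sides of $B\cup C$ you want $p\in g_i(B\cup C)$, but the min cut $g_i(B\cup C)$ moves with $i$; at best you get that $p$ lies in the Hausdorff limit of $g_i(B\cup C)$ (and even that requires argument, since the intersection of Hausdorff limits of $g_i(W_1)$ and $g_i(W_2)$ may strictly exceed the limit of their intersections when $\bd X$ is not locally connected). Nothing forces $p$ to land \emph{on} the finite set $g_i(B\cup C)$ for any given $i$, so $g_i^{-1}p\in B\cup C$ fails and the second pair $B',C'$ yields no contradiction. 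A secondary loose end: Lemma~\ref{C:Stab} gives $\rad\overline F<\pi$ only once $\overline F\neq\bd X$, and your Baire sketch does not establish this --- gaps of an infinite wheel are continua that can have empty interior in $\bd X$, so the half-cuts may well be dense.

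The paper sidesteps both problems by \emph{choosing} $p$ rather than letting it emerge from $\pi$-convergence. Since infinitely many disjoint half-cuts sit in a boundary of finite Tits diameter, one finds half-cuts $B_1,B_2$ with $d_T(B_1,B_2)<\pi/2$ and places $p$ on a shortest Tits geodesic joining them. A \emph{single} continuum $Q$ of Tits radius $\ge\pi$ (Corollary~\ref{C:bigtits}) then supplies two points $s_1,s_2\in Q$ with $d_T(n,s_j)\ge\pi$ and $d_T(s_1,s_2)\ge\pi/2$; $\pi$-convergence forces $g_is_j\to p$, while each $g_i$ is a Tits isometry permuting half-cuts, so $d_T(g_is_1,g_is_2)\ge\pi/2$ persists even as both points are pushed into the Tits-small region between $B_1$ and $B_2$. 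The leverage your argument lacks is this pairing of a strategically placed target $p$ with the Tits-isometry property of the $g_i$.
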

\begin{proof} Assume that $G$ leaves invariant the maximal wheel $W$. If $W$ is finite then $G$ virtually fixes a point. But as we noted
earlier this is impossible. For the same reason the center of $W$ is empty. Since $W$ is infinite it has infinitely many
disjoint half-cuts. 
Since the Tits diameter of $\bd X$ is finite it follows that for any $\epsilon >0$ there are half cuts
of $W$, $A_1,A_2$ such that $d_T(A_1,A_2)<\epsilon $. If we fix a half cut $A$ of $W$ we may define betweeness on the set
of half cuts, we say $A_2$ is  between $A_1,A_3$ if the cut $A_1\cup A_3$ separates $A,A_2$. It follows that given the choice of $A$
the set of half cuts of $W$ can be linearly ordered. We fix now such an order $<$.

Let $A_1<A_2<...<A_{10}$ be half cuts of $W$. Then  there are continua $W_1,W_2$ such that 
$$\bd X=W_1\cup W_2, W_1\cap W_2=A_9\cup A_{10},\, A_i\subseteq W_1,\, i=1,...,8.$$

By corollary \ref{C:bigtits} there is a continuum $Q$ in $W_1$ between $A_1,A_8$, of Tits-radius $\geq \pi$.

 
Let $B_1,B_2$ be half cuts in $W$ with $d_T(B_1,B_2)<\pi/2 $. We pick $p$ in the interior of a shortest (Tits) path joining $B_1$ to $B_2$. Let $g_i\in G$
such that $g_i\to p$. By passing to a subsequence we may assume that $g^{-i}\to n$. There are points $s_1, s_2\in Q$ with $d_T(s_1,s_2)=\pi $
with $$d_T(n,s_1)\geq \pi,\, d_T(n,s_2)\geq \pi .$$ It follows that $g_is_1\to p, g_is_2\to p$. This is however impossible as $g_i$ sends half cuts to half cuts and
$d_T(s_1,s_2)\geq  \pi/2 $.
\end{proof}
We now have the following:
\begin{Cor}\label{nofixed}
$G$ acts on the Cactus tree $T$ of $\bd X$ without fixed points.
\end{Cor}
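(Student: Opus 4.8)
The plan is to assemble the three results just proved. As set up at the start of this section, since $G$ acts non-nestingly on $T$ it suffices to rule out a $G$-fixed point lying in the median pretree $\mathcal P$, and every point of $\mathcal P$ is of exactly one of three kinds: an inseparable (isolated) min cut, a median set, or a maximal wheel. The correspondence between points of $T$ (and of $\mathcal P$) and subsets of $\bd X$ is $G$-equivariant, so a $G$-fixed point of $\mathcal P$ is nothing but a $G$-invariant subset of $\bd X$ of one of these three kinds. First I would dispose of the inseparable min cuts by Lemma \ref{fixedcut}: an invariant one would make $G$ virtually $H\times\Z$ with $H$ one-ended, contradicting the presence of a finite cut in $\bd X$. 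Next, the median sets are excluded by Theorem \ref{median}, and the maximal wheels by Lemma \ref{fixwheel}. Since these three cases exhaust $\mathcal P$, $G$ fixes no point of $\mathcal P$, hence no point of $T$, which is the assertion.

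There is essentially no obstacle left at this stage: all the genuine work --- producing subcontinua of Tits radius $\geq\pi$ between sufficiently many linearly ordered min cuts and inside wheels with many half-cuts (Theorem \ref{bigtits}, Corollary \ref{C:bigtits}), and then using $\pi$-convergence (Theorem \ref{piconverge}) to transport such sets so that two of them cannot share a limit point --- is already contained in the proofs of Theorem \ref{median} and Lemma \ref{fixwheel}. The one point meriting a word of care is the reduction from ``no fixed point of $T$'' to ``no fixed point of $\mathcal P$'', which is where the non-nesting hypothesis enters (through the predensity of $\mathcal P$ in $T$): a fixed point lying in the interior of an arc of $T$ forces a fixed point of $\mathcal P$. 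Granting that, the Corollary is a short case split.
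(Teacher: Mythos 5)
Your proposal is correct and matches the paper's (implicit) argument exactly: the paper sets up at the start of Section 4 that non-nesting reduces the problem to fixed points of the pretree $\mathcal P$, whose points are inseparable min cuts, median sets, and maximal wheels, and then the Corollary follows by combining Lemma \ref{fixedcut}, Theorem \ref{median}, and Lemma \ref{fixwheel}. Nothing further is needed.
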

We will need a lemma from permutation groups that is due to P.M. Neumann:

\begin{Lem}\cite[Lemma 2.3]{Neu}\label{permutation} Suppose $G$ is acting on a set $\Omega $ without virtually fixing a point. Then for any finite $A\subseteq \Omega$ there is some
$g\in G$ such that $gA\cap A=\emptyset $.
\end{Lem}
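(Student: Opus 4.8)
The plan is to prove the contrapositive-flavoured statement directly: assuming that for some finite $A \subseteq \Omega$ every $g \in G$ satisfies $gA \cap A \neq \emptyset$, we produce a point of $\Omega$ that is virtually fixed by $G$. First I would reduce to the case where $A$ has minimal cardinality among all finite subsets with the property ``$gA \cap A \neq \emptyset$ for all $g \in G$''; call this size $n$, and fix such an $A = \{a_1,\dots,a_n\}$. The key structural observation is that the hypothesis forces a strong finiteness condition on the $G$-orbit of $A$: for any $g$, the set $gA$ meets $A$, so if we had infinitely many pairwise disjoint translates $g_1A, g_2A,\dots$ we could not have them all meeting the fixed finite set $A$ — but this is not yet a contradiction, so the real work is in organizing the translates.

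The main step is to consider the ``trace'' structure. For $g \in G$ let $\sigma(g) = gA \cap A$, a nonempty subset of $A$. By minimality of $n$, I would argue that for a suitable choice one can arrange that $\sigma(g) = A$ for a large subgroup, or more precisely analyze the equivalence relation on $G$ given by $g \sim h$ iff $gA = hA$, whose classes are cosets of $\st(A) = \{g : gA = A\}$. The number of distinct translates $gA$ that meet $A$ is bounded: each such translate is determined by choosing, for each $a_i \in A \cap gA$, which element of $A$ it equals, so there are at most $\sum_{k=1}^{n}\binom{n}{k} k^{n}$ (a crude bound) many translates $gA$ in the $G$-orbit of $A$. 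Hence $\st(A)$ has finite index in $G$. Now $\st(A)$ is a subgroup of finite index acting on the finite set $A$; its image in $\mathrm{Sym}(A)$ is finite, so the kernel $K = \{g \in G : g a_i = a_i \text{ for all } i\}$ has finite index in $\st(A)$, hence in $G$. Thus $K$ fixes each $a_i$, so $G$ virtually fixes the point $a_1 \in \Omega$, contradicting the hypothesis.

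The step I expect to be the main obstacle is establishing that the $G$-orbit of $A$ contains only finitely many distinct translates — equivalently that $\st(A)$ has finite index. The naive bound above only shows that the translates $gA$ which \emph{meet} $A$ are finite in number, but a priori the orbit of $A$ could contain translates disjoint from $A$; the hypothesis explicitly forbids exactly those, so in fact \emph{every} translate meets $A$ and the bound applies to the whole orbit. So the obstacle is really just making sure one invokes the hypothesis in the right place: ``$gA \cap A \neq \emptyset$ for all $g \in G$'' says precisely that $G \cdot A$ consists entirely of translates meeting $A$, of which there are boundedly many, giving $[G : \st(A)] < \infty$. Once this is in hand the rest is the routine passage $\st(A) \to \mathrm{Sym}(A)$ with finite-index kernel $K$, and $K$ fixes $a_1$, so $G$ virtually fixes $a_1$.
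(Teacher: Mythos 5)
Your argument breaks at the step you yourself flag as the main obstacle, and it cannot be repaired along the route you propose. Knowing that $gA$ meets $A$, together with the ``intersection pattern'' (which elements of $A$ lie in $gA$ and which $ga_j$ they equal), does \emph{not} determine the set $gA$: the points of $gA$ lying outside $A$ are completely unconstrained, so infinitely many distinct translates share one pattern, and your bound $\sum_k\binom{n}{k}k^n$ counts patterns, not translates. Moreover the failure persists even under your standing hypothesis that every translate meets $A$. Take $\Omega=\{*\}\sqcup\N$ with $G$ the finitary symmetric group of $\N$ acting trivially on $*$, and $A=\{*,1\}$. Every translate $gA=\{*,g(1)\}$ contains $*$ and hence meets $A$, yet the orbit of $A$ is the infinite family $\{\{*,n\}:n\in\N\}$, so $\st (A)$ has infinite index and your chain ``finite orbit of $A$ $\Rightarrow$ $[G:\st (A)]<\infty$ $\Rightarrow$ finite\mbox{-}index pointwise stabilizer'' collapses at the first link. (In this example $G$ does virtually fix a point, namely $*$, so the lemma is not contradicted --- but the point is that the conclusion is not reached via finiteness of the orbit of $A$.)

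What the hypothesis actually yields is finiteness of index for a \emph{point} stabilizer, not for the setwise stabilizer of $A$, and the tool that extracts this is B.~H.~Neumann's covering lemma rather than a counting argument. If $gA\cap A\neq\emptyset$ for every $g\in G$, then for each $g$ there exist $a,b\in A$ with $ga=b$, so $$G=\bigcup_{a,b\in A}\{g\in G: ga=b\},$$ a union of finitely many sets, each of which is either empty or a left coset of the point stabilizer $\st (a)$. By B.~H.~Neumann's lemma, a group covered by finitely many cosets of subgroups is already covered by the cosets of those subgroups of finite index; hence some $\st (a)$ has finite index in $G$, i.e.\ $G$ virtually fixes $a\in\Omega$, contradicting the hypothesis. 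This is essentially the proof of \cite[Lemma 2.3]{Neu}, which the paper only cites; your reduction to minimal $|A|$ and the passage from $\st (A)$ to the kernel of its action on $A$ are not needed once the covering lemma is invoked.
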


\begin{Thm} \label{rank1}
$G$ is rank 1.
\end{Thm}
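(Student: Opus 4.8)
The plan is to build an axis of a hyperbolic element $g \in G$ on the cactus tree $T$ that carries an inseparable min cut $a$ together with a translate $ga$ such that $a \cap ga = \emptyset$; this will force $\mathrm{diam}_T(\bd X) = \infty$, which by Ballmann's dichotomy and the fact that $G$ acts geometrically means $G$ is rank $1$ (recall that for a geometric action, either $G$ is rank $1$ and $\mathrm{diam}_T\bd X = \infty$, or $\mathrm{diam}_T\bd X < \tfrac{3\pi}{2}$). So the whole content is: find such a $g$, $a$.

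First I would recall from Corollary \ref{nofixed} that $G$ acts on $T$ without fixed points; since $T$ is an $\R$-tree and $G$ acts by homeomorphisms without fixing a point, standard pretree/tree arguments give that $G$ acts on $T$ without \emph{virtually} fixing a point either (otherwise a finite-index subgroup fixes a point, and then — via the bounded-fixed-set argument and the fact that $\bd X$ is a continuum with a finite cut — one gets a contradiction exactly as in Lemmas \ref{fixedcut}, \ref{median}, \ref{fixwheel}, since those arguments only used $\pi$-convergence and the structure of $\bd X$, which pass to finite-index subgroups). Actually, more carefully: the point is that there is no $G$-fixed point in $\mathcal{P}$, and since the action is non-nesting a non-nesting action on an $\R$-tree with no fixed point in the median completion has a hyperbolic element — so pick a hyperbolic $g_0 \in G$ acting on $T$ with an axis $\ell$.

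Next, on the axis $\ell$ I want to find an inseparable min cut. The axis $\ell$ is a maximal linearly ordered subset of $T$; its points are of the three types (inseparable min cut, wheel, median set). If $\ell$ contains an inseparable min cut $a$, then since $g_0$ translates $\ell$, the points $g_0^n a$ all lie on $\ell$ and are translated off. Because min cuts correspond to nested decompositions of $\bd X$, for large $n$ the min cut $g_0^n a$ is "far" along $\ell$ from $a$, and I would argue that for some $n$, $a \cap g_0^n a = \emptyset$: if $a \cap g_0^n a \neq \emptyset$ for all $n$, then by finiteness of $a$ (it has $m$ points) and a pigeonhole/stabilization argument, $g_0$ virtually fixes a point of $a$, hence $G$ virtually fixes a point of $\bd X$ (after passing to the finite-index subgroup fixing that point's orbit), contradicting Lemma \ref{fixedcut}'s conclusion that $G$ is not virtually $H \times \Z$. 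This is where P.M. Neumann's Lemma \ref{permutation} comes in: apply it to the action of the stabilizer situation — more precisely, to get from "crossing forever" to "essentially fixing", one applies Neumann's lemma to the orbit of the finite set $a$ under $\langle g_0 \rangle$ or a suitable group, producing a $g$ with $ga \cap a = \emptyset$, and $g$ can be arranged hyperbolic with $a, ga$ on its axis. If instead $\ell$ contains \emph{no} inseparable min cut, then $\ell$ consists of wheels and median sets; here I would use Corollary \ref{C:bigtits}/Theorem \ref{bigtits} to extract a subcontinuum of $\bd X$ between two points of $\ell$ of Tits radius $\geq \pi$, and combine with $\pi$-convergence (as in Lemma \ref{two}/Theorem \ref{median}) to reach a contradiction with $g_0$ having an axis — effectively ruling this case out, or else directly witnessing infinite Tits diameter.

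The main obstacle I expect is the bookkeeping in the case analysis on the type of points appearing on the axis $\ell$, and in particular converting "the min cuts $g_0^n a$ keep intersecting" into "$G$ virtually fixes a point" — this requires the correct application of Neumann's Lemma \ref{permutation} to an appropriate finite set and group so as to produce simultaneously a hyperbolic element and disjoint translated min cuts on its axis, rather than just any element moving the set off itself. Once the disjoint pair $a, ga$ on an axis is in hand, the conclusion $\mathrm{diam}_T \bd X = \infty$ is immediate: the two complementary min-cut decompositions of $\bd X$ associated with $a$ and $ga$ are "independent", so iterating $g$ produces points in $\bd X$ at unbounded Tits distance (since passing through each min cut costs a definite amount of Tits length, as in Lemma \ref{L: bigpi}), and then rank $1$ follows from the geometric-action dichotomy recalled above.
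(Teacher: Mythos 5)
Your overall strategy is the paper's: find a hyperbolic element of $G$ on $T$ whose axis carries an inseparable min cut $A$ and a disjoint translate, deduce $\diam_T\bd X=\iy$, and conclude rank~$1$. But the step you yourself flag as ``the main obstacle'' is precisely the content of the proof, and your sketch of it contains both a faulty deduction and a missing construction. The faulty deduction: from ``$a\cap g_0^n a\neq\emptyset$ for all $n$'' you infer (via pigeonhole) that $g_0$ virtually fixes a point of $\bd X$, and then claim this contradicts the no-virtually-fixed-point property of $G$. It does not: a single element virtually fixing a point of $\bd X$ is perfectly possible (every hyperbolic isometry fixes $g^\pm$), and indeed the case $K=\bigcap_n g_0^n a\neq\emptyset$ is a genuine case that must be handled, not a contradiction. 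Relatedly, Neumann's lemma cannot be applied to ``the orbit of $a$ under $\langle g_0\rangle$'', since $\langle g_0\rangle$ does virtually fix the points of $K$; it must be applied to all of $G$ acting on $\bd X$ (using that $G$ itself does not virtually fix a point, via \cite[Lemma 26]{PS1} as in Lemma \ref{fixedcut}), and then it only hands you \emph{some} $h\in G$ with $hA\cap A=\emptyset$ --- with no control on whether $h$ is hyperbolic on $T$ or whether $A$ and $hA$ lie on a common axis.

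The missing idea is the ping-pong step that converts this $h$ into the desired hyperbolic element. In the paper one sets $a=hg h^{-1}$, which translates along $h\ell$; since every min cut on $\ell$ contains $K$ and every min cut on $h\ell$ contains $hK\subseteq hA$ while $K\cap hA=\emptyset$, the axes $\ell$ and $h\ell$ meet in at most a point. Taking the bridge $[x,y]$ between them and suitable powers $g^r$, $a^m$ pushing $A$ and $hA$ past $x$ and $y$ respectively, the element $b=a^m g^{-r}$ is hyperbolic on $T$ with both $A$ and $hA\in[A,bA]$ on its axis, whence $A\cap bA=\emptyset$ because $A\cap hA=\emptyset$. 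Without this (or an equivalent) construction your argument does not close. A smaller gap: in the first case you should justify why disjointness of $A$ and $g^nA$ forces $d_T(A,g^{nk}A)=k\epsilon\to\iy$ (nestedness of the translates means any Tits path must cross each intermediate disjoint cut), and in the case where $\ell$ carries no inseparable cut your appeal to Theorem \ref{bigtits} is too vague to assess.
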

\begin{proof} 

\begin{figure}[htbp]
\hspace*{-3.3cm}                                                           
   \includegraphics[scale=1.00]{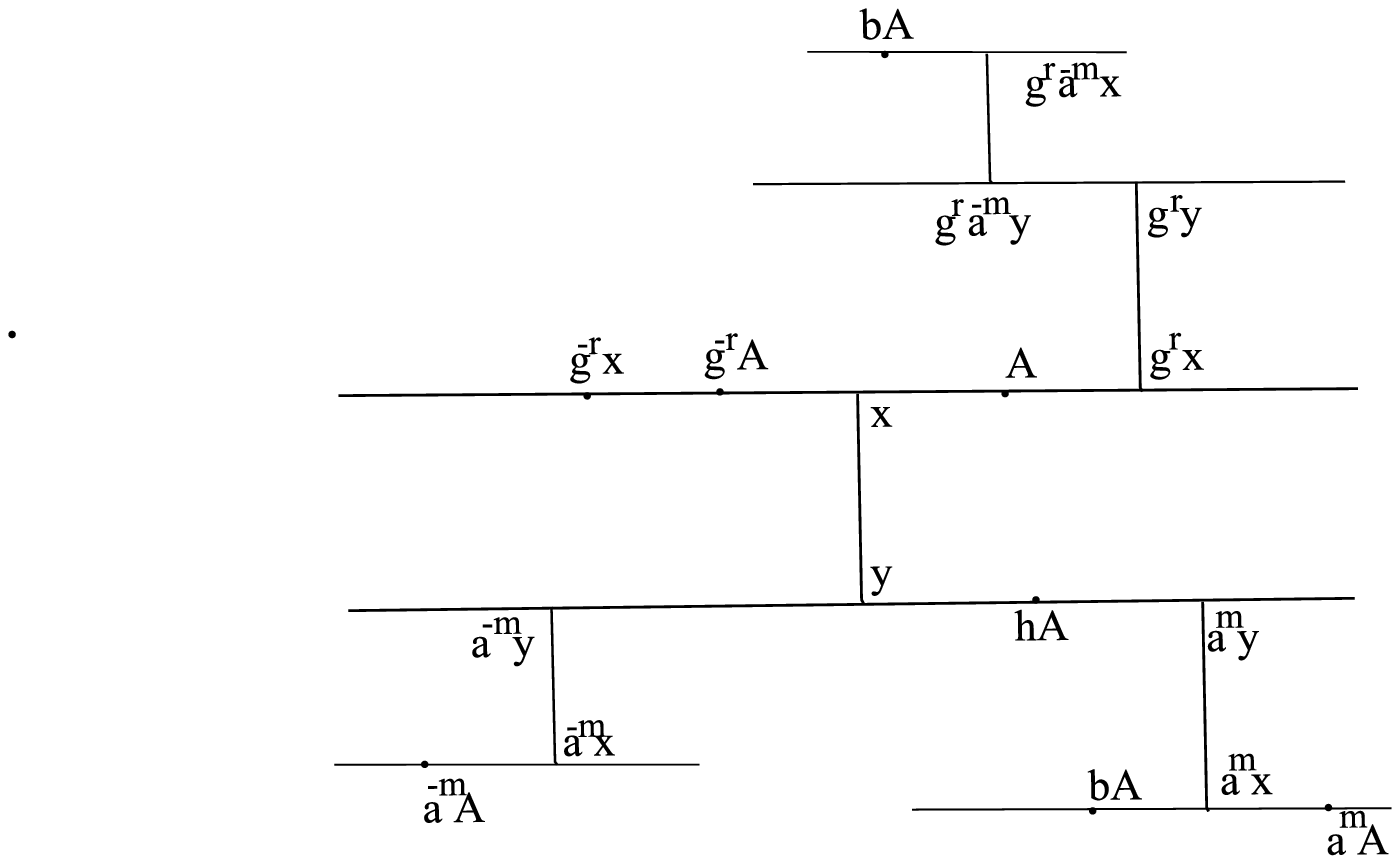}%

  \caption{}
  \label{Tree.eps}
\end{figure}


Let $T$ be the cactus tree of $\partial X$. Since $G$ acts on $T$ without fixed points there is an element $g$ acting
hyperbolically with axis of translation $l$. Let $A$ be an inseparable cut lying on $l$. If for some power $g^n$ we have
$g^nA\cap A=\emptyset $ then $$d_T(A,g^nA)=\epsilon >0$$ and $$d_T(A,g^{nk}A)=k\epsilon $$  for any $k\in \mathbb N$. It follows that
the Tits diameter of $\partial X$ is infinite and $G$ is rank 1. 
%

Otherwise $$K=\bigcap _{n\in \mathbb Z}g^nA\ne \emptyset .$$ By lemma \ref{permutation} there is some $h\in G$ such that $hA\cap A=\emptyset $
and $a=hgh^{-1}$ translates along the axis $hl$. We remark that every min cut in $l$ contains $K$ so every min cut in $hl$ contains $hK\subseteq hA$. Since $K\cap hA=\emptyset $ we have that $l\cap hl$ does not contain a min cut. It follows that $l\cap hl$ is either a single point or empty. Let $p=[x,y]$ be a minimal path on $T$ joining $l$ to $hl$ (possibly $x=y$).
Let $g^r$ be such that $A\in [x,g^rx]$  and $a^m$ be such that $$hA\in [y,a^my].$$ Let  $b=a^mg^{-r}$. Then the intervals $$[b^{-1}A,A],[A,bA]$$
intersect only at $A$ so $b$ is hyperbolic (see picture). Also $hA\in [A,bA] $. It follows that $A\cap bA=\emptyset$ as $A$ and $hA$ are disjoint.
Hence $$d_T(b^s(A),A)=\epsilon >0$$ and we conclude that $G$ is rank 1 as in the first case.

\end{proof}
\section{Stabilizers}
We now assume that $G$ is rank 1 and $G$ acts nontrivially and non-nestingly on the cactus tree $T$.
%
%
%
%
%
%

\begin{Def}
For $W$ a wheel consider the collection $$\cM(W) = \{ M\, \text{continua in a wheel decomposition of  a finite subwheel of }W\}.$$
We order $\cM(W)$ by inclusion and we define a {\em gap} of $W$ to be the intersection of a maximal chain in $\cM(W)$.
Clearly if $W$ is finite, then the gaps of $W$ are just the continua of the wheel decomposition of $W$.  A gap $P$ which contains $B,C \in \cR$ with $B \in (W, C)$ is called {\em nonterminal}.  A  pair of gaps $P \neq Q$ of $W$ are called adjacent if for every finite subwheel $\hat W$ of $W$, $P,Q$ are contained in an adjacent pair of the wheel decomposition of $\hat W$.  (Notice that $P,Q$ may be in the same continua of the wheel decomposition of $\hat W$).  
\end{Def}

\begin{figure}[htbp]
\hspace*{-3.3cm}     
\begin{center}
                                                      
   \includegraphics[scale=0.600]{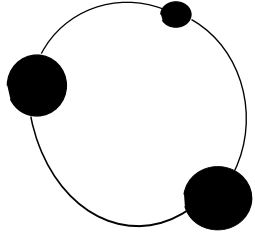}%
   \end{center}

\caption{A wheel with 3 gaps in a 2-thick continuum}
  \label{}
\end{figure}

Clearly when $P$ and $Q$ are adjacent gaps of $W$ then for some finite wheel $\hat W$,  $P \subseteq M_0$ and $Q \subseteq M_1$ for some adjacent pair $M_0,M_1$, It follows that $P \cap Q = M_0\cap M_1$ and so $$|P\cap Q| = k +|I| \ge 2$$ for $m \ge 3$.  Also notice that $\bd X $ is the union of all gaps of $W$.
\begin{Def} For $A$ a minimal  cut of $\bd X$, then $\bd X = M \cup N$ where $N$ and $M$ are continua with $M \cap N = A$.
We call $M,N$ a decomposition of $\bd X$ by $A$.  Clearly  such a decomposition is not unique.  
Let $$\cM(A) = \{ M\, \text{continua in a  decomposition of } \bd X \text{ by } A\}.$$
As before we order $\cM(A)$ by inclusion, and we define a {\em gap} of $A$ to be the intersection of a maximal chain in $\cM(A)$. 
 As before, a gap $M$ which contains $ B,C \in \cR$ with $B \in (A, C)$ is called {\em nonterminal}.  
\end{Def}
For $C$ a wheel or minimal cut, $C \in T$, the cactus tree of $\bd X$ and there is an injective function from the set of components of $T-C$ to the set of gaps of $C$, defined as follows.   If $D$ is a component of $T-C$ and $\cP$ be the cactus pretree of $\bd X$.  The elements of $D \cap \cP$ are not separated by $C$, therefore $D\cap \cP$ is contained in a gap $M$ of $C$, and our function sends $D$ to $M$.  This will be 1 to 1 by \cite{PS2}.

\begin{Lem}\label{L:gap contract} Let $X$ be a $m$-thick continuum $m\ge 2$. If $M$ is either 
\begin{enumerate}
\item the union of adjacent gaps of a wheel and $m>2$,
\item a non terminal gap (of a wheel or  inseparable cut)
\end{enumerate}
 then $\rad M \ge \pi$.
\end{Lem}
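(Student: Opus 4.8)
The plan is to deduce both cases from Corollary~\ref{L: notcontractible}: in each case I will exhibit two subcontinua $A,B\subseteq M$ with $|A\cap B|\ge 2$ together with a decomposition $\bd X=Z_1\cup Z_2$ by a minimal cut $J$ such that $A\subseteq Z_1$ and $B\subseteq Z_2$; since $A\cup B=M$, Corollary~\ref{L: notcontractible} then gives $\rad M\ge\pi$. Throughout I use that gaps (of wheels or of inseparable cuts) are non--degenerate continua, that the union of two distinct half--cuts of a wheel is a minimal cut, and that a cut of the $m$--thick continuum $\bd X$ with exactly $m$ points is automatically minimal; all of this is from \cite{PS2}.

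\emph{Case (1).} Here $M=P\cup Q$ with $P,Q$ adjacent gaps of a wheel $W$ and $m\ge3$. Let $h=P\cap Q$; by the remark preceding the lemma, $h$ is the half--cut shared by $P$ and $Q$, and $|h|\ge2$ precisely because $m\ge3$. Take $J=\bd Q$, the union of the two half--cuts of $W$ adjacent to $Q$; this is a minimal cut, and $\bd X=Q\cup Q'$ is a decomposition by $J$ with $Q'=\overline{\bd X\setminus Q}$ a continuum and $Q\cap Q'=\bd Q$. Moreover $P\subseteq Q'$ since $P\setminus h\subseteq\bd X\setminus Q$ and $\overline{P\setminus h}=P$ ($P$ being a non--degenerate continuum). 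As $h\subseteq\bd Q=J$ we get $P\cap Q\subseteq J$ with $|P\cap Q|\ge2$, so Corollary~\ref{L: notcontractible} applied with $A=P\subseteq Q'$ and $B=Q\subseteq Q$ yields $\rad M=\rad(P\cup Q)\ge\pi$.

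\emph{Case (2).} Here $M$ is a non--terminal gap of some $C_0\in\cR$, so there are $B,C\in\cR$ with $B,C\subseteq M$ and $B\in(C_0,C)$. Put $A_1=\bd M$: if $C_0$ is an inseparable cut then $\bd M=C_0$, while if $C_0$ is a wheel then $\bd M$ is the union of the two half--cuts of $C_0$ adjacent to $M$; since $M$ is non--terminal, in either case $A_1$ is a minimal cut and $\bd X=M\cup N$ is a decomposition by $A_1$ with $N=\overline{\bd X\setminus M}$ a continuum. Associate to $B$ a minimal cut $J\subseteq B\subseteq M$ of $\bd X$: take $J=B$ if $B$ is an inseparable cut, and, if $B$ is a wheel, take $J$ to be a union of two half--cuts of $B$ separating $C_0$ from $C$ (such a pair exists because $B$ lies strictly between $C_0$ and $C$ in the cactus pretree). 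Write $\bd X=Z_1\cup Z_2$ for the resulting decomposition by $A_2:=J$, with $C\subseteq Z_2$. Since $B\ne C_0$ in $\cR$, the minimal cuts $A_1,A_2$ are distinct and do not lie in a common wheel; also $A_2\subseteq M$ and $A_1=\bd M\subseteq Z_1$, as $\bd M$ lies on the $C_0$--side of $B$. Hence Lemma~\ref{L:middle}, applied with $C_1=M$ and $C_2=Z_1$, shows $M\cap Z_1$ is a continuum. By the correspondence between gaps of $C_0$ and components of $T-C_0$ (the discussion preceding this lemma, and \cite{PS2}), the entire side $Z_2$ is contained in $M$, so $M\cap Z_2=Z_2$ is a continuum. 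Now $A:=M\cap Z_1$ and $B':=M\cap Z_2=Z_2$ are subcontinua of $M$ with $A\cup B'=M$, $A\subseteq Z_1$, $B'\subseteq Z_2$, and $A\cap B'=M\cap J=J$ (as $J\subseteq M$), whence $|A\cap B'|=m\ge2$. Corollary~\ref{L: notcontractible} now gives $\rad M\ge\pi$.

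The substance of the proof is in Case (2), and the step that demands genuine care is the combination of Lemma~\ref{L:middle} with the gaps/components dictionary: one must verify that for a non--terminal gap $M$ the frontier $\bd M$ is honestly a minimal cut giving a decomposition $\bd X=M\cup\overline{\bd X\setminus M}$, that $A_1=\bd M$ and $A_2=J$ satisfy the hypotheses of Lemma~\ref{L:middle} (distinct, not in a common wheel, and correctly nested relative to the two decompositions), and that the far side $Z_2$ of the cut at $B$ is swallowed by $M$. Each of these is a routine consequence of the cactus--tree structure of \cite{PS2}, but collectively they are where the real work is; Case (1), by contrast, is essentially immediate once one observes that $|P\cap Q|\ge2$ exactly when $m\ge3$, which is also why Case (1) needs $m>2$ while Case (2) works for all $m\ge2$.
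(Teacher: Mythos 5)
Your Case (1) is essentially the paper's argument: the only content is that adjacent gaps $P,Q$ satisfy $|P\cap Q|\ge 2$ when $m\ge 3$ and are separated by a decomposition coming from the wheel, so Corollary~\ref{L: notcontractible} applies. One small caution: for an infinite wheel a gap need not have "two adjacent half-cuts," and $\bd Q$ need not be a min cut (gaps of the wheel of a circle are single points), so your choice $J=\bd Q$ is not quite right as stated; but since you already have $P\subseteq M_0$ and $Q\subseteq M_1$ for a finite subwheel, taking $J=\bd M_1$ with the decomposition $M_1\cup\overline{\bd X\setminus M_1}$ repairs this immediately, and that is what the paper does.

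Case (2) has a genuine gap, and it sits exactly at the step you dismiss as "a routine consequence of the cactus-tree structure": the claim that for a non-terminal gap $M$ the frontier $\bd M$ is a finite min cut and $M,\ \overline{\bd X\setminus M}$ is a decomposition of $\bd X$ by it. A gap is by definition the intersection of a \emph{maximal chain} of continua from decompositions, and this definition exists precisely because such chains need not stabilize. When they do not, $M$ need not be a side of any decomposition of $\bd X$: the complementary pieces $M_k\setminus M_{k+1}$ are attached to $\bd X$ only along the cut, but they can accumulate onto points of $M$ far from the cut, in which case $M\cap\overline{\bd X\setminus M}$ is infinite and Lemma~\ref{L:middle} cannot be applied with $C_1=M$. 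Nothing in \cite{PS2} or in this paper asserts otherwise, and the paper's own proof is visibly engineered to avoid ever using $\bd M$: it uses non-terminality to produce \emph{two} elements $\hat S,\hat T\in\cR$ inside $M$ with $\hat S\in(W,\hat T)$, takes min cuts $S,T$ there, shows by a direct crossing-min-cuts argument that the region $B=C\cap E$ between $S$ and $T$ is connected, and applies Corollary~\ref{L: notcontractible} to the set $A\cup B$, which is only the part of $M$ beyond $S$; the conclusion $\rad M\ge\pi$ then follows because the extrinsic Tits radius is monotone under inclusion. Your insistence on decomposing all of $M$ (rather than a subcontinuum of it) is what forces you onto the unsupported claim about $\bd M$; replacing your cut $A_1=\bd M$ by a second min cut taken strictly inside $M$ is both necessary and sufficient to fix the argument, and doing so reproduces the paper's proof.
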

\begin{proof} 

 For case (1) say $M = P\cup Q$ where $P \subseteq M_0$ and $Q \subseteq M_1$ with $M_0\dots M_{n-1}$  a wheel decomposition of some finite wheel.  Since $\bd M$ is $m$-thick for $m \ge 3$,  $|P\cap Q| \ge 2$.    By Lemma \ref{L: notcontractible},  $\rad M \ge \pi$.

For case (2) say $W$ is the wheel or inseparable cut of which $M$ is  gap.  Since $M$ is non-terminal, there exists $\hat S,\hat T \in \cR$, and $\hat S,\hat T \subseteq M$ with $\hat S \in (W,\hat T)$.  Take $T$ a minimal cut of $\hat T$.  Now $ T$ decomposes $\bd X$ into continua $A,C$, so $A \cap C = T$ and $A \cup C = \bd X$ with
$W, \hat S \subseteq C$, so $A \subseteq M$.  Similarly $S$ decomposes $\bd X$ into $D,E$ with $W \subseteq D$ (so $E \subseteq M$) and $\hat T \subseteq E$.  Let $B= C \cap E$.  We must show that $B$ is connected. \begin{figure}[htbp]
\hspace*{-3.3cm}    
\begin{center}
                                                       
   \includegraphics[scale=1.00]{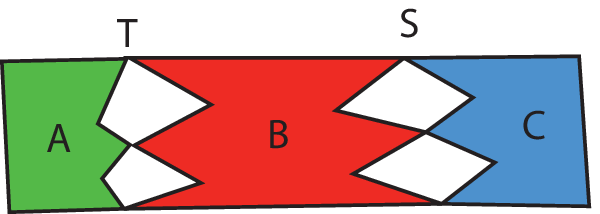}%
\end{center}

  \caption{}
\end{figure} 

 Suppose not, so $B =B_1 \cup B_2$ where $B_1$ and $B_2$ are non-empty disjoint closed subsets of $\bd X$.  Now $$\bd B_1 \cup \bd B_2 \subseteq S\cup T$$ and $$\bd B_1 \cap \bd B_2 = \emptyset .$$  It follows by minimality that $$|\bd B_1|= |\bd B_2|\geq m,$$ also $$\bd B_1 \cup \bd B_2 = S\cup T.$$  Thus $S \cap T = \emptyset$ and so $|\bd B_1|=|\bd B_2| =m$.  Each of the disjoint sets  $$(\bd B_1 \cap S) \cup (\bd B_2 \cap T)$$ and $$(\bd B_2 \cap S) \cup (\bd B_1 \cap T)$$ separates $B_1$ from $B_2$, so these are crossing minimal cuts. It follows that $S,T$ are in the same wheel which is a contradiction.  Thus $B$ is connected.   By Lemma \ref{L: notcontractible}, $\rad (A \cup B)\geq \pi$ so $\rad M \ge \pi$ as well.

\end{proof}

\begin{Lem}\label{L: rank1gap} Suppose that $G$ is rank 1 and non-nesting on the Cactus tree of $\bd X$, and that $\bd X$ is decomposed into continua $Y_1, \dot Y_i$, (so $\bd X= \cup Y_i$ and $Y_i \cap Y_j$ is finite for $i\neq j$) then:
\begin{itemize}
\item  For each $i,j$ there is a rank 1 hyperbolic element $h$ with $h^- \in\,$Int$\, Y_i$ and $h^+ \in\,$Int$\, Y_j$
\item  If $\cup \bd Y_i$ is contained in a wheel, $W$ then $Y_i$ contains a gap corresponding to  a branch of the minimal $G$ invariant subtree of the cactus tree of $\bd X$.
\end{itemize}\end{Lem}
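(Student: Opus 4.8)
The plan is to prove the two assertions in turn.

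For the first assertion, the initial step is to observe that $\Int Y_i\neq\emptyset$ for each $i$. The decomposition being finite and $Y_i\cap Y_j$ finite for $j\neq i$, the set $\bigcup_{j\neq i}Y_j$ is closed and meets $Y_i$ in a finite set $F_i$; since $\bd X=\bigcup_k Y_k$, the set $Y_i\setminus F_i=\bd X\setminus\bigcup_{j\neq i}Y_j$ is a nonempty open subset of $\bd X$ lying in $Y_i$, hence in $\Int Y_i$, and as $W$ is finite $\Int Y_i\setminus W$ is still nonempty and open. I would then invoke the standard fact that, for a rank $1$ group $G$ acting geometrically, the pairs $(g^{+},g^{-})$ with $g$ ranging over the rank $1$ hyperbolic elements of $G$ are dense in $\bd X\times\bd X$ --- this follows from the North--South dynamics of rank $1$ isometries together with a ping-pong argument, and is exactly the property already used in the proof of Lemma \ref{diameter} (see \cite{Gu-S}, \cite{PS1}). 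Picking a point of $\Int Y_i$ and a point of $\Int Y_j$ (distinct ones when $i=j$), together with sufficiently small neighbourhoods, produces the desired rank $1$ hyperbolic $h$ with $h^{-}\in\Int Y_i$ and $h^{+}\in\Int Y_j$; moreover we may arrange that $h^{\pm}\notin W$.

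For the second assertion, assume $\bigcup_i\bd Y_i\subseteq W$ and let $T_0$ be the minimal $G$-invariant subtree of the cactus tree $T$ (it exists because $G$ acts on $T$ without fixed points, Corollary \ref{nofixed}, and --- being rank $1$ --- without a fixed end). First I would check that a rank $1$ hyperbolic $h$ as above with $h^{\pm}\notin W$ fixes no point of $T$: by North--South dynamics $\fp h\cap\bd X=\{h^{+},h^{-}\}\not\subseteq W$, so (as $m>2$) $h$ fixes no min cut of $\bd X$, and inspecting centres shows it fixes no wheel and no median point either; hence $h$ is hyperbolic on $T$ with an axis $l_h$, and $l_h\subseteq T_0$ since $T_0$ is the closure of the union of the axes of hyperbolic elements of $G$. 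Now fix $i$, choose some $j\neq i$, and take such an $h$ with $h^{-}\in\Int Y_j\setminus W$ and $h^{+}\in\Int Y_i\setminus W$. Because $\bd Y_k\subseteq W$ for all $k$, one has $Y_k\setminus W=\Int Y_k$, which is clopen in $\bd X\setminus W$, so each $Y_k$ is a union of closures of quasi-components of $\bd X\setminus W$; from this it follows that the vertex $W$ of $T$ separates $\Int Y_j\setminus W$ from $\Int Y_i\setminus W$, and hence $W\in l_h\subseteq T_0$. The positive ray $l_h^{+}$ then enters a component of $T-W$ which meets $T_0$ in a branch of $T_0$ at $W$; this component corresponds to a gap $P$ of $W$ with $h^{+}\in P$, and since $l_h^{+}$ is an infinite ray $P$ is nonterminal, so by Lemma \ref{L:gap contract} $\rad P\ge\pi$. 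Thus $P$ is a gap of the required kind, and it remains to prove $P\subseteq Y_i$; varying $i$ then gives the statement for every piece.

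The hard part will be precisely this containment $P\subseteq Y_i$. In general a gap $P$ of $W$ can meet several pieces, because $P\setminus W$ need not be connected: removing the (at least two) points of $P\cap W$ may disconnect the continuum $P$, and distinct quasi-components of $\bd X\setminus W$ inside $P$ could lie in different $Y_k$. The plan to overcome this is to argue with the branch $B$ of $T_0$ --- the component of $T_0-W$ --- rather than the ambient gap: the realization of $B$ is the decreasing intersection of the complementary continua $M_n$ of the min cuts $B_n$ lying on $l_h^{+}$, and far out along $l_h^{+}$ one may take $B_n$ disjoint from $W$, so that $M_n$ is a subcontinuum of $\bd X\setminus W$ containing $h^{+}$ and hence contained in the single set $\Int Y_i$; the gap attached to this branch is then forced into $Y_i$. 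So the one genuinely delicate point is the interaction between the gap structure of $W$ furnished by \cite{PS2} and the given decomposition $\{Y_i\}$; everything else is bookkeeping with the cactus pretree of \cite{PS2} together with the rank $1$ dynamics used in the first bullet.
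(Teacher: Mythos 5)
Your first bullet is handled exactly as in the paper: both arguments reduce to the density of the pairs $(g^-,g^+)$ over rank~1 hyperbolic $g\in G$, which is \cite[Theorem 3.4]{BAL}. (One small slip: a wheel need not be finite --- it is only countable, being a union of the min cuts of the countable pretree $\cR$ --- but a countable set still misses a nonempty open subset of a continuum, so your conclusion that one may take $h^\pm\notin W$ survives.)

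For the second bullet your overall strategy coincides with the paper's, but the route you take has a genuine gap at the step ``$h$ fixes no point of $T$.'' Ruling out fixed min cuts via $\fp h=\{h^\pm\}$ is fine for $m>2$, but ``inspecting centres'' does not rule out $h$ stabilizing a wheel: a wheel may have empty centre, and an element can stabilize such a wheel while translating its half-cuts in their linear order --- this is exactly the configuration the paper has to work to exclude in Theorem \ref{T: notsimp}, and it does so only under extra hypotheses and via a counting argument on the boundary of a nested union of gaps. Likewise, Theorem \ref{median} excludes a $G$-invariant median set, not an $h$-invariant one, so the median case is also unaddressed. The paper's proof sidesteps all of this: since $h^-\in\Int Y_i$ and $h^+\in\Int Y_j$, North--South dynamics gives $h^n(W)\subseteq \Int Y_j$ for large $n$, hence (after replacing $h$ by a power) $h^2(Y_j)\subsetneq Y_j$; because the action on $T$ is non-nesting, this single proper inclusion already forces $h$ to be hyperbolic on $T$ with its positive ray running into $Y_j$, and the nested intersection $\bigcap_n h^{2n}(Y_j)\subseteq Y_j$ is what pins the corresponding gap inside $Y_j$. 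You should replace your fixed-point analysis by that argument; note also that your closing claim ``the gap attached to this branch is then forced into $Y_i$'' is asserted rather than proved --- by your own observation a gap of $W$ need not lie in a single piece merely because the realizations $M_n$ of far-out min cuts do, and it is the equivariant nesting of $Y_j$ itself (not of the $M_n$) that closes this last step.
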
 
\begin{proof}The first claim follows from  \cite[Theorem 3.4]{BAL}.  For the second claim, there is a rank 1 hyperbolic element $h$ with $h^- \in\,$Int$\, Y_i$ and $h^+ \in\,$Int$\, Y_j$, and it follows that $h(W ) \subseteq Y_j$.  It follows that $$h^2(Y_j) \subsetneq Y_j,$$ so $h$ is hyperbolic in its action on the cactus tree and its axis runs into $Y_j$ so $Y_j$ contains a gap corresponding to a ray on this axis.
\end{proof}
\begin{Cor}\label{C: rank1gap} Suppose that $G$ is rank 1, acts non-nestingly the cactus tree of $\bd X$, and that $W$ is a wheel or inseparable cut of $\bd X$ with infinitely many gaps. Then the minimal $G$ invariant subtree $\hat T$ of the cactus tree has infinite valence at $W$.

\end{Cor}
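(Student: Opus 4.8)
The plan is to show, for every $n\in\N$, that there are at least $n$ distinct components of $T-W$ each of which contains a ray of the axis of some hyperbolic element of $G$ whose axis passes through $W$. Since every axis lies in the minimal $G$-invariant subtree $\hat T$ (which exists by Corollary \ref{nofixed}), this simultaneously shows $W\in\hat T$ and that $\hat T$ has valence at least $n$ at $W$ (distinct components of $T-W$ meeting $\hat T$ give distinct components of $\hat T-W$, $\hat T$ being a subtree); as $n$ is arbitrary, the valence is infinite. The engine is Lemma \ref{L: rank1gap}: from a decomposition $\bd X=Y_1\cup\dots\cup Y_k$ into continua meeting pairwise in finite sets one extracts rank 1 hyperbolic $h\in G$ with $h^-,h^+$ in the interiors of two prescribed pieces $Y_i,Y_j$; since $Y_i$ and $Y_j$ are separated by $W$ in $T$, the axis of $h$ must pass through $W$ and then run into a gap, and that gap's direction at $W$ meets $\hat T$.

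\textbf{The case that $W$ is a wheel.} Since $W$ has infinitely many gaps it is an infinite wheel, so by \cite{PS2} it admits finite subwheels with arbitrarily many pieces in their wheel decomposition. Fixing $n$, I would pick a finite subwheel of $W$ with pieces $M_0,\dots,M_{k-1}$, $k\ge n$; these form a decomposition $\bd X=M_0\cup\dots\cup M_{k-1}$ with $\bigcup_i\bd M_i$ finite, and the subwheel and $W$ determine the same vertex of $T$. Applying the second claim of Lemma \ref{L: rank1gap} to this decomposition gives, for each $i$, a gap $P_i\subseteq M_i$ whose associated component of $T-W$ contains a ray of an axis through $W$, hence meets $\hat T$. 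These $k$ components are pairwise distinct: any gap is a non-degenerate continuum, whereas two distinct pieces $M_i,M_j$ overlap only in a half-cut or in the center of the subwheel, a finite set, so no gap can lie in two pieces. Thus $T-W$ has at least $k\ge n$ components meeting $\hat T$.

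\textbf{The case that $W$ is an inseparable cut.} Now $W$ is finite, and by \cite{PS2} its gaps are exactly the closures of the quasicomponents of $\bd X-W$, so there are infinitely many such quasicomponents. Fixing $n$, I select quasicomponents $C_1,\dots,C_n$ and set $Y_i=\cl{C_i}$ for $i<n$ and $Y_n=\cl{\bd X-(C_1\cup\dots\cup C_{n-1})}$; using that $\cl{C_\ell}\supseteq W$ for every quasicomponent and that distinct quasicomponents are disjoint open subsets of $\bd X$, one checks the $Y_i$ are continua with nonempty interiors meeting pairwise exactly in $W$, so the first claim of Lemma \ref{L: rank1gap} applies. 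The second claim is \emph{not} available here (an inseparable cut is not contained in a wheel), so I would rerun its proof directly: for $i\ne j$ take rank 1 hyperbolic $h$ with $h^-\in\Int Y_i$, $h^+\in\Int Y_j$; since $W\cap\Int Y_i=\emptyset$ (as every point of $W$ is a limit of a quasicomponent other than $C_i$) the North–South dynamics of $h$ on $\bd X$ forces, after passing to a power, $h(W)\subseteq\Int Y_j$, whence $h(Y_j)\subsetneq Y_j$ and $h$ acts hyperbolically on $T$ with axis through $W$ running into $Y_j$. Varying $j$, the $n$ pairwise distinct components of $T-W$ associated to $\cl{C_1},\dots,\cl{C_n}$ all meet $\hat T$.

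\textbf{Main obstacle.} I expect no conceptual difficulty; the work is bookkeeping with the structural facts of \cite{PS2}. In the wheel case the delicate point is verifying that the gaps furnished by Lemma \ref{L: rank1gap} lie in distinct pieces and therefore index distinct components of $T-W$, which rests on non-degeneracy of gaps against finiteness of half-cuts and centers. In the inseparable case the point is reproving the ``hyperbolic on $T$'' step of Lemma \ref{L: rank1gap} without an ambient wheel, i.e. checking carefully that $h$ pushes the vertex $W$ strictly into the $Y_j$-direction; this uses only $\cl{C_\ell}\supseteq W$ for all quasicomponents and the injectivity of the component-to-gap map recalled before Lemma \ref{L:gap contract}.
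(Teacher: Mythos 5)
Your proof is correct and follows essentially the same route as the paper: the paper's proof is exactly "infinitely many gaps give, for each $n$, a decomposition of $\bd X$ into $n$ continua meeting pairwise in finite subsets of $W$, now apply Lemma \ref{L: rank1gap}." Your only addition is the (justified) observation that the second bullet of Lemma \ref{L: rank1gap} is stated only when $\cup\bd Y_i$ lies in a wheel, so for an inseparable cut you rerun its one-line dynamical argument; the paper elides this but clearly intends the same fix.
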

\begin{proof} By infinitely many gaps, for any $n\in \N$, there are continua $Y_1, \dots Y_n$ such that $\bd X = \cup Y_i$ and $Y_i \cap Y_j$ is a finite subset of  $W$ for each $i,j$.
It follows from lemma \ref{L: rank1gap}, that the valence of $\hat T$ at $W$ is at least $n$.  
\end{proof}
We now generalize orthogonal projection to other angles.
\begin{Lem}\label{L:angles}  Let $L:\R \to X$ be a geodesic line and $\theta \in (0, \pi)$.   We will abuse notation and refer to $L(w)$ as $w$ and to $L(-\iy)$ as $-\iy$.  
If $\phi:X-L \to L$ is defined by $$\phi(z) = \sup \{w \in L : \angle_w(z, -\iy) > \theta\}$$ then $\phi$ is continuous, and $\angle_{\phi(z)}(z, -\iy) \ge \theta$.
\end{Lem}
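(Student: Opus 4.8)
The plan is to exploit the monotonicity of the comparison angle $\bar\angle_w(z,-\iy)$ along $L$ and the basic semicontinuity properties of angles in a CAT(0) space. First I would record the key monotonicity fact: for fixed $z \in X - L$, the function $w \mapsto \angle_w(z,-\iy)$ is non-increasing as $w$ moves in the direction of $-\iy$, equivalently non-decreasing as $w \to +\iy$. This is a standard consequence of the CAT(0) inequality applied to the triangles with vertices $z$ and two points of $L$ (the comparison angle version of convexity of the metric; see \cite{BRI-HAE}). In particular the set $\{w \in L : \angle_w(z,-\iy) > \theta\}$ is an up-closed subset of $L$ (a half-line or all of $L$ or empty), and since $\theta < \pi$ and the angle tends to $\pi$ as $w\to +\iy$ (because $[w,z]$ and $[w,-\iy)$ become nearly opposite), it is a nonempty proper up-closed set, so $\phi(z) = \sup\{w : \angle_w(z,-\iy) > \theta\}$ is a well-defined real number.

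Next I would verify the inequality $\angle_{\phi(z)}(z,-\iy) \ge \theta$. Write $w_0 = \phi(z)$. For every $w < w_0$ we have $\angle_w(z,-\iy) > \theta$ by definition of the supremum and up-closedness. The angle function $w \mapsto \angle_w(z,-\iy)$ is upper semicontinuous in the endpoint $w$ (angles in CAT(0) spaces are upper semicontinuous under limits of geodesics; \cite{BRI-HAE} I.3.2 and the limiting argument for rays), so taking $w \uparrow w_0$ gives $\angle_{w_0}(z,-\iy) \ge \limsup_{w\uparrow w_0}\angle_w(z,-\iy) \ge \theta$. I would handle the degenerate possibility $\phi(z) = -\iy$ (i.e.\ the angle exceeds $\theta$ at every point of $L$): since $\theta < \pi$, one can argue this cannot happen for $z\notin L$, because as $w\to -\iy$ the direction from $w$ to $z$ converges to the direction toward $-\iy$, forcing $\angle_w(z,-\iy)\to 0$; hence the sup is attained over a genuine proper subset and $\phi(z)\in\R$.

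The main work is continuity of $\phi$. Fix $z$ and a sequence $z_n \to z$ in $X - L$, and set $w_n = \phi(z_n)$, $w_0 = \phi(z)$. I would show $\limsup w_n \le w_0$ and $\liminf w_n \ge w_0$ separately. For the first, suppose $w_{n_k} \to w' > w_0$ along a subsequence; pick $w_0 < u < w'$. For large $k$, $w_{n_k} > u$, so by up-closedness $\angle_u(z_{n_k},-\iy) > \theta$; upper semicontinuity of the angle in its \emph{side} arguments ($z_{n_k} \to z$) then gives $\angle_u(z,-\iy) \ge \theta$... wait, we need a strict statement, so instead: for each $k$ choose $u_k$ with $w_0 < u_k < w_{n_k}$ and $u_k \to w_0^+$; then $\angle_{u_k}(z_{n_k},-\iy) > \theta$, and by joint upper semicontinuity of the angle (in the vertex and in the side point simultaneously) we would get $\angle_{w_0}(z,-\iy) \ge \theta$ — consistent, but to get a contradiction from $w' > w_0$ I would instead use that for $u$ strictly between $w_0$ and $w'$ we eventually have $\angle_u(z_{n_k},-\iy)>\theta$ while $\angle_u(z,-\iy) \le \theta$ (strict-ish, from the definition of $w_0$ as the sup), and push this through a careful semicontinuity estimate; the cleanest route is to use that the comparison angles $\bar\angle_w(\cdot,-\iy)$ vary \emph{continuously} in all arguments along geodesics, and that $\angle = \lim$ of these comparison angles is a decreasing (hence continuous-from-one-side) limit, so the level behavior of $\phi$ is controlled. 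For $\liminf w_n \ge w_0$: if $w_{n_k} \to w'' < w_0$, pick $w'' < v < w_0$; then for large $k$, $v > w_{n_k}$, so $\angle_v(z_{n_k},-\iy) \le \theta$ by definition of $w_{n_k}$ as a sup over an up-closed set — but $v < w_0$ forces $\angle_v(z,-\iy) > \theta$, and lower semicontinuity of the angle from below (or again continuity of comparison angles together with the monotone-limit structure) contradicts $\angle_v(z_{n_k},-\iy)\le\theta$ for $z_{n_k}\to z$.

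The hard part, and the one I would expect to be the real obstacle, is the semicontinuity bookkeeping in the continuity argument: the Alexandrov angle is only upper semicontinuous in general, not continuous, so one must be careful about which inequality goes which way, and in particular one needs a genuine \emph{lower} bound on $\angle_v(z_n,-\iy)$ when $v$ is strictly inside the set where the angle at $z$ exceeds $\theta$. The way to resolve this is to work with the \emph{comparison} angles $\bar\angle$, which depend continuously on all data (they are explicit functions of three distances), and to use that $\angle_w(z,-\iy) = \sup_{t}\bar\angle_w(z, L(-t)) = \lim_{t\to\infty}\bar\angle_w(z,L(-t))$ with the limit monotone non-decreasing in $t$ by CAT(0). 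Then: if $\angle_v(z,-\iy) > \theta$ there is a finite $t$ with $\bar\angle_v(z,L(-t)) > \theta$, and by continuity of $\bar\angle$ in the side argument, $\bar\angle_v(z_n,L(-t)) > \theta$ for $n$ large, whence $\angle_v(z_n,-\iy) > \theta$, i.e.\ $\phi(z_n) \ge v$. This gives $\liminf \phi(z_n) \ge w_0$ directly. The reverse inequality $\limsup \phi(z_n) \le w_0$ follows from the already-established fact $\angle_{\phi(z_n)}(z_n,-\iy) \ge \theta$ combined with upper semicontinuity: if $\phi(z_{n_k}) \to w' > w_0$, then $\angle_{w_0'}(z_{n_k},-\iy) \ge \theta$ for a point $w_0 < w_0' < w'$ (by up-closedness, since $\phi(z_{n_k}) > w_0'$ eventually), and upper semicontinuity of the Alexandrov angle under $z_{n_k}\to z$ and fixed vertex $w_0'$ gives $\angle_{w_0'}(z,-\iy) \ge \theta$... which is not yet a contradiction, so one instead takes $w_0 < w_0' < w_0'' < w'$, notes $\angle_{w_0''}(z_{n_k},-\iy)>\theta$ eventually hence $\bar\angle_{w_0''}(z_{n_k},L(-t)) > \theta$ for \emph{some} $t=t_k$, and here is where uniformity must be extracted — bounding $t_k$ using that $\bar\angle_{w_0''}(z,L(-t)) \nearrow \angle_{w_0''}(z,-\iy) \le \theta$ forces, via the monotone convergence and continuity in $z$, that $\bar\angle_{w_0''}(z_{n_k}, L(-s))$ stays $\le \theta + o(1)$ for all bounded $s$, contradicting strict excess. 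Assembling these two one-sided bounds gives $\phi(z_n) \to \phi(z)$, which is the desired continuity.
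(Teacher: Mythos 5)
Your overall plan (monotonicity of $w\mapsto\angle_w(z,-\iy)$ plus a two-sided limit argument for continuity) is the paper's plan, but the step you yourself flag as ``the real obstacle'' is resolved by an identity that is false. You assert $\angle_w(z,-\iy)=\sup_t\bar\angle_w(z,L(-t))=\lim_{t\to\iy}\bar\angle_w(z,L(-t))$. In a CAT(0) space the Alexandrov angle is the \emph{infimum} of comparison angles as \emph{both} side points shrink to the vertex; it satisfies $\angle\le\bar\angle$ pointwise, so with $z$ held fixed and the other point running to infinity the limit of comparison angles is in general \emph{strictly larger} than $\angle_w(z,-\iy)$. Concretely, in $\HH^2$ with $z$ at distance $b$ on the perpendicular to $L$ at $w$ one has $\angle_w(z,-\iy)=\pi/2$ while $\bar\angle_w(z,L(-t))\to\arccos\bigl(-\tfrac{\log\cosh b}{b}\bigr)>\pi/2$. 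Consequently your inference ``$\bar\angle_v(z_n,L(-t))>\theta$, whence $\angle_v(z_n,-\iy)>\theta$'' goes the wrong way, and the proof of $\varliminf\phi(z_n)\ge\phi(z)$ (and the parallel $\varlimsup$ bookkeeping) collapses. The tool that actually works, and that the paper uses, is that for a \emph{fixed} vertex the Alexandrov angle $(x,y)\mapsto\angle_p(x,y)$ is genuinely continuous, not merely semicontinuous \cite[II 3.3]{BRI-HAE}; since $\angle_{w_k}(x,-\iy)=\angle_{w_k}(x,L(-t))$ for any fixed point $L(-t)$ on the ray, continuity in $x$ alone gives $\angle_{w_k}(z_n,-\iy)\to\angle_{w_k}(z,-\iy)$ and hence the lower bound directly.

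Two further problems. First, your monotonicity is reversed: as $w\to+\iy$ the directions from $w$ to $z$ and to $-\iy$ become nearly parallel, so $\angle_w(z,-\iy)\to0$, while the angle tends to $\pi$ as $w\to-\iy$; thus $w\mapsto\angle_w(z,-\iy)$ is non-increasing and $\{w:\angle_w(z,-\iy)>\theta\}$ is an initial segment, not an up-closed set (a nonempty up-closed subset of $\R$ has supremum $+\iy$, which would make $\phi$ undefined as you set it up, and your discussion of the degenerate case inherits the same reversal). Your later displayed inequalities quietly use the correct orientation, but the stated facts are wrong. Second, the $\varlimsup$ direction needs the \emph{strict} decrease of $w\mapsto\angle_w(z,-\iy)$ on the preimage of $(0,\pi)$: one ends up knowing only that this function is identically $\theta$ on an interval above $\phi(z)$, and the contradiction comes from strictness. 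You assert only non-strict monotonicity ``from the CAT(0) inequality''; the strict version requires the rigidity half of the comparison (an angle sum $\ge\pi$ forces a flat, hence here degenerate, triangle), which is the first half of the paper's proof and is absent from yours.
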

\begin{proof} 
Fix $z \in X-L$.   Suppose that for $w> \hat  w\in L=\R$, we have   $$\pi >\angle_w(z,-\iy) \ge \angle_{\hat w}(z,-\iy)>0.$$  
\begin{figure}[htbp]
\hspace*{-3.3cm}  
\begin{center}                                                         
   \includegraphics[scale=1.00]{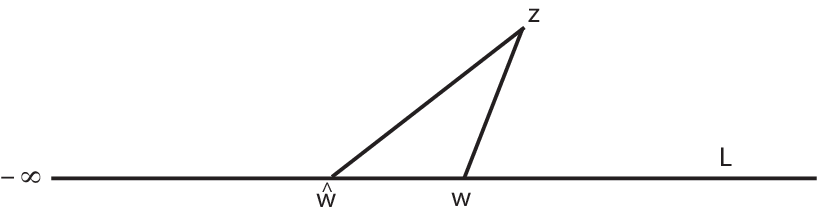}%
\end{center}

  \caption{}
  \label{}
\end{figure}
By the triangle inequality of angles,
$$\angle_{\hat w} (z,w)  \ge \pi-  \angle_{\hat w} (z,-\iy) \ge \pi-  \angle_{w} (z,-\iy) =  \pi-  \angle_{w} (z,\hat w).$$ Thus the angle sum of the triangle $\Delta(z,\hat w,w)$ is at least $\pi$.  So by \cite[II 2.9]{BRI-HAE} $\Delta(z,\hat w,w)$ is a Euclidean triangle.  Also $\angle_z(w,\hat w) =0$, so $\Delta(z,\hat w, w)$ is a line segment.   This contradicts $$\pi > \angle_w(z,\hat w)> 0.$$  We have shown that the function $f: L \to [0,\pi]$ defined by $$f(w)=\angle_w(z,-\iy)$$  is  strictly decreasing on $f^{-1}((0,\pi))$.
A sub-argument of the above shows that $f$ is non-increasing on $[0,\pi]$.  

Recall \cite[II 3.3]{BRI-HAE} that the function $(p,x,y) \to \angle_p(x,y)$ (on $X^3$) is upper semicontinuous, while for fixed $p$, the function $(x,y) \to \angle_p(x,y)$ (on $X^2$) is continuous. By this upper semicontinuity, $$\angle_{\phi(z)}(z, -\iy) \ge \theta .$$  

 Let $z_n \to z $, and let $w= \phi(z)$.  By definition $$\exists  (w_k)\subseteq (-\iy, w) \subseteq L =\R$$  with $$\angle_{w_k}(z,\iy) > \theta$$  for all $k$ and $w_k \to w$.  By continuity, for each  $k$ there exists $N_k$ such that for all $n > N_k$, $\angle_{w_k}(z_n,\iy) > \theta$.  It follows that for $n> N_k$,  $\phi(z_n) \ge w_k$.  Thus $$\varliminf \phi(z_n) \ge w.$$ 

   We now show that $$\varlimsup \phi(z_n) \le w.$$  Suppose not, then there is $\hat w > w$  and a subsequent $(z_{n_i}) \subseteq (z_n)$ with $\phi(z_{n_i}) > \hat w$.
 Thus $$\angle_{\hat w}(z_{n_i}, -\iy) > \theta $$ and by continuity $$f(\hat w)= \angle_{\hat w} (z,-\iy) \ge \theta .$$  Now by definition of $\phi$, $f(y) \le \theta$ for $y\in (w, \hat w]$.
 Since $f$ is non increasing $f(y) = \theta$ on $(w,\hat w]$ contradicting that $f$ is strictly decreasing on $f^{-1}(0,\pi)$.
 
   Thus $\varlimsup \phi(z_n) \le w \le \varliminf \phi(z_n)$ as required.
\end{proof}

We recall that a subset $Y\subseteq X$ \textit{coarsely separates} $X$ if sor some $K>0$, $X\setminus \nb _K(Y)$ has at least 2 connected components
that are not contained in any finite neighborhood of $Y$.
We say $H <G$ has \textit{codimension 1} if $H\cdot x_0$ coarsely separates $X$ for some (all) $x_0$.

\begin{Thm}\label{T:cut pair} Let $G$ be a group acting geometrically on a CAT(0) space $X$ with $G$ rank 1.  Suppose that $G$ doesn't split over a virtually cyclic subgroup.  If $A$ is a separable min cut  of $\bd X$ and $g$ is a hyperbolic element fixing $A$ with $\{g^\pm\} \subseteq A$, then $A= \{g^\pm\}$ and $\langle g\rangle$ is a codimension one subgroup of $G$.
\end{Thm}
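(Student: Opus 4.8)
The plan is to show that a finite neighborhood of an axis $L$ of $g$ coarsely separates $X$, so that $\langle g\rangle$ is codimension one; then the Algebraic Torus Theorem of Dunwoody--Swenson \cite{D-S} forces a splitting of $G$ over a subgroup commensurable with $\langle g\rangle$, which is virtually cyclic, contradicting the hypothesis unless $A=\{g^\pm\}$. Actually the logic runs the other way: we prove directly that $\langle g\rangle$ is codimension one (using the geometry of $X$, not the splitting hypothesis), and separately that $A=\{g^\pm\}$; the ``doesn't split'' hypothesis is what lets us conclude, via \cite{D-S}, that this is the end of the story in the ambient argument. For the theorem as stated I would first establish $A=\{g^\pm\}$, then codimension one.

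\emph{Step 1: $A = \{g^\pm\}$.} Since $A$ is a min cut, write $\bd X = M\cup N$ with $M\cap N = A$ and $M,N$ continua. The element $g$ fixes $A$ setwise and fixes $g^+,g^-$ pointwise; replacing $g$ by a power we may assume $g$ fixes $A$ pointwise. Suppose $A$ contains a point $\beta\notin\{g^\pm\}$. Because $g$ is rank $1$, $d_T(g^+,g^-)=\pi$ and $g^\pm$ are isolated in the Tits metric, so $d_T(\beta,g^\pm)=\infty$; in particular $\beta\ne g^\pm$ genuinely contributes a third point of $A$. Now use $\pi$-convergence (Theorem \ref{piconverge}): choosing the attracting/repelling pair to be $g^+,g^-$, for the sequence $(g^n)$ we get $g^n(x)\to g^+$ for every $x$ with $d_T(x,g^-)\ge\pi$. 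Since $\beta$ is fixed by $g$ but $d_T(\beta,g^-)=\infty\ge\pi$, $\pi$-convergence would force $\beta = g^n\beta \to g^+$, i.e. $\beta = g^+$, a contradiction. (If $g$ only fixes $A$ after passing to a power, replace $g$ by that power throughout; its axis and endpoints are unchanged.) Hence $A=\{g^\pm\}$ and $m=|A|=2$.

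\emph{Step 2: A finite neighborhood of an axis $L$ of $g$ coarsely separates $X$.} This is the geometric heart and the main obstacle. Fix a geodesic line $L$ which is an axis of $g$, with endpoints $g^-=L(-\infty)$ and $g^+=L(+\infty)$. Using the generalized-angle projections of Lemma \ref{L:angles}: pick $\theta\in(0,\pi)$ and form $\phi:X-L\to L$, the ``$\theta$-projection'' onto $L$; by the Lemma $\phi$ is continuous with $\angle_{\phi(z)}(z,-\infty)\ge\theta$ for all $z$, and applying the same construction from the other end of $L$ gives a second continuous projection $\psi$. The idea is that the pair of projections $(\phi,\psi)$, together with which ``side'' of $L$ a point lies on, detects the two components $M,N$ of $\bd X-\{g^\pm\}$ at infinity: a geodesic ray from a basepoint $x_0\in L$ to a point of $\operatorname{Int}M$ eventually leaves every finite neighborhood $\nb(L,K)$ and does so ``on the $M$-side'', and likewise for $N$. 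Concretely, I would argue: (i) the set $\nb(L,K)$ for suitable $K$ contains $L$ and has the property that its complement has components whose closures in $\bar X$ meet $\bd X$ in subsets of $M$ or of $N$ respectively; (ii) a path in $X\setminus\nb(L,K)$ joining a point projecting deep into $\operatorname{Int}M$ to one projecting deep into $\operatorname{Int}N$ would, by continuity of $\phi,\psi$ and compactness, have to pass near $L$, contradiction. Here $\pi$-convergence and Lemma \ref{C:Stab}/Lemma \ref{C: finite limit set} control that neither ``half'' of $X$ cut off by $\nb(L,K)$ is contained in a finite neighborhood of $L$ — since $\operatorname{Int}M$ and $\operatorname{Int}N$ are nonempty and $G$ is rank $1$ with $\diam_T\bd X=\infty$, each half has infinite diameter transverse to $L$. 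This gives two deep components, hence $L$ (equivalently an orbit $\langle g\rangle x_0$, which is at bounded Hausdorff distance from $L$) coarsely separates $X$, so $\langle g\rangle$ has codimension one.

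\emph{Step 3: conclude.} Thus $\langle g\rangle$ is a codimension one virtually-cyclic (indeed infinite cyclic) subgroup of $G$; since $G$ is one-ended, by the Algebraic Torus Theorem \cite{D-S} $G$ splits over a subgroup commensurable with $\langle g\rangle$, hence over a two-ended group — which is exactly the conclusion we want in the ambient theorem, and in the present statement it is the assertion ``$\langle g\rangle$ is a codimension one subgroup of $G$'' that we have proven, together with $A=\{g^\pm\}$ from Step 1. The main difficulty I anticipate is Step 2: making rigorous that the $\theta$-projections of Lemma \ref{L:angles} genuinely separate $X$ coarsely — one must rule out ``going around'' $L$ through high-angle regions, which requires combining the monotonicity/continuity in Lemma \ref{L:angles} with the rank $1$ behaviour of $g$ (the contraction property of rank $1$ axes) to bound how far a path avoiding $\nb(L,K)$ can stray; this is where the proof will need real care rather than formal manipulation.
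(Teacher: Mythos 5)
Your overall architecture --- coarse separation of $X$ by a finite neighborhood of an axis $L$ of $g$ via the $\theta$-projections of Lemma \ref{L:angles}, followed by the Algebraic Torus Theorem --- is the paper's, but Step 1 contains a genuine error that then undermines Step 2. The hypothesis is that $G$ is rank 1, not that $g$ is; these are very different, and in the only nontrivial case $g$ is necessarily \emph{not} rank 1. Indeed, if $A \supsetneq \{g^\pm\}$ then (after passing to a power so that $g$ fixes $A$ pointwise) any $a \in A - \{g^\pm\}$ lies in $\fp g = \Lambda Z_g = \bd \min g$, a spherical suspension with poles $g^\pm$; so $Z_g$ is not virtually cyclic, $g$ is not rank 1, and $a$ lies on a Tits arc of length $\pi$ from $g^-$ to $g^+$ (the boundary of a half-flat attached along $L$), whence $d_T(a,g^\pm) < \pi$. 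Your claim that $d_T(\beta,g^\pm)=\iy$ is therefore false, and $\pi$-convergence applied to $(g^n)$ gives no information about such $\beta$: these are exactly the points excluded by the ``$d_T(x,n)\ge\pi$'' hypothesis of Theorem \ref{piconverge}. (You also misquote Ballmann's dichotomy: rank 1 means $g^\pm$ are isolated, which is the \emph{alternative} to $d_T(g^+,g^-)=\pi$, not its companion.) The conclusion $A=\{g^\pm\}$ cannot be obtained by soft dynamics; in the paper it falls out only at the very end, when one shows that $\bar X - [\nb_M(L)\cup\{g^\pm\}]$ is already disconnected with the two pieces of $\bd X - A$ in different components, so that $\{g^\pm\}$ itself cuts $\bd X$ and minimality of the cut $A$ forces $A=\{g^\pm\}$.

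This failure propagates into Step 2, which as sketched is too soft to close. Because each $a\in A-\{g^\pm\}$ corresponds to a half-flat glued along $L$, a path avoiding $\nb_K(L)$ can travel out such a half-flat and ``go around'' $L$ at bounded angle; the paper handles this by adjoining to $\nb_K(L)$ explicit cone neighborhoods $O_a$ (together with $U$, $V$) pointing toward each element of $A$, first proving by a limiting argument that $\nb_K(L)\cup U\cup V\cup\bigcup O_a$ separates $Z_1$ from $Z_2$, and only afterwards removing the $O_a$. The second missing ingredient is the mechanism of the final contradiction: given nearby points $y_1,y_2$ on an alleged connecting path whose associated rays end in different gaps of $A$, the paper translates by a power $g^k$ to bring $\phi(y_1),\phi(y_2)$ into a fixed fundamental domain of $\langle g\rangle$ acting on $L$, and uses convexity together with $\angle_{\phi(z)}(z,-\iy)\ge\theta$ to keep the translated rays at distance at least $D\sin\theta$ from $L$ and from all the cones, hence in the same component --- contradicting that $g$ preserves each gap. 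Your sketch contains neither the $O_a$ nor the $g^k$-normalization, and these are precisely where the work lies.
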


\begin{proof} By \cite{PS2} the separable min cut $A$ will be a subset of a wheel $W$.

Claim: {\em We may assume $g(M) =M$ for every gap $M$ of $A$}.
Take $\hat W$ to be a finite 
subwheel of $W$ with $A\subseteq \hat W$,and $\hat W$ containing a min cut which crosses $A$.  By uniqueness of the wheel decomposition of $\hat W$ there are only two gaps of $A$, so $g^2(M)=M$ and we replace $g$ by $g^2$ if need be. This completes the claim.

We recall the fact (see \cite{ONT}) that geodesics in $X$ are
`almost extendable' i.e. there is an $E>0$ such that for any $d,e \in X$ 
there is a unit speed geodesic ray $\alpha:[0,\iy) \to X$ with $\alpha(0) =d$ and $d(\alpha(d(e,d)),e) < E$.  
  Since $G$ is rank one, {\bf we can and will always choose $\alpha$ so that $\alpha(\iy)$ is the  endpoint  of a rank 1 hyperbolic element of $G$.} 

Let $\bar X=X\cup \partial X$. Let $L$ be an axis for $g$, and let $x \in L$. 
Notice that since $$A \subseteq \fp g= \Lambda Z_g = \bd \,Min\, g$$ then for each $a\in A$ there is a half-flat $F$ attached along $L$ with $a \in \bd F \subseteq \bd X$.
Thus for any $a \in A$, $ \angle_x(g^-,a) =  d_T(g^-,a)$.
 Let $$\theta = \frac 1 2 \min\limits_{a \in A-\{g^-\}} \angle_x(g^-,a) = \frac 1 2 \min\limits_{a \in A-\{g^-\}} d_T(g^-,a),$$ and note that $0 < \theta \le  \frac \pi 2$.  Let $\tau $ be the translation length of $g$ and choose $D$ with 
$$D \gg \frac{4\tau+ 1 +  E}{\sin  \theta }  .$$

   We consider
the subrays $\alpha_\pm$ of $L$ from $x$ to $g^\pm$ respectively. For any $y \in \bar X$ we define 
$\alpha_y:[0, d(x,y)] \to \bar X$ to be the unit speed geodesic from $x$ to the  point $y$.
We define the following neighborhoods:
\begin{itemize}
\item $ U=\{y\in \bar X: d(\alpha_y(D),\alpha_-(D))<1 \}$
\item $V  =\{y\in \bar X: d(\alpha_y(D),\alpha_+(D))<1 \}$
\item For $a \in A-\{g^\pm\}$    we define
$$ O_a=\{y\in \bar X: d(\alpha_y(D),\alpha_a(D))<1 \}$$
\end{itemize}

Choose nonempty continua $\bar Z_1$, $\bar Z_2$ such that $\bar Z_1\cup \bar Z_2 =\bd X$ and $\bar Z_1 \cap \bar Z_2 =A$.  We have the disjoint open sets $Z_i = \bar Z_i -A$.

We claim that there is a $K>0$ such that a
$\nb_K (L) \cup U \cup V\cup \bigcup\limits_{a \in A-\{g^\pm\}} O_a$ separates $Z_1$ from $Z_2$.

\begin{figure}[htbp]
\hspace*{-3.3cm}  
\begin{center}                                                         
   \includegraphics[scale=0.85]{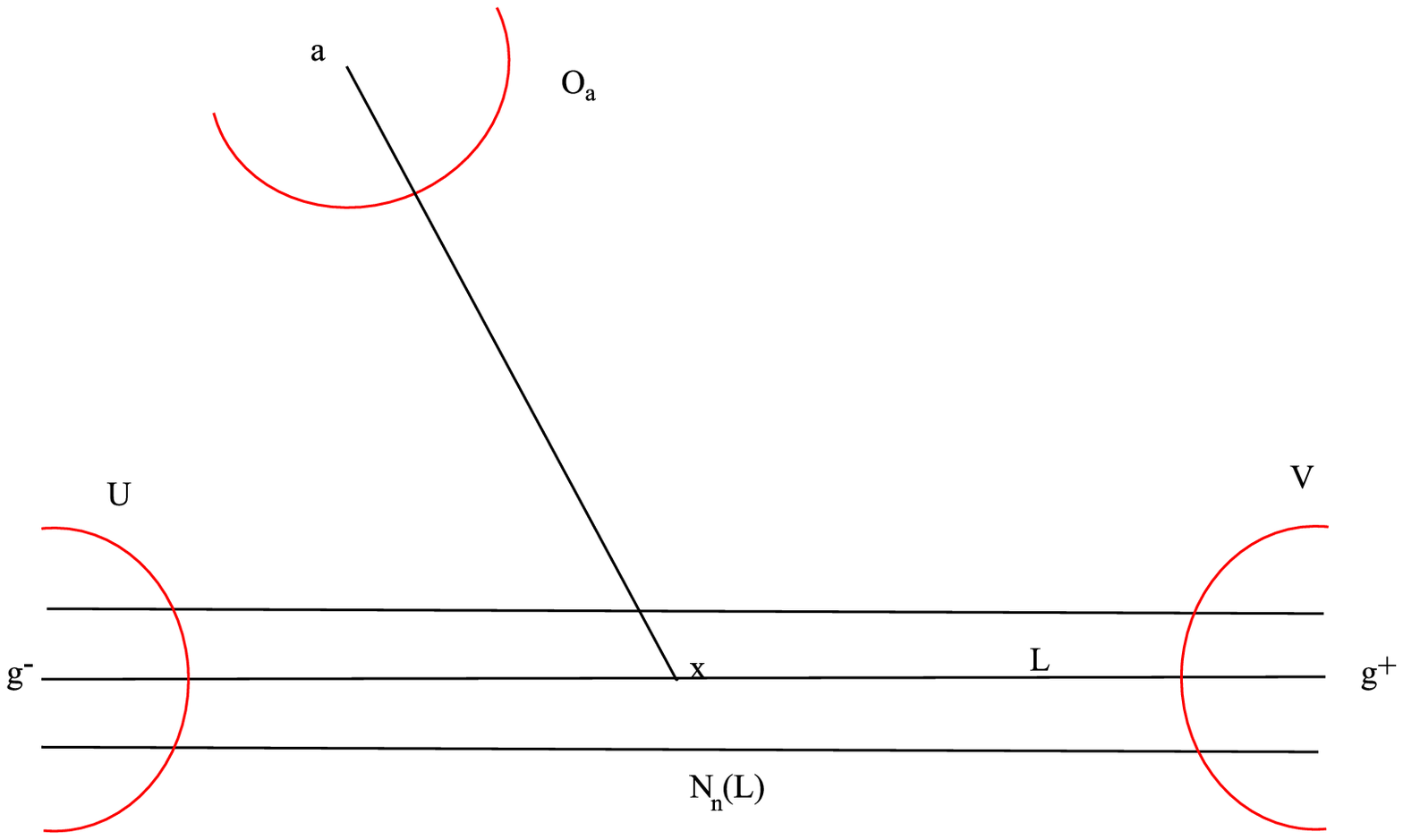}%
\end{center}

  \caption{}
  \label{}
\end{figure}
If not, for any $n$
there is a path $q_n$ in $$\bar X- \left[\nb_n(L)\cup U \cup V\cup \bigcup\limits_{a \in A-\{g^\pm\}} O_a\right]$$ from $Z_1$ to $Z_2$ (in fact we can choose $q_n$ such that 
only the endpoints of $q_n$ are in $\bd X$).  Some subsequence of $(q_n)$ will converge to a continua $Q$ in $\bar X$ from $Z_1$ to $Z_2$.  By construction $Q \subseteq \bd X$ and $A \cap Q = \emptyset$.  This is a contradiction and the claim is proven.

Let $\phi:X-L \to L$ be the function of Lemma \ref{L:angles}, so $$\phi(z) = \sup \{w \in L : \angle_w(z, -\iy) > \theta\}$$ (where we are taking $L=\R$), and  $\phi$ is continuous.

We claim that there is an $M>0$, and disjoint open sets $U_1$, and $U_2$ of $\bar X$ with
\begin{itemize}
\item $U_1 \cup U_2 =\bar X- [\nb_M(L) \cup \{g^\pm\}]$
\item $Z_i \subseteq U_i$ for $i =1,2$.
\end{itemize}
and this claim clearly implies the theorem.

Notice that there are two infinite
rays $r_1,r_2$ from $x$ to $Z_1,Z_2$ respectively with $\angle_x(g^-, r_i) = \theta$ . Indeed we consider rays from $x$ to, say,
points on the continuum $\bar Z_1$. The angle that these rays form with $L$
varies continuously and takes the values $\{0,\pi \}$ at
$\{g^\pm\}$. Therefore, for some such ray $r_1$,  $\angle_x(g^-, r_1) = \theta$.

Given $n\gg D + K $ let $$R_1=r_1(n+1),R_2=r_2(n+1).$$ If
$R_1,R_2$ are not contained in the same component of $X-N_n(L)$
then the claim is proven. Otherwise there is a path $p$ in
$X-N_n(L)$ joining $R_1$ to $R_2$. For every $y\in p- \{R_1,R_2\}$ we consider
$\phi(y)\in L$ and we pick an infinite ray $r_y$ from $\phi(y)$ such
that $$d(r_y(d(x,y)),y)< E$$ and $r_y(\iy)$ is the endpoint of a rank 1 hyperbolic. For $R_1,R_2$ we choose the corresponding
rays to be $r_1,r_2$ respectively. Clearly there are $y_1,y_2\in
p- \{R_1,R_2\}$ such that $$d(\phi(y_1),\phi(y_2)),  d(y_1,y_2) \ll 1$$   with $r_{y_1} (\iy) \in Z_1$ and $r_{y_2}(\iy) \in Z_2$.  

For $i=1,2$, let $M_i$ be the gap of $A$ with $r_{y_i}(\iy) \in M_i$.  Since $M_i -A \subseteq Z_i$,  $M_1 \neq M_2$.  By the previous claim, $g(M_i) = M_i$.

 Since $g$ acts by translation  on $L$ we may choose $k \in \Z$ with 
 \begin{itemize}
\item $d(g^k(\phi(y_1)),x), d(g^k(\phi(y_2)),x)<2\tau$,
\item $g^k(\phi(y_1), g^k(\phi(y_2) \in (-\iy,x) \subseteq L$.  
\end{itemize}
Notice that $$g^k(\phi(y_i)) = \phi(g^k(y_i))$$ since $g$ acts by isometries on $X$ and $L$.  
By the first claim, $g$ leaves $M_i$ invariant 
so $$g^k(r_{y_1}(\iy))\in M_1-A \subseteq Z_1$$ and $$g^k(r_{y_2}(\iy))\in M_2-A \subseteq Z_2.$$    

To arrive at our contradiction, it suffices to show that the ends of $g^k(r_{y_1})$ and $g^k(r_{y_2})$ are contained in the same component of 
$$X- \left[\nb_n(L)\cup U \cup V\cup \bigcup\limits_{a \in A-\{g^\pm\}} O_a\right]$$
For any ray $\alpha:[0,\iy)\to X$ with $\alpha(0) =x$, 
 if $$d(\alpha(D), \alpha_-(D))<1,\text{ then }\alpha([D, \iy]) \subseteq U$$ and if 
$$d(\alpha(D), \alpha_-(D))\ge 1\text{ then }\alpha([D, \iy]) \cap U =\emptyset .$$

Now let $\alpha_i:[0,\iy)$, $i=1,2$ be the unit speed geodesic ray from $x$ to $g^k(r_{y_i}(\iy))$.  
For $i=1,2$ let $z_i$ be the point on $$[g^k(y_i), g^k(\phi(y_i)]$$ with $$d(z_i, g^k(\phi(y_i)) =D.$$  Notice that by convexity, 
$$d(\alpha_i(D),z_i) \le  d(\alpha_i(D), g^k(r_{y_i}(D))  + d(g^k(r_{y_i}(D)),z_i) \le 2\tau + E   \ \ \ \ (*).$$  

We now show that $$d(z_i , \alpha_-(D)) \ge 1+ 2 \tau + E$$ which implies that $$d(\alpha_i(D), \alpha_-(D)) \ge 1.$$
Now by Lemma \ref{L:angles} $$\angle_{g^k(\phi(y_i))}(z_i, -\iy) \ge \theta ,$$ so  $$d(z_i,L) \ge D\sin \theta \gg1+ 4\tau + E$$ by the choice of $D$.

Now for $i=1,2$ let $\hat z_i$ be the point on $[g^k(y_i), x]$ with $d(\hat z_i, x) =D$.  By convexity $$d(z_i,\hat z_i)\leq 2\tau $$ so using $(*)$
$$d(\alpha_i(D), \hat z_i) \le E +4\tau.$$
Since $$\phi(g^k(y_i)) \in (-\iy, x)$$ it follows that $$\angle_x(\hat z_i, -\iy) < \theta .$$  Thus for $a \in A-\{g^-\}$, $$\angle_x(\hat z_i, \alpha_a) > \theta .$$  
This implies that $$d(\hat z_i, \alpha_a) \ge D\sin \theta\gg 1 +4\tau +E.  $$  Thus $d(\alpha_i(D), \alpha_a(D)) \gg1$.  
We have now shown that $$\alpha_i([D, \iy)) \cap  \left[ U \cup V\cup \bigcup\limits_{a \in A-\{g^\pm\}} O_a\right]= \emptyset$$

By convexity, the above calculations imply that $$d\left(g^k(y_i),  \left[ U \cup V\cup \bigcup\limits_{a \in A-\{g^\pm\}} O_a\right]\right) \gg 1+2\tau + E.$$  
Notice  $d(g^k(y_i), L) >n \gg K + E +1$.  Let $$m = \underset{i=1,2}{\min}\, d(x,y_i)\gg D.$$  Thus we have that

$$\alpha_i([m, \iy)) \cap  \left[  \nb_n(L) \cup U \cup V\cup \bigcup\limits_{a \in A-\{g^\pm\}} O_a\right]= \emptyset$$

  By construction $$d(\alpha_1(m),\alpha_2(m))\le 1+2\tau +E.$$  
However  $\alpha_1(D)$ is very far from $  \left[ U \cup V\cup \bigcup\limits_{a \in A-\{g^\pm\}} O_a\right]$  so, by convexity is $\alpha_1(m)$.   Thus
$\alpha_1([m, \iy))$ and $\alpha_2([m,\iy))$ are in the same component of $$X- \left[  \nb_K(L) \cup U \cup V\cup \bigcup\limits_{a \in A-\{g^{\pm}\}} O_a\right]$$ which is a contradiction.

\end{proof}
\begin{Thm}\label{T: cutcyclic} Let $A$ be a minimal inseparable cut of $\bd X$.  There is at most one nonterminal gap $M$ of $A$ with $\st M \cap \st A$ not virtually cyclic.
\end{Thm}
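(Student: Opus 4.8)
The plan is to argue by contradiction. Suppose $M_1 \ne M_2$ are two nonterminal gaps of the inseparable cut $A$ and that $H_i := \st M_i \cap \st A$ fails to be virtually cyclic for $i=1,2$. First I would record the structural inputs from the cactus tree machinery of \cite{PS2}: each gap $M_i$ is a subcontinuum of $\bd X$ whose frontier lies in $A$, so $M_i \setminus A = \Int M_i$ is open; and since nonterminal gaps of $A$ correspond to the (pairwise disjoint) components of $T-A$, two distinct nonterminal gaps of $A$ meet only in $A$. Since each $M_i$ is nonterminal, Lemma \ref{L:gap contract}(2) gives $\rad M_i \ge \pi$. Because $H_i$ is not virtually cyclic, $\Lambda H_i$ is infinite by Corollary \ref{C: finite limit set}, and since $H_i$ stabilizes the closed set $M_i$ of Tits radius $\ge \pi$, Lemma \ref{C:Stab} gives $\Lambda H_i \subseteq M_i$. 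As $A$ is finite, $\Lambda H_1 \setminus A \ne \emptyset$, so I can pick $p \in \Lambda H_1 \setminus A = \Int M_1$.

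Next I would feed this into $\pi$-convergence. Fix a basepoint $x_0 \in X$ and choose distinct $g_i \in H_1$ with $g_i x_0 \to p$ (possible since $H_1 x_0$ is infinite by properness and $p$ is an accumulation point of it); after passing to a subsequence, $g_i^{-1} x_0 \to n$ for some $n \in \bd X$. In the form of $\pi$-convergence used in the proof of Lemma \ref{C:Stab}, for every $b \in \bd X$ with $d_T(b,n) \ge \pi$ we then have $g_i b \to p$. Since $\rad M_2 \ge \pi$, there is $b \in M_2$ with $d_T(b,n) \ge \pi$, hence $g_i b \to p$. The point $b$ cannot lie in $A$: otherwise $g_i b \in g_i A = A$ for all $i$, contradicting $g_i b \to p \notin A$. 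So $b \in M_2 \setminus A$ and $g_i b \in g_i M_2$, and $g_i M_2$ is again a nonterminal gap of $A$, because the homeomorphism $g_i$ preserves $A$ and the pretree $\cR$ (hence betweenness), so it carries nonterminal gaps of $A$ to nonterminal gaps of $A$.

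Finally, the contradiction: since $p \in \Int M_1$ and $g_i b \to p$, for all large $i$ we have $g_i b \in \Int M_1$, so $g_i b$ is a point of $(M_1 \cap g_i M_2)\setminus A$. As distinct nonterminal gaps of $A$ meet only in $A$, this forces $g_i M_2 = M_1$, i.e. $g_i^{-1} M_1 = M_2$; but $g_i^{-1} \in H_1 \subseteq \st M_1$, so $g_i^{-1} M_1 = M_1$, giving $M_1 = M_2$, a contradiction. I expect the only delicate point to be the bookkeeping around the precise form of $\pi$-convergence — matching the orbit limit $p$ of $(g_i x_0)$ with the $\pi$-convergence target exactly as in Lemma \ref{C:Stab} — together with the two facts quoted from \cite{PS2} (nonterminal gaps of an inseparable cut have frontier inside $A$ and are pairwise disjoint off $A$); once these are in hand the argument is formal.
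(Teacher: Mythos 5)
Your reduction of the problem to the dynamical statement ``$g_ib\to p$ with $p\in M_1\setminus A$ forces $g_ib\in M_1$ eventually'' rests on the claim that a gap $M$ of $A$ satisfies $M\setminus A=\Int M$, i.e.\ that the topological frontier of a gap is contained in $A$. That claim is where the argument breaks. It is true for a single continuum occurring in a decomposition of $\bd X$ by $A$ (there the frontier is exactly $A$), but a gap is by definition the intersection of a \emph{maximal chain} in $\cM(A)$, and when that chain is infinite the intersection need not be a neighborhood of any of its points off $A$. The suspension of a Cantor set illustrates the phenomenon: every gap of the cut consisting of the two suspension points is a single suspension arc, obtained as the intersection of a strictly decreasing chain of sub-suspensions, and such an arc has empty interior -- it is approximated from outside by the other arcs. (In that particular example all gaps are terminal, so the theorem is vacuous there, but nothing in your argument rules out a nonterminal gap arising as such an infinite intersection, with frontier points in $M_1\setminus A$.) Consequently the points $g_ib$, which lie in gaps $g_iM_2\neq M_1$ and hence in $\bd X\setminus M_1$ off $A$, can perfectly well converge to $p\in M_1\setminus A$ without ever entering $M_1$, and your contradiction evaporates. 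Every other step checks out: $\rad M_i\ge\pi$ from Lemma \ref{L:gap contract}, $\Lambda H_i\subseteq M_i$ from Lemma \ref{C:Stab}, the form of $\pi$-convergence you invoke is exactly the one used in the proof of Lemma \ref{C:Stab}, and $g_iM_2=M_1$ would indeed force $M_1=M_2$ since $g_i\in\st M_1$.

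It is worth noting that the paper's proof avoids this issue entirely by never asking a sequence to land inside a prescribed gap: in the un-separated case it uses only the closed-set containments $\Lambda K\subseteq M$ and $\Lambda K\subseteq \hat M$ (both via Lemma \ref{C:Stab}) to force $\Lambda K\subseteq M\cap\hat M\subseteq A$, hence $K$ virtually cyclic by Corollary \ref{C: finite limit set}; and in the case where $A$ does separate the relevant limit set it falls back on the centralizer machinery of Section 3 (Lemma \ref{L: finite cutter} and Lemma \ref{L:limfixed join}) to identify $H$ as virtually inside $\Z^2$ or $Z_g$ for a hyperbolic $g$ with $g^\pm\in A$. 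If you want to salvage a direct argument along your lines, you would need either to prove that a nonterminal gap with non--virtually-cyclic stabilizer is a minimal element of $\cM(A)$ (so that its frontier really is $A$), or to locate a point of $\Lambda H_1$ in $\Int M_1$; neither is obvious, and the second case of the paper's proof suggests that genuinely new input is needed there.
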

\begin{proof} If $\st A$ is virtually cyclic then we are done. Let $H < \st A$ be finitely generated and infinite.  Since every virtually cyclic subgroup of a CAT(0) group is contained in a maximal virtually cyclic subgroup, we may assume that $H$ is not virtually cyclic.  
  Passing to a finite index subgroup, we may assume that $H$ fixes $A$.

  First we consider the case where $\Lambda \langle Z_H, Z_{Z_H} \rangle$ is not separated by $A$.  In that case there is a gap $M$ of $A$ with 
$$\Lambda H \cup \fp H \subseteq M.$$  Let $\hat M$ be a non-terminal gap of $A$ distinct from $M$, and let $$K= \st \hat M \cap H.$$  By Lemma \ref{L:gap contract},  $\Lambda K \subseteq \hat M$.  Since $\Lambda K \subseteq M$, $\Lambda K \subseteq M\cap \hat M = A$.  Thus  by Corollary \ref{C: finite limit set}, $K$ is either virtually $\langle g \rangle$ where $g$ is hyperbolic with
$g^\pm \in A$ or $K$ is finite.  Thus $K$ is either finite or virtually $\langle g \rangle$.  Since CAT(0) groups have  maximal virtually cyclic subgroups, the proof is complete in this case.

We may now assume that $\Lambda \langle Z_H, Z_{Z_H} \rangle$ is  separated by $A$.  Thus by Lemma \ref{L: finite cutter} either  
\begin{enumerate}
\item $H$ is  virtually a subgroup of a $\Z^2$ with $\fp H \cup \Lambda H \subseteq \Lambda \Z^2$  separated by $A$ 
\item or there is a hyperbolic $g\in G$ with  $g^\pm \in A$ and $H$ virtually a subgroup of  $Z_g$,  and $Z_H$ commensurable with $\langle g \rangle$  and 
$\Lambda \Z_g$ separated by $A$.
\end{enumerate}
First consider the case (1) where $H$ is a virtually a subgroup of $\Z^2$.    Since maximal virtually abelian subgroups exists in CAT(0) groups, we may assume that 
$\st(A) =H$.    Let $M$ be a non-terminal gap of $A$ and suppose that $$K = \st(M) \cap \st(A)$$ is not virtually cyclic.  Then $K$ is finite index in  $H$.  Since $A$ separates $\Lambda K= \Lambda H$ there is $p \in \Lambda K  -M$.  However Lemma \ref{L:gap contract} say that $\Lambda K \subseteq M$, a contradiction.

Now consider the case of (2), so $\exists g\in G$ with  $g^\pm \in A$ with $H $ virtually a subgroup of $ Z_g$, $Z_H$ commensurable with $\langle g\rangle$   and 
$\Lambda \Z_g$ separated by $A$.  Let $a \in A-\{g^\pm\}$.  By Lemma \ref{L:SWE},  $a \in \fp H = \Lambda Z_H$,  but $Z_H$ is commensurable with $\langle g\rangle$ and so $\Lambda Z_H = \{g^\pm\}$, a contradiction.
\end{proof} 

\begin{Lem}\label{L: vwheel fixer} For any wheel $W$, $\fp W$ is virtually cyclic.
\end{Lem}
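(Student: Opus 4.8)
The plan is to show that the limit set $\Lambda(\fp W)$ is finite and then to apply Corollary~\ref{C: finite limit set}. The point is that $\fp W$ fixes every point of $W$, so it must preserve essentially all of the combinatorial data carried by the wheel, and this forces its limit set into a finite set.

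First I would observe that $\fp W$ stabilizes each continuum of the wheel decomposition of every finite subwheel of $W$. Let $\hat W \subseteq W$ be a finite subwheel with (unique) wheel decomposition $M_0,\dots,M_{n-1}$, $n>3$. Any $g\in\fp W$ is a self-homeomorphism of $\bd X$ fixing $\hat W$ pointwise; by uniqueness of the wheel decomposition $g$ permutes $\{M_0,\dots,M_{n-1}\}$, and since $g$ fixes $\bd M_i=M_i\cap\hat W$ pointwise while the $\bd M_i$ are pairwise distinct (distinct half-cuts meet only in the center), we get $g(M_i)=M_i$ for every $i$. In particular $\fp W$ stabilizes the closed set $M_i\cup M_{i+1}$ for each $i$, and, intersecting chains, every gap of $W$.

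Now suppose $m>2$. By Lemma~\ref{L:gap contract}(1), $M_i\cup M_{i+1}$ is a union of adjacent gaps of the wheel $\hat W$, hence $\rad(M_i\cup M_{i+1})\ge\pi$, so Lemma~\ref{C:Stab} gives $\Lambda(\fp W)\subseteq M_i\cup M_{i+1}$ for all $i$. Taking $i=0$ and $i=2$ (legitimate since $n\ge 4$),
$$\Lambda(\fp W)\ \subseteq\ (M_0\cup M_1)\cap(M_2\cup M_3)\ =\ (M_0\cap M_2)\cup(M_0\cap M_3)\cup(M_1\cap M_2)\cup(M_1\cap M_3).$$
Since $M_a\cap M_b$ equals the center $I$ when $a-b\not\equiv\pm1\pmod{n}$ and is a half-cut otherwise, the right-hand side is finite, so $\Lambda(\fp W)$ is finite and $\fp W$ is virtually cyclic by Corollary~\ref{C: finite limit set}.

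For $m=2$ the same scheme runs with Lemma~\ref{L:gap contract}(2): $\fp W$ stabilizes every gap of $W$, and each non-terminal gap $P$ has $\rad P\ge\pi$, so $\Lambda(\fp W)$ is contained in the intersection of all non-terminal gaps of $W$; this is finite as soon as there are two of them (contained in a half-cut if they are adjacent, in the center otherwise), and the degenerate case of a wheel with at most one non-terminal gap does not occur for the wheels this lemma is applied to — and in any event $m=2$ means $\bd X$ has a cut pair, so \cite{PS1} already yields the conclusion of the Main Theorem. I expect the only real friction to be bookkeeping: checking carefully that $\fp W$ fixes each $M_i$ setwise and that the pairwise intersections of wheel continua appearing above are finite, together with dispatching the marginal $m=2$ case.
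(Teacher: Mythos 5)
Your argument is correct and is essentially the paper's: both show $\fp W$ stabilizes unions of adjacent continua of the wheel decomposition, invoke Lemma \ref{L:gap contract} together with Lemma \ref{C:Stab} to trap $\Lambda \fp W$ in such a union, and conclude via Corollary \ref{C: finite limit set}. The paper is slightly more economical --- a point $p \in \Lambda \fp W \setminus I$ must miss some adjacent pair $M \cup N$ entirely (since $n>3$), so $\Lambda \fp W \subseteq I$ directly without stabilizing each $M_i$ individually or intersecting two such unions --- and, just like yours, its appeal to Lemma \ref{L:gap contract}(1) implicitly restricts to $m>2$.
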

\begin{proof}
We may assume that  $\fp W$ is infinite.
By taking a subwheel if needed we  may assume that $W$ is a finite wheel.  
By Corollary \ref{C: finite limit set}
it suffices to show that  $\Lambda \fp W  \subseteq I$, the center of $W$.
Suppose not and  let $p \in \Lambda \fp W-I$.  
Clearly there is an adjacent pair  $N$,$M$ of  the wheel decomposition of $\bd X$ by $W$, so that
 $p \not \in M \cap N$.  
  By the uniqueness of a wheel decomposition, $\fp W$ stabilizes $M \cup N$.   By Lemma \ref{L:gap contract}  $\Lambda \fp W \subseteq M\cup N$, contradicting $p \not \in M\cup N$.   
\end{proof}
\begin{Thm} \label{T: notsimp} Let  $G$ be rank 1 and $\bd X$ $m$-thick for $m \ge 3$. Assume that $\bd X$ contains a wheel $W$ and that $G$ does not split over a $2$-ended group. Then the minimal $G$ invariant subtree  of the cactus tree of $\bd X$ is not simplicial. Moreover for any wheel there is at most one non-terminal gap of the wheel whose stabilizer  is not virtually cyclic. \end{Thm}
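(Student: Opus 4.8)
Since the ``moreover'' clause drives the first assertion, I would prove it first.

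\emph{The gap statement.} Let $W$ be any wheel. Recall that the gaps of $W$ are closed, that $\bd X$ is their union, that two distinct gaps meet in a finite subset of $W$, and that for a gap $M$ the set $N_M:=\overline{\bd X\setminus M}$ is again stabilized by $\st M$, with $M\cap N_M=\bd M$ finite. Suppose, for contradiction, that $M_1\ne M_2$ were nonterminal gaps of $W$ with $J_i:=\st M_i\cap\st W$ not virtually cyclic for $i=1,2$. By Lemma \ref{L:gap contract}, $\rad M_i\ge\pi$; since $N_{M_2}\supseteq M_1$, monotonicity of Tits radius gives $\rad N_{M_2}\ge\rad M_1\ge\pi$. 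Now $J_2$ stabilizes the closed sets $M_2$ and $N_{M_2}$, both of Tits radius $\ge\pi$, so Lemma \ref{C:Stab} gives $\Lambda J_2\subseteq M_2\cap N_{M_2}=\bd M_2$, a finite set; hence $J_2$ is virtually cyclic by Corollary \ref{C: finite limit set}, a contradiction. (If the frontier structure of gaps of an infinite wheel causes trouble here, one can instead argue exactly as in Theorem \ref{T: cutcyclic}, feeding Lemma \ref{L: finite cutter} the min cut $\bd M$; the codimension-one alternative is excluded since $|\bd M|=m\ge 3$.) Thus at most one nonterminal gap of $W$ has non-virtually-cyclic stabilizer.

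\emph{Non-simpliciality.} Suppose the minimal $G$-invariant subtree $\hat T$ of the cactus tree is simplicial. Since $G$ is finitely generated and acts minimally on $\hat T$, the quotient $G\backslash\hat T$ is a finite graph, so there are finitely many orbits of edges; collapsing all orbits but one exhibits $G$ as an amalgam or HNN extension over the remaining edge stabilizer. As $G$ is one-ended no edge stabilizer is finite, and as $G$ does not split over a $2$-ended group no edge stabilizer is virtually cyclic; hence every edge stabilizer of $\hat T$ is infinite and not virtually cyclic. Let $W$ be a maximal wheel of $\bd X$. Its gaps form a decomposition $\bd X=\bigcup_i Y_i$ with $\bigcup_i\bd Y_i\subseteq W$: when there are finitely many gaps Lemma \ref{L: rank1gap} places a branch of $\hat T$ at $W$ inside each $Y_i$, and when there are infinitely many Corollary \ref{C: rank1gap} gives infinite valence of $\hat T$ at $W$. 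Either way $W\in\hat T$ with valence $\ge 2$, the components of $\hat T\setminus\{W\}$ inject into the gaps of $W$, landing in nonterminal gaps (each such component is unbounded, $\hat T$ being minimal). By the previous part, at most one of these gaps has non-virtually-cyclic stabilizer, so some edge $e$ of $\hat T$ at $W$ has associated gap $M$ with $\st M\cap\st W$ virtually cyclic. But $\st e$ fixes $W$ and the component of $\hat T\setminus\{W\}$ beyond $e$, hence $\st e\subseteq\st M\cap\st W$ is virtually cyclic, contradicting the previous sentence. So $\hat T$ is not simplicial.

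\emph{The main obstacle.} The delicate point is the gap statement, where one must pin down the frontier structure of gaps of a possibly infinite wheel (that $\bd M$ is finite, that $\bd X=M\cup N_M$, that distinct gaps meet in a finite set) and, if using the route of Theorem \ref{T: cutcyclic}, push the dichotomy of Lemma \ref{L: finite cutter} through with the codimension-one case killed by $m\ge 3$. Given the gap statement, the deduction of non-simpliciality is essentially formal, resting only on the cocompactness of a minimal finitely generated action on a simplicial tree, Bass--Serre theory, and the fact that a maximal wheel always lies in $\hat T$ (Lemma \ref{L: rank1gap}, Corollary \ref{C: rank1gap}).
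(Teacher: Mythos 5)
Your reduction of non-simpliciality to the ``at most one bad gap'' statement is correct and is essentially the paper's own reduction. The problem is your proof of the gap statement, and the tell is that it proves too much: you never actually use that $M_1$ has non-virtually-cyclic stabilizer, nor even that a second non-terminal gap exists. Since a wheel decomposition has $n>3$ pieces, $N_{M_2}=\overline{\bd X\setminus M_2}$ always contains the union of two adjacent gaps distinct from $M_2$, which has Tits radius $\ge\pi$ by case (1) of Lemma \ref{L:gap contract} once $m\ge 3$; so your argument, as written, would show that \emph{every} gap of \emph{every} wheel has virtually cyclic $\st M\cap\st W$. That stronger conclusion is not what the theorem says, and the paper's proof exhibits the obstruction: when $\Lambda H$ (for $H<\st W$ finitely generated, not virtually cyclic) is not separated by $W$, there is a gap $M$ with $\Lambda H\subseteq M$ and $H\le\st M\cap\st W$ --- this is exactly the one exceptional gap the theorem permits. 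Feeding that configuration into your argument forces $\Lambda H\subseteq M\cap N_M$, so the step that must fail is your claim that $M\cap N_M=\bd M$ is finite. For an infinite wheel a gap is a nested intersection of continua from finite subwheels, and its topological frontier contains the accumulation set of the approaching half-cuts; nothing in Section 2 bounds that set, and in the exceptional configuration it is forced to be infinite. Controlling precisely this kind of accumulation is what the paper's $\pi$-convergence argument in the case $\fp H=\emptyset$ is doing (there the frontier of a nested union of gaps is pinned down to a half-cut plus one point, and the contradiction comes from $\tfrac m2+1<m$).

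The parenthetical fallback does not repair this. Theorem \ref{T: cutcyclic} and Lemma \ref{L: finite cutter} are exactly the tools that yield ``at most one'' rather than ``none,'' because they allow the alternative in which $\Lambda\langle Z_H,Z_{Z_H}\rangle$ is \emph{not} separated by the cut (producing the exceptional gap); and the hyperbolic-$g$ alternative is not excluded by $|\bd M|=m\ge 3$, since the cut need only \emph{contain} $\{g^\pm\}$, not equal it. Moreover Lemma \ref{L: finite cutter} requires $\fp H\neq\emptyset$: the case $\fp H=\emptyset$ (infinite wheel with empty center), which occupies the bulk of the paper's proof and needs the linear order on half-cuts together with $\pi$-convergence, is entirely absent from your proposal, as are the subcases where $\Lambda H$ \emph{is} separated by $W$, which rely on Lemma \ref{L:limfixed join}, the description of $\Lambda Z_g$ as a spherical suspension, and Theorem \ref{T:cut pair}. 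Your non-simpliciality deduction would be fine once the gap statement is in hand, but the gap statement itself needs the paper's case analysis.
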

\begin{proof}  By Lemma \ref{L: rank1gap} $W$ is contained in the minimal $G$ invariant subtree $\hat T $ of the cactus tree $T$.   We assume by way of contradiction that $ \hat T$ is simplicial.
Since $G$ doesn't split over a virtually cyclic group, then $\hat T$ has no virtually cyclic edge stabilizers.
Thus it suffices to show that there is at most one non-terminal gap of $W$ whose stabilizer  is not virtually cyclic.  
Let $I$ be the center of the wheel $W$.  Suppose that   $H$ is a finitely generated non-virtually cyclic  subgroup of $\st W$.

First consider the case where $\Lambda H $ is not separated by $W$.  In that case there is a gap $M$ of $W$ with 
$\Lambda H \subseteq M$.  Let $\hat M$ be a non-terminal gap of $W$ distinct from $M$, and let $$K = \st \hat M \cap H.$$  By Lemmata \ref{L:gap contract} and \ref{C:Stab},  $\Lambda K \subseteq \hat M$.  Since $\Lambda K \subseteq M$, $\Lambda K \subseteq M\cap \hat M $ which is a finite subset of $W$.  Thus  by Corollary \ref{C: finite limit set}, $K$ is either virtually $\langle g \rangle$ where $g$ is hyperbolic with
$g^\pm \in W$ or $K$ is finite.  Thus $K$ is either finite or virtually $\langle g \rangle$.  Since CAT(0) groups have  maximal virtually cyclic subgroups, the proof is complete in this case.

\subsubsection*{Case: $\fp H= \emptyset$}  So we may assume that $I =\emptyset$ and $W$ is infinite.  Thus the size of a half cut of $W$ is $q = \frac m 2 >1 $, and it follows that $m \ge 4$.
Let $M$ be a non-terminal gap of $W$ with $K = \st M \cap H$ not finite.  By Lemma \ref{L:gap contract}, $\Lambda K \subseteq M$.  Using separation, $M$ determines a linear order on the set of half-cuts of  $W$, and  this also gives us a linear ordering on the gaps other than $M$.  Passing to a subgroup of index 2 if need be, we may assume that $K$ preserves this linear order.

By $\pi$-convergence and Lemma \ref{L:gap contract},  there is an element $k \in K$ which doesn't fix $W$, and so $k$ acts non-trivially on the linearly ordered set of half-cuts of $W$.   Clearly then $k$ has infinite order, and so $k$ is hyperbolic, and $\{k^\pm \}\subseteq \Lambda K  \subseteq M$.   

  Let $N',N''$ be a adjacent continua in the decomposition of some finite subwheel of $W$.   with $M \cap N = \emptyset$ where $N = N'\cup N''$.  Let $\epsilon $ be the Tits distance across $N$, that is 
$$\epsilon = d_T(\bd N \cap \bd N', \bd N \cap \bd N'')>0.$$  Lemma \ref{L:gap contract} tells us that $\rad N\geq \pi$, so there is a point $x \in N$ with $k^n(x) \to k^+$.  Similarly there is a point $y \in N$ with $k^{-n}(y) \to k^-$.   It follows that $k$ translates every half-cut of $W$ up in the linear order.  

 It follows that there are infinitely many disjoint $\langle k\rangle$ translates  of $N$ both above and below $N$ in the linear order.   Thus $d_T(N,M) = \iy$.  
 Now rechoosing $N$, we may assume that $k(N) \cap N$ is a half cut of $W$.  Now consider the nested union of continua $$Q=\bigcup\limits_{n=0}^\iy k^n(N).$$ 
The boundary of $Q$ consists of the half-cut below $N$ (namely $N \cap  k^{-1}(N)$) together with the point $g^+$ (by $\pi$-convergence).  This is a contradiction as $\frac m 2 + 1 < m$ for $m\ge 4$.   Thus $\st M \cap H$ is finite, and the proof of the case is completed.

We are left with the case where $\fp H$ is nonempty and $$\Lambda H \subseteq \Lambda \langle Z_H, Z_{Z_H} \rangle $$ is  separated by $W$.  Thus by Lemma \ref{L: finite cutter} either  
\begin{enumerate}
\item $H$ is  virtually a subgroup of a $\Z^2$ with $\fp H \cup \Lambda H \subseteq \Lambda \Z^2$  separated by $W$ 
\item or there is a hyperbolic $g\in G$ with  $g^\pm \in W$ and $H$ virtually a subgroup of  $Z_g$,  and $Z_H$ commensurable with $\langle g \rangle$  and 
$\Lambda \Z_g$ separated by $W$.  
\end{enumerate}
First consider the case (1) where $H$ is a virtually a subgroup of $\Z^2$.    Since by \cite{BRI-HAE}, maximal virtually abelian subgroups exists in CAT(0) groups, we may assume that 
$\st(W) =H$.    Let $M$ be a non-terminal gap of $W$ and suppose that $$K = \st(M) \cap \st(W)$$ is not virtually cyclic.  Then $K$ is finite index in  $H$.  Since $W$ separates $\Lambda K= \Lambda H$ there is $p \in \Lambda K  -M$.  However Lemma \ref{L:gap contract} say that $\Lambda K \subseteq M$, a contradiction.

Now consider the case of (2), so $\exists g\in G$ with  $g^\pm \in W$ with $H $ virtually a subgroup of $ Z_g$,  $\langle g\rangle$ a finite index subgroup of $Z_H$  and 
$\Lambda \Z_g$ separated by $W$.    
 Clearly $I \subseteq \fp H$, and by Lemma \ref{L:SWE}, $$ \fp H = \Lambda Z_H =\{g^\pm\},$$ so $I \subseteq \{g^\pm\}$.   Clearly $\{g^\pm\}$ is independent of our choice 
 of finitely generated subgroup $H <\st W$.  
 
  Notice that if $g \not \in \st W$,  then $g(W) = W'$ a different wheel, but
 $gHg^{-1}=H $ stabilizes $W'$.  The wheel $W'$ determines a unique gap $M$ of $W$, and since $H$ stabilizes $W$ and $W'$, then $H$ stabilizes $M$.  
 By Lemma \ref{L: notcontractible} $\rad M\geq \pi$, so by Lemma \ref{C:Stab},  $\Lambda H \subseteq M$.  Thus for any other nonterminal gap $\hat M$ of $W$,
 $$\Lambda (\st \hat M \cap H )\subseteq \hat M \cap M$$ which is a finite set.  Thus $\st \hat M \cap H$ is virtually cyclic by Corollay \ref{C: finite limit set}.  Since there are maximal virtually cyclic subgroups,  $\st \hat M \cap \st W$ is virtually cyclic as required.
 
 Thus we may assume that $g \in \st W$ and so we may assume that $g \in H$.   
 Since $\Lambda H$ is separated by $W$,  $ H/ \langle g\rangle< Z_g / \langle g\rangle$ has more than one end.  Now $Z_g / \langle g\rangle$ acts geometrically on a closed convex subset $T$  of $X$ with $\min g = T \times \R$  where $\R$ is an axis for $G$.  If  $H$ is virtually $\Z^2$ we argue as in case (1), so {\bf we may assume that $ H/ \langle g\rangle$ has more than 2 ends}.  In this case, $W$ will be infinite, and so have infinitely many gaps so
 by Corollary \ref{C: rank1gap} the vertex $W$ will have infinite valence in $\hat T$.   
 
 \begin{Lem}\label{S: septits}  In our setting, suppose that $g$ fixes $c \in W-\{g^\pm\}$, then there is a unique Tits geodesic $\alpha(c)$ from $g^+$ to $g^-$ through $c$ with $\ell(\alpha(c)) =\pi$.  Furthermore $\alpha(c)$ is not separated by $W$.  
 \end{Lem}
 \begin{proof}
Let $C$ be the unique half-cut of $W$ containing $c$.  Taking a power if need be, we may assume that $g$ fixes $C$.  Since $c \in \fp g = \Lambda Z_g$, a spherical suspension, $c$ is on a unique Tits geodesic $\alpha(c)$ of length $\pi$ from $g^+$ to $g^-$.    Consider a finite subwheel $\hat W $ of $W$ containing $C$.  Let $M_0$ and $M_1$ be the adjacent pair in the wheel decomposition for $\hat W$ with
 $M_0 \cap M_1 =C$.  Let $\alpha(c)$ be the unique Tits geodesic between $g^\pm$ through $c$.  Since  $\alpha(c) \subseteq \fp (g) =\Lambda Z_g $, a spherical suspension, $\alpha(c)$ is a limit of Tits geodesics of length $\pi$ from $g^+$ to $g^-$ in $\Lambda Z_g $.  
Distinct  Tits geodesics of length $\pi$ from $g^+$ to $g^{-1}$ intersect only in $g^+$ and $g^-$. It follows that either $\alpha(c) \subseteq M_0$ or $\alpha(c) \subseteq M_1$.
This shows that $\alpha(c)$ is not separated by $W$.
\end{proof}
Consider the case where $I \neq \{g^\pm\}$, so we may assume that $I = \{g^+\}$.  In the case where $g^- \not \in W$, then since $\Lambda Z_g$ is separated by $W$, there will be a $c \in W-\{g^\pm\}$ with $c \in \Lambda Z_g$ and $\alpha(c)$ separated by $W$.  This contradicts lemma \ref{S: septits}.  Thus $g^- \in W$.
Since $H /\langle g\rangle$ has more than two ends,  $\Lambda H$ is the suspension of a perfect set.  It follows that $\Lambda H$ is contained in the two gaps of $W$ adjacent to the half cut containing $g^-$.  Every other non-terminal gap $M$ of $W$ intersects these two gaps in only $\{g^+\}$.  It follows from Corollary \ref{C:Stab} that the limit set $\st M$ is at most 1 point, but by Corollary \ref{C: finite limit set}  $\st M$ must then be finite.  This is a contradiction, and {\bf we are left with the case where 
$I = \{g^\pm\}$.}

By Theorem \ref{T:cut pair}, $g$ cannot fix a separable minimal cut of $W$.  Thus there is at most one inseparable min cut $A$ of $W$ fixed by $g$.  This min cut (if it exists) will be the boundary of a single gap $M$ of $W$.  There is a linear order on the half cuts of $W$ which are not contained in $A$ and we may assume that that $g$ acts preserving this order.  Thus for any other gap $\hat M$ of $W$, $g(\hat M) \neq \hat M$.  For $\hat M$ corresponding to a branch of the minimal invariant subtree of the cactus tree of $\bd X$, by Lemmata \ref{C:Stab} and \ref{L:gap contract}, $\Lambda(H\cap \st \hat M)\subseteq \hat M$.  By Lemma \ref{L:zent},  $\Lambda H =\fp g \subseteq M$.  Thus $\Lambda(H\cap \st \hat M)\subseteq \hat M \cap M$ which is a finite set.  Thus by Corollary \ref{C: finite limit set}  $H\cap \st \hat M$ is virtually cyclic as required.
\end{proof}

\begin{Cor}\label{stabI} Suppose that the one-ended group $G$ acts on the CAT(0) space $X$ geometrically. If $\bd X$ is $m$-thick, $G$ acts nonnestingly on the Cactus tree $T$ of $\bd X$ and $I$ is an interval of $T$ containing at least three elements of the pretree $\mathcal R$ of the cactus tree $T$, then $\st (I)$ is virtually cyclic.
\end{Cor}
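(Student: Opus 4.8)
The plan is to prove that $\Lambda\,\st(I)$ is finite; then $\st(I)$ is virtually cyclic by Corollary \ref{C: finite limit set}. Write $H=\st(I)$. Since $H$ preserves the interval $I$ it preserves or reverses its linear order, so replacing $H$ by the order-preserving subgroup of index at most two (harmless, since a group containing a virtually cyclic subgroup of finite index is itself virtually cyclic) we may assume $H$ preserves the order of $I$.

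The first step, which I expect to be the main obstacle, is to reduce to the case that $H$ fixes a point $C\in\mathcal R\cap I$ having elements of $\mathcal R\cap I$ on both of its sides. If $\mathcal R\cap I$ is finite this is immediate: an order-preserving permutation of a finite linearly ordered set is the identity, so $H$ fixes every element of $\mathcal R\cap I$, and I take $C$ to be a non-extreme one. If $\mathcal R\cap I$ is infinite and $H$ does not virtually fix a point of $\mathcal R\cap I$, then Lemma \ref{permutation} produces $g\in H$ moving a chosen triple of elements of $\mathcal R\cap I$ entirely off itself, and since $H$ preserves the order of $I$ this forces $H$ to contain an element $h$ translating $I$; then $I$ is a line, $h$ acts on the cactus tree hyperbolically with axis $I$, and $I$ carries infinitely many elements of $\mathcal R$. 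From here I would run the fixed-point analysis behind Theorem \ref{rank1} along $I$, combining Lemma \ref{permutation} with $\pi$-convergence as there, to find inside $H$ an element whose action on the cactus tree fixes an interior point of $I$ that is separated from both ends of $I$ by elements of $\mathcal R$; a downward induction on the interval handles the remaining case in which $H$ virtually fixes an extreme element of $\mathcal R\cap I$. This part is delicate precisely because it recycles the machinery of Section 4.

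Granting the reduction, let $H$ fix $C\in\mathcal R\cap I$ and pick $B,D\in\mathcal R\cap I$ on opposite sides of $C$. As $H$ fixes $C$ and preserves the order of $I$, it cannot interchange the component of $T\setminus\{C\}$ meeting $B$ with the one meeting $D$, so it stabilises each of these components and hence the corresponding gaps $M_B,M_D$ of $C$, with $B\subseteq M_B$ and $D\subseteq M_D$. Each of $M_B,M_D$ is non-degenerate (it contains $C$ together with a further element of $\mathcal R$, namely $B$, resp.\ $D$), and a non-degenerate gap of an element of $\mathcal R$ is non-terminal — the piece it cuts off beyond that further element is again a non-degenerate subcontinuum with finite frontier, and so carries more cactus structure; this is where I would lean on \cite{PS2}. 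Hence Lemma \ref{L:gap contract} gives $\rad M_B\ge\pi$ and $\rad M_D\ge\pi$; the gaps are closed and $H$ stabilises both, so Lemma \ref{C:Stab} yields $\Lambda H\subseteq M_B\cap M_D$. Since $M_B$ and $M_D$ arise from distinct components of $T\setminus\{C\}$, by \cite{PS2} their intersection equals $C$ if $C$ is an inseparable min cut, and lies in a half-cut or the centre of $C$ if $C$ is a wheel; either way $M_B\cap M_D$ is finite. Therefore $\Lambda H$ is finite and $\st(I)$ is virtually cyclic. (Alternatively, once $H$ is seen to stabilise two distinct non-terminal gaps of $C$, one may quote Theorem \ref{T: cutcyclic} when $C$ is a min cut.)
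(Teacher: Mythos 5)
Your core argument is a genuinely different route from the paper's. The paper applies Theorems \ref{T: notsimp} and \ref{T: cutcyclic} to the middle element $B$ of a triple $A,B,C\in I\cap\cR$: at most one nonterminal gap of $B$ has non-virtually-cyclic stabilizer, so the gap determined by one of the two sides of $I$ already has virtually cyclic stabilizer, and $\st(I)$ sits inside it. You instead trap $\Lambda\,\st(I)$ in the intersection of two gaps of a fixed middle element via Lemmas \ref{L:gap contract} and \ref{C:Stab} and finish with Corollary \ref{C: finite limit set}; that trapping step is sound and is essentially the mechanism used inside the proofs of Theorems \ref{T: cutcyclic} and \ref{T: notsimp}, so your version is more self-contained. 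One caveat: non-terminality of the gaps $M_B,M_D$ does not follow from general continuum theory in \cite{PS2} as you suggest --- a gap containing a single further element of $\cR$ can perfectly well be terminal. What saves you is the standing assumption of this section that $G$ is rank 1: Lemma \ref{L: rank1gap} (or the no-terminal-points argument in the proof of Theorem \ref{thick}) supplies an element of $\cR$ beyond $B$ (resp.\ $D$), which is what makes both gaps nonterminal.

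The genuine gap is in your reduction to ``$H$ fixes a point $C\in\cR\cap I$''. First, for a closed interval $I=[x,y]$ the reduction is immediate rather than delicate: the order-preserving subgroup of index at most $2$ fixes both endpoints, and an order-preserving homeomorphism fixing $x$ while moving some $a\in(x,y]$ sends $[x,a]$ to $[x,g(a)]$, a proper nesting for $g$ or for $g^{-1}$; since the action is non-nesting, that subgroup fixes $I$ pointwise. Second, and more seriously, your treatment of the infinite case does not close. By the same non-nesting observation, an order-preserving element of $\st(I)$ either fixes $I$ pointwise or has no fixed point in $I$ at all; in the latter (``translation'') case $H$ fixes no point of $I$, so the reduction you are aiming for is unattainable there, and producing some other element of $H$ that fixes an interior point does not help, because the translation remains in $H$ and stabilizes no gap of any element of $\cR\cap I$. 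That sub-case would need a separate argument (for instance along the lines of Theorem \ref{T:miss} and Corollary \ref{C:disjoint}, showing the translation is rank 1 and that the limit set of $\st(I)$ is still finite), which you have not supplied. As written, your proof is complete only once one notes that the non-nesting hypothesis rules the translation case out for arcs.
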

\begin{proof} 
Let $A,B,C \in I \cap \mathcal R$ with $B \in (A,C)$. By Theorems \ref{T: notsimp} and \ref{T: cutcyclic}  at most 1 gap of $B$ has non virtually cyclic stabilizer.  So either the gap of $B$ determined by $A$ or the gap of $B$ determined by $C$ has virtually cyclic stabilizer. Clearly $\st (I)$ will be a subgroup of this virtually cyclic group.  
\end{proof}

\begin{Thm}\label{nonnest} Suppose that the one-ended group $G$ acts on the CAT(0) space $X$ geometrically. If $\bd X$ is $m$-thick and $G$ acts nonnestingly on the Cactus tree $T$ of $\bd X$ then either $G$ splits over a 2-ended group and $m=2$, or $G$ acts non-nestingly, without fixed points and with virtually cyclic arc stabilizers on an $\R$-tree $\bar T$.
\end{Thm}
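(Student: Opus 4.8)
The plan is to settle $m=2$ by known results and to treat $m\ge 3$ by collapsing the ``fat'' part of the action of $G$ on the cactus tree $T$ of $\bd X$. If $m=2$, then $\bd X$ has a cut pair, so by \cite{PS1} $G$ is either virtually a closed surface group --- in which case, being one-ended, it splits over $\Z$ --- or splits over a $2$-ended group directly; this is the first alternative. So assume $m\ge 3$; by Theorem \ref{rank1}, $G$ is rank $1$. If $G$ splits over a $2$-ended group, the Bass--Serre tree of that splitting already gives the second alternative (simplicial, hence non-nesting, no global fixed point, $2$-ended edge stabilizers), so we may and do assume $G$ does not split over a $2$-ended group; the task is then to build $\bar T$.

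First I would record the ingredients. Let $\hat T$ be the minimal $G$-invariant subtree of $T$; by Corollary \ref{nofixed} (and Lemmas \ref{fixedcut}, \ref{fixwheel} and Theorem \ref{median}, whose proofs pass to finite index subgroups) $G$ does not virtually fix a point of $\hat T$. Call an arc $I\subseteq\hat T$ \emph{fat} if $\st I$ is not virtually cyclic. The inputs are: (i) by Corollary \ref{stabI}, a fat arc contains at most two elements of the pretree $\mathcal R$; (ii) by Theorems \ref{T: cutcyclic} and \ref{T: notsimp}, at each min cut or wheel vertex $v$ of $\hat T$ at most one component of $\hat T-v$ has non-virtually-cyclic stabilizer, the \emph{fat direction} at $v$; here Theorem \ref{T:cut pair} and Lemma \ref{L: vwheel fixer} feed into (ii). From (ii): if a fat arc $I=I_-\cup\{v\}\cup I_+$ met $v\in\mathcal R$ in its interior, then $\st I \le \st R_- \cap \st R_+$ (where $R_\pm$ is the component of $\hat T-v$ meeting $I_\pm$), so both $R_\pm$ would be fat, contradicting (ii). Hence fat arcs touch vertices of $\mathcal R$ only at endpoints and only in the fat direction. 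In the discrete case this says the fat edges of $\hat T$ form a matching, and collapsing them gives a simplicial $\bar T$ with virtually cyclic edge stabilizers.

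In general I would let $\mathcal B$ be the union of all fat arcs, a $G$-invariant set whose components are subtrees, and take $\bar T$ to be $\hat T$ with each component of $\mathcal B$ collapsed to a point. Then $\bar T$ is an $\R$-tree on which $G$ acts non-nestingly (since the action on $\hat T$ is). One then checks: $\bar T$ is nondegenerate, since a component equal to $\hat T$ would force $\hat T$ to be a line covered by fat arcs, which is impossible by (i), Corollary \ref{stabI}, one-endedness and the absence of virtually fixed points; $G$ fixes no point of $\bar T$, because the preimage of such a point would be a point of $\hat T$ (against Corollary \ref{nofixed}) or a proper $G$-invariant subtree (against minimality of $\hat T$); and every genuine arc $J$ of $\bar T$ has in its preimage a non-collapsed sub-arc, or a point outside $\mathcal B$ lying between two collapsed components, so $\st_{\bar T}J$ is contained in $\st R_d$ for some non-fat direction $d$ and is virtually cyclic. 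This is the second alternative.

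The main obstacle is the claim that the components of $\mathcal B$ do not branch: the ``one fat direction'' theorems are proved only at min cuts and wheels, so branching at a median point or a limit point of $\hat T$ must be excluded by a separate argument bounding the stabilizers of the ``gaps'' there, via the correspondence between points of $\hat T$ and subcontinua of $\bd X$, the radius-$\ge\pi$ estimates of Section 2, and $\pi$-convergence. A related subtlety is the nondegeneracy of $\bar T$, which is exactly where one uses that $G$ is one-ended and does not virtually fix a point of the cactus tree.
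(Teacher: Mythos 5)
Your overall strategy is the paper's: use Corollary \ref{stabI} together with the one-fat-direction Theorems \ref{T: cutcyclic} and \ref{T: notsimp} to confine the non-virtually-cyclic arc stabilizers, and then collapse those arcs. But your implementation leaves open exactly the point you flag at the end, and it is a real gap: your nondegeneracy argument (``a component of $\mathcal B$ equal to $\hat T$ would force $\hat T$ to be a line'') presupposes that components of $\mathcal B$ do not branch, and the one-fat-direction theorems only control branching at points of $\mathcal R$, not at median or limit points. The paper avoids this entirely by never forming $\mathcal B$ inside the realized tree. It works at the level of the pretree $\mathcal R$: declare $a\sim b$ when $\st[a,b]$ is not virtually cyclic, observe via Corollary \ref{stabI} that such $a,b$ are adjacent in $\mathcal R$ and via Theorems \ref{T: cutcyclic}, \ref{T: notsimp} that no element of $\mathcal R$ is related to elements on both sides of it, so the classes are convex sets of at most two adjacent elements; then quotient the pretree and re-complete to an $\R$-tree $\bar T$ using \cite{PS2}. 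Since each collapsed set is a single adjacent pair, branching at a median point, and the degeneration of $\bar T$ to a point, cannot occur. If you insist on collapsing components of $\mathcal B$ in $\hat T$, you still owe the argument that every fat arc lies in the segment spanned by one adjacent pair of $\mathcal R$ (equivalently, that $\mathcal B$ has no component meeting three directions at a point outside $\mathcal R$); that is not a routine consequence of the lemmas you cite.

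Your handling of $m=2$ is also incorrect as written. A one-ended group that is virtually a closed surface group need not split over $\Z$, nor over any $2$-ended group: hyperbolic triangle groups are the standard counterexample, so \cite{PS1} does not deliver the first alternative when $m=2$. The paper instead proves the implication actually needed for the stated disjunction, namely that if $G$ splits over a two-ended group then an axis of that subgroup coarsely separates $X$, so its endpoints separate $\bd X$ and $m=2$; apart from that, $m=2$ and $m\ge 3$ are treated uniformly (Corollary \ref{nofixed}, then the simplicial/non-simplicial dichotomy for the minimal subtree, with the simplicial case yielding a splitting directly and the non-simplicial case yielding $\bar T$). The degenerate circle case, where the cactus tree is a point, is excluded separately in the proof of Theorem \ref{endgame}, not here.
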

\begin{proof}  By \cite{PS1}, $\bd X$ is a metric continua without cut points, so $m\ge 2$.

Notice that if $G$ splits over a  two-ended subgroup, then an axis $L$ for  a hyperbolic element of said subgroup will coarsely separate $X$.  It follows that the endpoints of $L$ separate $\bd X$, and so  $m\le2$.   
By Corollary \ref{nofixed}, $G$ doesn't fix a point of the Cactus tree $T$ of $\bd X$.  Let $\hat T$ be the minimal $G$ invariant subtree of $T$.\hfill\break
{\bf Case I}: {$\hat T$ is simplicial}. \hfill\break By construction of $T$, for every pair of adjacent vertices of $\hat T$, one vertex of the pair is a minimal inseparable cut or wheel. It follows from Theorems \ref{T: notsimp} and \ref{T: cutcyclic}, that there is an edge of $\hat T$ with virtually cyclic stabilizer, and $G$ splits over a virtually cyclic subgroup as required. \hfill\break
{\bf Case II}:  {$\hat T$ is not simplicial}.\hfill\break 
By Corollary \ref{stabI} if the stabilizer of an arc $I$ is not virtually cyclic then it contains at most two elements of the pretree $\mathcal R$. In particular every element of $\mathcal R$ in $I$ has an element adjacent to it.
If $a\in \mathcal R$ is adjacent to $b,c$ in $I$, where both $b,c\in \mathcal R$ then by theorems \ref{T: cutcyclic}, \ref{T: notsimp} the stabilizer
of at least one of $[a,b],[a,c]$ is virtually cyclic. We define now a new pretree $\mathcal R '$ as follows: we define an equivalence relation on $\mathcal R$ by $a\sim b$ if and only
if the stabilizer of $[a,b]$ is not virtually cyclic. By lemma \ref{stabI} and theorems \ref{T: cutcyclic}, \ref{T: notsimp} $a,b$ are then adjacent in $\mathcal R$. The elements
of $ \mathcal R '$ are then the equivalence classes of this equivalence relation. We define betweeness in the obvious way: $[b]$ is between $[a],[c]$ in $ \mathcal R'$
if $b$ is between $a,c$ in $\mathcal R$. It is clear that $\mathcal R$ is preseparable with few gaps since $\mathcal R$ is, so we obtain a corresponding $\mathbb R$-tree $\bar T$ by \cite{PS2}.
Clearly $G$ acts on $\bar T$ non-nestingly and for every arc $I$ of $\bar T$, $\st I$ is virtually cyclic. 

We note that $\bar T$ is obtained from $T$ by collapsing some intervals of $T$ to points.

\end{proof}

\section{ Nesting homeomorphisms and the non-nesting quotient}

As shown by Levitt \cite{Le} the Besvina-Feighn-Rips theory of isometric actions on $\mathbb R$-trees extends to the case of non-nesting actions by homeomorphisms.
Even though omitting the non-nesting hypothesis is impossible (think of $Homeo(\R)$) there are cases where one may obtain a non-nesting action from a nesting action.
The objective of this section is to give some general conditions that ensure this (and apply in our case of actions of CAT(0) groups on the boundary). We recall:

\begin{Def} Let  $T$ be  an $\R$-tree without terminal points.
We say a homeomorphism $g$ of $T$ is \textit{nesting}  if there is an arc  $I \subseteq T$ with 
$g(I) \subsetneq I$. The arc $I$ is called a \textit{nesting interval} of $g$.  If $g$ inverts the linear order on $I$ then we say $g$ \textit{inverts a nesting interval}.  For $G$ a group acting by homeomorphisms on $T$, we
say $G$ is \textit{nesting} if it contains a nesting element. 
\end{Def}

We give below some (artificially constructed) examples of nesting actions on $\mathbb R$-trees to illustrate how nesting actions can give rise to non-nesting actions.
Consider the action of the free group $F_2=\langle a,b\rangle $  on the Bass-Serre tree $T$
corresponding to the free product decomposition $F_2=\langle a\rangle *\langle b\rangle $. For each vertex $v$ fixed by a conjugate
$gag^{-1}$ of $a$,  glue a copy of the interval $[0,1]$ to $T$ identifying $1$ to $v$. Now extend the action of $gag^{-1}$ on
the interval so that it fixes $0,1$ and acts by `translations' on $(0,1)$ (note that $(0,1)$ is homeomorphic to $\mathbb R$).
Clearly the action of $a$ on the new tree that we constructed is nesting. Note that this action is not minimal.

We give now a more interesting example similar to the actions that we consider in this paper. Consider the amalgamated product
$F_2*_{\langle a \rangle }G$ where $F_2$ is the free group of rank 2 and $G$ is any group. Let $T$ be the Bass-Serre tree corresponding to this
decomposition. Let $X$ be the Cayley graph of $F_2$. We compactify $X$ by adding its ends and we still denote it $X$ for simplicity.
Clearly $a$ fixes two ends of $X$, $\epsilon, \gamma $. If $v$ is a vertex of $T$ fixed by $a$ we replace $v$ by $X$. We explain how
we glue $X$ to the edges incident to $v$. If $e$ is the edge fixed by $a$ and $\partial _0 e$ is identified with $v$ then we identify
$\partial _0 e$ with $\epsilon $. If $ge$ ($g\in F_2$) is another edge incident to $v$ we identify its endpoint to the end $g\epsilon $
of $X$. We extend this equivariantly to the whole tree $T$, namely we replace
each vertex $gv$ ,$(g\in F_2*_{\langle a \rangle }G$) by a copy of $X$ and we identify with the incident edges extended equivariantly
the gluing for the vertex $v$. Clearly this construction gives rise to a nesting action on an $\mathbb R$-tree. \smallskip

Note that if a nesting element $g$ inverts a nesting interval then $g^2$ is also nesting and does not invert a nesting interval.
\begin{Lem}\label{L:int}Let $g$ be a  nesting homeomorphism of the $\R$-tree $T$ which doesn't invert a nesting interval.   There is a collection $\{I_\alpha\}$ of pairwise disjoint open intervals and rays of $T$ such that:
\begin{enumerate}
\item For each $\alpha$, $g$ acts on $I_\alpha$ by translation.
\item If $I$ is a convex linearly ordered subset of $T$ and $g$ acts by translation on $I$,
then $I=I_\alpha$ for some $\alpha$.
\item  $g$ fixes each point of the closure of $\hu \left[ \cup I_\alpha \right] -\cup I_\alpha $. 
\item For all $\alpha, \beta$, the convex hull, $\hu \left[ I_\alpha \cup I_\beta\right]$ is a linearly ordered subset of $T$.  
\end{enumerate}
\end{Lem}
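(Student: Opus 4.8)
The plan is to exploit the structure of a nesting homeomorphism: the behavior of $g$ near a nesting interval forces a "sliding" picture that can be propagated along the tree.

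\medskip

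First I would establish the basic local picture. Let $[a,b]$ be a nesting interval with $g([a,b]) \subseteq [a,b]$ and, since $g$ does not invert the order on it, $g(a)=a$, $g(b)\le b$ in the linear order. Since $g$ is a homeomorphism with $g([a,b])\subsetneq [a,b]$, one of the endpoints must move strictly; WLOG $g(b)<b$. Then for each $x\in (a,b]$ the orbit $g^n(x)$ is monotone, so either it converges to $a$ or it stabilizes; in fact the set of fixed points of $g$ in $[a,b]$ is closed, and on each complementary open interval $g$ acts as a fixed-point-free, order-preserving homeomorphism — i.e. by "translation" in the sense used here (such an interval, being a fixed-point-free orbit-space of a homeomorphism of an ordered continuum order-isomorphic to an interval of $\R$, is homeomorphic to $\R$ and $g$ acts as a translation on it). This handles the seed interval.

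\medskip

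Next I would define the collection $\{I_\alpha\}$ to be the set of \emph{all} maximal convex linearly ordered subsets $I$ of $T$ on which $g$ acts by a fixed-point-free order-preserving translation. This makes (1) and (2) essentially definitional once one checks maximality is well-defined: if $g$ translates along $I$ and along $J$ with $I\cap J\ne\emptyset$, then on the convex hull $\hu[I\cup J]$ — which is linearly ordered because any two overlapping linearly ordered convex subsets of an $\R$-tree have linearly ordered union, using the tree/pretree axioms — $g$ is still fixed-point-free order-preserving, hence translates there too; so the union of a chain of such intervals is again one, and Zorn gives maximal elements; distinct maximal ones are disjoint. For (4), note that for arbitrary $\alpha,\beta$ the hull $\hu[I_\alpha\cup I_\beta]$ is linearly ordered iff it contains no branch point in its "interior"; I would argue that $g$ fixes the bridge between $I_\alpha$ and $I_\beta$ (the arc $[p,q]$ where $p\in\cl I_\alpha$, $q\in\cl I_\beta$ realize the distance), because a point on this bridge is fixed by $g$ iff its image under the nearest-point projections behaves correctly — more precisely, $g$ permutes the components of $T$ minus the bridge, and since $g$ has "nesting direction" into both $I_\alpha$ and $I_\beta$ it must fix each of these two rays/intervals' ends, forcing the bridge pointwise fixed; then since $g$ translates $I_\alpha$ toward (say) its far end and $I_\beta$ toward its far end, the configuration rules out a genuine branch, giving that $\hu[I_\alpha\cup I_\beta]$ is linearly ordered.

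\medskip

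For (3): a point $z$ in $\cl{\hu[\cup I_\alpha]} - \cup I_\alpha$ lies in the closure of the union, so either $z$ is a limit of points in some single $I_\alpha$ — then $z$ is an endpoint of $I_\alpha$, and since $g$ translates $I_\alpha$ toward one end, the other endpoint is fixed, and an endpoint that $g$ moves would have to lie in a translation interval strictly larger or be moved off $\cl{I_\alpha}$, contradicting maximality or the limit structure — or $z$ lies on a bridge $\hu[I_\alpha\cup I_\beta] - (I_\alpha\cup I_\beta)$, which by the argument for (4) is fixed pointwise. So every such $z$ is a fixed point of $g$. The continuity of $g$ then gives the claim for all of the closure.

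\medskip

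\textbf{Main obstacle.} The delicate step is the interaction of two translation intervals $I_\alpha, I_\beta$ that are "far apart" in $T$: establishing that the bridge between them is fixed pointwise by $g$ and that no branching occurs along $\hu[I_\alpha\cup I_\beta]$ — i.e. claims (4) and the bridge case of (3). The natural tool is to track how $g$ acts on the set of directions/components at points of the bridge, using that $g$ preserves the betweenness relation, is fixed-point-free on each $I_\gamma$, and "points into" $I_\gamma$ in a consistent direction; one must rule out that $g$ could shuffle branches in a way that is compatible with translating on $I_\alpha$ and $I_\beta$ separately but not jointly. I expect this to require a careful case analysis on the position of $g$'s nesting direction relative to the bridge, but no hard estimates.
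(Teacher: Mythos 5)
Your overall strategy (orbit/translation intervals, gates fixed by invariance) tracks the paper's, but there is a genuine error where you assert that the bridge between $I_\alpha$ and $I_\beta$ is fixed pointwise; you use this both as an intermediate step in (4) and as the entire argument for the bridge case of (3). Knowing that $g$ fixes the two endpoints of the bridge does not force $g$ to fix the bridge pointwise: the bridge can perfectly well contain further translation intervals $I_\gamma$ (take $g$ acting on a line in $T$ with fixed-point set $\{0,1,2,3\}$ and translating on each complementary interval; the bridge between $(0,1)$ and $(2,3)$ is $[1,2]$, which contains the translation interval $(1,2)$). For (4) the overreach is harmless --- all you need is that $g$ fixes the gate of $I_\beta$ on $I_\alpha$, which follows from $g(I_\alpha)=I_\alpha$, $g(I_\beta)=I_\beta$ and uniqueness of the gate, and a fixed point lying inside the open interval $I_\alpha$ contradicts translation. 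But for (3) the false claim is the whole argument. The correct and short repair, which is what the paper does, is: if $c$ lies on the bridge $[a,b]$ with $a,b$ fixed and $g(c)\neq c$, then $g(c)\in(a,c)$ or $g(c)\in(c,b)$, so the orbit of $c$ is monotone and trapped between the fixed points $a$ and $b$; its convex hull is therefore itself a translation interval containing $c$, so $c\in\cup I_\gamma$ and is excluded from the set in item (3). You need this observation explicitly; ``fixed pointwise'' is not available.

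A smaller point: your justification that two intersecting translation intervals have linearly ordered hull --- ``any two overlapping linearly ordered convex subsets of an $\R$-tree have linearly ordered union'' --- is false in general (two arcs of a tripod sharing a leg). What actually saves you is something stronger: if $g$ translates on both $I$ and $J$ and $c\in I\cap J$, then each of $I$ and $J$ equals the convex hull of $\{g^n(c):n\in\Z\}$, so $I=J$. This identity is also the clean way to see that every translation interval is already maximal, which your formulation of item (2) quietly requires.
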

\begin{proof}
There exists $[a,b] \in T$ with 
$g([a,b]) \subsetneq [a,b]$. Since $g$ doesn't invert $[a,b]$, by the fixed point theorem we may assume that $g(b) =b$ and that $g$ has no other fixed points in $[a,b]$,so $g(a) \in (a,b)$. 
Notice that $g$ cannot act by translation on a line of $T$.  
  It follows that the set $\{g^n(a) :\, n \in \Z\}$ is linearly ordered.  Let $I_a$ be the convex hull $\{g^n(a) : n \in \Z\}$.  Notice that $I_a$ is an open interval or ray of $T$ having $b$ as one of its endpoints, and that $g$ acts on $I_a$ by translation.  
 Repeating this process give $\{I_\alpha\}$. This proves (1) and (2).
 
 Suppose that for some $\alpha, \beta$,  $I_\alpha \cap I_\beta\neq \emptyset$. 
 Let $c \in I_\alpha \cap I_\beta$.  Since $g$ act by translation on both $I_\alpha$ and $I_\beta$,  $I_\alpha$ is the convex hull of the set $\{g^n(c):\,n \in \Z\}$, as is 
 $I_\beta$.  Thus $I_\alpha = I_\beta$, proving pairwise disjoint.
 
 Suppose that for some $\alpha, \beta$, $$\hu \left[ I_\alpha \cup I_\beta\right]$$ is not linearly ordered.  Then interchanging $\alpha $ and $\beta$ if need be, there is $a \in I_\alpha$ such that $$\hu\left[ \{a\} \cup I_\beta\right] \cap I_\alpha = \{a\}.$$  Notice that $$\hu\left[ \{g(a)\} \cup g(I_\beta)\right] \cap I_\alpha = \{g(a)\}$$ and so $g(I_\beta) \cap I_\beta =\emptyset$ which contradicts the fact that $g$ acts on $I_\beta $.   Thus (4) is true.
 
 Lastly let $$c \in \hu \left[ \cup I_\alpha \right] -\cup I_\alpha .$$ Then there exists $\alpha, \beta$ with $c \in \hu\left[ I_\alpha \cup I_\beta \right] $.  Let $a$ be an endpoint of $I_\alpha$ in $\hu\left[ I_\alpha \cup I_\beta \right]$ and $b$ be an endpoint of $I_\beta$ in $\hu\left[ I_\alpha \cup I_\beta \right]$. Clearly $g$ fixes $a$ and $b$.  If $c$ is not fixed by $g$, then either $g(c) \in (a,c)$ or $g(c) \in (c,b)$ either way, $c$ is in $I_\gamma$ for some $\gamma$ by construction and we have proven (3).
\end{proof}

\begin{Def}For $g$ a nesting homeomorphism of the $\R$-tree $T$ which doesn't invert a nesting interval, the intervals $\{I_\alpha\}$ of Lemma \ref{L:int} are called the {\em intervals of translation} of $g$. The closure of the convex hull of $ \cup I_\alpha$ is called the \textit{nesting subtree} of $g$, denoted $T_g$. 
More generally, if $g$ is a homeomorphism of the $\R$-tree $T$, $I$ is a maximal interval such that $g$ does not fix any point of $I$ and $g(I)=I$ then we say that
$I$ is an \textit{axis} of $g$. Clearly if $g$ is non-nesting then $g$ has a unique axis.

\end{Def}

\begin{Lem}\label{L:commute}Let $g,h$ be two commuting nesting homeomorphisms of the $\R$-tree $T$ which don't invert a nesting interval.  Let $I=(a,b)$ be a nesting interval of $g$
and let $J=(c,d)$ be a nesting interval for $h$. If $b\in (c,d)$ then the intervals $h^n (I),\, (n\in \Z)$ are all disjoint nesting intervals for $g$ and are contained in $J$. Moreover
for every $n\ne 0$ $h^ng^k$ is a nesting homeomorphism with nesting interval $J$.
\end{Lem}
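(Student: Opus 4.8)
The plan is to run everything off the conjugacy relation $h^{n}gh^{-n}=g$ coming from commutativity, together with the structure lemma for non-inverting nesting homeomorphisms (Lemma \ref{L:int}).

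First I would normalize notation: let $b$ be the endpoint of $I$ that $g$ fixes (so $g$ pushes the interior of $[a,b]$ toward $b$, $g(a)\in(a,b)$), and $d$ the endpoint of $J$ that $h$ fixes. Commutativity gives $g\circ h^{n}=h^{n}\circ g$, hence $g(h^{n}(I))=h^{n}(g(I))\subsetneq h^{n}(I)$ (using $g(I)\subsetneq I$ and injectivity of $h^{n}$), so every $h^{n}(I)$ is a nesting interval of $g$; the restriction $g|_{h^{n}(I)}=h^{n}\circ(g|_{I})\circ h^{-n}$ does not invert $h^{n}(I)$, and its only fixed point in $\overline{h^{n}(I)}$ is $h^{n}(b)$, because $g(h^{n}(b))=h^{n}(g(b))=h^{n}(b)$.

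Next I would study the $h$-orbit of $b$. Since $b\in(c,d)$ and $h$ inverts no nesting interval, Lemma \ref{L:int} applied to $h$ says $\{h^{m}(b):m\in\Z\}$ is linearly ordered, lies in one interval of translation of $h$, and $h$ translates its convex hull; in particular the $h^{n}(b)$ are pairwise distinct, and since $b$ lies interior to $J$ and $h$ contracts $J$ toward $d$, the orbit (and the hull segments joining consecutive points of it) stays in $J$, giving $h^{n}(I)\subseteq J$ for all $n$. Applying Lemma \ref{L:int} now to $g$: each $h^{n}(I)$ is a convex, linearly ordered set on which $g$ acts by translation, so by clause (2) it lies in a unique interval of translation $I_{\alpha_{n}}$ of $g$, and $h^{n}(b)$, a $g$-fixed point in $\overline{I_{\alpha_{n}}}$, must be an endpoint of $I_{\alpha_{n}}$. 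Distinct $h^{n}(b)$ force distinct $I_{\alpha_{n}}$, which are pairwise disjoint by clause (1); hence the $h^{n}(I)\subseteq I_{\alpha_{n}}$ are pairwise disjoint nesting intervals of $g$, all contained in $J$.

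Finally, for the statement about $h^{n}g^{k}$, I would verify $(h^{n}g^{k})(J)\subsetneq J$ without inversion, for $n\ge1$ and any $k$ (the negative powers handled symmetrically). Here $h^{n}(J)\subsetneq J$ because $h(J)\subsetneq J$, and $g^{k}(J)\subseteq J$ because the only portion of $J$ that $g$ displaces is a union of translation intervals $h^{m}(I)\subseteq J$, on each of which $g^{k}$ slides points toward the fixed point $h^{m}(b)\in J$; composing, $(h^{n}g^{k})(J)=h^{n}(g^{k}(J))\subseteq h^{n}(J)\subsetneq J$, and non-inversion is inherited from that of $h^{n}$ on $J$ and of $g$ on each subarc. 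I expect the real work — and the main obstacle — to be exactly this last claim, namely keeping rigorous control of how $g^{k}$ acts on $J$ (so that $g^{k}(J)$ genuinely remains inside $J$ and does not protrude past $d$); this is where the containments $h^{m}(I)\subseteq J$ from the previous step, and the non-inversion hypotheses, are essential.
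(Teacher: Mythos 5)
Your outline has the right ingredients (conjugation by $h^n$, Lemma \ref{L:int}), but it assumes the one claim that carries essentially all the content of the lemma. You write that because the $h$-orbit of $b$ stays in $J$, ``the orbit (and the hull segments joining consecutive points of it) stays in $J$, giving $h^n(I)\subseteq J$.'' This does not follow: the orbit of $b$ controls only one endpoint of $I$, and nothing you have said prevents the other endpoint $a$ from lying outside $(h^{-1}(b),b)$ --- in which case $I$ protrudes past $h^{-1}(b)$, the translates $h^n(I)$ can overlap, and $I$ need not even lie in $J$. The paper's proof is devoted precisely to excluding this: set $s=h^{-1}(b)$ and suppose $a\notin(s,b)$; then the point $t$ with $(s,b)\cap(a,b)=(t,b)$ lies in $I$, so $g(t)\neq t$, hence $g(s)\neq s$, hence $h(g(s))\neq h(s)=b$; but commutativity gives $h(g(s))=g(h(s))=g(b)=b$, a contradiction. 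Thus $I\subseteq(h^{-1}(b),b)$, and both the disjointness of the $h^n(I)$ and the containment $h^n(I)\subseteq J$ drop out at once. Your disjointness argument also leans on ``distinct $h^{n}(b)$ force distinct $I_{\alpha_{n}}$,'' which is not valid as stated (an interval of translation has two endpoints, so two distinct $g$-fixed points can bound the same interval); it is repairable, but only once the containment above is in hand.

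For the last assertion you aim at $(h^{n}g^{k})(J)\subsetneq J$ and correctly sense that controlling $g^{k}$ on all of $J$ is the obstacle: your claim that ``the only portion of $J$ that $g$ displaces is a union of translation intervals $h^{m}(I)$'' is unsupported ($g$ may have other translation intervals meeting $J$, conceivably one containing $d$ in its interior), and note also that $h(J)=J$ rather than $h(J)\subsetneq J$, since $J$ is an interval of translation of $h$. The paper sidesteps all of this: $g$ fixes every $h^{n}(b)$ by commutativity, these points converge to $d$, so by continuity $g(d)=d$; then $h^{n}g^{k}$ sends the arc $[b,d]$ to $[h^{n}(b),d]\subsetneq[b,d]$, which is the nesting statement required. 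So the hard step is in the first part of the lemma, not the last, and the two facts your argument actually needs --- $I\subseteq(h^{-1}(b),b)$ and $g(d)=d$ --- are exactly the ones missing from your write-up.
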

\begin{proof} Without loss of generality we may assume that $(b,d)$ does not intersect $I$ and $h(b)\in (b,d)$.
Let $s=h^{-1}b$. Suppose $a\notin (s,b)$ and let $(s,b)\cap (a,b)=(t,b)$. Then $gt\ne t$, so $gs\ne s$ and $hgs\ne b$. However $g,h$ commute and $gb=b$ so 
$$hgs=ghs=gb=b \,.$$
Therefore $(a,b)\subset (h^{-1}b,b)$. It follows that the intervals  $h^n (I),\, (n\in \Z)$ are all disjoint. Clearly $$gh^na=h^nga=h^na$$ so $g$ fixes $h^na$ and similarly fixes $h^nb$.
For  fixed $x\in I$ $$I=\bigcup _{k\in \Z}[g^kx,g^{k+1}x]$$ so
$$h^n(I)=\bigcup _{k\in \Z}[g^kh^nx,g^{k+1}h^nx]$$ so $h^nI$ is an interval of translation for $g$. Note finally that $h^ng^kb=h^nb\in (b,d)$ and $g(d)=d$ since
$g$ fixes all $h^nb$ which limit to $d$ and $g$ is a homeomorphism. So $h^ng^k$ is nesting. Clearly $J$ is a nesting interval for this homeomorphism.

\end{proof}


 \begin{Def}\label{overlap} Let $G$ be a finitely generated group acting nestingly on an $\R$-tree $T$. If for any two 
 intervals of translation $I,J$ with $I\cap J\ne \emptyset $  no endpoint of $I$ is contained in $J$ we say that $G$ acts on $T$ with \textit{non-overlapping translation intervals}.
 
 \begin{figure}[h]
\includegraphics[width=6in ]{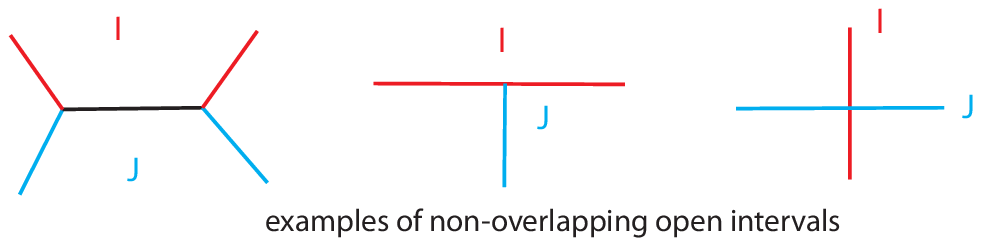}
\vspace{0.1in}
 \caption{}
\end{figure}

\begin{figure}[h]
\includegraphics[width=6in ]{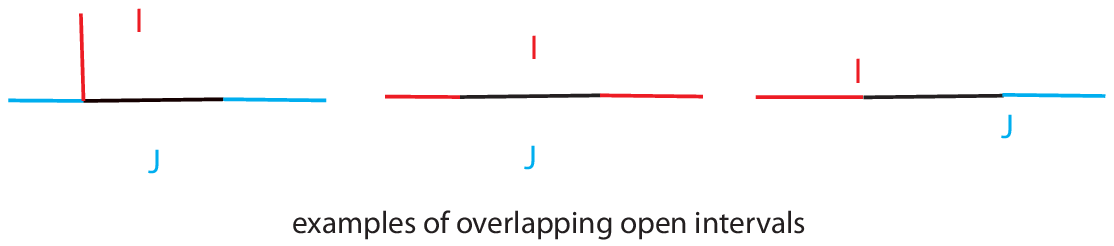}
\vspace{0.1in}
 \caption{}
\end{figure}

\end{Def}
\begin{Def} Let $G$ be a finitely generated group acting nestingly on an $\R$-tree $T$. A \textit{cross component} of  $T$ is a subtree $T'$ maximal with the property that for any two points $x,y \in T'$ there is a finite sequence of intervals of translation $I_1, \dots I_n$ of $g_1, \dots g_n$ respectively with $x \in I_1$,  $y \in I_n$ and $I_i \cap I_{i+1} \neq \emptyset$ for all $1\le i <n$. 

If $Q$ is a cross component
of $T$ then an element of $\overline  Q\setminus Q$ is called an \textit{endpoint} of $Q$.

\end{Def}

\begin{Lem} \label{cross-ends}
Suppose that a group $G$ acts minimally and nestingly on the $\mathbb R$-tree $T$ with non-overlapping translation intervals. If a cross-component $Q$ of $T$ is not an interval then the stabilizer of some endpoint
of $Q$ is a proper subgroup of the stabilizer of $Q$. If a cross-component $Q$ of $T$ is an interval then $G$ acts non-nestingly and non-trivially on an $\R$- tree  with arc stabilizers virtually conjugate into the stabilizer of $Q$. 
\end{Lem}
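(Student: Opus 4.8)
The plan is to treat the two alternatives separately, using throughout two elementary consequences of the non-overlapping hypothesis. First, distinct cross-components are disjoint $G$-invariant subtrees, every point of a cross-component lies in some interval of translation, and (by maximality) any interval of translation meeting a cross-component is contained in it; thus each cross-component is exactly the union of the intervals of translation it contains, and if an interval of translation $I$ of $g$ lies in the cross-component $C$ then $g(C)=C$. Second, if $I\subseteq Q$ is an interval of translation and $e\in\cl I\setminus I$ is an endpoint of $I$, then $e\in\bd Q$: otherwise $e$ lies in some interval of translation $K\subseteq Q$, and since $K$ is an open arc of $T$ containing $e$ it meets $I$ in points near $e$, so $I\cap K\neq\emptyset$ while the endpoint $e$ of $I$ lies in $K$ — contrary to non-overlapping. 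I also note that two distinct intervals of translation contained in $Q$ which meet cannot have linearly ordered union (non-overlapping forces them to coincide), so their union has a genuine branch point, and such a branch point is necessarily an interior point of both intervals (an endpoint of one lying inside the other is forbidden, and endpoints are at most bivalent in the union).

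\medskip
\noindent\textbf{The case where $Q$ is not an interval.} Here I would show that $\st Q$ does not fix all endpoints of $Q$; this is the desired conclusion. Argue by contradiction, assuming $\st Q$ fixes $\bd Q$ pointwise. Since $Q$ is not linearly ordered and equals the union of its intervals of translation, there are two distinct intervals of translation $I$ (of $g$) and $J$ (of $h$) contained in $Q$ with $I\cap J\neq\emptyset$ and $\hu(I\cup J)$ not linearly ordered (otherwise the intervals of translation in $Q$ would be pairwise contained in common lines and $Q$ would be an interval). Note $g\in\st Q$ by the first remark. By the remarks above $\cl{I\cup J}$ has a branch point $c$ interior to both $I$ and $J$; at $c$ the arc $I$ spans two directions, and $J$ issues from $c$ in a third direction $\delta$, reaching along $\delta$ an endpoint $k$ of $J$, so $k\in\bd Q$ and $g$ fixes $k$. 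Since $c$ is interior to the $g$-invariant interval $I$ we have $g(c)\in I$ and $g(c)\neq c$, so $g(c)$ lies beyond $c$ in one of the two directions of $I$; hence the arc $[g(c),k]$ passes through $c$, i.e. $c$ lies between $g(c)$ and $k$. Applying the homeomorphism $g^{-1}$, which preserves betweenness, carries $g(c)$ to $c$ and fixes $k$, we get that $g^{-1}(c)$ lies between $c$ and $k$, hence in direction $\delta$ from $c$; but $g^{-1}(c)\in I$ and $g^{-1}(c)\neq c$, so $g^{-1}(c)$ lies in one of the two directions of $I$ at $c$, not $\delta$ — a contradiction. Therefore some endpoint $e\in\bd Q$ is not fixed by $\st Q$, so the stabiliser of $e$ is a proper subgroup of $\st Q$.

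\medskip
\noindent\textbf{The case where $Q$ is an interval.} Here the plan is to collapse every cross-component of $T$ to a point, obtaining an $\R$-tree $S$ on which $G$ acts (well defined because cross-components are disjoint and permuted by $G$). I would check that $S$ is non-trivial and that $G$ fixes no point of $S$ using minimality: a $G$-fixed point of $S$ is the class of a $G$-invariant cross-component, whose closure is then a $G$-invariant subtree of $T$, hence all of $T$, forcing $T$ to be a line — an excluded degeneracy. Next I would verify that the $G$-action on $S$ is non-nesting: if some $g$ (replacing $g$ by $g^2$ if it inverts) were nesting on $S$, Lemma~\ref{L:int} would give $g$ an interval of translation $\bar I$ in $S$; any point of $\bar I$ that is not a collapsed cross-component lifts to a point of $T$ moved monotonically by $g$ along an arc, hence lying in an interval of translation of $g$, hence in a cross-component — impossible; so $\bar I$ consists entirely of collapsed cross-components, but then those cross-components are glued end to end along a linearly ordered family permuted by $g$, whose union is then a single interval of translation of $g$ and so lies in one cross-component — absurd, since that union properly contains distinct cross-components. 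Finally, for the arc stabilisers: any arc of $S$ through a collapsed point $[R]$ has stabiliser contained in $\st R$, and by minimality ($T=\hu(GQ)$, so the collapsed points are cofinal in $S$) every arc of $S$ meets such a collapsed point; since the relevant cross-components are $G$-translates of $Q$, their stabilisers are $G$-conjugates of $\st Q$, which yields arc stabilisers of $S$ virtually conjugate into $\st Q$.

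\medskip
\noindent\textbf{Main obstacle.} The delicate point in the first alternative is the $\R$-tree bookkeeping — placing the branch point inside an interval of translation and tracking the three directions at $c$ under the non-overlapping hypothesis. In the second alternative the real work is to confirm that the collapsed action is genuinely non-nesting (ruling out nesting carried along a chain of cross-components) and to pin the arc stabilisers down to conjugates of $\st Q$ rather than to stabilisers of arbitrary cross-components; that last step is where one must lean on minimality and, if necessary, on the first alternative of the lemma to reduce to the situation in which every cross-component is a translate of $Q$.
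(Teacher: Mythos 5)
Your argument departs from the paper's in both halves, and each half has a genuine gap.

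In the first half the betweenness computation at the branch point $c$ is sound, but it needs a point $k$, fixed by $g$, lying in a direction at $c$ transverse to $I$. You take $k$ to be the endpoint of $J$ in the direction $\delta$ and assert $k\in\bd Q$. This fails twice over. First, by Lemma \ref{L:int} a translation interval may be a ray, so $J$ need not have an endpoint in the direction $\delta$ at all. Second, your preliminary claim that an endpoint of a translation interval contained in $Q$ lies in $\bd Q$ is unjustified: an ``open interval'' of an $\R$-tree is not an open subset of $T$, so a translation interval $K$ containing $k$ need not meet $J$ near $k$ --- it can branch off transversally at $k$, in which case $K\cap J=\emptyset$ and the non-overlapping hypothesis (which only constrains \emph{intersecting} pairs) says nothing; such a $K$ can lie in the same cross-component $Q$ via a longer chain, putting $k$ in $Q$ rather than in $\bd Q$, and then the contradiction hypothesis gives no control of $g(k)$. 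The paper instead fixes an endpoint $e$ of $Q$, assumes $\st Q$ fixes it, uses minimality to produce a second endpoint $f$ and a translation interval $I\subseteq Q$ of which $f$ is not an endpoint, and argues directly that the translating element cannot fix $f$.

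In the second half the gap is structural. Collapsing \emph{all} cross-components gives arc stabilizers contained only in stabilizers of arbitrary cross-components $R$ --- or in stabilizers of arcs of $T$ meeting no cross-component, which minimality does not rule out, since $T=\hu(G\cdot Q)$ only forces the existence of connecting arcs between translates, not that every subarc meets one --- whereas the lemma requires arc stabilizers virtually conjugate into $\st Q$ for the \emph{given} $Q$. Your proposed repair (reduce to a single orbit of cross-components) is not available: nothing in the hypotheses forces all cross-components to be translates of $Q$, and the first alternative of the lemma does not supply this. The paper's construction is essentially the opposite of yours: it first modifies the action on each translate $hQ$ so that elements of its stabilizer either fix it pointwise or flip it, and then collapses the components of $T\setminus\bigcup_{h}hQ$; every nondegenerate arc of the quotient then contains a nondegenerate subarc of some $hQ$, so its pointwise stabilizer lies, up to index two, in $h\,\st(Q)\,h^{-1}$, which is exactly the stated conclusion.
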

\begin{proof} Assume that $Q$ is not an interval and let $e$ be an endpoint of $Q$. If the stabilizer of $e$ is a proper
subgroup of the stabilizer of $Q$ the lemma is shown. Otherwise, since the action of $G$ is minimal, $Q$ has at least one more endpoint, say $f$.
Since $Q$ is not an interval there is an element $g$ translating along a nesting interval $I$ such that $f$ is not an endpoint of $I$.
It follows then that $g$ does not fix $f$.

If $Q$ is an interval then for any element $g$ in the stabilizer of $Q$ we alter its action as follows: if $g$ fixes  the ends of $Q$
in the new action $g$ fixes $Q$ pointwise while if $g$ reverses the ends of $Q$ in the new action $g$ acts as inversion (a flip).
Finally we collapse the part of the tree that does not lie in the union $\bigcup _{h\in G} hQ$ to obtain a new minimal $G$ $\R$-tree, say $T'$.   Any nesting interval of $T'$ gives rise to a nesting interval of $T$ which will violate non-overlapping translation intervals.  Thus $G$ acts non-nestingly on $T'$, and arc stabilizers in $T'$  will stabilize some translate of $Q$.  
\end{proof}

\subsection{Pretrees} 

By remark \ref {R:order} of section 2 we have the following definition of pretrees which is equivalent to the one given earlier:

\begin{Def}[see \cite{BOW5}]Let $\cP$ be a set.  A function $(.,.) : \cP^2 \to 2^\cP$ is called a pretree structure on $\cP$ (so $(a,b) \in \cP^2,\, \forall a,b \in \cP$)
if the following conditions are satisfied
\begin{enumerate}
\item $(x,y)=(y,x),\, \forall x,y \in \cP$
\item $[x,z] \subset [x,y]\cup [y,z],\, \forall x,y,z \in \cP$
\item If $z \in (x,y)$, then $x \not \in (z,y),\, \forall x,y,z \in \cP$
\end{enumerate} where we define
$[a,b]= (a,b) \cup \{a,b\},\, \forall a,b \in \cP$.
The set $\cP$ with this interval relation is called a pretree. 
\end{Def}
\begin{Def}
Let $\cP$ be a pretree.
We say distinct $a,b \in \cP$ are {\em adjacent} if $(a,b) = \emptyset$.  A subset $A \subset \cP$ is called {\em convex} if  $(a,b)\subset A,\, \forall a,b \in A$.   A collection of sets $\cA$ of $\cP$ is called colinear if  for any finite subset $S \in \cup \cA$, $S$ is contained in an interval of $\cP$.
\end{Def}

\begin{Thm}\label{T:pretreequotient} Let $\cP$ be a pretree and $\cR$ be a partition of $\cP$ into convex subsets.  If we define an interval structure on $\cR$ by
$A \in (B,C)$ if and only if $A,B,C \in \cR$ are distinct and there exist $a\in A$, $b\in B$ and $c \in C$ with $a \in (b,c)$, then $\cR$ is a pretree.
\end{Thm}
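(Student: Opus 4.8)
The plan is to verify the three axioms in the interval formulation of ``pretree'' for $\cR$, reducing each to a property of $\cP$. The one useful preliminary fact is that, in a witness of betweenness, the two \emph{endpoints} may be replaced freely within their classes. Precisely, I would first prove: if $A'\in\cR$, $a,a'\in A'$, $x\in\cP\setminus A'$ and $b\in\cP$, then $x\in(a,b)$ if and only if $x\in(a',b)$. Indeed, assume $x\in(a,b)$, i.e. $a\,x\,b$; since $x\notin A'$ we have $x\neq a'$, so the pretree axiom ``if $u\,z\,v$ and $z\neq w$ then $u\,z\,w$ or $v\,z\,w$'' applied to $a\,x\,b$ with $w=a'$ gives $a\,x\,a'$ or $b\,x\,a'$. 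The first puts $x\in(a,a')\subseteq A'$ by convexity of $A'$, which is impossible, so $b\,x\,a'$, i.e. $x\in(a',b)$; the converse is symmetric. Since the classes in $\cR$ are pairwise disjoint, this lets me normalise both endpoints of any witnessing triple. (It does \emph{not} allow replacing the middle element, and the proof is arranged so that this is never required.)

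Axiom $1$, $(A,C)=(C,A)$, is immediate from $(a,c)=(c,a)$ in $\cP$. For axiom $2$, $[A,C]\subseteq[A,B]\cup[B,C]$: the cases where two of $A,B,C$ agree are trivial, so let $A,B,C$ be distinct and $D\in[A,C]$. If $D\in\{A,B,C\}$ there is nothing to check, so suppose $D$ is a fourth class with $D\in(A,C)$, witnessed by $d\in(a,c)$ with $a\in A,\ c\in C,\ d\in D$. Choosing any $b\in B$ (so $d\neq b$), the same pretree axiom applied to $a\,d\,c$ with $w=b$ gives $a\,d\,b$ or $c\,d\,b$, i.e. $D\in(A,B)$ or $D\in(B,C)$. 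For axiom $3$, suppose $C\in(A,B)$ and, for a contradiction, $A\in(C,B)$; then $A,B,C$ are distinct, with witnesses $c_0\in(a_0,b_0)$ and $a_1\in(c_1,b_1)$ for suitable representatives. Using the endpoint lemma I move $a_0\to a_1$ and $b_0\to b_1$ to obtain $c_0\in(a_1,b_1)$, and move $c_1\to c_0$ to obtain $a_1\in(c_0,b_1)$. Then $b_1\,c_0\,a_1$ and $c_0\,a_1\,b_1$ both hold in $\cP$; but the pretree axiom ``if $y\in(x,z)$ then $z\notin(x,y)$'' applied to the first (with $x=b_1,\ y=c_0,\ z=a_1$) gives $a_1\notin(b_1,c_0)=(c_0,b_1)$, contradicting the second.

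The only delicate point — more a thing to be careful about than a real obstacle — is that endpoint-replacement is available but the analogous statement for the middle point is not obviously true, so the three verifications (especially axiom $3$) must be organised to use endpoint moves only. Granting the endpoint lemma, everything follows from the pretree axioms of $\cP$ together with convexity of the classes and their pairwise disjointness.
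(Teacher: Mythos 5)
Your proof is correct and takes essentially the same route as the paper's: both arguments reduce each axiom for $\cR$ to the corresponding axiom for $\cP$ by replacing endpoint representatives within the convex, pairwise disjoint classes, the final contradiction in axiom (3) being the identical configuration $c_0\in(a_1,b_1)$ and $a_1\in(c_0,b_1)$ violating the pretree axiom in $\cP$. The only (cosmetic) difference is that you package the representative change as a standalone endpoint-replacement lemma proved from the fourth betweenness axiom, whereas the paper performs the same replacements inline using the interval axiom $[x,z]\subseteq[x,y]\cup[y,z]$ together with convexity.
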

\begin{proof}  Clearly $\cR$ satisfies condition (1).  For condition (2), let $A,B,C \in \cR$, and let $D \in (A,B)$.  Thus $A,B,D$ are distinct and there are $a\in A$, $b \in B$, and $d\in D$ with $d \in (a,b)$ in $\cP$.  For any $c \in C$, $[a,b]\subset [a,c] \cup [c,b]$ so $d \in [a,c] \cup [c,b]$.  Thus $D \in [A,C]\cup[C,B]$.  

For condition (3), let  $A,B, D\in \cR$ with $D \in (A,B)$. Then $A,B,D$ are distinct and there exist $a\in A$, $b \in B$ and $d \in D$ with $d\in (a,b)$.  Suppose by way of contradiction that  $A \in (D,B)$,  then there exists $\hat a \in A$, $\hat b \in B$ and $\hat d \in D$ with $\hat a \in (\hat d, \hat b)$.    However by condition (2) in $\cP$,  
$$ \hat a \in [\hat d,d] \cup[d,\hat b] \subset D \cup [d,\hat b]$$ so $\hat a \in (d,\hat b)$.  Similarly $$d \in (a,b) \subset [a,\hat a] \cup [\hat a,b] \subset [a,\hat a]\cup[\hat a,\hat b] \cup[\hat b,b] \subset A \cup [\hat a, \hat b] \cup B $$ so $d \in (\hat a, \hat b)$  which contradicts condition (3) in $\cP$.  
\end{proof}
There is an obvious way to insert a linearly ordered set between adjacent points of a pretree.  The following theorem belabors this process.
\begin{Thm}[see \cite{SWE}]\label{T:preinsert}
Let $\cP$ be a pretree and  consider a collection $\{ \{-\iy_\omega, \iy_\omega\}: \omega \in \Omega \}$ of adjacent pairs of $\cP$.   For each $\omega \in \Omega$ choose a linearly ordered set $J_\Omega$.  Define  $\cR = \bigcup\limits_{\omega \in \Omega}J_\omega \cup \cP$.  Extent  the betweeness relation on $\cP$ to a relation on $\cR$ in the following  steps:
\begin{enumerate} [I]
\item For $a \in J_\omega,\, \, p,q \in \cP$, we define $q \in (a,p)\iff  q\in [-\iy_\omega, p) \cap [\iy_\omega, p)$.  
\item For $a \in J_\omega,\,\, b \in J_\psi, \, q \in \cP$ we define $$q\in (a,b) \iff q \in [\iy_\omega, \iy_\psi]\cap  [-\iy_\omega, \iy_\psi]\cap  [\iy_\omega, -\iy_\psi]\cap  [-\iy_\omega, -\iy_\psi]$$
\item For $a \in J_\omega,\,\, b,c \in \cR\setminus J_\omega$ we define $a \in (b,c) \iff  \pm \iy_\omega \in [b,c]$.
\item For $a,b \in J_\omega$, then $(a,b)$ is defined as a subset of the linearly ordered set $J_\omega$.
\item For $a,b \in J_\omega, \,\, c \in \cR\setminus J_\omega$ with $a<b$ in the linear order on $J_\omega$, then  $b\in (a,c) \iff \iy_\omega \in (a, c]$
\item For $a,b \in J_\omega, \,\, c \in \cR\setminus J_\omega$ with $a>b$ in the linear order on $J_\omega$, then  $b\in (a,c) \iff -\iy_\omega \in (a, c]$
\end{enumerate}
Then $\cR$ is a pretree.  
\end{Thm}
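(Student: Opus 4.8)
The plan is to check the three pretree axioms of the interval-function definition directly for $\cR$: symmetry $(x,y)=(y,x)$; the triangle condition $[x,z]\subseteq[x,y]\cup[y,z]$; and the ``no backtracking'' condition that $z\in(x,y)$ forbids $x\in(z,y)$. Throughout I will use that $\cR$ is the disjoint union of $\cP$ with the sets $J_\omega$, and I will read the symbol ``$\pm\iy_\omega\in[b,c]$'' appearing in rule III as ``$\{-\iy_\omega,\iy_\omega\}\subseteq[b,c]$''; with this reading rule III is unambiguous. A small standing observation, proved from the pretree axioms in $\cP$ together with the adjacency of $\{-\iy_\omega,\iy_\omega\}$, is that whenever $b,c\in\cP$ and neither equals $-\iy_\omega$ nor $\iy_\omega$ one has $\iy_\omega\in[b,c]\iff-\iy_\omega\in[b,c]$; the case where $b$ or $c$ coincides with an endpoint of some other inserted block is exactly where this can fail, and is the reason rule III is phrased with the full pair.

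The first real step is to isolate a handful of bookkeeping facts making precise the slogan \emph{``each $J_\omega$ is a linearly ordered set glued into $\cP$ at the adjacency $\{-\iy_\omega,\iy_\omega\}$''}. Specifically I would show: (a) for $x,y,z\in\cP$ the relation $z\in(x,y)$ holds in $\cR$ iff it holds in $\cP$, since none of the rules I--VI introduces a betweeness relation among three points of $\cP$; (b) for $a,b\in J_\omega$ the interval $(a,b)$ computed in $\cR$ is exactly the linear-order interval between $a$ and $b$ inside $J_\omega$ --- rule IV gives the inclusion into $J_\omega$, and rules I and III together with the adjacency show that no point of $\cP$ and no point of another $J_\psi$ can fall into it; (c) the block $B_\omega=\{-\iy_\omega\}\cup J_\omega\cup\{\iy_\omega\}$ is convex in $\cR$ and carries the obvious linear order $-\iy_\omega<J_\omega<\iy_\omega$; and (d) ``seen from outside'', i.e.\ for $w\notin B_\omega$ and $a\in J_\omega$, one has $w\in(a,y)$ iff $w\in(\iy_\omega,y)\cap(-\iy_\omega,y)$, and $a\notin(b,c)$ unless $\{-\iy_\omega,\iy_\omega\}\subseteq[b,c]$ --- this repackages rules I, III, V, VI into the single statement that from outside $B_\omega$ the block behaves exactly like the $\cP$-segment $[-\iy_\omega,\iy_\omega]$. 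These four facts are each short; (b) and (c) use the half-open intervals in rule I and the linear order in rules V--VI, while (d) is essentially a restatement.

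With (a)--(d) available, symmetry is checked rule by rule: rules II and IV are manifestly symmetric, and for a pair $\{a,y\}$ with $a\in J_\omega$ the complete description of $(a,y)$ obtained by collating I, III, V, VI refers only to the (symmetric) $\cP$-interval data and to the linear order on $J_\omega$, hence is symmetric. The triangle condition and the no-backtracking condition are then a finite case analysis on the ``type vector'' of the points involved --- for the triangle condition, on $x,y,z$ together with the probe point $w\in[x,z]$ --- where each point is recorded as an interior point of $\cP$, one of the $\pm\iy_\omega$, or an element of some $J_\omega$, while keeping track of which indices coincide. Up to the symmetry just established there are only a few genuinely distinct configurations, and each reduces, via (a)--(d), to one of three already-known situations: all relevant points effectively lie in one $J_\omega$, where the axioms hold because a linearly ordered set is a pretree; all of them effectively lie in $\cP$ once each block $B_\omega$ is replaced by its endpoint data, where the axioms are the corresponding axioms of $\cP$ (and, for the quotient-flavoured bookkeeping, Theorem \ref{T:pretreequotient}); or a mixed configuration with one point inside some $J_\omega$ and the rest outside, handled by using (d) to push the inside point out to $-\iy_\omega$ or $\iy_\omega$, applying the $\cP$-axiom to the resulting $\cP$-configuration, and re-inserting along the linear order of $J_\omega$. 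I would write the mixed configurations out explicitly, since that is where rules V and VI actually do work.

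I do not expect a single hard idea here; the main obstacle is completeness and bookkeeping --- ensuring the case list for the triangle condition is exhaustive and that the ``inside/outside'' translations of (d) are applied consistently --- and the one genuinely delicate point is when an endpoint $\pm\iy_\omega$ of one inserted block coincides with an endpoint of another (chained or branching insertions), where one must be careful that the pair reading of ``$\pm\iy_\psi\in[b,c]$'' in rule III is the one that yields the correct (negative) answer. Proving the bookkeeping facts (a)--(d) once and for all, with the coincident-endpoint case explicitly allowed, is what reduces everything else to routine verification.
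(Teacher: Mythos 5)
Your overall approach coincides with the paper's for the simple reason that the paper offers no proof at all: immediately after the statement it says that ``the proof with its unenumerable trivial cases is left to the obsessive compulsive reader.'' A direct verification of the three interval-relation axioms, organized around your facts (a)--(d) --- that each block $B_\omega=\{-\iy_\omega\}\cup J_\omega\cup\{\iy_\omega\}$ is convex and linearly ordered, and that from outside it behaves exactly like the $\cP$-segment $[-\iy_\omega,\iy_\omega]$ --- is precisely what is intended, and your reading of rule III as requiring \emph{both} $-\iy_\omega$ and $\iy_\omega$ to lie in $[b,c]$ is the correct one.

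One stated claim is false, however. Your ``standing observation'' that for $b,c\in\cP$ with $b,c\notin\{-\iy_\omega,\iy_\omega\}$ one has $\iy_\omega\in[b,c]\iff-\iy_\omega\in[b,c]$ does not follow from adjacency: take $\cP$ a tripod with center $\iy_\omega$ and with $b$, $c$, $-\iy_\omega$ on its three branches; then $\{-\iy_\omega,\iy_\omega\}$ is an adjacent pair and $\iy_\omega\in(b,c)$, but $-\iy_\omega\notin[b,c]$. So the equivalence fails for perfectly ordinary points of $\cP$, not only when $b$ or $c$ is an endpoint of another inserted block. Since you adopt the conjunctive reading of rule III throughout and never actually invoke this observation, it is a removable error rather than a gap, but the sentence should be deleted --- if used, it would let you collapse cases that must be kept separate (namely, whether the path from $b$ to $c$ genuinely crosses the adjacency or merely touches one of its endpoints). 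Relatedly, your case analysis should record the implicit hypothesis that distinct indices give distinct adjacent pairs: if $J_\omega$ and $J_\psi$ are both inserted between the same pair $\{-\iy,\iy\}$, then rules II, V, VI force $(a,c)=\emptyset$ for $a\in J_\omega$, $c\in J_\psi$, while both $a$ and $c$ lie in $(-\iy,\iy)$, and the triangle axiom $[-\iy,\iy]\subseteq[-\iy,a]\cup[a,\iy]$ fails. With that hypothesis made explicit, your facts (a)--(d) and the mixed-case analysis go through as planned.
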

The proof with its unenumerable trivial cases is left to the obsessive compulsive reader.  

We assume now that a finitely generated group $G$ acts on an $\mathbb R$-tree $T$ with non-overlapping translation intervals.
We will define a new tree $S$ called the \textit{non-nesting quotient} of $T$.


An informal description of $S$ is as follows: we collapse each cross component of $T$ to a new vertex and for each endpoint $e$ of a cross component $Q$ we introduce a
vertex $v_{e,Q}$ and an edge joining the new vertex to $Q$ and $e$. It is clear that both these operations
produce a pretree, which we may complete to get an $\R $-tree.

Another way to think of this is as follows: Replace each endpoint $e$ of a cross-component $Q$ by an edge with
endpoints $v_{e,Q},e$ (so that $v_{e,Q}$ is between $Q,e$).
Then collapse all subtrees corresponding to cross components to points. 

We define now $S$ formally by introducing a pretree $\mathcal S$ which we complete to a tree.
First let $ \mathcal  S_0$ be the partition of $T$ into cross components and singletons (i.e. points that lie in no cross component).  Since $ \mathcal S_0$ is a partition of a pretree into convex subsets, it inherits a pretree structure via theorem \ref{T:pretreequotient}.  

For $e$, an endpoint of the cross component $Q$, and $e\in E \in \mathcal S_0$  then $Q$ and $E$ are adjacent in $\mathcal S_0$.  

We now define $\mathcal S = \mathcal S_0 \cup \{v_{e,Q}:\, e$ is an endpoint of the cross component $Q \}$.  The linearly ordered set $\{v_{e,Q}\}$ will be inserted between the adjacent pair $Q,E$ where $e \in E \in \mathcal S_0$.  The pretree structure on $\mathcal S_0$ extends to a structure on $\mathcal S$ via theorem \ref{T:preinsert}.
By theorem 5.4 of \cite{PS2} we may complete this pretree $\mathcal S$ to an $\R$-tree $S$.


Note that each vertex $v_{e,Q}$ of $T$ corresponding to some end-point $e$ is connected by
an edge to $Q$ and by another edge either to $e$ or to the cross-component containing $e$ if $e$ is contained in some cross-component.

Clearly each cross-component has at least one end-point and if a cross-component is not an interval then its stabilizer is not virtually $\mathbb Z$.
The action of $G$ on $T$ induces an action of $G$ on $S$.

\begin{Lem} \label{quotient}
The action of $G$ on $S$ is non-nesting. If $e$ is an endpoint of a cross-component $Q$ then the stabilizer
of the edge $[e,v_{e,Q}]$ is equal to the stabilizer of the end of $Q$ corresponding to $e$.
\end{Lem}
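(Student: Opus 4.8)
The plan is to verify the two assertions separately, working directly from the construction of $S$ as the completion of the pretree $\mathcal S$. For the non-nesting claim, first I would observe that any nesting element $h$ of $G$ acting on $S$ would have to nest some arc $[x,y]$ of $S$; pulling back along the collapsing/insertion construction, such an arc either projects to a genuine (nonsingleton) interval of $T$ that is not swallowed by a single cross-component, or lies inside one of the inserted segments $\{v_{e,Q}\}$ together with possibly one adjacent collapsed cross-component point. In the first case, because the arc crosses a cross-component boundary, the action of $h$ on $T$ along the preimage arc would produce a nesting behavior; but a nesting interval of $T$ is, by Lemma \ref{L:int}, contained in an interval of translation, and the non-overlapping translation interval hypothesis forbids an endpoint of one translation interval from sitting inside another, which is exactly what a nested arc crossing a cross-component boundary would force. (This is the same mechanism used in the last paragraph of Lemma \ref{cross-ends}.) In the second case $h$ would have to act on the linearly ordered inserted set $\{v_{e,Q}\}$ without fixing it setwise while stabilizing the adjacent cross-component $Q$ and the endpoint $e$; but $h$ then fixes both endpoints of this inserted segment and permutes the $v_{e',Q'}$, and since the only inserted vertices between $Q$ and $e$ are exactly the $v_{e,Q}$ (one copy, as $e$ is a single endpoint), there is no room for nesting. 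So $G$ acts non-nestingly on $S$.

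For the stabilizer identification, fix an endpoint $e$ of a cross-component $Q$ and consider the edge $[e,v_{e,Q}]$ of $S$ (more precisely the pair $Q, v_{e,Q}, E$ where $e\in E\in\mathcal S_0$). An element $g\in G$ stabilizes this edge in $S$ if and only if it fixes both $v_{e,Q}$ and the adjacent vertex (which is $E$, hence the image in $S$ of the point $e$ or of the cross-component containing $e$), equivalently if and only if $g$ fixes $Q$ (since $v_{e,Q}$ is, by construction, between $Q$ and $E$ and is the unique inserted vertex for the pair $(e,Q)$, so $g$ fixing $v_{e,Q}$ forces $g(Q)=Q$) and $g$ fixes $e$ as a point of $T$. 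Now $e\in\overline Q\setminus Q$ is an endpoint of the cross-component $Q$, and "the end of $Q$ corresponding to $e$" is precisely the end of the subtree $Q$ determined by $e$; an element of $G$ stabilizes that end exactly when it stabilizes $Q$ and fixes $e$. Thus $\st[e,v_{e,Q}]$ coincides with the stabilizer of that end of $Q$.

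The main obstacle I anticipate is the bookkeeping in the non-nesting argument: one has to be careful that collapsing cross-components to points and inserting the linearly ordered sets $\{v_{e,Q}\}$ does not accidentally create a nesting arc out of material that was non-nesting in $T$, and conversely that a would-be nesting arc of $S$ genuinely reflects back to a configuration in $T$ violating the non-overlapping translation interval hypothesis. Making the first case precise — showing that an arc of $S$ nested by $g$ whose preimage is nontrivial in $T$ must meet the interior of a translation interval and an endpoint of another, contradicting Definition \ref{overlap} — is the crux, but it is essentially the argument already run at the end of the proof of Lemma \ref{cross-ends}, so I would invoke that reasoning rather than repeat it in full.
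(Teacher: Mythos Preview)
Your overall strategy is in the right direction, but both parts have gaps that the paper's proof fills in with concrete case analysis.

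\textbf{Non-nesting.} The crux you skip is: why does a nesting arc $[A,B]$ in $S$ lift to a nesting interval for $g$ in $T$? You assert that ``the action of $h$ on $T$ along the preimage arc would produce a nesting behavior'', but this is not automatic, because the quotient $T\to S$ is not injective on cross-components. If $B$ is a cross-component and $b\in T$ is a preimage point, $g$ may send $b$ to a different point of the same cross-component, so $g([a,b])\not\subset [a,b]$ in $T$ even though $g([A,B])\subset [A,B]$ in $S$. The paper handles this by first reducing (via the observation that $g$ fixing $v_{e,Q}$ forces $g$ to fix or swap $Q$ and $E$, so $B\neq v_{e,Q}$) to the case where $A$ is a genuine singleton of $T$ not in any cross-component, and then splitting on whether $B$ is a singleton or a cross-component. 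In the latter case one looks at the first point $x$ of $B$ hit by the geodesic from $A$, shows $g(x)=x$, and only then obtains a nesting interval $[A,x]$ in $T$. Your invocation of the non-overlapping hypothesis is a detour: once you know $g$ nests $[A,x]$ in $T$, Lemma~\ref{L:int} already says $[A,x)$ lies in a translation interval and hence in a cross-component, contradicting the choice of $A$. The reference to the end of Lemma~\ref{cross-ends} is misplaced; that lemma collapses the complement of cross-components, which is a different construction.

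\textbf{Stabilizer.} Your argument is essentially circular in the converse direction. You write ``an element of $G$ stabilizes that end exactly when it stabilizes $Q$ and fixes $e$'', but stabilizing the end of $Q$ at $e$ means a priori only that $g$ preserves the germ of a ray in $Q$ limiting to $e$; one must \emph{deduce} $g(Q)=Q$. The paper does this by a small dichotomy: such a $g$ either translates along an interval of $Q$ (so a translation interval of $g$ lies in $Q$, hence $g(Q)\cap Q$ is nontrivial) or fixes a sub-ray of $Q$ pointwise; in both cases $g(Q)\cap Q$ contains an interval, and maximality of cross-components forces $g(Q)=Q$.
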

\begin{proof} Assume that $g$ acts nestingly on $S$ with $g([A,B])\subset [A,B]$, $g(B)=B$ and $g(A)\in (A,B]$.
We may assume that
 $B$ is the unique fixed point of $g$ in $[A,B]$. Since $v_{e,Q}$ was inserted between the adjacent points $Q$ and $E\ni e$ it follows that if $g$ fixes $v_{e,Q}$, then it either fixes or interchanges 
 $Q$ and $E$.  Thus $B \neq v_{e,Q}$. 
Since the group $G$ is finitely generated by lemma \ref{L:int} every interval of $T$ intersects at most countably
many cross-components. It follows that 
we may assume that $A$ is a point of $T$ ( and so $A$ is not contained in a cross component) . Note that if $B$ is also a point of $T$ then $g$ acts nestingly on the interval $[A,B]$ of $T$,
so $[A,B)$ is contained in a cross component of $T$, hence $g$ does not act nestingly on $[A,B]$ in $S$.

Assume now that $B$ is a cross-component and let $p$ be a geodesic in $T$ from $A$ to $B$. If $p$ goes through an endpoint $x$ of $B$
then $x$ is fixed by $g$ (otherwise $g(A)$ wouldn't lie in $[A,B]$) implying that  $g$ fixes $v_{x,B}\in (A, B)$ which is a contradiction.
Similarly if $p$ does not go through an endpoint of
$B$ and $x$ is the first point of $p$ lying on $B$ then $x$ is fixed by $g$. It follows that the interval $[A,x]$ is a nesting interval
of $g$ in $T$, and that $A$ is contained in a cross component of $T$ which is a contradiction. So the action
of $G$ on $S$ is non-nesting.

Assume that $g$ fixes the edge $[e,v_{e,Q}]$. Then $g(Q)=Q$ and $ge=e$. Therefore $g$ fixes the end of $Q$ corresponding to $e$.
Conversely if $g$ fixes the end $e$ of $Q$ it is either a nesting element with nesting axis contained in $Q$ or it is an elliptic element
fixing an interval of $Q$ which has $e$ as an endpoint. In the latter case $gQ\cap Q$ contains an interval which implies that $gQ=Q$.
It follows that $g$ fixes the edge $[e,v_{e,Q}]$. 
\end{proof}

\begin{Thm} \label{T:nesting}
Suppose that a finitely generated group $G$ acts minimally and nestingly on the $\mathbb R$-tree $T$ with non-overlapping translation intervals.
 Then $G$ acts non-nestingly on an $\R$-tree $S$ (without proper $G$-invariant subtree) with every arc stabilizer of $S$  stabilizing the  end of a cross-component of $T$.  
\end{Thm}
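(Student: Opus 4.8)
The plan is to assemble Theorem \ref{T:nesting} directly from the construction of the non-nesting quotient $S$ together with Lemma \ref{quotient} and Lemma \ref{cross-ends}, handling the two faces of the last clause — "non-nesting" and "arc stabilizers fix ends of cross-components" — with a small additional argument in the case where cross-components are intervals.

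\textbf{Step 1: $S$ is an $\R$-tree with no proper $G$-invariant subtree.} First I would record that $\mathcal S$ is a genuine pretree: $\mathcal S_0$ inherits a pretree structure from $T$ by Theorem \ref{T:pretreequotient}, and adjoining the linearly ordered sets $\{v_{e,Q}\}$ between the adjacent pairs $(Q,E)$ preserves the pretree structure by Theorem \ref{T:preinsert}. Since $T$ is an $\R$-tree it is preseparable with few gaps; collapsing convex subsets and inserting copies of $\R$ (or subsets thereof) along adjacent pairs cannot destroy these properties — any maximal linearly ordered subset of $\mathcal S$ pulls back to a maximal linearly ordered subset of $T$ with at most countably many inserted arcs interleaved, using that $G$ is finitely generated so (by Lemma \ref{L:int}) an interval meets only countably many cross-components. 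Hence by Theorem \ref{T:PRtree} (Theorem 5.4 of \cite{PS2}) $\mathcal S$ completes to an $\R$-tree $S$. The action of $G$ on $T$ permutes cross-components and their endpoints, so it permutes the $v_{e,Q}$ and descends to an action on $\mathcal S$, hence on $S$. Minimality of the $G$-action on $T$ forces minimality on $S$: any nonempty $G$-invariant subtree of $S$ pulls back to a nonempty $G$-invariant subtree of $T$ (the preimage of each non-inserted vertex together with the relevant inserted arcs), which must be all of $T$, hence the subtree is all of $S$.

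\textbf{Step 2: the action on $S$ is non-nesting.} This is exactly Lemma \ref{quotient}, which I would simply invoke. The one place to be careful is that Lemma \ref{quotient} is stated for "a finitely generated group $G$ acting on an $\R$-tree $T$ with non-overlapping translation intervals," which is precisely our hypothesis (minimality is not needed there).

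\textbf{Step 3: arc stabilizers of $S$ stabilize ends of cross-components of $T$.} Let $I$ be an arc of $S$ and $h \in G$ with $h$ fixing $I$ pointwise; I want to exhibit a cross-component $Q$ of $T$ an end of which is fixed by $h$. The arc $I$ contains either an edge of the form $[e, v_{e,Q}]$ (where $e$ is an endpoint of a cross-component $Q$), in which case Lemma \ref{quotient} says $h$ stabilizes the end of $Q$ corresponding to $e$ and we are done; or $I$ is contained in a single edge $[v_{e,Q}, Q']$ or between two $v$-vertices adjacent to the same cross-component — but by construction of $\mathcal S$ the edges of $S$ are exactly the inserted edges $[e,v_{e,Q}]$ and $[v_{e,Q},E]$, so after subdividing we may assume $I$ lies in one such edge, and either $I$ meets some $[e,v_{e,Q}]$ (handled) or $I$ meets some $[v_{e,Q},E]$ where $E$ is the cross-component or singleton containing $e$; in the latter case $h$ fixes $v_{e,Q}$, hence (as in the proof of Lemma \ref{quotient}) $h$ either fixes or swaps $Q$ and $E$, and fixing $v_{e,Q}$ together with fixing a nearby point of the edge forces $h$ to fix $Q$ and $E$, and again $h$ fixes the end $e$ of $Q$. (If $E$ is a cross-component, $h$ fixes the end of $E$ facing $v_{e,Q}$ as well, which is still an end of a cross-component.) The one genuinely separate case is when every cross-component is an interval, so $S$ might conceivably be a line or degenerate; here I would invoke Lemma \ref{cross-ends}, which in the "$Q$ an interval" case already produces a non-nesting non-trivial action with arc stabilizers virtually conjugate into $\st Q = $ the stabilizer of that interval's ends — and one checks this coincides with $S$. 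I expect \textbf{Step 3 to be the main obstacle}: pinning down exactly what an arc of $S$ can look like after the two-stage pretree completion (collapse then insert then complete), and verifying that the completion adds no new "exotic" arcs whose stabilizers escape control, is where the bookkeeping bites; the safe route is to reduce every arc to a subarc of an inserted edge and then quote Lemma \ref{quotient} verbatim.
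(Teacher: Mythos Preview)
Your Step 3 contains a genuine gap. The assertion that ``by construction of $\mathcal S$ the edges of $S$ are exactly the inserted edges $[e,v_{e,Q}]$ and $[v_{e,Q},E]$'' is false. The pretree $\mathcal S_0$ is the partition of $T$ into cross-components \emph{and singletons}; any arc $[a,b]\subset T$ that lies entirely outside every translation interval (hence outside every cross-component) consists of singletons, and it survives intact as an arc in $\mathcal S_0$, in $\mathcal S$, and in the completion. An element $h\in G$ fixing such an arc need not fix any end of any cross-component --- it could be an elliptic element whose fixed subtree in $T$ simply avoids all cross-components. Minimality of the $G$-action on $T$ does not obviously rule out such arcs, and you give no argument that it does.

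The paper's proof closes this gap by an additional collapse that you omit. After forming the non-nesting quotient $\hat S$ (your $S$), the paper fixes one edge $E=[e,v_{e,Q}]$ and collapses every connected component of $\hat S\setminus G\cdot E$ to a point; the resulting tree is the paper's $S$. In this collapsed tree every arc is by construction a subarc of some $G$-translate of $E$, so its stabilizer is contained in the stabilizer of some $gE$, which by Lemma \ref{quotient} equals the stabilizer of the end $ge$ of the cross-component $gQ$. Non-nestingness and absence of a proper invariant subtree survive the collapse. This is not merely bookkeeping; without the collapse the conclusion about arc stabilizers is not available from Lemma \ref{quotient} alone. Your Steps 1 and 2 are fine (and match the paper's construction of $\hat S$), but you need to insert this further collapse before Step 3, at which point Step 3 becomes a one-line citation of Lemma \ref{quotient}.
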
 
 
\begin{proof} 
Consider the nesting quotient $\hat S$ of $T$. Then $G$ acts on $\hat S$ and the action of $G$ on $\hat S$ is non-nesting, and has no proper $G$-invariant subtree.

Let $e$ be an endpoint of a cross-component $Q$. Note that the stabilizer of the segment $E=[e,v_{e,Q}]$ is equal to the stabilizer of the end $e$ of $Q$ by lemma \ref{quotient}. 
By collapsing the connected components of $\hat S\setminus G\cdot E$ to points we obtain an $\R$-tree $S$. $G$ acts non-nestingly on $S$, $S$ has no proper $G$-invariant subtree, 
and arc stabilizers of $S$ are conjugate into an end stabilizer of $Q$.


%
 

\end{proof}

\section{Rank 1, Nesting version}

Let $Z$ be be the boundary of a CAT(0) space $X$ on which $G$ acts geometrically.
We assume that $Z$ is an $m$-thick continuum with cactus tree $T$.  $G$ acts  as a $\pi$-convergence group on $Z$.  Thus $G$ acts by homeomorphism on the $\R$-tree $T$ which has no terminal points (by construction). The aim of this section is to show that if the action is nesting and $G$ does not virtually split over a 2-ended group then $G$ is rank 1.

\begin{figure}[htbp]
\hspace*{-3.3cm}  
\begin{center} 
   \includegraphics[viewport=0 280 400 600,height=8cm,clip]
   {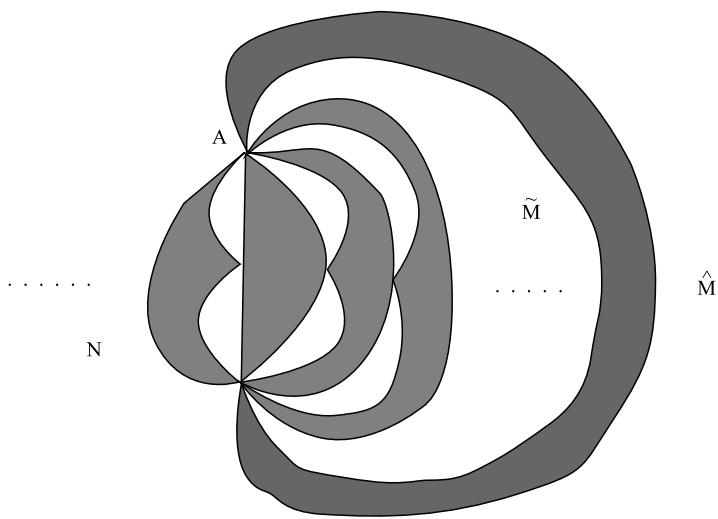}
   \end{center}
 \caption{The positive and the attracting continuum of a translation}
  \label{7.2Def.eps}
\end{figure}

\begin{Def} Let $A \in \cP$ the Cactus pretree for $Z=\bd X$.  We say that $g$ an isometry of $X$ \textit{translates along} $A$ if $A \in (g^{-1}(A), g(A))$ and $g$ doesn't invert the obvious interval of $T$.   \end{Def} 

Notice that when $A$ is a min cut, then there are continua $M,N$ of $\bd X$ with $$g^{-1}(A) \subset N , \,g(A) \subset M,\, M \cap N = A$$ and $M \cup N = \bd X$.  Since $g$ doesn't invert an interval, $g(M) \subsetneq M$ and $g^{-1}(N) \subsetneq N$.  When $A$ is a wheel we can replace $A$ with a minimal cut contained in the wheel which separates $g^{-1}(A)$ and $g(A)$ and then use this minimal cut instead.  In either case we have a minimal cut subset  $A$ decomposing $\bd X$ into $M$ and $N$ with $g(M) \subsetneq M$ and $g^{-1}(N) \subsetneq N$. 

We introduce now some notation that will be useful in the sequel:

\begin{Def} We define the {\em positive continuum} $\hat M$ of the $g$ translation along $A$ to be
$$\hat M= \bigcap\limits_{n>0} g^n(M) .$$

We define the {\em attracting continuum} $\tilde M$ of the $g$ translation along $A$ to be the intersection of the closure of $\bigcup\limits_{n>0} g^{n}(N)$ with the positive continuum of $g$. So
$$\tilde M =\overline {\bigcup\limits_{n>0} g^{n}(N)}\cap \hat M .$$

We define the {\em negative continuum} $\hat M$ of $g$ to be
$$\hat N= \bigcap\limits_{n<0} g^n(N) $$
and the {\em repelling continuum} $\tilde N$ of $g$  by
$$\tilde N =\overline {\bigcup\limits_{n<0} g^{n}(M)}\cap \hat N .$$
Let $\hat A=\tilde M\cap \tilde N= \hat M \cap \hat N \subset A$. 
If $I$ is the interval of $\cP$ spanned by $g^n(A),n\in \Z$ then these continua and $\hat A$ are clearly independent of any choice of min cut subset in $I$.   \end{Def} 



\begin{Thm}\label{T:miss}  Let $X$ be a proper CAT(0) space and $g$ an isometry of $X$ such that there is a compact $M \subset \bd X$ with $g(M)\subset \Int M$. Then $g$ is rank 1, and $\{g^+\} = \cap g^n(M) $
\end{Thm}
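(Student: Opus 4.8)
The plan is to study the decreasing family of compacta $M_n:=g^n(M)$. Since $g$ is a homeomorphism of $\bd X$ and $g(M)\subseteq\Int M\subseteq M$, applying $g^n$ gives $M_{n+1}=g^n(g(M))\subseteq g^n(\Int M)=\Int M_n\subseteq M_n$; hence $M_\iy:=\bigcap_{n\ge 0}M_n$ is a non-empty compactum with $g(M_\iy)=M_\iy$ and $M_\iy=g(M_\iy)\subseteq g(M)\subseteq\Int M$, so in particular $M\ne\bd X$ (one must read $g(M)\subset\Int M$ as proper containment; otherwise $M=\bd X$ is permitted and the conclusion fails, and in every application $M$ is a proper subcontinuum). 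Dually, with $C:=\bd X\setminus\Int M$ one has $g(M)\cap C=\emptyset$, hence $g^{-1}(C)\subseteq\bd X\setminus M\subseteq\Int C$, and $N_\iy:=\bigcap_{n\ge 0}g^{-n}(C)$ is a non-empty $g$-invariant compactum with $N_\iy\subseteq\bd X\setminus M$. Thus $M_\iy$ (an ``attractor'' inside $\Int M$) and $N_\iy$ (a ``repeller'' outside $M$) are disjoint, non-empty, $g$-invariant compacta.

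Next I would show that $\langle g\rangle$ is infinite cyclic and \emph{discrete} in $\mathrm{Isom}(X)$, and hence acts properly on $X$; in particular $g$ is not elliptic. If $g$ had finite order $c$, then $M=g^c(M)\subseteq g(M)\subseteq M$ would force $g(M)=M$, contrary to strictness. If $\langle g\rangle$ were not discrete, extracting $g^{n_k}\to h$ in $\mathrm{Isom}(X)$ with $n_k\uparrow\iy$ produces $g^{m_k}\to\mathrm{id}$ with $m_k:=n_{k+1}-n_k$, and, after passing to a subsequence, $m_k\to\iy$; the induced boundary homeomorphisms then converge uniformly to $\mathrm{id}$ on the compactum $\bd X$, so $g^{m_k}(M)\to M$ in the Hausdorff metric while $g^{m_k}(M)\subseteq g(M)$, forcing $M\subseteq g(M)$ --- again a contradiction. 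Finally an infinite-order elliptic element lies in the compact stabiliser of its fixed point and hence generates a non-discrete group; so $g$ is not elliptic, the orbits $(g^{\pm n}x_0)$ leave every bounded set, and by properness $d(x_0,g^{n}x_0)\to\iy$.

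The heart of the argument --- and the step I expect to be the main obstacle --- is to show that $g$ is a \textbf{rank one hyperbolic} isometry. Since $\langle g\rangle$ acts properly, $\pi$-convergence (Theorem~\ref{piconverge}) applied to $(g^{n})$ gives a subsequence $(g^{n_i})$ and points $\nu,\mu\in\bd X$ with $g^{n_i}(x)\to\mu$ whenever $d_T(x,\nu)\ge\pi$, and $g^{-n_i}(x)\to\nu$ whenever $d_T(x,\mu)\ge\pi$. Using that $\Int M$ is open and non-empty one finds $\xi\in\Int M$ with $d_T(\xi,\nu)\ge\pi$ (the degenerate subcase in which $\Int M$ lies inside a Tits ball of radius $\pi$ about $\nu$ needs a short separate treatment and, via Lemma~\ref{C:Stab}, does not occur); then $g^{n_i}(\xi)\to\mu$ while $g^{n_i}(\xi)\in M_{n_i}$ with $M_{n_i}\searrow M_\iy$, so $\mu\in M_\iy$, and symmetrically $\nu\in N_\iy$; in particular $\mu\ne\nu$. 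For hyperbolicity I would use convexity of the metric of $X$: the midpoints $m_n$ of the segments $[g^{-n}x_0,g^{n}x_0]$ satisfy $d(m_n,gm_n)\le d(x_0,gx_0)$, so they lie in the $g$-invariant closed convex set $Y=\{y:d(y,gy)\le d(x_0,gx_0)\}$, which also contains the segments $[g^{-n}x_0,g^{n}x_0]$, whose lengths tend to $\iy$. If $(m_n)$ has a bounded subsequence, these segments subconverge to a geodesic line in $Y$ along which $d_g$ is bounded, so $g$ is semisimple (cf.~\cite{BRI-HAE}) and, being non-elliptic, hyperbolic. The remaining possibility, $(m_n)\to\iy$, forces $g$ to be parabolic (one extracts a $g$-fixed point at infinity along whose ray $d_g$ stays bounded); this case, and likewise the non--rank-one alternative $d_T(g^{+},g^{-})=\pi$ of Ballmann's dichotomy, must be excluded by exploiting the \emph{proper} trapping region $\Int M$: in each case the $\langle g\rangle$-invariant set carrying the degenerate dynamics (the parabolic fixed-point set, respectively the boundary of the parallel set of an axis --- cf.~Theorem~\ref{T:C-M}) is too small to absorb a cone-open trapping region, whereas $\pi$-convergence together with Lemma~\ref{C:Stab} forces any $\langle g\rangle$-invariant closed set to meet both $M_\iy$ and $N_\iy$, contradicting their disjointness. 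Hence $g$ is hyperbolic with $d_T(g^{+},g^{-})=\iy$, i.e.\ rank one, and the $\pi$-convergence pair is $(\nu,\mu)=(g^{-},g^{+})$.

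Finally I would identify $\bigcap_{n}g^{n}(M)=M_\iy$ with $\{g^{+}\}$. We already have $g^{+}=\mu\in M_\iy$. Conversely, let $\zeta\in M_\iy$ with $\zeta\ne g^{+}$. Since $g$ is rank one, $g^{+}$ lies at infinite Tits distance from every other point, so $d_T(\zeta,\mu)=d_T(\zeta,g^{+})\ge\pi$; hence $g^{-n_i}(\zeta)\to\nu=g^{-}$ by $\pi$-convergence. But $M_\iy$ is $g$-invariant, so $g^{-n_i}(\zeta)\in M_\iy$, whence $g^{-}\in M_\iy$, contradicting $M_\iy\cap N_\iy=\emptyset$ (recall $g^{-}=\nu\in N_\iy$). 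Therefore $M_\iy=\{g^{+}\}$, which together with the rank one assertion completes the proof. As indicated, the delicate point is the third paragraph: upgrading ``proper trapping compactum'' to ``rank one hyperbolic'', i.e.\ ruling out the elliptic, parabolic, and flat ($d_T(g^{+},g^{-})=\pi$) behaviours.
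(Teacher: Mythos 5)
Your first, second, and fourth paragraphs are sound and essentially match the paper's endgame: the nested compacta $M_\iy=\bigcap g^n(M)$ and $N_\iy$, the observation that $g$ has infinite order, and the deduction $M_\iy=\{g^+\}$ \emph{once rank one is known}. But the third paragraph, which you yourself flag as the main obstacle, is where the theorem actually lives, and it is not proved. Two specific assertions there are unsubstantiated: (a) the existence of $\xi\in\Int M$ with $d_T(\xi,\nu)\ge\pi$ --- Lemma \ref{C:Stab} cannot deliver this, since it takes $\rad\ge\pi$ as a \emph{hypothesis} rather than producing it; and (b) the exclusion of the parabolic and flat ($d_T(g^+,g^-)=\pi$) alternatives, which you reduce to the slogan that the invariant set carrying the degenerate dynamics is ``too small to absorb a cone-open trapping region.'' That slogan is essentially a restatement of the theorem. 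In the flat case $\pi$-convergence gives no information about points at Tits distance less than $\pi$ from $g^-$, and a priori that could be all of $\bd X\setminus\{g^+\}$, so the attractor/repeller dynamics cannot be leveraged without an additional idea; nor does Lemma \ref{C:Stab} force a $g$-invariant closed set to meet both $M_\iy$ and $N_\iy$ unless its Tits radius is already known to be at least $\pi$.

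The missing idea, which is the whole content of the paper's proof, is to work with the compact frontier $A=\bd M$ (note $g(A)\cap A=\emptyset$, so $g^\pm\notin A$) and to prove \emph{directly} that $d_T(A,g^+)=\iy$: if not, lower semicontinuity of Tits length and compactness of $A$ give a length-minimizing Tits geodesic $\alpha$ from $A$ to $g^+$; since $g$ is a Tits isometry, $g(\alpha)$ is a minimizing geodesic from $g(A)$ to $g^+$ of the same length, yet if $g^+\in M$ then $\alpha$ must cross $g(A)$, and if $g^+\notin M$ then $g(\alpha)$ must cross $A$ --- either way one obtains a strictly shorter connection realizing the same infimum, a contradiction. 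With $d_T(A,g^\pm)=\iy$ in hand, $\pi$-convergence applied to $A$ gives $g^+\in\Int M$ and $g^-\notin M$, every Tits path from $g^-$ to $g^+$ must cross $A$, so $d_T(g^-,g^+)=\iy$ and $g$ is rank one; the parabolic and flat cases never need separate treatment (and in the paper's standing setting $g$ belongs to a group acting geometrically, so it is semisimple and $\langle g\rangle$ acts properly, making your discreteness digression unnecessary). Without some version of this geodesic-shortening argument, or a genuine substitute, your proposal does not establish the rank one conclusion.
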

\begin{proof} Clearly $g$ has infinite order and so is a hyperbolic isometry.   Let $A = \bd M$, the topological boundary of $M$ in $\bd X$.  The points $g^\pm$ are fixed by $g$, and $g(A) \cap A = \emptyset$ so $g^\pm \not \in A$.
Recall Ballmann's dichotomy \cite{BAL} that if $g$ is rank one, then $g^+$ and $g^-$ are at infinite Tits distance from all other points in $\bd X$, and if $g$ is not rank 1 then  $d_T(g^+,g^-) =\pi$.

We first show that $d_T(A,g^+) = \iy$.  Suppose not.   By lower semi-continuity of the Tits metric, since $A$ is compact there is a Tits geodesic $\alpha$ from $A$ to $g^+$ of length $d_T(A,g^+)$.  Since $g$ acts on the Tits boundary by isometries, $$d_T(A, g^+) = d_T(g(A), g^+).$$ However if $g^+\in M$ (which implies that $ g^+ \in g(M)$) then  $\alpha$ passes through $g(A)$.  Thus $$d_T(g(A), g^+) < \ell(\alpha)= d_T(A,g^+)$$ which is a contradiction.   If on the other hand $g^+\not \in M$ then similarly  $g(\alpha)$ passes through $A$ and so $$d_T(A, g^+) < \ell(g(\alpha)) = \ell(\alpha) =d_T(A,g^+)$$ which is ridiculous.  Thus $d_T(A,g^+) = \iy$ and it follows from the above dichotomy that $d_T(A,g^-)= \iy$.  

Now by $\pi$-convergence $g^n(A) \to g^+$, and $g^{-n}(A) \to g^-$.  It follows that $g^+ \in \Int M$ and $g^- \not \in M$.  Thus any alleged Tits geodesic from $g^-$ to $g^+$ must pass through $A$, but this is ludicrous as the $A$ is at infinite Tits distance from $g^+$, so $d_T(g^-,g^+) = \iy$ and $g$ is rank 1.  Since $g^- \not \in M$, by $\pi$-convergence  $\{g^+\} = \cap g^n(M)$.  
\end{proof}
\begin{figure}[h]
\includegraphics[width=6.0in ]{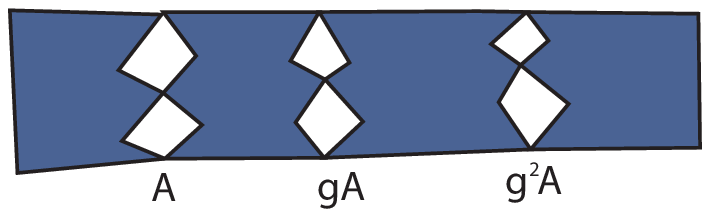}
\vspace{0.1in}
 \caption{}
\end{figure}
 
\begin{Cor} \label{C:disjoint} Let $X$ be a proper CAT(0) space and $g$ an isometry of $X$ such that $g$ acts on the cactus tree of $X$ as a translation along an interval $I$.
 If $I$ contains two disjoint min cuts  then $g$ is rank 1, $\{g^+\} = \tilde M$ and $\{g^-\}= \tilde N$.
\end{Cor}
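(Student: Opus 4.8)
The plan is to reduce the statement directly to Theorem \ref{T:miss}. Since $g$ translates along the interval $I$ and $I$ contains two disjoint min cuts $A_1, A_2$, I would first arrange matters so that one of them, say $A_2$, lies strictly between $g^{-1}(A_1)$-side and $g(A_1)$-side in the appropriate sense. Concretely, take the min cut decomposition $\bd X = M \cup N$ with $M \cap N = A$ for a suitable min cut $A$ in $I$, chosen so that $A_1$ and $A_2$ are separated by $A$, with $A_1 \subseteq N$ and $A_2 \subseteq M$ (possibly after replacing $g$ by a power and relabelling; since $A_1 \cap A_2 = \emptyset$ and both lie in the linearly ordered set $I$, some such configuration is available by shifting $A$ along $I$). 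The key geometric point is that because $A_1$ and $A_2$ are \emph{disjoint} min cuts on the axis, once we have $g(M) \subsetneq M$ we actually get $g(M) \subseteq \Int M$: the frontier of $M$ is contained in $A$, and the translate $g(A)$, being a min cut between $A_2$ and $g^2(A_1)$, is disjoint from $A$; hence $g(\cl M) = \cl{g(M)}$ is a compact set whose frontier $g(A)$ misses $A = \bd M$, so $g(M) \subseteq M \setminus A = \Int M$ (using that $M$ is closed and $g(M)$ cannot meet $N \setminus A$ by the nesting of the decomposition).

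With $g(M) \subseteq \Int M$ established, Theorem \ref{T:miss} applies verbatim with this compact set $M$: it yields that $g$ is rank $1$ and $\{g^+\} = \bigcap_{n>0} g^n(M) = \hat M$. It then remains to identify $\hat M$ with the attracting continuum $\tilde M = \cl{\bigcup_{n>0} g^n(N)} \cap \hat M$ and to show this is the single point $g^+$; but since $\hat M = \{g^+\}$ is already a singleton, $\tilde M \subseteq \hat M$ forces $\tilde M = \{g^+\}$ as well (one checks $\tilde M$ is nonempty because $g^+$ is a limit of $g^n(\bd N) \subseteq g^n(N)$ by $\pi$-convergence, using that $d_T(\bd N, g^+) = \infty$ from the proof of Theorem \ref{T:miss}). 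Applying the same argument to $g^{-1}$, which translates along $I$ in the reverse direction with $N$ playing the role of $M$, gives $\{g^-\} = \hat N = \tilde N$.

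The main obstacle I expect is the bookkeeping in the first paragraph: verifying that disjointness of the two min cuts on the axis genuinely upgrades the strict-containment $g(M) \subsetneq M$ coming from $g$ translating along $I$ (Definition before Theorem \ref{T:miss}) to the open-containment $g(M) \subseteq \Int M$. One has to be careful that $g(M)$ does not sneak up against $A$ from inside; this is precisely where $g(A) \cap A = \emptyset$ is used, together with the fact that $g(A)$ separates $g(M)$ from $N$. A secondary nuisance is choosing the min cut $A \in I$ and possibly a power of $g$ so that the two given disjoint min cuts end up on opposite sides of $A$ — but since $I$ is linearly ordered and the min cuts on it are nested or disjoint, sliding $A$ between $A_1$ and $A_2$ handles this cleanly. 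Once the hypothesis of Theorem \ref{T:miss} is in place, the rest is immediate.
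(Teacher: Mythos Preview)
Your strategy is exactly the paper's: reduce to Theorem \ref{T:miss} by exhibiting a compact $M$ with $g(M)\subset\Int M$, using the disjointness of the two min cuts on the axis. The paper's execution is slightly cleaner than yours, and your bookkeeping has a small but genuine slip.

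You choose a min cut $A$ strictly between $A_1$ and $A_2$ and then try to argue $g(A)\cap A=\emptyset$. But a min cut $A$ strictly between $A_1$ and $A_2$ in the pretree may very well share points with $A_2$ (distinct positions in the cactus tree do not force disjointness as subsets of $\bd X$), so your claim that ``$g(A)$, being past $A_2$, is disjoint from $A$'' does not follow: you only get $g(A)\cap A\subset A_2$, and $A\cap A_2$ need not be empty. The paper sidesteps this by taking the decomposition at $A_1$ itself (so $M=M_1$, $N=N_1$) and using the decomposition at $A_2$ only as an intermediate container: after replacing $g$ by a power so that $gA_1$ lies beyond $A_2$, one gets $g(M_1)\subset N_2\subset M_1$, and then $N_2\cap A_1\subset A_2\cap A_1=\emptyset$ gives $g(M_1)\subset\Int M_1$ directly. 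The disjointness hypothesis is used exactly once, for $A_1\cap A_2=\emptyset$, not for $g(A)\cap A=\emptyset$.

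Once Theorem \ref{T:miss} gives $\hat M=\bigcap g^n(M_1)=\{g^+\}$, your deduction of $\tilde M=\{g^+\}$ is fine (and in fact $\bigcup_{n>0}g^n(N)\supset\bd X\setminus\{g^+\}$ makes it immediate); the paper simply asserts this. The argument for $g^{-1}$ is symmetric.
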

\begin{proof}  
Assume $I$ contains two disjoint min cuts $A_1$ and $A_2$. Say $A_i$ decomposes $Z$ into $N_i, M_i$.  We may assume that $A_{3-i} \subset M_i$ and that $g$ translates in the direction from $A_1$ to $A_2$.  We may assume that $gA_1 \subset N_2$.  It follows that $gM_1 \subset N_2 \subsetneq M_1$ and the result follows from the theorem.  
\end{proof}


\subsection*{Setting}
Suppose now that $G$ acts nestingly on $T$, so there is $g\in G$ and $[A,B] \in T$ with 
$g([A,B]) \subsetneq [A,B]$.
So $g$ nests with nesting cut $A$. We will show that $G$ is rank 1. Note that $G$ doesn't virtually fix a point of $Z$. Indeed if $G$ virtually fixes a point of $Z$ then by \cite[Lemma 26]{PS1}
$G$ is virtually of the form $H\times \Z$ . If $H$ is one-ended $\partial X$ is not $m$-thick while if $H$ has more than 1 end then $G$ virtually splits
over a 2-ended group and $\partial X$ is 2-thick.

There are continua $M,N$ such that $$M \cup N =Z= \bd X,\, M\cap N = A,\, g(M) \subsetneq M \text{ and } g^{-1}(N) \subsetneq N.$$ Let $\tilde M $ be the attracting continuum and let $\tilde N$ be the repelling continuum of this translation.

\begin{Lem}\label{L:limit} $g^+\in \tilde M, \, g^-\in \tilde N $.
\end{Lem}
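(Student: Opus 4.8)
First the easy structural observations. Since $g$ nests it has infinite order, and as $G$ acts geometrically on the CAT(0) space $X$ every element is semisimple, so $g$ is a hyperbolic isometry; let $L$ be an axis, with endpoints $g^\pm$. The sets $\hat M=\bigcap_{n>0}g^n(M)$ and $\hat N=\bigcap_{n<0}g^n(N)$ are nested intersections of subcontinua of $Z$, hence are nonempty $g$-invariant subcontinua. Because $g^+$ is $g$-fixed, $g^+\in g^n(M)$ for all $n$ iff $g^+\in M$; thus $g^+\in\hat M\iff g^+\in M$, and likewise $g^+\in\hat N\iff g^+\in N$. Since $g^+\in Z=M\cup N$ we get $g^+\in\hat M$ or $g^+\in\hat N$ for free; the work is to exclude the second alternative and then to promote $g^+\in\hat M$ to $g^+\in\tilde M$. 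The statement for $g^-$ follows by applying everything below to $g^{-1}$, whose ``shrinking'' continuum is $N$ and whose attracting continuum is exactly $\tilde N$.

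Next I would pin down $g^+$ dynamically via $\pi$-convergence. Applying Theorem \ref{piconverge} to the sequence $(g^n)_{n>0}$ and passing to a subsequence $(g^{n_i})$, there are $a,b\in\bd X$ with $g^{n_i}(x)\to b$ for every $x$ with $d_T(x,a)\ge\pi$ (and, by the full statement, for every $x\in X$). Taking $x\in L$ gives $g^{n_i}(x)\to g^+$, so $b=g^+$; passing to $(g^{-n_i})$ and repeating, with the roles of $a,b$ reversed, gives $a=g^-$. Hence
$$g^{n_i}(x)\longrightarrow g^+\quad\text{whenever }d_T(x,g^-)\ge\pi .$$

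Now the main argument. Suppose there is a point $y\in M$ with $d_T(y,g^-)\ge\pi$. Then $g^{n_i}(y)\to g^+$, while $g^{n_i}(y)\in g^{n_i}(M)\subseteq g^k(M)$ for $n_i\ge k$; since each $g^k(M)$ is closed we get $g^+\in g^k(M)$ for all $k$, i.e. $g^+\in\hat M$. Similarly, if there is $z\in N$ with $d_T(z,g^-)\ge\pi$, then $g^{n_i}(z)\to g^+$ with $g^{n_i}(z)\in g^{n_i}(N)\subseteq\bigcup_{n>0}g^n(N)$, so $g^+\in\overline{\bigcup_{n>0}g^n(N)}$. When both points can be found we conclude $g^+\in\hat M\cap\overline{\bigcup_{n>0}g^n(N)}=\tilde M$, as wanted. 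Since $M\cup N=Z$, both can fail only if $Z\subseteq B_T(g^-,\pi)$, and more precisely the $\hat M$-step fails only if $M\subseteq B_T(g^-,\pi)$ and the $\overline{\bigcup g^nN}$-step fails only if $N\subseteq B_T(g^-,\pi)$.

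\textbf{The obstacle.} The delicate point is exactly the case where $M$ (equivalently $N$) lies inside the open Tits ball $B_T(g^-,\pi)$. Here I would invoke Ballmann's dichotomy used in Theorem \ref{T:miss}: in all cases $d_T(g^+,g^-)\ge\pi$, so $g^+\notin B_T(g^-,\pi)$. Thus $M\subseteq B_T(g^-,\pi)$ forces $g^+\notin M$, hence $g^+\notin\hat M$ and (by the trivial observation above) $g^+\in\hat N\subseteq N$. Running the orbit-tracking argument of the previous paragraph with the roles of $M$ and $N$, and of $g$ and $g^{-1}$, interchanged, and using $d_T(g^+,g^-)\ge\pi$ once more to locate $g^\pm$, one obtains a contradiction, except possibly when $\hat M$ degenerates to a single point — in which case Theorem \ref{T:miss} applies directly and gives $\hat M=\{g^+\}=\tilde M$. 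So the generic case is the three-line argument above, and only this Tits-bounded configuration requires the extra care; I expect it to be the main technical hurdle in writing out a complete proof.
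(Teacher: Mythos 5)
Your dynamical skeleton matches the paper's: use $\pi$-convergence to push a point at Tits distance at least $\pi$ from $g^-$ forward into the attracting continuum, noting that a point of $M$ lands the limit in $\hat M$ and a point of $N$ lands it in $\overline{\bigcup_{n>0}g^n(N)}$. But there is a genuine gap at exactly the place you flag as ``the obstacle'': you never establish that the required points exist, i.e.\ that $M$ (resp.\ $N$) is not contained in $B_T(g^-,\pi)$, and your proposed rescue of that case does not go through. Theorem \ref{T:miss} is not applicable there, since it requires $g(M)\subseteq \Int M$, whereas the setting only provides $g(M)\subsetneq M$: the translates $g^n(A)$ may all share the nonempty core $\hat A$, so $g(M)$ can meet $\bd M= A$. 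Likewise, interchanging $M$ with $N$ and $g$ with $g^{-1}$ merely relocates the same existence problem rather than producing a contradiction. Since $G$ is not yet known to be rank 1 at this stage, $\diam_T \bd X$ may be finite (less than $\frac{3\pi}{2}$), so the possibility that $M$ is Tits-small cannot be dismissed by soft arguments.

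The paper closes precisely this gap with Theorem \ref{bigtits}, which is the real content of the lemma: the eight linearly ordered min cuts $A, g(A),\dots,g^7(A)$ bound a continuum $Q\subseteq M\cap g^7(N)$ with $\rad Q\ge \pi$. A single point $p\in Q$ with $d_T(p,g^-)\ge\pi$ then does double duty: $g^n(p)\in g^n(M)$ forces the limit $g^+$ into $\hat M$, while $g^n(p)\in g^{n+7}(N)$ forces it into $\overline{\bigcup_{n>0}g^n(N)}$, so $g^+\in\tilde M$; applying the same to $g^{-1}$ gives $g^-\in\tilde N$. If you import Theorem \ref{bigtits}, your ``main case'' becomes the entire proof (and one point suffices where you asked for two); without it, the argument is incomplete.
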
 
\begin{proof}

By theorem \ref{bigtits} we may assume that there is some $p\in M\cap g^7N$ such that $d_T(p,g^-)\geq \pi$. So by $\pi $-convergence $g^np\to g^+$. However clearly $g^np$ converges to a point in $\tilde M$. So $g^+\in \tilde M$. Similarly $g^-\in \tilde N$. 

\end{proof}

\begin{Lem}\label{L:Int} Let $G$ act geometrically on $X$,  a proper CAT(0) space and let $\cP$ be the cactus pretree of $\bd X$ .  If $g\in G$ translates along $A\in \cP$ and $\rad \hat N\geq \pi $,  where $\hat N$ is the negative continuum of this translation,  then $g^+ \in \hat A$. \end{Lem}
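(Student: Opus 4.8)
The plan is to show that $g^{+}$ lies in both $\hat M$ and $\hat N$; since $\hat A=\hat M\cap\hat N$ by definition, this immediately gives $g^{+}\in\hat A$. Throughout we may take $A$ to be a min cut subset decomposing $Z=\bd X$ into continua $M,N$ with $g(M)\subsetneq M$ and $g^{-1}(N)\subsetneq N$, as in the discussion preceding the statement.

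First I would record that $\hat N$ is a closed, $g$-invariant subset of $Z$. It is closed as a nested intersection of continua, and $$g(\hat N)=g\Bigl(\bigcap_{n<0}g^{n}(N)\Bigr)=\bigcap_{n\le 0}g^{n}(N)=N\cap\hat N=\hat N,$$ using $\hat N\subseteq g^{-1}(N)\subsetneq N$. Since $g$ translates along $A$, the cuts $g^{n}(A)$, $n\in\Z$, are pairwise distinct, so $g$ has infinite order; an infinite-order element of a group acting geometrically on a proper CAT(0) space is a hyperbolic isometry, so $g^{\pm}\in\Lambda\langle g\rangle$. Now $\langle g\rangle$ stabilizes the closed set $\hat N$ and $\rad\hat N\ge\pi$ by hypothesis, so Lemma \ref{C:Stab} applied to $H=\langle g\rangle$ gives $\Lambda\langle g\rangle\subseteq\hat N$; in particular $g^{+}\in\hat N$.

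Next I would invoke the argument of Lemma \ref{L:limit}: one has $g^{+}\in\tilde M$, and $\tilde M\subseteq\hat M$ directly from the definition of the attracting continuum, so $g^{+}\in\hat M$. Concretely this is the $\pi$-convergence argument already used: the translates $g(A),\dots,g^{8}(A)$ are linearly ordered min cuts, no two of which lie in a common wheel, so Theorem \ref{bigtits} produces a point $p\in M\cap g^{7}(N)$ with $d_{T}(p,g^{-})\ge\pi$, whence $g^{n}(p)\to g^{+}$ by $\pi$-convergence while $g^{n}(p)$ can only accumulate in $\tilde M$. Combining the two inclusions yields $g^{+}\in\hat M\cap\hat N=\hat A$, as desired.

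The only real subtlety is that, unlike $g^{+}\in\hat M$ (which holds for any translation along $A$ with no further hypothesis), the inclusion $g^{+}\in\hat N$ genuinely uses the assumption $\rad\hat N\ge\pi$: a priori $g^{+}$ could sit on the cut $A$ without lying in the deep intersection $\hat N$. Thus the point to get right is the $g$-invariance of $\hat N$, so that Lemma \ref{C:Stab} is applicable and the Tits-radius hypothesis can be cashed in; the remainder is routine bookkeeping with the definitions of the positive, attracting, negative and repelling continua.
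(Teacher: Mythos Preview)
Your proposal is correct and follows essentially the same approach as the paper's proof. The paper's argument is slightly more direct: it picks $n\in\hat N$ with $d_T(n,g^-)\ge\pi$, applies $\pi$-convergence to get $g^i(n)\to g^+$, and uses that $\hat N$ is closed and $g$-invariant to conclude $g^+\in\hat N$, then combines with $g^+\in\tilde M$ from Lemma~\ref{L:limit} to land in $\hat N\cap\tilde M=\hat A$. You have simply packaged the first step as an invocation of Lemma~\ref{C:Stab}, which is exactly that argument wrapped up; the rest is identical.
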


\begin{proof}
Clearly there is $n\in \hat N$ with  $d_T(n,g^-) \geq \pi$.  By $\pi$-convergence $g^i(n) \to g^+$, but
$\hat N$ is closed and $g$-invariant so $(g^i(n)) \subset \hat N$ which implies $g^+ \in \hat N$. Thus  $g^+ \in \hat N \cap \tilde M = \hat A$.
\end{proof}

\begin{Cor}\label{C:pm} If $g$ translates along $A$ in a nesting fashion  then $$\hat A \cap \{g^\pm\} \neq \emptyset .$$
\end{Cor}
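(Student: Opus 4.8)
\emph{Plan of proof.}  Recall that ``$g$ translates along $A$'' already includes that $g$ does not invert the interval of $T$ spanned by $\{g^n(A):n\in\Z\}$, and that ``in a nesting fashion'' means this interval has an endpoint $B$ fixed by $g$ (so $g$ is a nesting homeomorphism as in the Setting of this section). By definition $\hat A=\hat M\cap\hat N$, and Lemma~\ref{L:limit} gives $g^+\in\tilde M\subseteq\hat M$ and $g^-\in\tilde N\subseteq\hat N$. So it suffices to prove $g^+\in\hat N$ or $g^-\in\hat M$, since either inclusion then puts the corresponding endpoint into $\hat M\cap\hat N=\hat A$.

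\emph{The easy cases.}  If $\rad\hat N\ge\pi$, then Lemma~\ref{L:Int} applied to $g$ gives $g^+\in\hat A$ directly. If $\rad\hat M\ge\pi$, observe that $g^{-1}$ also translates along $A$, but with the roles of $M$ and $N$ interchanged --- the relations $g(M)\subsetneq M$ and $g^{-1}(N)\subsetneq N$ say precisely that $g^{-1}$ pushes into $N$ --- so the negative continuum of $g^{-1}$ is $\hat M$ and its $\hat A$ is the same; Lemma~\ref{L:Int} applied to $g^{-1}$ then gives $g^-=(g^{-1})^+\in\hat A$. Hence we may assume $\rad\hat M<\pi$ and $\rad\hat N<\pi$, and it remains to reach a contradiction.

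\emph{Ruling out the remaining case.}  Since $g$ nests toward $B$ we have $g^n(A)\to B$ in $T$, and $\hat M=\bigcap_{n>0}g^n(M)$ is the subset of $\bd X$ carried by the subtree of $T$ consisting of $B$ together with everything beyond $B$ on the side away from $A$. I would analyse this according to the type of $B$ in the cactus pretree $\cP$. When $B$ (or the first element of $\cP$ reached on that side) is a wheel, this subtree meets $\bd X$ in a union of adjacent gaps of that wheel, or the wheel is infinite; then Lemma~\ref{L:gap contract}(1) or Corollary~\ref{C:bigtits} forces $\rad\hat M\ge\pi$. When it is an inseparable cut with further $\cP$-structure beyond it along the axis, $\hat M$ contains a nonterminal gap of that cut, and Lemma~\ref{L:gap contract}(2) again forces $\rad\hat M\ge\pi$. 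Either way this contradicts $\rad\hat M<\pi$. The degenerate subcases --- $B$ a finite wheel, or a min cut with no further $\cP$-structure beyond it on the axis --- would be handled by noting that then a power of $g$ fixes each of finitely many points of $\bd X$, which, combined with Ballmann's dichotomy~\cite{BAL} and the argument of Lemma~\ref{L:limit}, is incompatible with $\bd X$ being an $m$-thick continuum, $m\ge2$.

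\emph{Main obstacle.}  The crux is this last step: pinning down $\hat M$ (and symmetrically $\hat N$) precisely enough --- as a gap, a union of adjacent gaps, or the whole of an infinite wheel attached at the fixed vertex $B$ --- to bring the ``big Tits radius'' machinery of Section~2 to bear, and disposing of the degenerate possibilities for $B$. A more self-contained alternative for this step would be to iterate forward the continuum $Q$ of Tits radius $\ge\pi$ lying between the linearly ordered min cuts $A$ and $g^7A$ furnished by Theorem~\ref{bigtits}: following a single point $p\in Q$ with $d_T(p,g^-)\ge\pi$ under $\pi$-convergence, as in the proof of Lemma~\ref{L:limit}, rather than passing to a visual Hausdorff limit (the Tits topology being strictly finer), and showing that the escaping images $g^{7k}(Q)$ force $\hat M$ to contain a continuum of Tits radius $\ge\pi$.
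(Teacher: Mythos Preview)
Your opening reduction is correct and matches the paper's implicit structure: by Lemma~\ref{L:Int} and its symmetric version for $g^{-1}$, it is enough to show that at least one of $\rad\hat M$, $\rad\hat N$ is $\ge\pi$. But the case analysis you then propose is both unnecessary and has genuine gaps. Lemma~\ref{L:gap contract}(1) and Corollary~\ref{C:bigtits} each carry the hypothesis $m\ge3$, so your wheel cases say nothing when $m=2$; and your ``degenerate subcases'' (a finite wheel or a terminal min cut at $B$) are waved away with an appeal to Ballmann's dichotomy that does not actually produce a contradiction --- $g$ fixing a finite subset of $\bd X$ is perfectly compatible with $m$-thickness.

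The paper's route is a one-step observation you are missing. Since $g$ is \emph{nesting}, the interval spanned by $\{g^n(A):n\in\Z\}$ has an endpoint $B$ in $T$ fixed by $g$; say this is on the $\hat M$ side. Then $\hat M=\bigcap_{n>0}g^n(M)$ carries everything in $\bd X$ corresponding to $B$ and beyond in $T$, so $\hat M$ contains a min cut of $\bd X$. A single application of Lemma~\ref{L: bigpi} to $\hat M$ with that min cut gives $\rad\hat M\ge\pi$ directly, after which Lemma~\ref{L:Int} (for $g^{-1}$) gives $g^-\in\hat A$. No structural case-split on the type of $B$, no appeal to $m\ge3$ machinery, and the argument is uniform. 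Your ``alternative'' at the end --- pushing forward the continuum $Q$ of Theorem~\ref{bigtits} under powers of $g$ --- is essentially the mechanism already inside the proof of Lemma~\ref{L:limit}, and on its own only places $g^+$ in $\tilde M$, not in $\hat A$; you would still need Lemma~\ref{L:Int} to finish, so this alternative does not bypass the step you flagged as the main obstacle.
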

\begin{proof}  Since $g$ is nesting, one of $\hat N$, $\hat M$ contains an min cut.  
The result now follows from  lemma \ref{L: bigpi}.
\end{proof}

We will need some auxilliary lemmas:

\begin{Lem}\label{L:nestcap} Suppose $g_1,g_2$ are acting nestingly and translate along the disjoint min-cuts $A_1,A_2$ and let $I_1,I_2$ be their translation intervals
containing $A_1,A_2$ respectively. If the closure of $I_1\cap I_2$ contains an end-point of $I_2$ then $G$ is rank 1.\end{Lem}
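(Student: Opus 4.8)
The plan is to produce an element of $G$ that translates along an interval of the cactus tree $T$ containing two disjoint min cuts; Corollary \ref{C:disjoint} then shows this element is a rank $1$ hyperbolic isometry of $X$, whence $G$ is rank $1$.

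Let $e$ be the endpoint of $I_2$ with $e\in\overline{I_1\cap I_2}$. Since $g_2$ translates along $I_2$ it fixes $e$ (Lemma \ref{L:int}), and replacing $g_2$ by $g_2^{-1}$ (which still translates along $A_2$, with the same interval $I_2$) we may assume $g_2^n(x)\to e$ for every $x\in I_2$. As $e\notin I_2$ but $e\in\overline{I_1\cap I_2}$, the convex set $I_1\cap I_2$ accumulates at $e$, hence contains a half-open subinterval $[z,e)$ of $T$; note $[z,e)\subseteq I_1$. If $A_2\in[z,e)$, then $A_2\in I_1$ and the two disjoint min cuts $A_1,A_2$ lie in the single translation interval $I_1$ of $g_1$, so Corollary \ref{C:disjoint} applies; likewise if $A_1\in I_2$. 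So we may assume neither, and in particular $A_2$ lies on the side of $z$ in $I_2$ away from $e$. Because $g_2^n(A_2)\to e$ along $I_2$ and $[z,e)$ is a neighbourhood of $e$ in $\overline{I_2}$, for all large $N$ we have $B:=g_2^{N}(A_2)\in[z,e)\subseteq I_1$; here $B$ is a min cut, being the image of the min cut $A_2$ under the homeomorphism $g_2^{N}$, and $B\neq A_1$ once $N$ is large. Thus $A_1$ and $B$ are two min cuts lying in the translation interval $I_1$ of $g_1$, and it remains only to verify that $A_1\cap B=\emptyset$ as subsets of $\bd X$.

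Write $\bd X=M_2\cup N_2$ with $M_2,N_2$ continua, $M_2\cap N_2=A_2$ and $g_2(M_2)\subsetneq M_2$, with $M_2$ on the $e$-ward side so that $\hat M_2:=\bigcap_{n>0}g_2^{n}(M_2)$ is the attracting continuum; then $B=\bd(g_2^{N}(M_2))\subseteq g_2^{N}(M_2)$, and since the $g_2^{n}(M_2)$ are compact and decrease to $\hat M_2$, it suffices to prove $A_1\cap\hat M_2=\emptyset$. The continuum $\hat M_2$ is the subset of $\bd X$ carried by the part of $T$ on the far side of $e$ from $A_2$, and its frontier in $\bd X$ lies in $A_2$. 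If, as a point of $T$, $A_1$ lies in the component of $T\setminus\{e\}$ containing $A_2$, choose the decomposition $\bd X=M_1\cup N_1$ by $A_1$ with $M_1$ the continuum on the side of $A_1$ containing $e$; then $\hat M_2\subseteq M_1$, so $\hat M_2\cap N_1\subseteq A_1$. Were there $p\in A_1\cap\hat M_2$, then $p\notin A_2$ forces $p$ into the interior of $\hat M_2$, while $p\in A_1=\bd M_1=\bd N_1$ forces every neighbourhood of $p$ to meet the interior of $N_1$; hence $\hat M_2$ meets the interior of $N_1$, contradicting $\hat M_2\cap N_1\subseteq A_1$. If instead $A_1$ lies on the far side of $e$ from $A_2$, one runs the symmetric argument with the roles of $(g_1,A_1,I_1)$ and $(g_2,A_2,I_2)$ interchanged, using a power or inverse of $g_1$ to carry $A_1$ across $e$ into $I_2$ and the corresponding attracting continuum $\hat M_1$. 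The main difficulty is precisely this disjointness check: arranging that the translate used lands in the other translation interval for a value of the exponent large enough to force the two min cuts to be honestly disjoint, which is why the position of $A_1$ relative to $e$ must be tracked carefully.
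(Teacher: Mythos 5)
Your endgame --- exhibit two disjoint min cuts inside a single translation interval and invoke Corollary \ref{C:disjoint} --- is exactly the paper's, but two steps do not hold up. First, the assertion that the frontier of $\hat M_2$ in $\bd X$ lies in $A_2$ is false: that frontier is contained in the attracting continuum $\tilde M_2$, which by Lemma \ref{L:limit} contains $g_2^+$, and Corollary \ref{C:pm} only guarantees that \emph{one} of $g_2^{\pm}$ lies in $\hat A_2\subseteq A_2$. So a point of $A_1\cap \hat M_2$ off $A_2$ need not be interior to $\hat M_2$, and your contradiction in Case 1 does not materialize. That gap is repairable without mentioning $\hat M_2$ at all: in Case 1 each $g_2^n(A_2)$ lies between $A_1$ and $g_2^{n+1}(A_2)$, so $A_1\cap g_2^{n+1}(A_2)\subseteq g_2^n(A_2)$, and the finite sets $A_1\cap g_2^n(A_2)$ decrease to $A_1\cap \hat A_2\subseteq A_1\cap A_2=\emptyset$, hence are eventually empty.

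The second problem is structural: Case 2 is \emph{not} symmetric to Case 1, and the hand-off to "the symmetric argument" hides the hard sub-case. When $A_1$ lies beyond $e$, the translates $g_2^n(A_2)$ march \emph{towards} $A_1$, so $g_2^n(A_2)$ is between $A_2$ and $g_2^{n+1}(A_2)$ is between $g_2^n(A_2)$ and $A_1$, making $A_1\cap g_2^n(A_2)$ \emph{increasing} in $n$; and if the endpoint $d$ of $I_1$ on the $A_2$-side lies in $(A_2,e)$, the translates $g_1^{-n}(A_1)$ accumulate at $d$ without ever passing $A_2$, so $A_2\cap g_1^{-n}(A_1)$ is likewise increasing. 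Neither pairing of one cut with a translate of the other is forced to become disjoint, so your strategy can fail outright here. The paper's proof avoids the dichotomy with one observation you are missing: $\hat A_1=\bigcap_n g_1^n(A_1)$ equals the intersection of \emph{all} min cuts lying in $I_1$, because any min cut $B\in I_1$ lies between two translates $g_1^n(A_1),g_1^m(A_1)$ and hence contains $g_1^n(A_1)\cap g_1^m(A_1)\supseteq \hat A_1$. Since two consecutive translates $g_2^i(A_2),g_2^{i+1}(A_2)$ lie in $I_1\cap I_2$ near $e$, this yields $\hat A_1\subseteq g_2^i(A_2)\cap g_2^{i+1}(A_2)=\hat A_2\subseteq A_2$, while also $\hat A_1\subseteq A_1$; so $\hat A_1=\emptyset$, some $A_1\cap g_1^N(A_1)=\emptyset$, and the two disjoint min cuts are both $g_1$-translates of $A_1$, covering both of your cases at once.
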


\begin{proof} For $i=1,2$, let $$\hat A_i=\bigcap _{n\in \Z} g_i^nA_i\,  (i=1,2)$$ and notice that $g_i(\hat A_i) =\hat A_i$.  By replacing $g_2$ with a power, we may assume that  $$\hat A_2=A_2\cap g_2A_2=g_2^iA_2\cap g_2^{i+1}A_2$$ for any $i\in \Z$. We note also that if $B_j$, $j\in J$ is the set of min cuts lying in
$I_1$ then $$\hat A_1=\bigcap _{i\in \Z}g_1^iA_1=\bigcap _{i\in J}B_j.$$

 As $I_1\cap I_2$ contains an end-point of $I_2$ it follows that for some $i\in \Z$, $g_2^{i}A_2,g_2^{i+1}A_2$ lie in $I_1\cap I_2$.
Therefore $\hat A_1\subseteq \hat A_2$. But $A_1\cap A_2=\emptyset $ and $\hat A_i\subseteq A_i\,  (i=1,2)$. It follows that $\hat A_1=\emptyset $ so by Corollary \ref{C:disjoint}, $G$ is rank 1.
\end{proof}

\begin{Lem}\label{L:product} Suppose $g_1,g_2$ are acting nestingly and translate along the disjoint min-cuts $A_1,A_2$ and let $I_1,I_2$ be the translation intervals of $g_1,g_2$
containing $A_1,A_2$ respectively. Assume that 
the shortest path (possibly trivial) in $T$ joining $I_1,I_2$ does not contain the endpoint of either $I_1$ or $I_2$.
Then $G$ is rank 1.
\end{Lem}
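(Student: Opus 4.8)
Here is the plan I would follow.

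\textbf{Step 0 (reduce to disjoint intervals).} First I would observe that if $I_1\cap I_2\neq\emptyset$ the lemma follows immediately from Lemma \ref{L:nestcap}: since $I_1,I_2$ are intervals, $I_1\cap I_2$ is an interval whose endpoints are endpoints of $I_1$ or of $I_2$, so $\overline{I_1\cap I_2}$ contains an endpoint of $I_1$ or of $I_2$, and Lemma \ref{L:nestcap} (with the roles of $g_1,g_2$ interchanged if necessary) gives that $G$ is rank $1$. So from now on I would assume $I_1\cap I_2=\emptyset$ and let $[x,y]$ be the bridge between $\overline{I_1}$ and $\overline{I_2}$ in $T$, so $x\in\overline{I_1}$, $y\in\overline{I_2}$, $x\neq y$, and $(x,y)\cap(I_1\cup I_2)=\emptyset$. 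The hypothesis says $x$ is not an endpoint of $I_1$ and $y$ is not an endpoint of $I_2$, hence $x\in I_1$ and $y\in I_2$.

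\textbf{Step 1 (the element $h$).} The plan is to build a single element $h\in G$ and an interval of $T$ along which $h$ translates and which contains two disjoint min cuts; Corollary \ref{C:disjoint} then finishes. Identify $I_1$ order-isomorphically with $\R$ so that $g_1$ acts as $t\mapsto t+1$ and $x\leftrightarrow 0$; after replacing $g_1$ by $g_1^{-1}$ and reversing the identification if necessary, I may assume $A_1\leftrightarrow a_1\ge 0$. Do the same for $I_2,g_2,y,A_2$, obtaining $a_2\ge 0$. Fix integers $s>a_2$ and $r<-a_1$ and set $h=g_2^{\,s}g_1^{\,r}$. The key point is to trace the action of $h$ on the tree: $g_1^{\,r}$ preserves $I_1$ and carries $x$ to the coordinate $r$, so it carries the bridge together with $I_2$ to a configuration hanging off $I_1$ at $r$; then $g_2^{\,s}$ carries $y\in I_2$ to the coordinate $s$, hence carries $I_1$ (and everything attached at $x$) to $g_2^{\,s}(I_1)$, attached via a translated bridge to $I_2$ at the point $s$. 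Consequently the $T$‑geodesic from $A_1$ to $h(A_1)$ runs from $a_1$ to $0$ inside $I_1$, across the bridge to $y$, along $I_2$ from $y$ to $s$ (passing through $A_2$, since $0\le a_2<s$), across the translated bridge, and into $g_2^{\,s}(I_1)$ to $h(A_1)$. Thus $A_2$ is \emph{strictly} between $A_1$ and $h(A_1)$ in the pretree $\cR$, so there are continua $P,Q$ with $A_1\subseteq P$, $h(A_1)\subseteq Q$, $P\cup Q=\bd X$, $P\cap Q=A_2$; therefore $A_1\cap h(A_1)\subseteq A_1\cap A_2=\emptyset$, i.e. $A_1$ and $h(A_1)$ are disjoint min cuts.

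\textbf{Step 2 ($h$ translates through $A_1$).} A symmetric analysis of $h^{-1}=g_1^{-r}g_2^{-s}$ shows that the $T$‑geodesic $[h^{-1}(A_1),A_1]$ reaches $A_1$ from the side of $I_1$ with coordinate $-r>a_1$ (equivalently, for $a_1=0$, from the side of $I_1$ opposite the bridge) — which is exactly the direction opposite to the one in which $[A_1,h(A_1)]$ leaves $A_1$. Hence the two geodesics meet only at $A_1$, so $A_1\in(h^{-1}(A_1),h(A_1))$; this is precisely where the constraint $r<-a_1$ is used (with $r\ge -a_1$ the two geodesics would share a segment and $A_1$ would fail to lie between them). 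Applying $h^{k}$ to this shows $h^{k}(A_1)\in(h^{k-1}(A_1),h^{k+1}(A_1))$ for all $k$, so $\{h^{k}(A_1):k\in\Z\}$ is a linearly ordered subset of $T$ on which $h$ acts without fixed point; after replacing $h$ by $h^{2}$ if $h$ inverts this set, $h$ translates along $I:=\hu\{h^{k}(A_1)\}$, which contains the two disjoint min cuts $A_1$ and $h(A_1)$ (and $A_2$ between them). By Corollary \ref{C:disjoint}, $G$ is rank $1$.

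\textbf{Main obstacle.} The genuinely delicate part is the tree‑combinatorial bookkeeping of Steps 1–2: verifying that $g_1^{\,r}$ and $g_2^{\,s}$ move the bridge (and the attached copies of $I_1,I_2$) in the way described, that the resulting paths $[A_1,h(A_1)]$ and $[h^{-1}(A_1),A_1]$ really do have the asserted shapes and are non‑backtracking, and hence that $h$ translates rather than having a fixed point. This is a direct analogue of the ping‑pong argument in the proof of Theorem \ref{rank1}; what makes it work is exactly the interiority $x\in I_1$, $y\in I_2$ supplied by the hypothesis — it is what lets $g_1$ (resp. $g_2$) move the attachment point of the bridge, whereas if $x$ (or $y$) were an endpoint of its interval the relevant generator would fix it and the argument would collapse, that degenerate configuration being precisely the one handled by Lemma \ref{L:nestcap}. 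A minor point to check along the way is the case in which $A_1$ or $A_2$ is a min cut sitting inside a wheel, so that the corresponding point of $T$ is the wheel; this is routine since betweenness of min cuts in $\cR$ is defined intrinsically and the separation argument $A_1\cap h(A_1)\subseteq A_2$ goes through verbatim.
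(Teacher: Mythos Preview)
Your approach is essentially the paper's: build a ping-pong element whose axis contains two disjoint min cuts and invoke Corollary~\ref{C:disjoint}. The paper is slightly terser --- it takes $a=g_1^{\,n}$, $b=g_2^{\,k}$ with $A_1\in[x,ax]$, $A_2\in[y,by]$, observes that $ab^{-1}$ translates along an interval containing both $A_1$ and $A_2$ (disjoint by hypothesis), and applies Corollary~\ref{C:disjoint} directly; your version, arguing that $A_2$ separates $A_1$ from $h(A_1)$, is an equivalent way to produce the disjoint pair.

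There is one genuine slip in Step~0. Your claim that the endpoints of $I_1\cap I_2$ must be endpoints of $I_1$ or of $I_2$ is false in an $\R$-tree: if $I_1$ and $I_2$ share a subarc and then diverge at a branch point $p$, then $p$ is an endpoint of $I_1\cap I_2$ while lying in the \emph{interior} of both $I_1$ and $I_2$, so Lemma~\ref{L:nestcap} does not apply. This configuration is allowed by the hypothesis of the lemma (the trivial ``shortest path'' is an interior point of both intervals), so your reduction to $I_1\cap I_2=\emptyset$ leaves a case uncovered. The paper handles it simply by allowing the bridge to be trivial ($x=y\in I_1\cap I_2$) and running the same construction; your Steps~1--2 go through verbatim in that case once you drop the parenthetical ``$x\neq y$'', so the fix is immediate.
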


\begin{proof} The proof is very similar to that of theorem  \ref{rank1}, we refer the reader to the picture there. By lemma \ref{L:nestcap}
$I_1\cap I_2$ is either empty or it does not contain the endpoints of either $I_1$ or $I_2$.
Let $[x,y]$ be a shortest path joining $I_1,I_2$ in $T$ (so $x\in I_1$, $y\in I_2$). Pick $n,k\in \mathbb Z$ so that
$A_1\in [x,g_1^nx], A_2\in [y,g_2^ky]$. We set $a=g_1^n,b=g_2^k$. Then $$ab^{-1}([bx,by])=[ax,ay]$$ and $[bx,by],[ax,ay]$ are contained in
an interval. It follows that $ab^{-1}$ acts along an interval without fixing a point and this interval
contains both $A_1,A_2$, so by Corollary \ref{C:disjoint}.
 $G$ is rank 1.
\end{proof}

\begin{Lem}\label{L:disjoint} Suppose $g_1,g_2$ are acting nestingly and translate along the disjoint min-cuts $A_1,A_2$ and let $I_1,I_2$ be the translation intervals of $g_1,g_2$
containing $A_1,A_2$ respectively. If there is an element $g$ nesting along an axis $I$ such that $I$ intersects both $I_1,I_2$ non trivially
and contains at least one endpoint of each, then $G$ is rank 1.\end{Lem}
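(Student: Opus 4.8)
The plan is to reduce this to Lemma \ref{L:product} by replacing $g_1$ and $g_2$ with suitable powers so that the hypothesis of that lemma is met, using $g$ to "walk" the translation intervals of $g_1, g_2$ into position. First I would look at the axis $I$ of $g$, which meets both $I_1$ and $I_2$ and contains an endpoint of each. Since $g$ is nesting along $I$, $g$ contracts $I$ toward a fixed endpoint; without loss of generality say $g$ translates along $I$ so that $g^n(x) \to e$ for the appropriate endpoint $e$ of $I$, for every $x \in I$. The key observation is that conjugating $g_i$ by a power $g^{m}$ produces an element $g^m g_i g^{-m}$ that is still nesting and still translates along a min-cut, namely $g^m(A_i)$, with translation interval $g^m(I_i)$; and by choosing $m$ large (in the direction toward $e$, then exploiting that $g^{-m}$ pushes things the other way) we can move $I_1$ and $I_2$ so that the shortest path between the conjugated intervals no longer contains an endpoint of either, or else so that they become disjoint with disjoint min-cuts $g^m(A_1)$, $g^{m'}(A_2)$ still disjoint — at which point Lemma \ref{L:product} applies and $G$ is rank 1.

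The key steps, in order, would be: (1) fix the orientation of $g$ along $I$ so that there is a distinguished attracting endpoint $e$; (2) observe that for a point $p \in I_1 \cap I$, the translate $g^m(p)$ runs along $I$ toward $e$, and similarly track $I_2$; (3) distinguish cases according to whether $e$ lies "between" $I_1$ and $I_2$ along $I$ or is on the far side of one of them. In the case where $e$ separates $I_1$ from $I_2$ along $I$, applying $g^m$ for large $m$ slides both $g^m(I_1)$ and $g^m(I_2)$ toward $e$ and hence toward each other, so that the shortest path between them shrinks to a point interior to both — giving the hypothesis of Lemma \ref{L:product} (after checking $g^m(A_1) \cap g^{m}(A_2) = g^m(A_1 \cap A_2) = \emptyset$, which is immediate since $g^m$ is a homeomorphism). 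In the case where $e$ is on the far side of, say, $I_1$, I would instead use $g^{-m}$, which pushes $I_1$ away from $e$ and through the junction region, again arranging the configuration needed for Lemma \ref{L:product}, or else directly producing an interval containing two disjoint min-cuts on which some $g^{-m} g_1 g^{m}$-type element translates, invoking Corollary \ref{C:disjoint}. Throughout, I would use that $I$ \emph{contains} an endpoint of each $I_i$: this is what guarantees $I$ actually reaches into the junction between $I_1$ and $I_2$, so that translating along $I$ genuinely moves the intervals past one another rather than merely within themselves.

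The main obstacle I expect is the bookkeeping of the cyclic/linear order on the relevant arcs of $T$ — precisely controlling, after conjugation by $g^m$, where the endpoints of $g^m(I_1)$ and $g^m(I_2)$ sit relative to the shortest path between them, and ensuring that for \emph{some} sign and \emph{some} large exponent the "no endpoint on the connecting path" condition of Lemma \ref{L:product} (or the "two disjoint min-cuts in one translation interval" condition of Corollary \ref{C:disjoint}) is genuinely achieved rather than obstructed by the overlap $I_1 \cap I_2$. Here the already-proved Lemma \ref{L:nestcap} does real work: it tells us that $I_1 \cap I_2$, if nonempty, contains no endpoint of either interval, so the intervals overlap only in their "middles," and this is exactly the configuration in which sliding along $I$ behaves predictably. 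Once the geometric picture is pinned down, the conclusion "$G$ is rank 1" is then simply quoted from Lemma \ref{L:product} or Corollary \ref{C:disjoint}, and no CAT(0) geometry beyond what those results already encapsulate is needed.
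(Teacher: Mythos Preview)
Your conjugation strategy has a structural flaw. Conjugating \emph{both} $g_1$ and $g_2$ by the same $g^m$ is a global homeomorphism of $T$, so it preserves every incidence relation: if the shortest path between $I_1$ and $I_2$ contains an endpoint of one of them, the same is true for $g^m(I_1)$ and $g^m(I_2)$. Hence this never reaches the hypothesis of Lemma~\ref{L:product}. You acknowledge this by proposing different powers $m,m'$, but then you lose control of disjointness: $g^m(A_1)\cap g^{m'}(A_2)=g^m\bigl(A_1\cap g^{m'-m}(A_2)\bigr)$, and nothing in your setup forces $A_1\cap g^{j}(A_2)=\emptyset$. Your fallback of ``directly producing an interval containing two disjoint min-cuts on which some $g^{-m}g_1g^m$-type element translates'' points the right way but targets the wrong element.

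The paper's proof is far more direct and uses an idea you do not exploit: it applies Corollary~\ref{C:disjoint} to $g$ itself on its own axis $I$, with no conjugation at all. Since $I$ contains an endpoint of $I_i$ (taken in the direction of $g_i$-translation) and meets $I_i$, infinitely many translates $g_i^{n}(A_i)$ lie in $I$. Writing $\hat A_i=\bigcap_n g_i^{n}(A_i)\subset A_i$, one has $\hat A_1\cap\hat A_2\subset A_1\cap A_2=\emptyset$; and because the sets $g_i^{j}(A_i\setminus\hat A_i)$ are pairwise disjoint (over $j$), a finite pigeonhole argument produces $k,n$ arbitrarily large with $g_1^{k}(A_1)\cap g_2^{n}(A_2)=\emptyset$. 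Now $I$ contains two disjoint min-cuts and $g$ translates along $I$, so Corollary~\ref{C:disjoint} gives rank~1 immediately. The moral: rather than moving $I_1,I_2$ around with $g$, let $g_1,g_2$ push copies of $A_1,A_2$ into $I$.
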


\begin{proof} For $i=1,2$, we may assume that $I$ contains the endpoint of $I_i$ in the direction of $g_i$ translation.    Let $$\hat A_i=\bigcap _{n\in \Z} g_i^nA_i\,  (i=1,2)$$ and notice that $g_i(\hat A_i) =\hat A_i$.  By replacing $g_i$ with a power, we may assume that  $\hat A_i=A_i\cap g_iA_i$ for $i=1,2$. Since $A_1,A_2$ are disjoint $\hat A_1\cap \hat A_2=\emptyset $.
Since the sets $g_i^j(A_i\setminus \hat A_i)$ are pairwise disjoint ($j\in \Z$) it follows that
for any $m$ there are $k,n>m$ such that $g_1^k(A_1), g_2^n(A_2)$ are disjoint. As $I$ contains infinitely many such translates of $A_1,A_2$ the result follows by
Corollary \ref{C:disjoint}.
\end{proof}
%
%

\begin{Lem}\label{L:many} Let $X$ be a CAT(0) space and let $G$ be a one ended group acting geometrically on $X$ without virtually fixing a point of $\bd X$.
Assume $\bd X$ is $m$-thick and that $G$ acts nestingly on the cactus tree $T$ of $\bd X$. Then either $G$ is rank 1 or for any $n\in \mathbb N$ 
there are  $g_1,...,g_n\in G$ and pairwise disjoint min-cuts $A_1,...,A_n$ such that $g_i$ translates along $A_i$ nestingly $(i=1,...,n)$.
Moreover the intervals of translation of $g_i$ containing the $A_i$'s are mutually disjoint.
\end{Lem}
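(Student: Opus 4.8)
The plan is to argue by induction on $n$, under the standing assumption that $G$ is not rank $1$ (if it is, the first alternative of the statement holds and there is nothing to prove). For $n=1$ the assertion is essentially the content of the Setting above: since $G$ acts nestingly there is a nesting element $g\in G$; replacing $g$ by $g^2$ if it inverts a nesting interval and, if its nesting cut is a wheel, replacing that wheel by a min cut inside it separating the $g^{-1}$-side from the $g$-side, we obtain $g_1$ which translates along a min cut $A_1$ in a nesting fashion, and by Lemma \ref{L:int} the cut $A_1$ lies in a well-defined interval of translation $I_1$ of $g_1$.

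For the inductive step, suppose $g_1,\dots,g_k\in G$ translate nestingly along pairwise disjoint min cuts $A_1,\dots,A_k$ with pairwise disjoint translation intervals $I_1,\dots,I_k$. I would produce $g_{k+1}$ as a conjugate of $g_1$. Since $G$ does not virtually fix a point of $\bd X$, Neumann's Lemma \ref{permutation}, applied to the action of $G$ on $\bd X$ and to the finite set $F=A_1\cup\dots\cup A_k$, yields $h\in G$ with $hF\cap F=\emptyset$. Put $g_{k+1}=hg_1h^{-1}$, $A_{k+1}=hA_1$ and $I_{k+1}=hI_1$. Conjugation carries all of the relevant structure over, so $g_{k+1}$ translates nestingly along the min cut $A_{k+1}$, whose interval of translation is $I_{k+1}$; and since $hA_1\subseteq hF$, the cut $A_{k+1}$ is disjoint from each $A_i$ with $i\le k$.

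The main obstacle is to guarantee that the new interval $I_{k+1}$ is disjoint from $I_1,\dots,I_k$, and this is exactly where the auxiliary lemmas \ref{L:nestcap} and \ref{L:product} enter. Fix $i\le k$ and suppose $I_{k+1}\cap I_i\neq\emptyset$. Here $A_{k+1}$ and $A_i$ are disjoint min cuts and $g_{k+1},g_i$ nest along them with translation intervals $I_{k+1},I_i$. Since intervals of translation are open (Lemma \ref{L:int}): if the closure of $I_{k+1}\cap I_i$ contains an endpoint of $I_{k+1}$ or of $I_i$, then $G$ is rank $1$ by Lemma \ref{L:nestcap}; otherwise every point of $I_{k+1}\cap I_i$ is a trivial shortest path between $I_{k+1}$ and $I_i$ containing no endpoint of either, so $G$ is rank $1$ by Lemma \ref{L:product}. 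Either conclusion contradicts the standing assumption, so $I_{k+1}$ is disjoint from every $I_i$, which completes the induction. The upshot is that, once the min cuts have been separated by Neumann's lemma, disjointness of the corresponding translation intervals comes for free precisely because $G$ is not rank $1$.
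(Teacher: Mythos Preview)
Your proof is correct and follows essentially the same route as the paper: induct on $n$, use Neumann's lemma (Lemma~\ref{permutation}) applied to the finite set $A_1\cup\dots\cup A_k$ to find a conjugating element $h$, and then invoke Lemmas~\ref{L:nestcap} and~\ref{L:product} to force $I_{k+1}\cap I_i=\emptyset$ under the standing assumption that $G$ is not rank~1. The paper's proof adds a preliminary dichotomy (if some $g_1^nA_1\cap A_1=\emptyset$ then $g_1$ is already rank~1 by Theorem~\ref{T:miss}) and the observation that $hA_1\notin I_1$, but as your argument shows, neither is strictly needed once one appeals directly to Lemma~\ref{L:product}: whenever $I_{k+1}\cap I_i\neq\emptyset$ any point of the intersection is a trivial shortest path meeting no endpoint (the intervals being open), so the hypothesis of Lemma~\ref{L:product} is satisfied and $G$ is rank~1.
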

\begin{proof} Let $g_1\in G$ be an element which nests with nesting cut $A_1$. So there is an interval $[A_1,B_1] \subset T$ with 
$$g([A_1,B_1]) \subsetneq [A_1,B_1].$$
 We may assume that $A_1$ is an inseparable cut and that $I_1=(C_1,B_1)$ is an interval of translation of $g_1$ with $A_1\in I_1$. If for some power $g_1^n$ we have
$g_1^nA_1\cap A_1=\emptyset $ then by theorem \ref{T:miss} $g_1$ is rank 1 and the lemma is proven.

Otherwise $\bigcap _{n\in \mathbb Z}g^nA_1\ne \emptyset $. By lemma \ref{permutation} there is some $h\in G$ such that $hA_1\cap A_1=\emptyset $.
Note now that if an inseprable cut $D$ lies in $I_1$ then $D\cap A_1\ne \emptyset $. It follows that $hA_1$ does not lie in $I_1$. Clearly $I_2=h(I_1)$ is an interval of translation of
$g_2=hg_1h^{-1}$. Since we assume that $G$ is not rank 1 by lemmas \ref{L:nestcap} \ref{L:product} it follows that $I_1\cap h(I_1)=\emptyset $.

 Assume we have produced nesting elements $g_1,...,g_k$ as in the lemma translating along the pairwise disjoint
min-cuts $A_1,....,A_k$. By lemma \ref{permutation} there is some $u\in G$ such that $$u(\cup _{i=1}^k A_i)\cap (\cup _{i=1}^k A_i)=\emptyset .$$ So $g_{k+1}=ug_1u^{-1}$
nests along $u(A_1)$ which is disjoint from all $A_2,...,A_k$. Again by lemmas \ref{L:nestcap} \ref{L:product} it follows that $I_{i}\cap u(I_1)=\emptyset $ for all $i=1,...,k$.
\end{proof}

If $B,C$ are subsets of the tree $T$ we denote by $Hull(B,C)$ the smallest convex subset containing $B,C$. We say that two intervals $I,J$ are co-linear if
$Hull(I,J)$ is an interval.

\begin{Lem}\label{L:notline} Let $X$ be a CAT(0) space and let $G$ be a one ended group acting geometrically on $X$ without fixing a point of $\bd X$.
Assume $\bd X$ is $m$-thick and that $G$ acts nestingly on the cactus tree $T$ of $\bd X$. Then either $G$ is rank 1 or 
there are $g_1,g_2$ which  translate along  disjoint min-cuts $A_1,A_2$  nestingly so the that translation intervals $J_1,J_2$ are not colinear.
\end{Lem}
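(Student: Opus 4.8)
The plan is to argue by contradiction. Assume $G$ is not rank $1$ and, towards a contradiction, that \emph{every} pair of elements translating nestingly along disjoint min cuts has colinear translation intervals; the goal is then to derive that $G$ is rank $1$, contradicting the hypothesis.

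First I would invoke Lemma \ref{L:many}: since $G$ is not rank $1$, for each $n$ there are nesting elements $g_1,\dots,g_n$ translating along pairwise disjoint (inseparable) min cuts $A_1,\dots,A_n$, whose translation intervals $J_i\ni A_i$ are pairwise disjoint; by the standing assumption every pair $J_i,J_j$ is colinear. Put $Y=\hu\bigl(\bigcup_i J_i\bigr)$. An interval of $T$ contains no branch point of $T$ in its interior, so either (a) $Y$ is a single segment or line, and all the $A_i$ lie on a common line $\ell\subseteq T$; or (b) $Y$ has a branch point $c\notin\bigcup_iJ_i$, and at least three directions at $c$ run into three of the intervals, which we relabel $J_1,J_2,J_3$, each $J_k$ meeting the bridge $[c,J_k]$ only at one of its endpoints $p_k$.

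In case (b) I would run a short case analysis according to the direction (toward or away from $c$) in which each $g_k$ translates along $J_k$, keeping track — via Lemma \ref{L:int}(3) — of where each $g_k$ is forced to have fixed points. In every configuration, after replacing the $g_k$ by appropriate powers and forming a product $g_i^{\,a}g_j^{\,-b}$ (and, if needed, first conjugating one of them by a Neumann element from Lemma \ref{permutation} so as to separate the relevant cuts), one produces an element of $G$ acting on $T$ as a translation along a single interval that contains two disjoint min cuts; Corollary \ref{C:disjoint} then yields that $G$ is rank $1$. The configurations in which the spanning bridge passes through an endpoint of one of the intervals are precisely those covered by Lemmas \ref{L:nestcap}, \ref{L:product} and \ref{L:disjoint}, and I would simply quote these to conclude rank $1$ in those cases.

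For case (a), taking $n\ge 8$ the cuts $A_1,\dots,A_8$ are linearly ordered min cuts with no two in a common wheel (they are inseparable), so Theorem \ref{bigtits} supplies a subcontinuum $Q\subseteq\bd X$ caught between two of them with $\rad Q\ge\pi$; applying $\pi$-convergence to the translates of $g_1$ along $\ell$ furnished by Lemma \ref{permutation} then forces $\diam_T\bd X=\iy$, i.e.\ $G$ is rank $1$. (Alternatively, one can push a single translate of $J_1$ off $\ell$ and land back in case (b).) Both cases contradict the assumption that $G$ is not rank $1$, so a non-colinear pair of nesting translation intervals along disjoint min cuts must exist. The main obstacle I anticipate is the bookkeeping in case (b): verifying, in each orientation configuration, that the composed element genuinely translates along its spanning interval with no interior fixed point, so that Corollary \ref{C:disjoint} applies — Lemmas \ref{L:nestcap}, \ref{L:product} and \ref{L:disjoint} are designed to absorb exactly the remaining (endpoint-containing) cases.
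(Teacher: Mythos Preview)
Your setup via Lemma \ref{L:many} is fine, and the dichotomy into cases (a) and (b) is reasonable, but both branches have real gaps.

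In case (b) you have three disjoint intervals $J_1,J_2,J_3$ in three branches at $c$, each meeting its bridge to $c$ at an \emph{endpoint} $p_k$. This is precisely the configuration \emph{not} covered by the lemmas you cite: Lemma \ref{L:nestcap} needs overlapping intervals, Lemma \ref{L:product} needs the bridge to avoid endpoints of \emph{both} intervals, and Lemma \ref{L:disjoint} needs a third nesting element whose axis already threads both $J_i,J_j$ and contains an endpoint of each --- you do not have such an element. Your proposal to form products $g_i^{\,a}g_j^{\,-b}$ and conclude that they translate along an interval containing two disjoint min cuts does not work without further input: you have no control over what $g_i$ does outside its nesting subtree $T_{g_i}$ (Lemma \ref{L:int}(3) only tells you about fixed points \emph{inside} the hull of the translation intervals of $g_i$), so the action of $g_i$ on $J_j$ is completely unconstrained. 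This is exactly the difficulty that the later Lemma \ref{elliptic} is designed to handle, and its proof is delicate.

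In case (a) your argument is too vague to be correct. Having $\rad Q\ge\pi$ for some $Q$ between the $A_i$ does not by itself force $\diam_T\bd X=\iy$: the elements $g_i$ only translate along the bounded intervals $J_i$, not along all of $\ell$, so you cannot simply iterate as in Theorem \ref{rank1}. Your alternative suggestion (``push a translate of $J_1$ off $\ell$'') is in the right spirit but is not justified as stated.

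The paper's argument is quite different and more direct. It takes $N=m+1$ intervals from Lemma \ref{L:many} and, for each $i>1$, uses Theorem \ref{bigtits} to find a continuum $C_i$ of Tits radius $\ge\pi$ between $A_i$ and $g_i^7A_i$. It then chooses a point $x$ strictly between $A_1$ and $g_1A_1$, picks a sequence $(h_j)\subset G$ with $h_j\to x$ and $h_j^{-1}\to y$, and uses $\pi$-convergence (via points $n_i\in C_i$ with $d_T(n_i,y)\ge\pi$) to force $h_jn_i\to x$. Since $A_1\cup g_1A_1$ separates $x$ from any interval colinear with $I_1$ and disjoint from it, each translate $h_jI_i$ must either intersect $I_1$ or fail to be colinear with $I_1$. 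Pigeonhole on $|A_1|=m<N$ guarantees that some $h_jA_k$ is disjoint from $A_1$; if $h_jI_k\cap I_1\ne\emptyset$ then Lemmas \ref{L:nestcap}, \ref{L:product} give rank 1, and otherwise $(I_1,h_jI_k)$ is the desired non-colinear pair. The key idea you are missing is this use of $\pi$-convergence to \emph{move} an interval toward the interior of $I_1$, together with the pigeonhole step to retain disjointness of the min cuts.
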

\begin{proof}
Let $N=m+1$. By lemma \ref{L:many} 
there are nesting elements $g_1,...,g_N$ and pairwise disjoint min-cuts $A_1,...,A_N$ such that $g_i$ translates along $A_i$ nestingly with translation interval $I_i$ and $I_i\cap I_j=\emptyset $, for $i\ne j,\, i,j =1,...,N$.
If there is an $i$ such that the shortest path joining $I_1$ to $I_i$ ($i>1$) 
does not intersect the end points of either $I_1$ or $I_i$ then by lemma \ref{L:product} $G$ is rank 1.  We assume now that $Hull(I_1,I_i)$ is an interval
for all $i>1$.

By theorem \ref{bigtits} there is a continuum $C_i$  between $A_i$ and $g_i^7A_i$ for $i=2,...,N$ such that $\rad C_i\geq \pi$. Let $x$ be a point
between $A_1, g_1A_1$ that does not lie on $A_1$ or $g_1A_1$. Let $(h_j) \subset G$ such that $h_j(p)\to x$, and $ h_j^{-1}(p)\to y$ for any $p \in X$. Since $\rad C_i\geq \pi$ there are points
$n_i\in C_i$ with $d_T(n_i,y)\geq \pi$.  Note that for any interval $J$ disjoint from $I_1$ if $Hull (I_1,J)$ is an interval then $A_1\cup  g_1A_1$ separates $x$ from $J$.

Therefore since $h_jn_i\to x$ there is a $j$ big enough such that for any $i$, $h_jn_i$ is between $A_1$ and $g_1A_1$, so either $$h_jI_i\cap I_1\ne \emptyset $$ or $Hull (I_1,h_jI_i)$ is not an interval. Since $A_2,...,A_N$ are all disjoint, all $h_jA_i$ are disjoint $(i=2,...,N)$.
Since $N=m+1$, for some
$k$, $h_jA_k$ will be disjoint from $A_1$. Clearly $h_jg_kh_j^{-1}$ acts nestingly along the interval $h_j(I_k)$. If
$$h_j(I_k)\cap I_1\ne \emptyset $$
then by lemmas \ref{L:nestcap}, \ref{L:product}, $G$ is rank 1.
We are left with the case where  $I_1,h_jI_k$  are disjoint and not colinear.   Clearly $h_jg_kh_j^{-1}$ acts nestingly  
along $h_jA_k$ which proves the lemma (setting $J_1=I_1$ and $J_2=h_jI_k$).

\end{proof}

\begin{Lem}\label{elliptic} Suppose $g_1,g_2$  translate along  disjoint min-cuts $A_1,A_2$  nestingly and let $I_1,I_2$ be the translation intervals of $g_1,g_2$
containing $A_1,A_2$ respectively.
Assume that the shortest path $[a,b]$ joining $\bar I_1$ to $\bar I_2$ contains an endpoint $a$ of $I_1$ and  that $I_1$ and $I_2$ are not colinear.

Then there is some $x\in [a,b]$ fixed by $g_1$ and so that if $I_2'=Hull(x,I_2)$ then
 $g_1^nI_2'\cap I_2'=\{x\}$ for all $n\ne 0$.
\end{Lem}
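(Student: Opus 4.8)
The plan is to argue entirely inside the $\R$-tree $T$. The two ingredients are the structure of the translation intervals of $g_1$ (Lemma~\ref{L:int}) and the non-colinearity hypothesis, which together pin down a canonical fixed point $x$ of $g_1$ on $[a,b]$; the verification then reduces to showing that $g_1$ moves the direction at $x$ that contains $I_2'$ with infinite orbit, and the single case where this is not automatic is dispatched using that $G$ is not rank~$1$ (the running reductio hypothesis of this section), so that Theorem~\ref{T:miss} and Corollary~\ref{C:disjoint} cannot apply.

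First I would record two easy reductions. Since $a$ is an endpoint of the translation interval $I_1$ of $g_1$, it lies in the closure of $\hu[\bigcup I_\alpha]\setminus\bigcup I_\alpha$, so $g_1(a)=a$ by Lemma~\ref{L:int}(3) (equivalently, $a=\lim_n g_1^{\,n}(p)$ for $p\in I_1$, and one passes to the limit in $g_1(g_1^{\,n}(p))=g_1^{\,n+1}(p)$). Next, because $a$ is an endpoint of $I_1$ and $[a,b]$ meets $\bar I_1$ only in $a$, the union $\bar I_1\cup[a,b]$ is a single arc; hence if $b$ were an endpoint of $I_2$, then $\bar I_1\cup[a,b]\cup\bar I_2$ would be a single arc, forcing $\hu(I_1,I_2)$ to be an arc and contradicting that $I_1,I_2$ are not colinear. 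So $b$ lies in the open interval $I_2$, and in particular $g_2(b)\neq b$. Finally, since $[a,b]\cap\bar I_2=\{b\}$, for every $x\in[a,b]$ with $x\neq b$ the set $I_2'=\hu(x,I_2)=[x,b]\cup I_2$ lies inside $\{x\}$ together with a single direction at $x$, namely the direction towards $b$.

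Now I would take $x$ to be the innermost point common to all the arcs $[a,g_1^{\,n}(b)]$, $n\in\Z$; that is, $x$ is the endpoint other than $a$ of $C:=\bigcap_{n\in\Z}[a,g_1^{\,n}(b)]$. Since $C\subseteq[a,b]$ we have $x\in[a,b]$, and since $g_1$ carries the family $\{[a,g_1^{\,n}(b)]\}_n$ onto itself (shifting $n\mapsto n+1$) and fixes $a$, it preserves $C$, hence $g_1(x)=x$. Assume first $x\neq b$. For each $n$ let $d_n$ be the direction at $x$ containing the arc $[x,g_1^{\,n}(b)]$; then $g_1(d_n)=d_{n+1}$ for all $n$, and by the previous paragraph $I_2'\subseteq\{x\}\cup d_0$, so $g_1^{\,n}(I_2')\subseteq\{x\}\cup d_n$ for all $n$. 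If the $d_n$ are pairwise distinct, then for $n\neq0$ we get $g_1^{\,n}(I_2')\cap I_2'\subseteq(\{x\}\cup d_n)\cap(\{x\}\cup d_0)=\{x\}$, while $x$ lies in the intersection because $g_1(x)=x$; this is exactly $g_1^{\,n}I_2'\cap I_2'=\{x\}$, and the lemma is proved.

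It remains to rule out the bad cases: that $d_0$ is $g_1$-periodic (say of minimal period $p\geq1$, so $d_0=d_p$), or that $x=b$ (which, as one checks, forces $g_1(b)=b$). This is the step I expect to be the main obstacle. In each bad case a power of $g_1$ fixes $b\in I_2$ (or at least fixes the direction $d_0$ and agrees with the identity on $[a,b]$ up to $x$) while still nesting along $I_1$; chasing the arcs $[a,g_1^{\,np}(b)]$ — and, when $g_1^{\,p}(b)=b$, the arcs $[b,g_1^{\,\pm np}(q)]$ for $q\in I_2$ near $b$ — one extracts an arc, having nondegenerate intersection with $I_2$, along which a power of $g_1$ nests. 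Since $g_2$ translates $I_2$, the min cuts of $Z$ are spread along $I_2$ (Theorem~\ref{bigtits}), so after passing to a further power this nesting arc contains two disjoint min cuts of $Z$, whence that element of $G$ is rank~$1$ by Corollary~\ref{C:disjoint}, contradicting the standing assumption. Hence the bad cases do not occur, the directions $d_n$ are pairwise distinct, and the lemma follows. The delicate point throughout is that a nesting homeomorphism may a priori fix long arcs and permute links with finite period; it is precisely the location of $b$ in the open translation interval of $g_2$, together with the abundance of min cuts there, that converts any such degeneracy into disjoint min cuts on an axis and hence into a contradiction with $G$ not being rank~$1$.
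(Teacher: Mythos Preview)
Your reduction steps and the ``good case'' are fine: the definition of $x$ as the far endpoint of $\bigcap_n[a,g_1^{\,n}(b)]$, the verification that $g_1(x)=x$, and the observation that pairwise distinct directions $d_n$ immediately give $g_1^{\,n}I_2'\cap I_2'=\{x\}$ are all correct and pleasant.

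The gap is in your bad case. You assert that when some power $g_1^{\,p}$ fixes the direction $d_0$ (or fixes $b$), one can ``extract an arc, having nondegenerate intersection with $I_2$, along which a power of $g_1$ nests'', and that this arc then contains two disjoint min cuts. Neither step is justified. If $g_1^{\,p}$ fixes $x$ and the direction $d_0$, nothing prevents $g_1^{\,p}$ from fixing an entire initial arc of $[x,b]$ and then permuting branches further out without ever nesting inside $I_2$; iterating your construction produces a sequence $x=x^{(0)},x^{(1)},\ldots$ in $[x,b]$, each fixed by an increasing power of $g_1$, with no reason to terminate at $b$ or to produce a translation interval meeting $I_2$. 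And even if you did produce a translation interval $I_3$ of some $g_1^{\,p}$ with $I_3\cap I_2$ nondegenerate, you still need two \emph{disjoint} min cuts on $I_3$ to invoke Corollary~\ref{C:disjoint}; the min cuts $g_2^{\,k}A_2$ along $I_2$ may all share $\hat A_2=\bigcap_k g_2^{\,k}A_2\neq\emptyset$, so no pair is disjoint. Your reference to Theorem~\ref{bigtits} does not help here---that theorem produces sets of large Tits radius between linearly ordered min cuts, not disjoint min cuts.

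The paper's proof is not tree-theoretic: it uses the CAT(0) geometry in an essential way. From Lemma~\ref{L:Int} and Corollary~\ref{C:pm} one has $g_1^{-}\in\hat A_1$ and (up to sign) $g_2^{+}\in\hat A_2$. Theorem~\ref{bigtits} is then used in the boundary, not in the tree: between $g_2^{\,k}A_2$ and $g_2^{\,k+7}A_2$ there is $q$ with $d_T(q,g_1^{+})\ge\pi$, so $\pi$-convergence forces $g_1^{\,n}q\to g_1^{-}\in A_1$ as $n\to-\infty$. Since $A_1\cap A_2=\emptyset$, this is only possible if a half of $I_2$ is actually a translation interval $I_3$ of $g_1$; that in turn forces $\{g_1^{\pm}\}$ to meet $\hat A_2$, whence $g_2$ fixes one of $g_1^{\pm}$, so $g_1,g_2$ virtually commute by \cite[Theorem~8]{SWE}, and Lemma~\ref{L:commute} gives the contradiction. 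In short, the obstruction you try to rule out by tree combinatorics is in fact ruled out by locating $g_1^{\pm}$ inside specific min cuts of $\bd X$ and exploiting $\pi$-convergence; a purely tree-theoretic argument does not seem to suffice here.
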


\begin{proof} By lemmas \ref{L:nestcap}, \ref{L:product}, since $A_1,A_2$ are disjoint,  either $G$ is rank 1 or
  $I_1,I_2$ are disjoint. Without loss of generality we may assume that $g_1$ translates towards $I_2$.  Let $I_2 =(c,d)$.
By lemma \ref{L:Int},  $g_1^-\in \hat A_1$. 
By corollary \ref{C:pm}, we may assume that $g_2^+\in \hat A_2$.
By theorem \ref{bigtits} for any $k$, between any two translates  of $A_2$ of the form  $g_2^kA_2,g_2^{k+7}A_2$ there is some $q$ with $d_T(q,g^+)\geq \pi $ so  if $n\to -\infty $, $g_1^nq\to g_1^-$. Since $g_1^-\in A_1$ 
and $A_1,A_2$ are disjoint it follows that if the lemma is not true then either $(c,b)$ or $(b,d)$ is contained in a nesting interval $I_3$ of $g_1$. 

Clearly we may assume that for some $k\in \Z$, $g_2^kA_2, g_2^{k+1}A_2$ lie in $I_3$ and $$g_2^kA_2\cap g_2^{k+1}A_2=\hat A_2 .$$
It follows that if $$S=\bigcap_{n\in \Z} g_1^n(g_2^kA_3) $$ then $S\subseteq \hat A_2.$ As before by corollary \ref{C:pm}, $\{g_1^{\pm} \}$ intersects $S$.

Since $\hat A_2$ is fixed by $g_2$, $g_2$ fixes one of $g_1^{\pm}$.

 By \cite[Theorem 8]{SWE} $g_1,g_2$ virtually commute. So by passing to a power if necessary we may assume that $g_1,g_2$ commute. However
this contradicts lemma \ref{L:commute}.

 Since $g_1$ can neither fix nor translate along $I_2$ it acts elliptically so
there is some $x\in [a,b]$ fixed by $g_1$ and so that
$g_1^nI_2'\cap I_2'=\{x\}$ for all $n\ne 0$.
\end{proof}

\begin{Def}
Suppose $g_1,...,g_n$  translate along  disjoint min-cuts $A_1,...,A_n$  nestingly so that  be the translation intervals $I_1,...,I_n$ of  $g_1,...,g_n$ 
containing $A_1,...,A_n$ are mutually disjoint.
  We say that $g_1,...,g_n$ form a \textit{nesting chain of length $n$} if the shortest path
joining $I_k$ to $I_{k-1}$ $(k=2,...,n)$ contains an endpoint of $I_k$ and does not contain an endpoint of $I_{k-1}$. \end{Def} 

\begin{figure}[htbp]
   \includegraphics[scale=1.00]{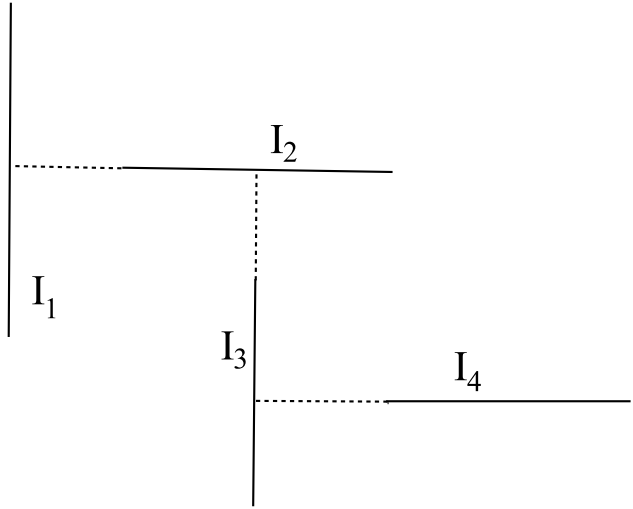}%

  \caption{A nesting chain of length 4}
  \label{nesting chain.eps}
\end{figure}

\begin{Lem}\label{L:chain} Let $X$ be a CAT(0) space and let $G$ be a one ended group acting geometrically on $X$.
Assume $\bd X$ is $m$-thick and that $G$ acts nestingly on the cactus tree $T$ of $\bd X$. Then either $G$ is rank 1 or for any $n\in \N$ there is a chain
of nesting elements of length $n$ on $T$. \end{Lem}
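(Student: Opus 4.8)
The statement is an inductive strengthening of Lemma \ref{L:notline}, so the natural approach is induction on the length $n$ of the nesting chain. For $n=1$ the existence of a single nesting element is exactly our standing hypothesis (together with the normalization in Lemma \ref{L:many} that picks out an inseparable cut and an interval of translation). For $n=2$, Lemma \ref{L:notline} already gives us two nesting elements $g_1,g_2$ translating along disjoint min cuts $A_1,A_2$ with non-colinear translation intervals $J_1,J_2$; after replacing $g_1$ by a power if necessary and invoking Lemma \ref{elliptic}, we obtain a point $x$ on the bridge $[a,b]$ from $\bar J_1$ to $\bar J_2$ which is fixed by $g_1$ and such that $g_1^n J_2' \cap J_2' = \{x\}$, where $J_2' = \mathrm{Hull}(x,J_2)$. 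This is precisely a nesting chain of length $2$ (up to relabelling: the bridge from $J_2'$ to $J_1$ contains the endpoint $x$ of $J_2'$ and does not contain an endpoint of $J_1$), so the base case is in hand.

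\textbf{Inductive step.} Suppose we have a nesting chain $g_1,\dots,g_k$ of length $k$ with disjoint min cuts $A_1,\dots,A_k$ and mutually disjoint translation intervals $I_1,\dots,I_k$. The idea is to attach one more link at the $I_k$-end. First I would apply Lemma \ref{permutation} to the finite set $A_1 \cup \dots \cup A_k$ together with a large finite piece of $\mathrm{Hull}(I_1,\dots,I_k)$ to get $u \in G$ with $u(A_1\cup\dots\cup A_k)$ disjoint from $A_1 \cup\dots\cup A_k$; then $g_{k+1} = u g_k u^{-1}$ is nesting along the disjoint min cut $u(A_k)$ with translation interval $u(I_k)$. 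By Lemmata \ref{L:nestcap} and \ref{L:product}, since $A_k$ and $u(A_k)$ are disjoint, either $G$ is rank $1$ (and we are done) or $u(I_k)$ is disjoint from and not colinear with $I_k$ — but this is not yet enough: I need the new interval to hang off $I_k$ and its bridge to contain an endpoint of the \emph{new} interval but not of $I_k$. For that, one should position the conjugating element so that $u(I_k)$ is far (in $T$) from $I_{k-1}$, using $\pi$-convergence to push $u(n_i)$ — where $n_i$ are points of large Tits radius in the continua $C_i$ produced by Theorem \ref{bigtits} between translates of $A_k$ — into the region strictly between $A_k$ and $g_kA_k$, so that the bridge from $u(I_k)$ to $I_k$ enters $I_k$ at an interior point. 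Then Lemma \ref{elliptic}, applied to $g_k$ acting on the non-colinear pair $(I_k, u(I_k))$, produces a point $x_k$ fixed by $g_k$ with $g_k^n (u(I_k))' \cap (u(I_k))' = \{x_k\}$; replacing $u(I_k)$ by $(u(I_k))' = \mathrm{Hull}(x_k, u(I_k))$ gives the $(k+1)$-st link, whose bridge to $I_k$ contains the endpoint $x_k$ of the new interval and does not contain an endpoint of $I_k$. The disjointness of all the min cuts $A_1,\dots,A_k, u(A_k)$ and the mutual disjointness of the intervals follows as in Lemma \ref{L:many}, again using Lemmata \ref{L:nestcap}, \ref{L:product} to rule out the colinear/overlapping cases on pain of rank $1$.

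\textbf{Main obstacle.} The genuinely delicate point is controlling \emph{where} the bridge from the newly attached interval meets the previous one: merely conjugating gives disjointness but not the ``enters at an interior point'' condition that distinguishes a chain from a caterpillar with several links at one vertex. This is exactly the role of the $\pi$-convergence argument with the large-Tits-radius continua $C_i$ (Theorem \ref{bigtits}), together with the observation — used already in the proof of Lemma \ref{L:notline} — that $A_k \cup g_k A_k$ separates the interior point $x$ of $I_k$ from any interval $J$ with $\mathrm{Hull}(I_k,J)$ an interval. A secondary bookkeeping nuisance is that at each stage one must re-normalize $g_{k+1}$ to a suitable power so that $\hat A_{k+1} = u(A_k) \cap g_{k+1} u(A_k)$, exactly as in Lemmata \ref{L:nestcap} and \ref{elliptic}; this is routine but must be done consistently so that the elliptic-action conclusion of Lemma \ref{elliptic} applies. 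Everything else — disjointness of min cuts, non-colinearity, the dichotomy ``$G$ rank $1$ or $\dots$'' — is a direct replay of the lemmas already established in this section, so the proof is short modulo the careful placement argument.
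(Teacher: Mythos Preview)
Your overall strategy matches the paper's --- induct on the chain length, and at the inductive step use $\pi$-convergence together with the large-Tits-radius continua from Theorem~\ref{bigtits} to force the bridge from the new interval to meet $I_k$ at an interior point. You also correctly isolate this ``interior attachment'' as the crux. But there are two genuine problems in your execution.

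\textbf{The disjointness/positioning conflict.} You propose to pick a single element $u$ via Neumann's Lemma~\ref{permutation} to make $u(A_k)$ disjoint from $A_1\cup\dots\cup A_k$, and then \emph{also} to use $\pi$-convergence to push $u(n_i)$ near an interior point $x\in I_k$. These are incompatible constraints on $u$: Neumann's lemma gives you one particular $u$, while $\pi$-convergence gives you a sequence $(h_j)$ with $h_j\to x$, and there is no reason the Neumann element lies on (or even near) this sequence, nor any reason that $h_j(A_k)$ is disjoint from $A_1,\dots,A_k$ for large $j$. The paper resolves this by decoupling the two steps: first invoke Lemma~\ref{L:many} to produce a \emph{large pool} of $N\gg n$ nesting elements with pairwise disjoint min-cuts $A_1,\dots,A_N$ and pairwise disjoint translation intervals; then apply $\pi$-convergence (a single sequence $(h_j)$) simultaneously to all the continua $C_i$, $i>k$; finally, since the $h_jA_i$ remain pairwise disjoint and $|A_1\cup\dots\cup A_k|\le km$, pigeonhole yields some $r>k$ with $h_jA_r$ disjoint from $A_1,\dots,A_k$. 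That $h_jI_r$ is your $I_{k+1}$.

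\textbf{The use of Lemma~\ref{elliptic} is both unnecessary and incorrect here.} Once the bridge from the new interval to $I_k$ enters $I_k$ at an interior point, Lemma~\ref{L:product} (plus $G$ not rank~1) already forces the bridge to contain an endpoint of the \emph{new} interval, which is exactly the nesting-chain condition. There is nothing more to do. Your proposal to replace $u(I_k)$ by $\hu(x_k, u(I_k))$ is wrong on its face: that hull is not a translation interval of any nesting element, so it cannot serve as $I_{k+1}$ in the definition of a nesting chain. (Lemma~\ref{elliptic} is used later, in Theorem~\ref{rank2}, for a different purpose.) Similarly, your base case $n=2$ follows directly from Lemma~\ref{L:notline} plus Lemma~\ref{L:product} without any appeal to \ref{elliptic}.
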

\begin{proof} The proof is almost the same as for lemma \ref{L:notline}. Pick $N\gg n$. 

By lemma \ref{L:many} 
there are elements $g_1,...,g_N\in G$ and pairwise disjoint min-cuts $A_1,...,A_N$ such that $g_i$ translates nestingly along $A_i$ $(i=1,...,N)$.
Let $I_i$ be an interval of translation of $g_i$ containing $A_i$. If for some $i\neq j$, $I_i\cap I_j\ne \emptyset $ then by lemmas
\ref{L:nestcap}, \ref{L:product} $G$ is rank 1. Similarly if the shortest path joining $I_i$ to $I_j$
does not intersect the end points of either $I_i$ or $I_j$ then by lemma \ref{L:product} $G$ is rank 1.

 Without loss of generality we may assume
that $g_1,...,g_k$ is a maximal nesting chain for some $n>k\geq 1$. We show that there is a nesting chain of length $k+1$. 

If there is an $i$ such that the shortest path joining $I_k$ to $I_i$ ($i>k$) which
does not intersect the end points of either $I_k$ or $I_i$ then by lemma \ref{L:product} $G$ is rank 1.  We assume now that the shortest path joining $I_k,I_i$ contains an endpoint of $I_k$
for all $i>k$.

By theorem \ref{bigtits} there is a continuum $C_i$  between $A_i,g_i^7A_i$ for $i=k+1,...,N$ such that $\rad C_i\geq \pi$. Let $x$ be a point
between $A_k, g_kA_k$ that does not lie on $A_k$ or $g_kA_k$. Let $h_j\in G$ such that $h_j(p)\to x$, $ h_j^{-1}(p)\to y $ for all $p \in X$. Since  $\rad C_i\geq \pi$ there are points
$n_i\in C_i$ with $d_T(n_i,y)\geq \pi$.  Note that for any interval $J$ disjoint from $I_k$, if the shortest path joining $I_k$ and $J$ contains an endpoint of $I_k$ then $A_k\cup  g_kA_k$ separate $x$ from $J$.

Since $h_jC_i\to x$, for $j \gg0$ either $h_jI_i$ intersects $I_k$  or  $I_k,h_jI_i$ are not colinear. Since $A_1,...,A_N$ are all disjoint all $$h_jA_i, \, i=1,...,N$$ are disjoint. Therefore for some
$r$, $h_jA_r$ will be disjoint from $A_1,A_2,...,A_k$. Since $G$ is not rank 1 It follows that $h_jI_r$ does not intersect $I_k$ . Since the arc from $h_j\bar I_r$  to $\bar I_k$ intersects $I_k$ in a non-endpoint and $h_jg_rh_j^{-1}$ acts 
on $h_jA_r$ nestingly with minimal overlap, this proves the lemma (setting $I_{k+1}$ to be $h_jI_r$).

%
\end{proof}

\begin{Thm} \label{rank2} Let $X$ be a CAT(0) space and let $G$ be a one ended group acting geometrically on $X$.
Assume $\bd X$ is $m$-thick and that $G$ acts nestingly on the cactus tree $T$ of $\bd X$. Then $G$ is rank 1.
\end{Thm}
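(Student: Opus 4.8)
The plan is to derive a contradiction from the assumption that $G$ is not rank 1, using the chain-building machinery developed in Lemmas \ref{L:many}--\ref{L:chain} together with P.M. Neumann's lemma and $\pi$-convergence, exactly mirroring the strategy of Theorem \ref{rank1} in the non-nesting case. So assume $G$ is not rank 1. First note that $G$ does not virtually fix a point of $\bd X$: by \cite[Lemma 26]{PS1} this would force $G$ virtually $H\times\Z$, and then $\bd X$ would either fail to be $m$-thick (if $H$ is one-ended) or $G$ would virtually split over a $2$-ended group, contradicting our standing assumption (which is part of the setting of this section). Then by Lemma \ref{L:chain}, for every $n\in\N$ there is a nesting chain $g_1,\dots,g_n$ along pairwise disjoint min cuts $A_1,\dots,A_n$ with mutually disjoint translation intervals $I_1,\dots,I_n$, where the geodesic from $I_k$ to $I_{k-1}$ hits an endpoint of $I_k$ but not of $I_{k-1}$.

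The key step is then to show that a sufficiently long nesting chain produces an element translating along an interval containing two disjoint min cuts, which by Corollary \ref{C:disjoint} is rank 1 --- the contradiction. The idea, as in Theorem \ref{rank1}: consecutive links of a nesting chain ``point the same way'', so $\hu(I_{k-1},I_k)$ is an interval and the chain lays out the $I_k$'s in linear order along a single arc of $T$. Using Lemma \ref{elliptic} one controls how $g_{k-1}$ acts near $I_k$: either $g_{k-1}$ translates along an interval already containing two disjoint min cuts (done), or $g_{k-1}$ acts elliptically with a fixed point $x_k$ on the connecting path so that $g_{k-1}^j\,\hu(x_k,I_k)\cap \hu(x_k,I_k)=\{x_k\}$ for $j\neq 0$. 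In the latter case, for suitable exponents $r_k$ the product $b=g_1^{r_1}g_2^{r_2}\cdots$ (built just as $b=a^m g^{-r}$ was in Theorem \ref{rank1}, but now telescoped along the whole chain) translates along an interval that contains $A_1$ and $A_n$. Since $A_1\cap A_n=\emptyset$, Corollary \ref{C:disjoint} gives that $b$, hence $G$, is rank 1 --- contradiction. Concretely it suffices to produce from the chain a single nesting (or translating) element whose translation interval meets two of the disjoint $A_i$; Lemmas \ref{L:nestcap}, \ref{L:product}, \ref{L:disjoint} are exactly the tools that force rank 1 whenever two nesting elements with disjoint min cuts interact in one of these ways, so the combinatorial task is to show a long enough chain cannot avoid all of these configurations.

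The main obstacle I expect is the bookkeeping in the elliptic case: when $g_{k-1}$ acts elliptically near $I_k$, one must track the fixed points $x_k$ and verify that the telescoped product $b$ genuinely acts as a translation (does not fix a point and does not invert) on an interval running from inside $A_1$ past $A_n$, so that Corollary \ref{C:disjoint} applies. This is where the hypothesis that the chain has length $\gg m$ is used: the disjointness of $A_1,\dots,A_n$ and the fact that each $A_i$ has only $m$ points means the ``overlaps'' $g_i^j(A_i\setminus\hat A_i)$ are all distinct, so among $m+1$ of them two translates must be disjoint; combining this with the colinearity forced by the chain condition pins down an axis with two disjoint min cuts on it. Once that single translating element is produced, Corollary \ref{C:disjoint} (equivalently Theorem \ref{T:miss}) finishes immediately, contradicting the assumption that $G$ is not rank 1 and proving the theorem.
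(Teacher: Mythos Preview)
Your proposal contains a genuine geometric error that undermines the whole construction. You claim that ``consecutive links of a nesting chain `point the same way', so $\hu(I_{k-1},I_k)$ is an interval and the chain lays out the $I_k$'s in linear order along a single arc of $T$.'' This is false. By the definition of a nesting chain, the shortest path from $\bar I_k$ to $\bar I_{k-1}$ contains an endpoint of $I_k$ but \emph{not} of $I_{k-1}$, so it lands at an interior point of $I_{k-1}$. Hence $\hu(I_{k-1}\cup I_k)$ is a tripod, not an interval, and the chain is a branching configuration, not a colinear one. Your telescoped product $b=g_1^{r_1}g_2^{r_2}\cdots$ therefore has no reason to translate along a single interval containing both $A_1$ and $A_n$; the argument of Theorem \ref{rank1} does not transplant directly because there the axes genuinely were colinear.

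The paper's proof exploits exactly this tripod structure rather than fighting it. A chain of length $4$ suffices. One applies Lemma \ref{elliptic} (with the roles reversed) to the pairs $(g_2,g_1)$ and $(g_4,g_3)$: since the path from $I_2$ to $I_1$ hits an endpoint of $I_2$ and they are not colinear, $g_2$ acts elliptically near $I_1$ with $g_2(I_1)\cap I_1$ at most a point, and similarly for $g_4(I_3)$. Set $a=g_2g_1g_2^{-1}$ and $b=g_4g_3g_4^{-1}$, which nest along $g_2(I_1)$ and $g_4(I_3)$ respectively. The shortest path $p$ from $g_2(I_1)$ to $g_4(I_3)$ now threads through the interiors of $I_2$ and $I_3$, meeting each nontrivially and containing an endpoint of each. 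The axis of the product $ab$ contains $p$, so Lemma \ref{L:disjoint} applies directly to $g_2,g_3$ (with their disjoint min cuts $A_2,A_3$) and the element $ab$, giving rank 1. No long chain or pigeonhole on $m$ is needed; the conjugation trick replaces your telescoping.
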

\begin{proof} 

\begin{figure}[htbp]
   \includegraphics[scale=1.00]{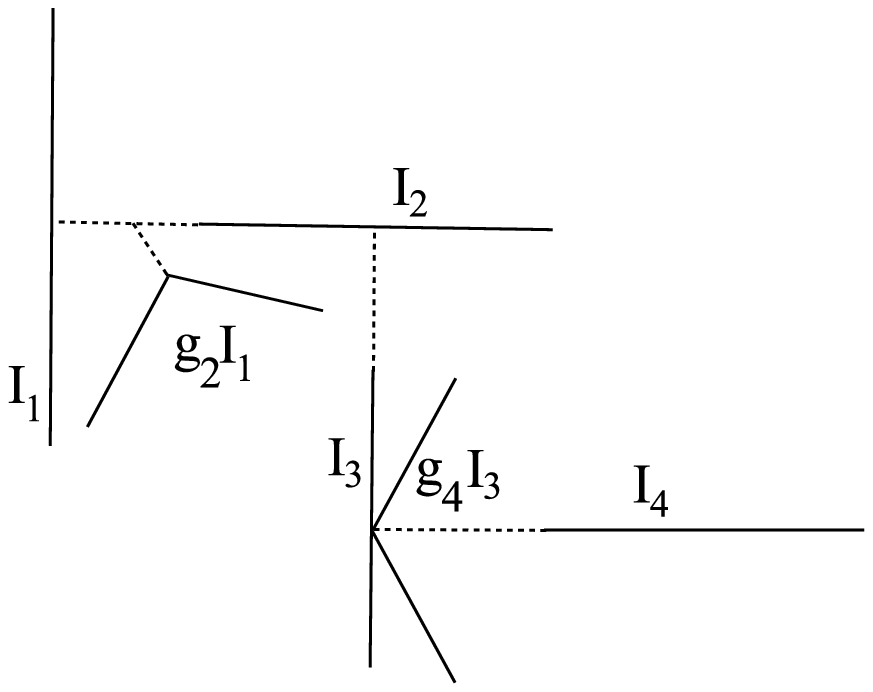}%

  \caption{}
  \label{chain-rank1.eps}
\end{figure}

If $G$ is not rank 1 then by lemma \ref{L:chain} there are nesting elements $g_1,g_2,g_3,g_4$ translating along pairwise disjoint min-cuts $A_1,...,A_4$ with intervals
of translation $I_1,...,I_{4}$. 
By lemma \ref{elliptic} $g_4(I_3)$ and $g_2(I_1)$ intersect the intervals $I_3,I_1$ at at most one point. If $a=g_2g_1g_2^{-1}$ and
$b=g_4g_3g_4^{-1}$ then $a,b$ act nestingly on the intervals $g_2(I_1), g_4(I_3)$. Moreover the shortest path $p$ joining $g_2(I_1), g_4(I_3)$
intersects non trivially $I_2,I_3$ and contains one endpoint of each one of them. Since the axis of the element $ab$ contains $p$
it follows by lemma \ref{L:disjoint} that $G$ is rank 1.

\end{proof}

We show now that intervals of translation do not overlap:

\begin{Lem} \label{intersection-cat}
Suppose that $g$ and $h$ are nesting elements with $I\subset T$ an interval of translation of $g$ and $J\subset T$ an interval of translation of $h$ with $\{g^\pm \}\neq \{h^\pm\}$. If $a \in J$ is an endpoint of $I$, then $J \cap I =\emptyset$.
\end{Lem}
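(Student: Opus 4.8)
The plan is to argue by contradiction: assuming $I\cap J\neq\emptyset$, I will deduce that $\{g^\pm\}=\{h^\pm\}$, against the hypothesis.

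First, by Theorem~\ref{rank2} the group $G$ is rank~$1$, so $g$ and $h$ are rank~$1$ hyperbolic isometries; by Ballmann's dichotomy each of $g^\pm,h^\pm$ is at infinite Tits distance from every other point of $\bd X$, and feeding the powers of $g$ (resp.\ $h$) into $\pi$-convergence (Theorem~\ref{piconverge}) yields the North--South dynamics $g^k(x)\to g^+$ for every $x\neq g^-$ and $g^{-k}(x)\to g^-$ for every $x\neq g^+$, and likewise for $h$. Since $J$ is an open interval of translation, the endpoint $a$ of $I$ lies in the \emph{interior} of $J$, so $h(a)\neq a$ while $g(a)=a$; and since $I$ is open, $a\notin I$. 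Choosing $z\in I\cap J$ and using convexity of $I$ and of $J$ in the $\R$-tree $T$, the non-degenerate arc $(a,z]$ lies in $I\cap J$. Thus $g$ and $h$ both translate a common non-degenerate arc $K$ having $a$ as one endpoint, with $I$ lying on the $K$-side of $a$ and $J$ continuing past $a$ on the opposite side. Replacing $g$ and/or $h$ by their inverses (which changes neither $\{g^\pm\}$ nor $\{h^\pm\}$) we may normalise the translation directions.

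Next I would locate the four ideal points among the min cuts of $\bd X$. After a mild reduction to the case where the relevant translation intervals carry a min cut (a nesting element always translates along an interval containing its nesting cut), pick a min cut $B$ in the interior of $K$ and write $\bd X=M_B\cup N_B$ with $M_B\cap N_B=B$ and $M_B$ the side into which $g$ nests. By Lemma~\ref{L:limit} one gets $g^+\in\tilde M_B\subseteq\hat M_B=\bigcap_{n>0}g^n(M_B)$ and $g^-\in\tilde N_B\subseteq\hat N_B$, so $g^+$ and $g^-$ are trapped by the nested families of min cuts $\{g^n(B)\}_{n>0}$ and $\{g^{-n}(B)\}_{n>0}$ accumulating at the two endpoints of $I$; using that every point of $B$ is distinct from $g^-$ (resp.\ $g^+$), the North--South dynamics shows these families of min cuts pinch down onto $g^+$ and $g^-$. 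The same holds for $h$ with the min cuts of $J$. In particular the min cut $B\in K$ separates $g^+$ from $g^-$ \emph{and} separates $h^+$ from $h^-$, and likewise for every min cut on $K$.

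The heart of the proof, and what I expect to be the main obstacle, is the case analysis that follows: one distinguishes whether $g$ and $h$ translate $K$ in the same direction or in opposite directions, and one must also treat the degenerate overlap configurations ($I\subseteq J$, or $K$ abutting an endpoint of $J$). In each case, tracing the min cuts $\{g^{\pm n}(B)\}$ and $\{h^{\pm n}(B)\}$ along the common arc and across the fixed point $a$, together with the pinching statements above, either directly identifies the pairs $\{g^\pm\}$ and $\{h^\pm\}$ or else produces a common ideal point; once $g$ and $h$ share an ideal point, \cite[Theorem~8]{SWE} makes them virtually commute, and passing to commuting powers Lemma~\ref{L:commute} exhibits the $h^n$-translates of $I$ as pairwise disjoint translation intervals of $g$, which together with Corollary~\ref{C:disjoint} and the structure of the cactus tree pins down $\{g^\pm\}=\{h^\pm\}$. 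Either way we contradict $\{g^\pm\}\neq\{h^\pm\}$, proving the lemma. The delicate points throughout are: determining exactly which boundary points the attracting and repelling continua of $g$, $h$ and of the composite elements contain; arranging that the min cuts fed into Corollary~\ref{C:disjoint} are genuinely disjoint; and keeping careful track of the gap structure of $T$ at the fixed point $a$.
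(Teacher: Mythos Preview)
Your argument has a genuine gap at the very first step. You claim that since $G$ is rank~$1$ (Theorem~\ref{rank2}), the nesting elements $g$ and $h$ are themselves rank~$1$ hyperbolic isometries, and then build everything on North--South dynamics on $\bd X$. But $G$ being rank~$1$ only means that $G$ \emph{contains} some rank~$1$ hyperbolic element; it does not force every hyperbolic element of $G$ to be rank~$1$. A nesting element $g$ may well satisfy $d_T(g^+,g^-)=\pi$, and the paper explicitly treats this possibility in Subcase~ii of the proof of Lemma~\ref{endstab}, which comes \emph{after} Theorem~\ref{rank2}. Without rank~$1$ for $g$ you lose North--South dynamics, and your claim that the families $\{g^{\pm n}(B)\}$ pinch down to the singletons $\{g^+\}$ and $\{g^-\}$ fails: all one knows a priori is that $\hat A=\bigcap_n g^n(A)$ meets $\{g^\pm\}$ (Corollary~\ref{C:pm}), and $\hat A$ may well be larger than a point.

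The paper's proof sidesteps this entirely. It takes a min cut $A\in I\cap J$, arranges that the iterates $g^nA$ lie in $J$ for $n\ge0$, and applies Corollary~\ref{C:pm} to $h$ along $J$ to force, say, $h^+\in g^nA$ for every $n$; hence $g$ fixes $h^+$, and \cite[Theorem~8]{SWE} makes powers of $g$ and $h$ commute. Lemma~\ref{L:commute} then produces infinitely many nesting elements $g^mh^k$ all translating along $J$, and Corollary~\ref{C:pm} applied to each of these puts one of $(g^mh^k)^\pm$ into the fixed finite set $A$; since distinct primitive pairs $(m,k)$ give distinct endpoint pairs, $A$ cannot absorb them all. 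Your endgame (``Lemma~\ref{L:commute}\dots together with Corollary~\ref{C:disjoint}\dots pins down $\{g^\pm\}=\{h^\pm\}$'') is also too vague as written---Corollary~\ref{C:disjoint} yields rank~$1$ from disjoint min cuts, not an identification of endpoint pairs---but the decisive gap is the unjustified rank~$1$ assumption on $g$ and $h$ at the outset.
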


\begin{proof}
Let's say that $I=(b,a)$.
Replacing $g$ with $g^2$ if need be  we may assume that $g(a) = a$. If $I\cap J\ne \emptyset$ then a min-cut $A$ lies in $I\cap J$. Notice that $$\hu\{g^n(A):\, n \in \Z\}$$ is the interval of translation $I$ of $g$ (ie $g$ translates along $A$). Since $a\in J$ by replacing $g$ by $g^{-1}$ if necessary, we may assume that $g^nA\in I$ for all $n\geq 0$.

Then $g^nA$ is also a nesting cut of $h\in G$ for all $n\geq 0$.
Since $h$ is nesting, by corollary \ref{C:pm}, we may assume that at least one of $h^+,h^-$ lies in $g^nA$ for all $n$. Say $h^+ \in g^n A$ for all $n$.
Then $h^+$ is fixed by $g$. It follows  by \cite[Theorem 8]{SWE} that $h^s,g$ commute for some $s\neq 0$. Replacing $h$ by $h^s$ we have that $g,h$ commute and an endpoint of $I$ is contained in $J$. By lemma \ref{L:commute} for all $m,k\neq 0$,  $g^mh^k$ are nesting homeomorphisms that translate along $A$. It follows that $A$ contains at least one of the two limit points of $g^mh^k$. However this is impossible as these  are distinct  for $m,k$ relatively prime, and $A$ is finite.

\end{proof}

Let $Z$ be an $m$-thick continuum. We explained in section 2 how to associate to $Z$ a `cactus tree' (see \cite{PS2}). To do this
one starts with a pretree $\mathcal R$ consisting of `isolated' min cuts and `wheels' consisting of crossing min cuts. It turns out that this pretree
is `preseparable' and has `few gaps' and hence embeds to an $\mathbb R$-tree $T$. We note that this $\mathbb R$-tree might have terminal points.
This is the case if and only if the pretree $\mathcal R$ has terminal points. Note that by definition a wheel can not be terminal so the
possible terminal points are isolated min cuts. Note that if a group $G$ acts on $T$ by isometries terminal points of $T$ are mapped to terminal
points of $T$, in particular the action of $G$ on $T$ is not minimal.

We assume now in the rest of the section that $Z=\partial X$ where $X$ is CAT(0) and $G$ acts on $X$ geometrically.
We showed in the previous section that if $G$ acts nestingly on $T$ then $G$ is rank 1. In particular by $\pi $-convergence it follows that
$\mathcal R$ has no terminal points.
%

\begin{Lem} \label{end-interval}
Suppose that $g$ is a nesting element and that $I,J$ are intervals of translation of $g$. Then $I,J$ lie in distinct cross components.
\end{Lem}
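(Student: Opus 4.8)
The plan is to argue by contradiction, so suppose $I\neq J$ are intervals of translation of the nesting element $g$ that lie in one cross-component $Q$. Since $g$ has intervals of translation, $g$ inverts no nesting interval and Lemma~\ref{L:int} applies; in particular $I$ and $J$ are disjoint and $\hu[I\cup J]$ is linearly ordered, so there is a well-defined endpoint $a$ of $I$ facing $J$ (the point of $\overline I$ nearest $J$), with $a\notin I$. Because $I,J\subseteq Q$, any $x\in I$ and $y\in J$ are joined by a finite chain of intervals of translation $I_1,\dots,I_n$, with $I_i$ belonging to a nesting element $g_i$, $x\in I_1$, $y\in I_n$ and $I_i\cap I_{i+1}\neq\emptyset$; after prepending $I$ and appending $J$ we may assume $I_1=I$, $I_n=J$, and take the chain of minimal length $n$. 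Then $n\ge 3$, since $I\cap J=\emptyset$.

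The first step is to locate $a$ inside the chain. The union $\bigcup_i I_i$ is connected, hence convex in the $\R$-tree $T$, and contains $x,y$, so it contains the arc $[x,y]$; and $a\in[x,y]$. Let $k$ be least with $a\in I_k$; then $k\ge2$ since $a\notin I_1=I$. Walking along the chain from $I_1$, the intervals $I_1,\dots,I_{k-1}$ all lie in the component $C_x$ of $\bigl(\bigcup_i I_i\bigr)\setminus\{a\}$ containing $x$; as $I_k$ meets $I_{k-1}$ and contains $a$, it contains some $w\in C_x$ together with the arc $[w,a]$. The median $c$ of $x,w,a$ lies on $[x,a]\subseteq\overline I$, and $c\neq a$ because $[x,w]$ avoids $a$; hence $(c,a)$ is a non-empty subinterval of $I$ contained in $[w,a]\subseteq I_k$. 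Thus $I_k\cap I\neq\emptyset$ while the endpoint $a$ of $I$ lies in $I_k$, so Lemma~\ref{intersection-cat} forbids $\{g^\pm\}\neq\{g_k^\pm\}$, and therefore $\{g^\pm\}=\{g_k^\pm\}$.

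The second step produces a commuting pair and applies Lemma~\ref{L:commute}. For any positive power $g^p$, the set $I$ is convex and linearly ordered and $g^p$ acts on it by translation, so $I$ is an interval of translation of $g^p$ as well (and $g^p$ inverts no nesting interval); likewise $I_k$ is one of $g_k^q$ for any $q\ge1$. Consequently $\langle g,g_k\rangle$ is not virtually cyclic: otherwise $g$ and $g_k$ would have a common positive power $g^p=g_k^q$, whence $I$ and $I_k$ are two intervals of translation of $g^p$, and being pairwise disjoint yet meeting they would coincide, contradicting $a\in I_k\setminus I$. Since $\{g^\pm\}=\{g_k^\pm\}$, \cite[Theorem 8]{SWE} shows $g$ and $g_k$ virtually commute, so $\langle g,g_k\rangle$ is virtually $\Z^2$ and there are $p,q\ge1$ with $g^p$ and $g_k^q$ commuting. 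These are nesting homeomorphisms inverting no interval, $I$ is an interval of translation of $g^p$, $I_k$ one of $g_k^q$, and the facing endpoint $a$ of $I$ lies in $I_k$, so Lemma~\ref{L:commute} yields that the intervals $g_k^{qm}(I)$, $m\in\Z$, are pairwise disjoint and contained in $I_k$. Taking $m=0$ gives $I\subseteq I_k$, hence $x\in I_k$ and $I_k,I_{k+1},\dots,I_n$ is a connecting chain of length $n-k+1\le n-1$, contradicting minimality. So $I$ and $J$ lie in distinct cross-components.

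The crux is the second step: extracting, from the bare existence of a connecting chain, an interval of translation $I_k$ that actually enters $I$ through the endpoint $a$ — which is what makes Lemma~\ref{intersection-cat} applicable — and then running it through \cite[Theorem 8]{SWE} and the virtually-$\Z^2$ dichotomy to set up Lemma~\ref{L:commute}. I expect the degenerate cases (when $I$ or $J$ is a ray rather than a bounded interval, or when the facing endpoints of $I$ and $J$ coincide) to be routine variants of the same argument.
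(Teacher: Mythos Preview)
Your argument is correct and follows the same skeleton as the paper's: since $\hu[I\cup J]$ is linearly ordered (Lemma~\ref{L:int}(4)), any connecting chain must pass through the facing endpoint $a$ of $I$, so some $I_k$ contains $a$; one then shows $I_k\cap I\neq\emptyset$ and invokes Lemma~\ref{intersection-cat}. The paper compresses this into two sentences and does not spell out why the interval $I_k$ hitting $a$ must also meet $I$ --- your median argument makes this explicit.

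More substantively, you handle the case $\{g^\pm\}=\{g_k^\pm\}$, which Lemma~\ref{intersection-cat} explicitly excludes from its hypothesis and which the paper's proof does not address. Your treatment (common power forces $I=I_k$; otherwise \cite[Theorem~8]{SWE} yields a commuting power and Lemma~\ref{L:commute} gives $I\subseteq I_k$, contradicting minimality of the chain) is a genuine and correct addition.

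One small cleanup: the claim that $\langle g,g_k\rangle$ is virtually $\Z^2$ is neither needed nor fully justified (virtually commuting does not immediately give a finite-index abelian subgroup of the whole group). What you actually use is just that some $g^p$ commutes with $g_k$, and that is exactly what \cite[Theorem~8]{SWE} provides; you can drop the $\Z^2$ sentence and apply Lemma~\ref{L:commute} directly to $g^p$ and $g_k$.
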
 
\begin{proof} By lemma \ref{intersection-cat} if $I_1=I,I_2,...,I_n=J$ is a sequence of pair-wise intersecting intervals the end-points of $I$ do not lie in $I_2$.
It follows that there is a path joining $I$ to $J$ which does not contain any-endpoint of $I$ contradicting Lemma \ref{L:int}(4).
\end{proof}

\begin{Lem} \label{endstab} Let $S$ be a cross-component of $T$ and let $E$ be an end of $S$ which is not an end of $T$.  It follows that $E$ corresponds to a nonterminal point of $T$.   Then $\st(E)< \st(S)$ is virtually cyclic.
\end{Lem}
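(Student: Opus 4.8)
```latex
\begin{proof}[Proof proposal]
The plan is to identify $E$ with a point of the cactus pretree $\cR$ (or rather of the cactus tree $T$), apply the results of Section 5 on gap stabilizers, and then feed the outcome into the rank-1/Neumann machinery of this section. First I would observe that since $E$ is an end of the cross-component $S$ but not an end of $T$, the end $E$ of $T$ is ``approached from outside $S$'', so $E$ is a limit of points of $T$ not lying in $S$; in particular $E$ is not terminal in $T$. Writing $C$ for the wheel or inseparable min cut of $\cR$ that $E$ corresponds to (using the identification of connected subsets of $T$ with subsets of $\bd X$ set up at the start of Section 4), the end $E$ then selects a \emph{nonterminal} gap $M$ of $C$, namely the gap containing all of $S$ on the far side of $C$. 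This is the place where Lemma \ref{L: rank1gap} / Corollary \ref{C: rank1gap} and the definition of gap are used to translate ``end of a cross-component'' into ``nonterminal gap''.

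Next I would compare stabilizers. If $g$ fixes the end $E$ of $S$, then since $E$ is approached by min cuts on the $S$-side, $g$ either translates along a nesting interval inside $S$ ending at $E$ or acts elliptically fixing an initial segment of such an interval; in both cases $g(S)\cap S$ contains an arc, hence $g(S)=S$ because distinct translates of a cross-component are disjoint or meet in a single point (Lemma \ref{L:int}(4) and the cross-component definition). This gives $\st(E) < \st(S)$. For virtual cyclicity I would run the argument of Lemma \ref{endstab}'s companion results: $\st(E)$ stabilizes the nonterminal gap $M$ of $C$, and it also stabilizes the min cut (or half-cut structure) bounding $M$, so $\st(E) < \st(C) \cap \st(M)$. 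By Theorem \ref{T: cutcyclic} (when $C$ is an inseparable min cut) or Theorem \ref{T: notsimp} (when $C$ is a wheel), at most one nonterminal gap of $C$ has non-virtually-cyclic stabilizer; if $M$ is that exceptional gap I would instead use the element $g$ nesting toward $E$: such a $g$ moves $C$ (by $\pi$-convergence, since $G$ is rank 1 by Theorem \ref{rank2}, one of $g^\pm$ lies in $C$, and $g$ cannot fix all of the infinitely many half-cuts/min cuts accumulating at $E$), so $\st(E)$ cannot contain such a $g$ while also fixing $C$ — contradiction unless $\st(E)$ is virtually cyclic. Combining, $\st(E)$ is virtually cyclic in every case.

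The step I expect to be the main obstacle is showing that $E$ genuinely corresponds to a \emph{nonterminal} point of $T$ and that the gap $M$ it picks out is the nonterminal gap to which the gap-contraction lemma (Lemma \ref{L:gap contract}) applies, i.e.\ that $M$ really contains two distinct elements of $\cR$ one between $C$ and the other. This requires knowing that there is genuine cactus-tree structure beyond $C$ in the direction of $E$; it follows because $S$ is a cross-component that is not reduced to the single point $C$ (an end of $S$ that is not an end of $T$ forces $S$ to extend past $C$), so the portion of $S$ on the $E$-side of $C$ contains min cuts, hence elements of $\cR$, strictly between $C$ and $E$. Once this is pinned down, $\rad M \ge \pi$ by Lemma \ref{L:gap contract}, Corollary \ref{C:Stab} confines limit sets into $M$, and the finite-limit-set criterion Corollary \ref{C: finite limit set} delivers virtual cyclicity exactly as in the proofs of Theorems \ref{T: cutcyclic} and \ref{T: notsimp}.
\end{proof}
```
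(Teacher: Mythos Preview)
Your proposal has a genuine gap at its very first step. You write ``$C$ for the wheel or inseparable min cut of $\cR$ that $E$ corresponds to'', but there is no reason $E$ should lie in the cactus pretree $\cR$. The point $E$ is an endpoint of a translation interval of some nesting element $g$, hence a limit in $T$ of the sequence $g^n(A)$ for a min cut $A$; such a limit is just a point of the completed $\R$-tree $T$ and need not be an inseparable cut, a wheel, or even a median point. The paper's proof never identifies $E$ with an element of $\cR$: instead it works directly with rays in $T$ representing $E$, splitting into the case where every $h\in\st(E)$ fixes such a ray (then Theorem~\ref{bigtits} and $\pi$-convergence force $\Lambda H$ into a single min cut) versus the case where some $g\in\st(E)$ nests along a ray to $E$ (then the positive/negative continua $\hat M,\hat N$ and Lemma~\ref{L:Int}, Corollary~\ref{C:pm} are used).

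Even granting your identification, there are two further problems. First, Theorems~\ref{T: cutcyclic} and~\ref{T: notsimp} live in Section~5, whose standing hypothesis is that $G$ acts \emph{non-nestingly} on $T$; in particular the proof of Theorem~\ref{T: notsimp} invokes Lemma~\ref{L: rank1gap} and Corollary~\ref{C: rank1gap}, both of which assume non-nesting. Here we are in Section~7, the nesting case, so you cannot simply import that machinery. Second, your treatment of the ``exceptional gap'' is circular: you say a nesting $g\in\st(E)$ ``moves $C$'', but $g$ fixes the endpoint $E$ of its translation interval, and you have set $C=E$, so $g(C)=C$; no contradiction arises. The paper handles the nesting element directly via the rank-1 dichotomy (Subcases i and ii of Case II), using the commuting relation from \cite[Theorem 8]{SWE} and Lemma~\ref{L:commute} to reach a contradiction with the finiteness of the min cut $A$.
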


\begin{proof}We may assume that $\st(E)$ is infinite, and not virtually $\Z$.

\noindent
{\bf Case I:}  For each $h \in \st(E)$ there is a ray in $T$ representing $E$ which is fixed by $h$.
Let $H< \st(E)$ finitely generated and infinite (there is a uniform bound on the size of a finite subgroup).  Since $H$ is finitely generated, it follows that there is a ray $R \subset T$ with $R$ fixed by $H$. By the definition of cross-components $R$ contains infinitely many linearly ordered min-cuts.
It follows by theorem \ref{bigtits} that there are linearly ordered min-cuts $A_1,A_2,A_3$ in $R$ and continua $c_1$ between $A_1,A_2$ and $c_2$ between $A_2,A_3$ such that $\rad c_i\geq \pi,\, i=1,2$. Let $h_n\in H$ such that $h_n\to h^+,h_n^{-1}\to h^-$.  There are $y_1\in c_1,y_2\in c_2 $ such that
$d_T(y_i,h^-)\geq \pi $. Therefore by $\pi $ convergence $h_ny_i\to h^+$. However $H$ fixes $R$ so $h_ny_1$ converges to a point between $A_1,A_2$
and $h_ny_2$ converges to a point between $A_2,A_3$. It follows that $h^+\in A_2$.


Therefore  $\Lambda H \subseteq A_2 $ so $\Lambda H$ is finite.  By \cite[Lemma 16]{PS1} we have that $H$ is virtually cyclic. It follows by \cite[Theorem 7.5, sec. II]{BRI-HAE} that $ \st(E)$ is virtually cyclic.


\noindent
{\bf Case II:} There is some $g \in \st(E)$ and an $R\subset T$ a ray representing $E$ with 
$g(R) \subsetneq R$.  Clearly $g$ has infinite order.  By the definition of the Cactus tree $T$ there is a minimal cut $A$ and continua $M,N$ such that $$M \cup N = \bd X,\, M\cap N = A,\, g(M) \subsetneq M \text{ and } g^{-1}(N) \subsetneq N.$$ 

Let $\hat M = \bigcap\limits_{i\in \Z} g^i(M)$, $\hat N = \bigcap\limits_{i\in \Z}  g^i(N)$, and  $\hat A = \bigcap\limits_{i \in \Z} g^{i}(A) = \hat M \cap \hat N$. 
By lemma \ref{end-interval}
we may assume that $E\in T$ is a limit point of the interval $I$
of $T$ spanned by $g^i(A), \, i\in \Z$. It follows that $\st(E)\subseteq \st(\hat M)$ and that $\st(E)$ stabilizes $\hat A$.  By lemma \ref {L:limit} $g^+\in \hat M$ and $g^-\in \hat N$.
 
\noindent
 {\bf Subcase i :}   $g$ is rank 1. By $\pi$-convergence either  $\hat N = \{g^-\}$ and $\hat M =\{g^+\}$ or $\{g^\pm\} \cap  \hat A\neq \emptyset$.  Either way 
 $\st(E)$ virtually fixes $g^\pm$.  It follows that  $\st(E)<_{vi}\st(g^+)$. By \cite[Theorem 8]{SWE} $$\st(g^+)=\bigcup _{n>0}Z _{g^n}.$$ Since
 $g$ is rank 1 $Z _{g^n}$ is virtually $\langle g\rangle$. It follows by \cite[Theorem 7.5, sec. II]{BRI-HAE} that $ \st(E)$ is virtually cyclic.

 \noindent
 {\bf Subcase ii :}  $g$ is not rank 1.  Thus $d_T(g^+,g^-) = \pi$. It follows by theorem \ref{T:miss} that $\hat A $ is non-empty.  By replacing $g$ by a power if necessary we may assume that $\hat A $ is fixed by $g$, so by \cite[Theorems 3.2, 3.3]{RUA} $\hat A \subset \Lambda Z_g$. 
 
Since $E$ is not an end of $T$, $\hat M$ has non-empty interior, so by lemma \ref{L:Int},  $g^-\in  A$.

We note that $ \st(E)$ stabilizes $\hat A\ni g^-$  and $\hat A$ is finite. It follows that a finite index subgroup of $\st(E)$
fixes $g^-$. As we are trying to determine $\st (E)$ up to finite index to simplify notation we assume $\st (E)$ fixes $g^-$.
For any $h\in \st(E)$ since $h$ fixes $g^-$ by \cite[Theorem 8]{SWE} there is an $n>0$ such that $g^nh=hg^n$ .


We note that by \cite[Theorem 7.5, sec. II]{BRI-HAE} if every finitely generated subgroup of $\st (E)$ is virtually cyclic then $\st(E)$ is virtually cyclic. Let $H=\langle g_1,...,g_k,g\rangle $ be a finitely generated subgroup of $\st(E)$ that is not virtually cyclic. Clearly for some
$n>0$, $g^n$ lies in the center of $H$.

\textbf{Claim}. There is some $h\in H$ such that $\langle h,g^n\rangle $ is isomorphic to $\Z ^2$.

\begin{proof}

Let $N$ be the (normal) subgroup of $H$ fixing $I$ (the interval of translation of $g$). By the same argument that we used in case I, $N$ is virtually cyclic. If $N$ is infinite
then there is some $h\in N$ of infinite order so $\langle h,g^n\rangle $ is isomorphic to $\Z ^2$.
Otherwise since $H$ is
not virtually cyclic the quotient group $H/\langle\!N,g^n\!\rangle $ is infinite. We note that $H$ acts non-nestingly on $S$. This follows by lemma \ref{intersection-cat} and by
the definition of $S$. So by \cite{Le} we can promote this action to an action by isometries on an $\mathbb R$-tree $S'$ where $g^n$ acts by translations on $S'$. Since $g^n$ lies in the center of $H$ every element of $H$ acting by isometries acts along the axis of $g^n$. Using this we get a homomorphism from $H/N\rangle $ to the group of isometries of $\R$. Since $H/\langle\!N,g^n\!\rangle $ is infinite
it follows that $H/N $ is not virtually $\Z$, so
there is some $h\in H$ so that $\langle h,g^n\rangle $ is isomorphic to $\Z ^2$. \end{proof}

We note that $h$ acts on $S$ and fixes $E$. From the proof of the claim above either $h$ translates along an interval $J$
that has $E$ as an endpoint or $h$ fixes $I$. In the first case note that by lemma \ref{intersection-cat}  $I,J$ do not contain the endpoints of each other.
Therefore if $I\ne J$ we may assume that there is $x\in J$ such that $x\notin I$. 
Note now that $gh(x)\neq hg(x)$, contradiction. So $I=J$. It follows that both $h,g$ stabilize $\hat M, \hat N$ (this is clearly true if $h$ fixes $I$). It follows
that every element of $\langle h,g^n\rangle $ stabilizes $\hat M $ and $\hat N$. It follows by lemma \ref{L:Int} that for any $a\in \langle h,g^n\rangle $, $a^-\in A$. This is a contradiction
since $A$ is finite.

\end{proof}

\begin{Thm} \label{thick}Let $Z$ be be the boundary of a 1-ended CAT(0) space $X$ on which $G$ acts geometrically.
Suppose that $Z$ be an $m$-thick continuum with cactus tree $T$ and
 that $G$ acts nestingly on $T$. Then $G$ acts non-nestingly without any fixed point on an $\mathbb R$-tree $\bar T$ with virtually cyclic
edge stabilizers.
\end{Thm}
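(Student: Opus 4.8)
The plan is to obtain Theorem \ref{thick} by assembling results already proved, the theorem being essentially a matter of checking that the hypotheses line up. First I would invoke Theorem \ref{rank2} to conclude that $G$ is rank $1$. As recorded in the setting of this section, $G$ does not virtually fix a point of $Z$; combined with rank one, the arguments of Section 4 apply verbatim (Lemma \ref{fixedcut} rules out $G$ fixing an inseparable cut, Lemma \ref{fixwheel} a wheel, and Theorem \ref{median} a median set, none of these proofs using non-nesting, only that $G$ is rank $1$), so $G$ fixes no point of $T$. By $\pi$-convergence the pretree $\cP$ and the tree $T$ then have no terminal points, and since $G$ is finitely generated with no global fixed point it admits a unique minimal invariant subtree $T'$; because $T'$ is $\langle g\rangle$-invariant for every nesting $g\in G$, it contains the orbit, hence the convex hull of the orbit, of any point of a translation interval of $g$, so the translation (equivalently nesting) subtrees of the nesting elements all lie in $T'$. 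Restricting the action to $T'$ we may therefore assume that $G$ acts minimally and nestingly on $T$.

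Next I would verify that $G$ acts on $T$ with non-overlapping translation intervals in the sense of Definition \ref{overlap}. For two distinct intervals of translation of a single nesting element this is immediate from Lemma \ref{L:int}: they are disjoint and colinear, and the endpoints of one are fixed by the element while the other is moved, so neither contains an endpoint of the other. For intervals of translation of two different nesting elements this is exactly Lemma \ref{intersection-cat}. With non-overlapping translation intervals in hand, Theorem \ref{T:nesting} applies and produces an $\R$-tree $\bar T$ on which $G$ acts non-nestingly, with no proper $G$-invariant subtree, and with every arc stabilizer of $\bar T$ stabilizing the end of a cross-component of $T$. Since the action of $G$ on $T$ is nesting it is non-trivial, so $\bar T$ is non-trivial, and thus having no proper $G$-invariant subtree forces $G$ to fix no point of $\bar T$.

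It remains to identify the arc stabilizers of $\bar T$ and check that they are virtually cyclic. In the construction of $\bar T$ in Section 6, each arc stabilizer is conjugate into $\st(E)$, where $E$ is the end of a cross-component $Q$ of $T$ arising from an endpoint $e\in\overline Q\setminus Q$. Since $e$ is a point of $T$ and $T$ has no terminal points, $e$ is nonterminal, so $E$ is not an end of $T$; Lemma \ref{endstab} then gives that $\st(E)<\st(Q)$ is virtually cyclic, whence every arc stabilizer of $\bar T$ is virtually cyclic, a subgroup of a virtually cyclic group being virtually cyclic. This gives the stated conclusion.

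The genuine content has all been established upstream (rank one in Theorem \ref{rank2}, non-overlap in Lemma \ref{intersection-cat}, the quotient construction in Theorem \ref{T:nesting}, and the end-stabilizer bound in Lemma \ref{endstab}), so the main obstacle here is the bookkeeping at the two ends of the argument: confirming that after restricting to the minimal invariant subtree the hypotheses of Theorem \ref{T:nesting} really do hold (in particular that the translation intervals of all nesting elements survive in $T'$), and confirming that the ends of cross-components which appear as arc stabilizers in $\bar T$ are precisely those covered by Lemma \ref{endstab}, i.e. that they correspond to genuine nonterminal points of $T$ rather than to ends of $T$. If one prefers to avoid the minimality reduction, one can instead run Theorem \ref{T:nesting} on the closure of the convex hull of the union of the nesting subtrees of the nesting elements together with the axes of the hyperbolic elements of $G$, which is $G$-invariant and on which the action is automatically minimal; either way this is the only step demanding care.
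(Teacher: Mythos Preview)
Your proposal assembles the same ingredients as the paper (rank~1 via Theorem~\ref{rank2}, non-overlapping via Lemma~\ref{intersection-cat}, end stabilizers via Lemma~\ref{endstab}, and the quotient construction via Theorem~\ref{T:nesting}), so the overall architecture is correct. There is, however, one misstep and one unnecessary detour.

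The misstep is your appeal to the Section~4 lemmas ``verbatim'' to rule out a global fixed point. The proof of Lemma~\ref{fixwheel} explicitly uses that the Tits diameter of $\bd X$ is finite (to force half-cuts of the invariant wheel to be arbitrarily Tits-close); this is precisely what \emph{fails} once you have established that $G$ is rank~1. So that lemma does not transfer as written, and your claim that ``none of these proofs use non-nesting, only that $G$ is rank~1'' is not accurate for the wheel case.

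The detour is the restriction to a minimal invariant subtree. The paper handles both issues at once with a short direct argument: since $G$ is rank~1, for any min cut $x$ one uses $\pi$-convergence to find $g\in G$ with $gx$ and $g^{-1}x$ on opposite sides of $x$, so $x$ is not terminal; the identical argument shows there is no proper $G$-invariant subtree of $T$. Thus the action on $T$ is already minimal and there is nothing to restrict, which also dissolves your bookkeeping worry about whether translation intervals survive passage to $T'$. With minimality in hand, the paper then invokes Lemma~\ref{intersection-cat}, Lemma~\ref{endstab}, and Theorem~\ref{T:nesting} exactly as you do.
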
 
 
\begin{proof} Let $T$ be the cactus tree of $Z$. We claim that $T$ has no endpoints.

Assume that $T$ has some endpoint $x$ which is necessarily an min cut of $Z$. Then there are disjoint open sets $U,V$ such that $Z\setminus x=U\cup V$.
Since $G$ is rank-1 the Tits-diameter of both $U,V$ is infinite. Therefore by $\pi$-convergence there is some $g\in G$
such that $gx\subset U, g^{-1}x\subset V$. It follows that $x$ is not an endpoint of $T$.

Note that the same argument shows  that is there is no proper $G$-invariant subtree of $T$.

By lemma \ref{intersection-cat} $G$ acts on $T$ with non overlapping translation intervals. By lemma \ref{endstab} the stabilizers of ends of cross components of $T$
are virtually cyclic, and so by  theorem \ref{T:nesting},   $G$
 acts non-nestingly without proper invariant subtree on an $\R$-tree $\bar T$ with virtually cyclic arc stabilizers.

%


%
 

\end{proof}

\section{Non-nesting actions on $\R$-trees}  In all cases we arrive at a non-nesting action of $G$ on an $\R$-tree with virtually cyclic arc stabilizers.

Even though Bestvina-Feighn-Rips theory \cite{B-F} does not imply immediately that $G$ splits over a 2-ended group we can reach this conclusion using
a further refinement of the theory due to Sela \cite{Se} and Guirardel \cite{Gui}. Guirardel gives a quite precise description of the cases when $G$ does not split over a 2-ended group.
Using CAT(0) geometry and our hypothesis on the boundary we can rule out these cases.

We recall here Guirardel's theorem \cite{Gui}:

\begin{Guirardel} Consider a non-trivial minimal action of a finitely generated
group $G$ on an $\mathbb R$-tree $T$ by isometries. Assume that:

1. $T$ satisfies the ascending chain condition;

2. for any unstable arc $J$,

(a) $G(J)$ is finitely generated;

(b) $G(J)$ is not a proper subgroup of any conjugate of itself.

Then either $G$ splits over the stabilizer of an unstable arc or over the stabilizer of an
infinite tripod, or $T$ has a decomposition into a graph of actions where each vertex action
is either

1. simplicial: $ G_v \curvearrowright Y_v$
 is a simplicial action on a simplicial tree;
 
2. of Seifert type: the vertex action $ G_v \curvearrowright Y_v$ has kernel
$N_v$, and the faithful action $G_v/N_v \curvearrowright Y_v$ is dual to an arational measured foliation on a closed 2-orbifold with
boundary;

3. axial: $Y_v$ is a line, and the image of $G_v$ in Isom($Y_v$) is a finitely generated group
acting with dense orbits on $Y_v$.
\end{Guirardel}

We give now an informal description of a graph of actions. Let $G$ be a group acting on a simplicial tree $T_s$ and let $\Gamma $ be the
corresponding graph of groups decomposition of $G$. We colour the edges of $T_s$ red.
Let $v$ be a vertex of $\Gamma$ and let $G_v$ be its vertex group. Assume each $G_v$ acts on an $\R $-tree $Y_v$. We assume further that if $G_e$ is an edge
group of an edge $e$ incident to $v$ then $G_e$ fixes a point $s_{ev}$ of $Y_v$.
We colour the $Y_v$'s black. Now we replace
each vertex in the orbit $G\cdot v$ of $T_s$ by a copy of $Y_v$. If $v,w$ are adjacent vertices of $T_s$ joined by an edge $e$ then we join $Y_v, Y_w$ by a red edge with endpoints $s_{ev}$, $s_{ew}$.
In this way we obtain an $\R$-tree $T'$. Finally we collapse all red edges of $T'$ and we obtain an $\R$-tree $T$. Clearly $G$ acts on $T$ by isometries. We say then
that the action of $G$ on $T$ decomposes as a graph of actions (with underlying graph $\Gamma $).

For a formal and more detailed description of graphs of actions see \cite{Gui}.


In our context we will call `exceptional' the groups that admit a decomposition
as in Guirardel's theorem with edge groups virtually $\mathbb Z^2$. With some more work we will be able to show that, under our assumptions, these exceptional groups
split over 2-ended groups. We define now this class of groups formally:

\begin{Def} Let $G$ be a one ended group acting geometrically on a CAT(0) space $X$. 
We say that $G$ is {\it exceptional} if $G$ admits a decomposition as a non-trivial graph of groups which has the following properties:

There are two types of vertex groups:

Black vertex groups $G_v$ where for some 2-ended $N_v$, $G_v/N_v$ is the fundamental group of a closed 2-orbifold with
boundary;

White vertex groups $G_v$ where $G_v$ could be any CAT(0) group.

Edge groups are virtually $\mathbb Z ^2$ and at least one endpoint of every edge is a black vertex.

Moreover if $G_e$ is an edge group incident to a black vertex group $G_v$ then $G_e/N_v$ is a subgroup of $G_v/N_v$ corresponding
to a boundary component of the underlying 2-orbifold.

We call this decomposition of $G$ a {\it Guirardel decomposition  }.

We call the black vertex groups of this decomposition {\it groups of Seifert type}.

If $G_v$ is a group of Seifert type, $q:G_v\to G_v/N_v$ is the quotient map to the orbifold group and $<b>$ is the fundamental group of
a boundary component of the underlying orbifold we say that $q^{-1}(<b>)$  is a {\it peripheral subgroup} of $G_v$.
\end{Def}

In our context we have the following:

\begin{Thm}\label{T:Rtree} Let $G$ be a one ended group acting geometrically on a CAT(0) space $X$. 
If $G$ acts non-nestingly on a non-trivial $\R$-tree $T$ without fixing a point and with virtually cyclic arc stabilizers, 
then either $G$ splits over a 2-ended group or $G$ has a 2-ended normal subgroup $N$ such that $G/N$ is the fundamental group
of a closed 2-orbifold, or $G$  is exceptional.
\end{Thm}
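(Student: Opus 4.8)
The plan is to feed the non-nesting action of $G$ on $T$ into Levitt's theorem to promote it to an isometric action on an $\mathbb R$-tree, and then apply Guirardel's theorem to that isometric action; the bulk of the work is to verify the hypotheses of both theorems and to organize the output of Guirardel's trichotomy.

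First I would use Levitt's result \cite{Le}: since $G$ acts non-nestingly and non-trivially (no fixed point) on the $\mathbb R$-tree $T$, there is an isometric action of $G$ on an $\mathbb R$-tree $T'$ with the same arc stabilizers (in particular virtually cyclic), and we may pass to a minimal $G$-invariant subtree, still denoted $T'$, which is still non-trivial because $G$ does not fix a point. Next I would check that $T'$ satisfies the hypotheses of Guirardel's theorem. The ascending chain condition holds because arc stabilizers are virtually cyclic, hence slender: an ascending chain of arc stabilizers $H_1 \le H_2 \le \dots$ inside a fixed virtually cyclic group stabilizes, and one uses that $G$ is finitely generated and acts with slender arc stabilizers to rule out infinite properly ascending chains of arcs (this is the standard argument, e.g. as in \cite{B-F}, \cite{Gui}). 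For hypothesis 2, an unstable arc $J$ has arc stabilizer virtually cyclic, and a virtually cyclic subgroup of a CAT(0) group is contained in a (unique maximal) virtually cyclic subgroup, so $G(J)$ is finitely generated, and an infinite finitely generated virtually cyclic group is co-Hopfian up to finite index, so $G(J)$ is not a proper subgroup of a conjugate of itself (one can pass to a finite index argument or use that the commensurator of a maximal virtually cyclic subgroup equals its normalizer). Likewise the stabilizer of an infinite tripod is an arc stabilizer, hence virtually cyclic, so a split over it is a split over a 2-ended group.

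Now apply Guirardel's theorem. Either $G$ splits over the stabilizer of an unstable arc or an infinite tripod --- but both are virtually cyclic by the previous paragraph, so $G$ splits over a 2-ended group and we are done --- or $T'$ decomposes as a graph of actions with vertex actions that are simplicial, of Seifert type, or axial. If there is a non-trivial simplicial vertex action, then (since its edge stabilizers are arc stabilizers of $T'$, hence virtually cyclic) $G$ splits over a 2-ended group. An axial vertex action $G_v \curvearrowright Y_v$ with $Y_v$ a line and dense orbits: here I would argue as is standard that $G_v$ maps onto a finitely generated dense subgroup of $\mathrm{Isom}(\mathbb R)$, hence onto a group which is either $\mathbb Z^2$-like or free abelian of rank $\ge 2$ modulo its kernel, so $G_v$ has a subgroup of infinite index of finite type --- more concretely one gets a split of $G_v$ over a subgroup, which combined with the ambient graph of groups yields a split of $G$ over a 2-ended group (the kernel and edge groups being arc stabilizers, virtually cyclic). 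If all vertex actions are Seifert type and the simplicial pieces are trivial (points), then $T'$ is a single Seifert-type action: $G$ has kernel $N = N_v$ which is the stabilizer of the generic arc, hence virtually cyclic (in fact, for an arational measured foliation on a closed 2-orbifold the generic arc stabilizer is the whole kernel $N$, which is 2-ended), and $G/N$ is the fundamental group of a closed 2-orbifold with boundary; if the orbifold has non-empty boundary then its fundamental group is virtually free, which gives a splitting of $G/N$ and hence of $G$ over a 2-ended group, and if the orbifold is closed we are exactly in the second alternative of the statement. In the remaining mixed case --- Seifert-type pieces together with some non-trivial simplicial structure but with every simplicial edge group virtually $\mathbb Z^2$ --- we are precisely in the definition of \emph{exceptional}, and I would just check that the graph-of-actions output matches the definition of a Guirardel decomposition: edge groups are virtually $\mathbb Z^2$ (these are the arc stabilizers that are not virtually cyclic --- wait, but arc stabilizers here are virtually cyclic). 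Here I would instead note that the only way to avoid a 2-ended splitting is that every simplicial edge stabilizer that could give a splitting is already absorbed, forcing the Seifert-type description, so the ``exceptional'' case as defined arises only through the boundary-parallel gluing of Seifert pieces, and the edge group being virtually $\mathbb Z^2$ reflects that the 2-ended kernel $N_v$ together with a boundary $\langle b\rangle$ generates a virtually $\mathbb Z^2$ subgroup.

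The main obstacle I expect is the last point: carefully analyzing the mixed case of the graph of actions, tracking the kernels $N_v$ of the Seifert pieces and the edge groups, to show that if $G$ does not split over a 2-ended group then the resulting decomposition genuinely has the structure of a Guirardel decomposition in the paper's sense (edge groups virtually $\mathbb Z^2$, each edge incident to a black/Seifert vertex, edge-to-boundary-component compatibility). The verification of the ascending chain condition and hypothesis 2 of Guirardel's theorem is routine given slenderness of virtually cyclic subgroups and the finite generation from convexity results of section 3, and ruling out the axial case and the boundary case of Seifert orbifolds is also fairly standard; it is the bookkeeping of the graph of actions, and in particular identifying exactly when the 2-ended splitting fails, that requires care.
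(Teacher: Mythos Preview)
Your overall strategy---Levitt to promote to an isometric action, then Guirardel's structure theorem, then case analysis---is exactly what the paper does. Two of the steps, however, are not handled correctly.

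First, the confusion you flag yourself is real and important. In the graph-of-actions decomposition, the edge groups are \emph{not} arc stabilizers of the $\R$-tree; they are stabilizers of the attaching points where two vertex trees $Y_v$, $Y_w$ meet. For a Seifert-type vertex $v$, the stabilizer in $G_v$ of such a point contains the kernel $N_v$ (which is $2$-ended) together with the point stabilizer in $G_v/N_v$, and the latter is exactly a boundary subgroup $\langle b\rangle\cong\Z$ of the orbifold. So the edge group is an extension of $\Z$ by a $2$-ended group; one then invokes \cite[II.7.1]{BRI-HAE} (splitting of central extensions in CAT(0) groups) to conclude it is virtually $\Z^2$. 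Your attempt to reach virtually $\Z^2$ via arc stabilizers cannot work, since those are virtually cyclic by hypothesis. The axial case is similarly cleaner than your sketch: the edge group incident to an axial vertex lies in the pointwise stabilizer of an arc of the axial line, hence is $2$-ended, and $G$ splits over it.

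Second, you omit a step that the paper needs and uses: the definition of \emph{exceptional} requires the white (non-Seifert) vertex groups to be CAT(0). This is not automatic from Guirardel's theorem; the paper appeals to the Hruska--Ruane result \cite{HR} that vertex groups in a graph of groups with convex edge groups are themselves convex. Since the edge groups are virtually $\Z^2$, they are convex by Corollary~\ref{C:convex}, and then \cite{HR} gives that every vertex group is convex, hence CAT(0). Without this, you have not shown that the output of the case analysis matches the definition of a Guirardel decomposition.
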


\begin{proof}
Applying a result of Levitt \cite{Le} to the non-nesting $G$ action on the tree $T$ we obtain an $\mathbb R$-tree $ T _0$ endowed with a $G$ action by isometries such that the stabilizer of an arc in $ T _0$ will also be the stabilizer of an arc of $T$.
Clearly the action satisfies the ascending chain condition
since 2-ended subgroups of CAT(0) groups are contained in maximal 2-ended subgroups. Also in a CAT(0) group a 2-ended group is not a proper subgroup
of a conjugate of itself. So  either $G$ splits over a 2-ended group or the action can be decomposed as a graph of actions as in the theorem.
A decomposition of the action as a graph of actions induces a decomposition of $G$ as a graph of groups. The vertex groups of this graph of groups
correcpond to the groups $G_v$ of Guirardel's theorem and we have actions $ G_v \curvearrowright Y_v$ where $Y_v$ is a subtree of $S$.
An edge group of en edge joining $v,w$ is the stabilizer of the point of intersection $Y_v\cap Y_w$ (if this intersection
is non-empty).  We note that in the case of vertex actions (case 1 in Guirardel's theorem) $Y_v$ might be reduced
to a point. 

Therefore either $G$ splits over a 2-ended group or this action can be obtained as a graph of actions by gluing together
simpler `building blocks', i.e. actions on $\mathbb R$-trees $Y_v$. The building blocks are glued together along a point and the combinatorics of the gluing are described by a simplicial tree.
If a building block $Y_v$ is of simplicial type and is not a point then the edge stabilizers of $ G_v \curvearrowright Y_v$ are 2-ended so $G$ splits over a 2-ended group.
It follows that
either $G$ splits over a 2-ended group
or $G$ is decomposed as a graph of groups where all vertex groups $G_v$ are of the following types: 
\begin{enumerate}

\item  $G_v$ is of simplicial type, $ G_v \curvearrowright Y_v$
is a trivial action and $Y_v$ is a single point.
 
\item $G_v$ is of Seifert type: $G_v$ acts on an $\mathbb R$-tree $Y_v$ with kernel of the action a 2-ended group $N_v$, and the
faithful action of $G_v/N_v$ on $Y_v$ is dual to an arrational measured foliation
on a closed 2-orbifold with boundary.

\item $G_v$ is of axial type: $G_v$ acts on a tree $Y_v$ which is a line, and the image of $G_v$ in $Isom(Y_v)$ is a finitely
generated group acting with dense orbits on $Y_v$.
 
\end{enumerate}

We denote by $T_B$ the Bass-Serre tree of this graph of groups decomposition of $G$.

We note now that if there are vertex groups of axial type then $G$ splits over a 2-ended group. Indeed note that the edge groups
incident to such vertex groups are stabilizers of the axial component. Since the edge stabilizers of the original action are 2-ended
the stabilizer of the axial component is 2-ended. Hence the edge stabilizer is 2-ended,
so if there are vertex groups of axial type $G$ splits over a 2-ended group. It follows that we may assume that if there is such a vertex group then the
graph of groups reduces to a single vertex so $G$ is virtually $\mathbb Z ^2$. Since we assume that it acts on a non-trivial tree it splits over a 2-ended group.

Therefore we may assume that all vertex groups are of Seifert type and the edge groups are extensions of $\Z$ (the boundary groups) by
a 2-ended group. Since $G$ is CAT(0), by Thm. 7.1, ch. II.7 of \cite{BRI-HAE}, the edge groups are virtually $\Z ^2$. If there is a single vertex group $G_v$
then $G=G_v$ is of Seifert type. But such a group either splits over a 2-ended group or $G_v/N_v$ is the fundamental group of a closed
2-orbifold (with no boundary).

Finally to deduce that vertex groups are CAT(0) we use a recent result of 
Hruska-Ruane \cite{HR} stating that if a CAT(0) group $G$ decomposes
as graph of groups with convex edge groups then the vertex groups are convex as well.
Since edge groups are virtually $\mathbb Z^2$ and $\Z^2$ is a convex subgroup by corollary \ref{C:convex} each edge group $G_e$ acts co-compactly on a convex subset of 
$X$, $X_e$. By Proposition 7.6 of \cite{HR},
for each vertex $v$ of $T$, $G_v$ acts co-compactly on a convex subset of $X$, $X_v$, so $G_v$ is CAT(0).

\end{proof}


\begin{Lem}\label{suspension} Let  $Z$ be a suspension of a Cantor set.
Then every finite cut of $Z$ contains the pair of its suspension points and the pair
of suspension points is the unique minimal cut of $Z$.
\end{Lem}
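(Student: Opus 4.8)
The plan is to work with the concrete model $Z = C * S^{0}$, i.e. $Z = \left(C\times[-1,1]\right)/\!\sim$ where $C\times\{-1\}$ is crushed to a point $p_{-}$ and $C\times\{1\}$ to a point $p_{+}$, with $C$ a Cantor set. Then $Z\setminus\{p_{-},p_{+}\}$ is homeomorphic to $C\times(-1,1)$, and for each $c\in C$ the image of $\{c\}\times[-1,1]$ is an arc in $Z$ from $p_{-}$ to $p_{+}$. First I would record the easy facts. Since $C$ is totally disconnected and infinite, $C\times(-1,1)=\bigsqcup_{c\in C}\{c\}\times(-1,1)$ has infinitely many components, so $\{p_{-},p_{+}\}$ is a (finite) cut; and since $Z$ is path connected with every point lying on an arc through $p_{+}$, deleting a single point never disconnects $Z$. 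Hence $Z$ is a $2$-thick continuum and $\{p_{-},p_{+}\}$ is a min cut. Given this, the "unique minimal cut'' assertion reduces to the main claim: \emph{every finite cut of $Z$ contains $\{p_{-},p_{+}\}$} — for then a minimal cut $A$ contains the cut $\{p_{-},p_{+}\}$ and, being minimal, equals it.

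\textbf{Proving the main claim.} I would prove the contrapositive: if $A\subseteq Z$ is finite and $p_{+}\notin A$, then $Z\setminus A$ is connected (the case $p_{-}\notin A$ is symmetric under the involution exchanging $p_{\pm}$). Let $P=\{c\in C:(\{c\}\times(-1,1))\cap A\neq\emptyset\}$, a finite set, and put $G=C\setminus P$; since $C$ is perfect and $P$ is finite, $G$ is dense in $C$. For each $c\in G$ the half-open arc which is the image of $\{c\}\times(-1,1]$ (whose top point is $p_{+}$) avoids $A$: its interior because $c\notin P$, and $p_{+}$ because $p_{+}\notin A$. Hence $B:=\bigcup_{c\in G}(\text{image of }\{c\}\times(-1,1])$ is a union of arcs through the common point $p_{+}$, so $B$ is a connected subset of $Z\setminus A$. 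Next I would check $\overline{B}=Z$: because $G$ is dense in $C$, any $(c,s)$ with $s\in(-1,1)$ is a limit of points $(c_{n},s)$ with $c_{n}\in G$, and $p_{\pm}$ are limits of points of $B$ as the height tends to $\pm1$. Thus $B\subseteq Z\setminus A\subseteq\overline{B}$, and since a space admitting a dense connected subset is connected, $Z\setminus A$ is connected. This gives the main claim, and with the preliminary facts it yields the lemma.

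\textbf{The main obstacle.} The difficulty is not technical but a matter of getting the right argument: it looks as though removing a finite $A$ with $p_{+}\notin A$ could still disconnect $Z$, since a sub-arc $\{c\}\times(-1,t)$ on a "bad'' arc (with points of $A$ above it and $p_{-}$ or further points of $A$ below it) admits \emph{no path} to $p_{+}$ and so seems cut off. The point to get right is that such a sub-arc is nevertheless \emph{not} a separate component, because every one of its points is a limit of points on nearby good arcs, all of which lie in the component of $p_{+}$; hence the sub-arc lies in the closure of that component, hence in it. This is exactly why the proof must be run through density and closures (the "dense connected subset'' lemma) rather than by exhibiting paths. A secondary, routine matter is to record that $Z$ is genuinely a metrizable continuum and that "Cantor set'' is being used for both of its needed properties: perfect (for density of $G$) and infinite (so that $\{p_{-},p_{+}\}$ is a cut at all).
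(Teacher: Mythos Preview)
Your proof is correct. Both arguments ultimately rest on the same observation---that the Cantor set is perfect, so the arcs avoiding a given finite set are dense---but they deploy it differently. The paper works with a decomposition $Z=M\cup N$, $M\cap N=F$: it shows that any suspension arc meeting $M\setminus F$ is a limit of nearby arcs that miss $F$ and hence lie entirely in $M$, so the arc itself lies in $M$; thus $M$ and $N$ are each unions of full suspension arcs together with points of $F$, and any point of $F$ interior to an arc can be discarded without losing the decomposition, forcing $F\supseteq\{p_\pm\}$. You instead work directly with the complement $Z\setminus A$: assuming $p_{+}\notin A$, you exhibit the union of ``good'' half-arcs through $p_{+}$ as a connected subset of $Z\setminus A$ whose closure is all of $Z$, so $Z\setminus A$ is connected. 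Your route is shorter and avoids reasoning about the pieces $M,N$ separately; the paper's route yields a structural description of those pieces, though that extra information is not used anywhere else.
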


\begin{proof} Let $a,b$ be the suspension points of $Z$ and $M,N$ be continua with $M\cup N=Z$
and $M\cap N=F$ where $F$ is finite. We claim that if $\alpha $ is a suspension arc and there is some $x\in M\setminus F$
then $\alpha \subset M$.

Indeed note that if $x\in M\setminus F$ then an open neighborhood of $x$
lies in $A$. So $M$ intersects infinitely many
suspension arcs limiting to $\alpha $. Since $F$ is finite it follows that $M$ actually contains almost all of these arcs.
As these arcs limit to $\alpha $ it follows that $\alpha  $ is contained in $M$ as well.

The same holds for $N$. So $M,N$ are both a union of suspension arcs together with $F$. If $x\in  M$ lies in the interior of a suspension arc  $\alpha $
which is not contained in $M$ then $Z=(M\setminus \{x\})\cup N$ and $M\cap N=F\setminus \{x\}$. So we can eliminate
successively elements of $F$ that are not the suspension points. Since $F$ separates we conclude that $F$ necessarily contains the suspension points.

\end{proof}


We show now that exceptional groups do not arise from an action of $G$ on its cactus tree.

\begin{Thm} \label{endgame}Let $G$ be a one ended group acting geometrically on a CAT(0) space $X$. Assume that $\partial X$ has a finite cut.
Then either $G$ is virtually a surface group or $G$ splits over a 2-ended group.
\end{Thm}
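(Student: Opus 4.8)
The plan is to run the dichotomy already set up in the paper: the action of $G$ on its cactus tree $T$ is either non-nesting or nesting, and in both cases we have reduced to a non-nesting action on an $\R$-tree with virtually cyclic arc stabilizers (Theorem \ref{nonnest} in the non-nesting case, Theorem \ref{thick} in the nesting case). If $\bd X$ is $2$-thick, then either $\bd X$ is separated by a pair of points and $G$ splits over a $2$-ended group by \cite{PS1}, or we still land in the machinery with $m=2$; in any case, if at any point $G$ splits over a $2$-ended group we are done. So assume $G$ does not split over a $2$-ended group and that $\bd X$ is $m$-thick for some $m\ge 2$. First I would dispose of the trivial cases: if $G$ virtually fixes a point of $\bd X$ then by \cite[Lemma 26]{PS1} $G$ is virtually $H\times\Z$ with $H$ one-ended (since $\bd X$ has no cut point), but then $\bd X$ is not separated by a finite set, contradicting the hypothesis. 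Hence $G$ fixes no point virtually, and Corollaries \ref{nofixed}, Theorems \ref{rank1}, \ref{rank2} apply to give that $G$ is rank $1$ and the cactus pretree $\mathcal R$ has no terminal points.

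Next, invoke Theorem \ref{nonnest} or Theorem \ref{thick} to obtain a non-trivial non-nesting action of $G$ on an $\R$-tree $\bar T$ with virtually cyclic arc stabilizers and no proper invariant subtree. Feed this into Theorem \ref{T:Rtree}: the conclusion is that either $G$ splits over a $2$-ended group (done), or $G$ has a $2$-ended normal subgroup $N$ with $G/N$ the fundamental group of a closed $2$-orbifold, or $G$ is exceptional. In the first of the two remaining cases, $G/N$ is a closed $2$-orbifold group; passing to a finite-index torsion-free subgroup and using that $G$ is CAT(0) (so $N$ is virtually $\Z$), $G$ is virtually $\pi_1S\times\Z$ for a closed surface $S$, and then $\bd X$ is (virtually) a suspension of a circle, which has no finite cut by Lemma \ref{suspension}-type reasoning — contradiction. (Alternatively: $G$ then acts geometrically with $\bd X$ a suspension of $S^1$, so the only min cut is the pair of suspension points, which are not a finite cut in the $m$-thick sense unless $\bd X$ is $2$-thick, in which case $G$ splits over a $2$-ended group anyway.) Actually this case should be handled more carefully: if $G/N$ is a closed surface group then $G$ is virtually $\pi_1 S \times \Z$ and $\bd X$ is a suspension of a circle; such a space is $2$-thick and its min cut is a pair of points, so by \cite{PS1} $G$ splits over a $2$-ended group, done.

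The real work is ruling out the exceptional case, which is where the hypothesis that $\bd X$ has a \emph{finite cut} (rather than just a local cut point) must be used, together with $\pi$-convergence and the geometry of Seifert pieces. I would argue: suppose $G$ has a Guirardel decomposition with at least one Seifert vertex group $G_v$, so $G_v/N_v$ is a $2$-orbifold group with boundary and $N_v$ is $2$-ended; since $G$ is CAT(0) and $G_v$ is convex by the argument in Theorem \ref{T:Rtree} (via Hruska–Ruane), $G_v$ acts geometrically on a convex $X_v\subseteq X$, and $N_v$ being central-ish forces $X_v$ to split as a product with an $\R$-factor, so $\Lambda G_v$ is a suspension of $\Lambda(G_v/N_v)$, where $G_v/N_v$ is virtually free, hence $\Lambda(G_v/N_v)$ is a Cantor set — so $\Lambda G_v$ is the suspension of a Cantor set. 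Now by Lemma \ref{suspension} every finite cut of $\Lambda G_v$ contains the two suspension points, which are exactly the endpoints $n^\pm$ of $N_v$. The main obstacle is then to show this local structure is incompatible with a finite cut of the \emph{global} continuum $\bd X$: using $\pi$-convergence and the way $\bar T$ (equivalently the cactus tree) was built — in particular that the min cut used to produce $\bar T$ cannot be contained in a single peripheral/suspension arc of a Seifert piece — one derives that no min cut of $\bd X$ can exist, contradicting the standing hypothesis. I expect this last step — pinning down precisely why a Seifert piece's limit set, sitting inside $\bd X$ the way the cactus-tree construction forces it to, obstructs the existence of a finite cut — to be the crux of the proof, requiring the geometric observation that the peripheral subgroups of $G_v$ have one-dimensional (suspension-of-point, i.e. arc) limit sets while a min cut must genuinely separate, forcing $G$ to split over one of these peripheral $\Z^2$'s or their $\Z$-quotients, hence over a $2$-ended group after all. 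Therefore there are no Seifert pieces, $G$ is not exceptional, and in every case $G$ either is virtually a surface group or splits over a $2$-ended group.
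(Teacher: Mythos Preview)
Your overall architecture is correct and matches the paper: reduce via Theorems \ref{nonnest}/\ref{thick} to a non-nesting action with virtually cyclic arc stabilizers, apply Theorem \ref{T:Rtree}, and then rule out the exceptional case. Two points need fixing.

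First, a small one: you never explain where ``virtually a surface group'' comes from. In the paper this arises at the very start from the case $\bd X = S^1$ (via Gabai and Casson--Jungreis in the hyperbolic case, or the flat plane otherwise), after which one assumes $\bd X$ is not a circle and proves an actual splitting. Your handling of the closed-orbifold case of Theorem \ref{T:Rtree} is also muddled: a suspension of a circle is $S^2$, which has \emph{no} finite cut (it is not $2$-thick), so that case contradicts the hypothesis directly.

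Second, and this is the real gap: your mechanism for eliminating the exceptional case is not the one that works. The paper does not argue that peripheral $\Z^2$'s force a splitting. Instead it proves two auxiliary results. Lemma \ref{cutcircle} gives a precise decomposition $\bd X = \bd X_v \cup \bigcup_i Z_i$ with $Z_i\cap \bd X_v$ a peripheral circle, and shows that any min cut separating $\bd X_v$ separates exactly two of the $Z_i$. Lemma \ref{act} is the key step you are missing: by taking a \emph{maximal} Guirardel decomposition (Bestvina--Feighn accessibility) and, if every Seifert piece were elliptic on the cactus tree $T$, re-running Levitt and Guirardel relative to these elliptics to produce a strictly larger decomposition, one finds a Seifert vertex group $G_v$ containing an element $g$ acting \emph{hyperbolically on the original cactus tree} $T$ (not just on $\bar T_0$). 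The contradiction is then a two-case analysis on the translation interval $I$ of $g$: if some min cut in $I$ separates $\bd X_v$, then the half-cut structure from Lemma \ref{cutcircle} forces all $g^n A$ into a single maximal wheel fixed by $g$, contradicting hyperbolicity; if no min cut in $I$ separates $\bd X_v$, then $g^\pm\in\bd X_v$ lie on the same side of every such min cut $A$, and $\pi$-convergence applied to a point on the other side contradicts $g$ translating along $I$. Your sketch does not contain this return to $T$ or the accessibility/maximality argument that makes it possible.
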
 

\begin{proof}  Let $T$ be the cactus tree of $\partial X$. We note that if $\partial X$ is a circle and $G$ is hyperbolic then by \cite {GAB}, \cite{CAS-JUN} $G$ is virtually a surface group. If $G$ is not hyperbolic then by \cite{BRI} $X$ contains a flat plane. If $\partial X$ is a circle then $X$ is contained in a finite neighborhood of this plane so $G$ is virtually $\mathbb Z^2$. So we may assume that $\partial X$ is not a circle. It follows then by theorem \ref{rank1} in the case that the action of $G$ on $T$ is non-nesting and by theorem \ref{rank2} in the
case that this action is nesting that $G$ acts as a rank 1 group on $X$. Further by theorems \ref{nonnest} and \ref{thick} we have that either $G$ splits over a 2-ended group or we may obtain from the action of $G$ on $T$
a non-trivial non-nesting action of $G$ on an $\R $-tree $\bar T$ with 2-ended arc stabilizers.  We note that $\bar T$ is obtained from $T$ by collapsing some subtrees of
$T$ to points. By a theorem of Levitt \cite{Le} we can replace this action by an action by isometries on an $\R$-tree $\bar T _0$ so that arc stabilizers of $\bar T_0$ are arc stabilizers of $\bar T$.

By theorem \ref{T:Rtree} either $G$ splits over a 2-ended group or $G$ is exceptional. In particular $G$ admits a graph of groups
decomposition $\Gamma $ with at least one vertex group $G_v$ which is of Seifert type, so it has a 2-ended normal subgroup $N_v$ and $G_v/N_v$ is the
fundamental group of a 2-orbifold . By Guirardel's theorem $G_v$ acts non trivially on a subtree $Y_v$ of $\bar T _0$. 
Also by Theorem 1.3 of \cite{HR},
$G_v$ acts geometrically on a convex subset of $X$, $X_v$, in particular it is a CAT(0)-group. Since $N_v$ is two ended it has a finite
index subgroup isomorphic to $\mathbb Z$ which is normalized by $G_v$. It follows by Thm. 7.1, ch. II.7 of \cite{BRI-HAE} that $G_v$ has a subgroup of finite index, $G_v'$,
that contains $\mathbb Z$ as a direct factor. Since $G_v/N_v$ is a 2-orbifold group we may assume that $G_v'=F_v\times \Z$ where $F_v$ is the fundamental group
of a compact suface with non-empty boundary, so it is a free group.

It is well known by work of Croke-Kleiner \cite{C-K} that a given CAT(0) group may act geometrically on CAT(0) spaces with different boundaries. However Bowers-Ruane \cite{BR} have
shown that this does not happen for CAT(0) groups of the form $F\times \Z ^n$ with $F$ hyperbolic. Since $F_v\times \Z$ acts geometrically on $\rm{Tree}\times \R$,
$\bd X_v$ is a suspension of a Cantor set. If $h\in G_v$ is a hyperbolic element corresponding to the $\Z $  factor of $F_v\times \Z$ then $h^{\pm}$ are the suspension points
of $\bd X_v$. By lemma \ref{suspension}
any min cut of $\bd X$ separating $\bd X_v$ contains $h^{\pm}$ (if there is such a min cut). 
In the next lemma we examine  closely the structure of $\bd X_v$ and $\bd X$.

\begin{Lem}\label{cutcircle} Let $G$ be exceptional acting geometrically on the CAT(0) space $X$ and let $\Gamma $ be a Guirardel decomposition of $G$. 
Let $G_v$ be a Seifert type vertex group of $\Gamma $ acting geometrically on a convex subset $X_v$ of $X$. Then there is a collection of
geodesic circles $C_i\subseteq \bd X_v$ and continua $Z_i$ $(i\in I)$ such that

$$\bd X=\bd X_v \bigcup _{i\in I} Z_i \text { with }Z_i\cap \bd X_v= C_i.$$
Moreover if a min cut $A$ separates $\bd X_v$ then it separates exactly two of the continua $Z_i$.
\end{Lem}

\begin{proof} 
Since $G_v$ is of Seifert type it has a normal subgroup $N_v$ so that $G_v/N_v$ is the
fundamental group of a 2-orbifold. If $G_e$ is an edge group adjacent to $G_v$ 
then $G_e/N_v$ is a subgroup of $G_v/N_v$ corresponding
to a boundary component of the underlying 2-orbifold. 

As noted above $G_v$ has a subgroup of finite index of the form $F_v\times \Z$
where $F_v$ is the fundamental group
of a compact suface with boundary. $F_v\times \Z$ acts discretely on $H^2\times \R$ and $F_v$ acts co-compactly on a convex subset $Q$ of $H^2$ quasi-isometric to a tree.
So $F_v\times \Z$ acts geometrically on $Q\times \R$. If $h$ is the generator of the $\Z$-factor $\bd (Q\times \R)$ is the suspension of the Cantor set $C=\bd Q$
with suspension points $h^{\pm}$. $C$ is equiped by a natural cyclic order as $Q$ is
a convex subset of $H^2$ and $\bd Q\subseteq \bd H^2=S^1$. If $x,y\in C$ are the endpoints of a connected component of $S^1\setminus C$ we say that $\{x,y\}$ is a gap of $C$.

By \cite{BR} $\bd X_v$ is equivariantly homeomorphic to $\bd (Q\times \R)$ so we identify these two sets, and we
consider $C=\bd Q$ to be a subset of $\bd X_v$. If $G_e$ is an edge group adjacent to $G_v$ then $G_e$ is virtually $\mathbb Z^2$ so it acts geometrically on a flat plane $P_e\subseteq X_v$
(see Proposition 7.6 and Theorem 1.3 of \cite{HR}).

If $\{x,y\}$ is a gap of the Cantor set $C$ then its suspension by $\{h^{\pm}\}$ is a circle $a\subseteq \bd X_v$. We call such a circle a peripheral circle of $\bd X_v$. Any peripheral circle is equal to
$\bd P_e$ where $P_e$ is a plane stabilized by some edge group $G_e$ contained in $G_v$. 

We pick now a base point $x_0\in X_v$ and we consider infinite rays from $x_0$. Any ray either stays inside $X_v$ or crosses a unique plane $P_i$. We define the set
$Z_i$ to be the endpoints of rays crossing $P_i$ together with $C_i=\bd P_i$. If $x_n\in Z_i$ with $x_n\to x$ then the rays $r_n$ defining $r_n$ either
intersect $P_i$ in a bounded set so they converge to a ray $r$ that crosses $P_i$ or for any compact set $K\subset P_i$ almost all $r_n$ do not intersect $K$.
In this case $x\in C_i$. This shows that $Z_i$ is closed. It is also connected since $\bd X$ is connected and $C_i\subset Z_i$.

Clearly $$\bd X=\bd X_v \bigcup _{i\in I} Z_i \text { with }Z_i\cap \bd X_v= C_i.$$

Let's say that a min cut $A$ separates $\bd X_v$. By lemma \ref{suspension} $A$ contains the suspension points $h^{\pm}$ of $\bd X_v$. We note that each arc joining
$h^+,h^-$ in $\bd X_v$ is a limit of other such arcs. So $A$ necessarily separates two such arcs. We note that the set of peripheral circles $C_i$ is dense in $\bd X_v$,
so if $A$ separates $\bd X_v$ it necessarily separates the two arcs of some peripheral circle $C_i$. It follows that $A\cap Z_i$ separates $Z_i$. We note now that
$$(\bigcup _{j\ne i}Z_j)\cup \bd X_v$$ is connected and contains $C_i$ so as before $A$ since it separates $C_i$ it separates some peripheral circle $C_j$ for $j\ne i$.

The set of arcs of $\bd X_v$ joining $h^+,h^-$ inherits a circular order from $C$. As any two points seprate the circle in two arcs, any two arcs of $\bd X_v$  joining $h^+,h^-$ 
define two `intervals' in this circular order.

If $A$ separates $Z_i$ and $Z_j$ then there are arcs $\zeta _1,\zeta _2$ of $C_i$
and $\eta _1, \eta _2$ of $C_j$ separated by $A$ and we may assume that $\zeta _2,\eta _2$ are not separated by $\zeta _1,\eta _1 $ in this circular order.
Let's say that $Z_i=Z_i'\cup Z_i''$ with $Z_i'\cap Z_i''\subset A$, $Z_j=Z_j'\cup Z_j''$ with $Z_j'\cap Z_j''\subset A$ and $\zeta _1\subset Z_i', \eta _1\subset Z_j'$.

If we denote by $S_1,S_2$ the two `intervals' of the circular order defined by $\zeta _1, \eta _1$ and 
$$Z'=\overline{ (\bigcup_{s\in S_1} Z_s)\cup Z_i'\cup Z_j'},\,Z''=\overline{ (\bigcup_{s\in S_2} Z_s)\cup Z_i''\cup Z_j''}$$
then $Z=Z'\cup Z''$ and $Z'\cap Z''\subseteq A$.

\end{proof}

We have the following lemma:

\begin{Lem}\label{act} There is a Guirardel decomposition $\Gamma $ of $G$ such that either $\Gamma $ has a 2-ended edge group or 
there is some Seifert type vertex group $G_v$ of $\Gamma $ and some $g\in G_v$
acting hyperbolically on the cactus tree $T$ of $\bd X$.

\end{Lem}

\begin{proof} We argue by contradiction. We give a partial order on the set of Guirardel decompositions of $G$: $\Gamma _1\leq \Gamma _2$ if every
Seifert type vertex group of $\Gamma _1$ is contained in a conjugate of a Seifert type vertex group of $\Gamma _2$. By Bestvina-Feighn accessibility \cite{B-F} there
is a maximal element $\Gamma $ for this partial order. Let $H_1,...,H_k$ be the Seifert type vertex groups of $\Gamma $. If none of them contains
an element acting hyperbolically on $T$ each of them fixes a point of $T$. It follows by constrcution of $\bar T$ that each of the $H_i$'s fixes a point
of the tree $\bar T$ (obtained in theorems \ref{nonnest} and \ref{thick}).

By corollary 6 of \cite{Le} there is an action by isometries of $G$ on an $\mathbb R$-tree $\bar T _0$ such that each of the $H_i$'s acts elliptically
on $\bar T _0$ and arc stabilizers of $\bar T _0$ are arc stabilizers of $\bar T$.
By theorem \ref{T:Rtree} either $G$ splits over a 2-ended group or $G$ is exceptional. In particular $G$ admits a graph of groups
decomposition $\Gamma '$ with at least one vertex group $G_v$ which is of Seifert type, so it has a 2-ended normal subgroup $N_v$ and $G_v/N_v$ is the
fundamental group of a 2-orbifold. By Guirardel's theorem $G_v$ acts non trivially on a subtree $Y_v$ of $\bar T _0$. However this contradicts the maximality
of the Guirardel decomposition $\Gamma $ as $G_v$ is not contained in a conjugate of any of the $H_i$'s. This proves the lemma.
\end{proof}

Let $\Gamma $ and $G_v$ be as in the lemma \ref{act} above and let $g\in G_v$ be an element acting by translations along an interval $I$ of $ T$. We will show that this leads to a contradiction.

We distinguish two cases:
 
 \textit{Case 1.} Some min cut $A$ that separates $\bd X_v$ lies in $I$. Then $I$ is spanned by $g^nA$, $n\in \Z$. Since $g$ fixes the suspension points of $\bd X_v$ all min cuts $g^nA$ contain the
suspension points $h^+,h^-$ of $\bd X_v$. 

If a min cut $A$ separates $\bd X_v$ then by lemma \ref{cutcircle} it separates the two semicircles of some peripheral circle. 
In fact by minimality of $A$, $A$ necessarily separates exactly two peripheral circles
$a,a'$ and if $$\bd X=X_1\cup X_2,
X_1\cap X_2=a, \bd X_v\subseteq X_1$$ and
 $$\bd X=X_1'\cup X_2',
X_1'\cap X_2'=a', \bd X_v\subseteq X_1'$$
then $$A=A_1\cup A_2, A_1\cap A_2=\{h^+,h^-\}$$ and $A_1$ is a cut of $X_2$ separating $a$ while $A_2$ is a cut of $X_2'$ separating $a'$. We refer to the sets $A_1,A_2$ 
obtained by a min cut as above as half cuts. We note now that if $A,B$ are any two min cuts separating $\bd X_v$ with corresponding half cuts $A_1,A_2$ and $B_1,B_2$
then $A_i\cup B_j$ is a finite cut of $\bd X$ for any $i,j$. It follows that $|A_1|=|A_2|$ and $A,B$ lie in the same wheel of $\bd X$.

It follows that all min cuts $g^nA$ lie in the same maximal wheel $W$ of $\bd X$. Moreover $gW=W$. Therefore the interval $I$
does not contain any min cut separating $\bd X_v$. \smallskip

  \textit{Case 2.} No min cut $A$ that separates $\bd X_v$ lies in $I$.
 
Since $g\in G_v$, if we denote by $g^{\pm}$ the attractive-repelling points of $g$ we have $g^{\pm}\in \bd X_v$, so for any min cut $A$ lying
on $I$ $g^+,g^-$ are not separated by $A$. As $G$ is rank 1 there is a point $x\in \bd X$ separated from  $g^{\pm}$ by $A$. It follows by $\pi$-convergence that
$g^nx$ and $g^{-n}x$ converge to $g^+,g^-$ respectively as $g\to \infty $. However this is not possible as $g$ acts by translations on $I$ so at least one of $g^nx,g^{-n}x$
is separated from both $g^+,g^-$ by $A$ for every $n$. We conclude that $G$ is not exceptional, so it splits over a 2-ended group.

\end{proof}

\end{document}
\]
\]
\end{align*}
\]
\]


\begin{thebibliography}{99}
 \bibitem{BAL} W. Ballmann {\em  Lectures on spaces of nonpositive curvature}
  with an appendix by Misha Brin. DMV Seminar, 25. Birkh�user Verlag, Basel, 1995. viii+112 pp. ISBN: 3-7643-5242-6
\bibitem{BAL-BUY}
W. Ballmann and S. Buyalo {\em Periodic rank one geodesics in
Hadamard spaces} Preprint 2002.
\bibitem{BAL-BRI}
W. Ballmann, M. Brin  {\em Orbihedra of nonpositive curvature},
Inst. Hautes Etudes Sci. Publ. Math. No. 82 (1995), 169--209
(1996).

\bibitem{B-F}
M.Bestvina, M.Feighn. {\em Bounding the complexity of simplicial group
actions on trees.} Invent. Math. 103, pp.449-469, (1991).
\bibitem{BES-MES}
M. Bestvina and J. Mess, {\em The boundary of negatively curved
groups}, J. Amer. Math. Soc. 4 (1991), no. 3, 469--481

\bibitem{BOW} B.H. Bowditch, {\em  Cut points and canonical splittings
of hyperbolic groups}, Acta Math. 180, No.2, pp.145-186 (1998)
\bibitem{BOW1} B.H. Bowditch, {\em Group actions on trees and
dendrons }  Topology 37, no. 6, pp. 1275-1298  (1998).

\bibitem{BOW5}
B. Bowditch, {\em Treelike structures arising from continua and
convergence groups }, Mem. Amer. Math. Soc. 139 (1999), no. 662,
86pp

\bibitem{BR} P.L.Bowers, K. Ruane 
 {\em Boundaries of non-positively curved groups of the form $G\times \mathbb Z ^n$
 }, Glasgow Math J. (1996), 177-189.

\bibitem{BRI} M. Bridson  
{\em On the existence of flat planes in spaces of non-positive curvature}, Proc. AMS, 123 (1995), 223-235.

\bibitem{BRI-HAE}
M. Bridson and A. Haefliger {\em Metric spaces of non-positive
curvature} Grundlehren der Mathematischen Wissenschaften
[Fundamental Principles of Mathematical Sciences], 319.
Springer-Verlag, Berlin, 1999. xxii+643 pp. ISBN: 3-540-64324-9

\bibitem{BUY}
Buyalo, S. V., {\em Euclidean Planes in Three-Dimensional Manifolds of Nonpositive Curvature}, Mat. Zametki 43 (1988), 103–114, translated in Math. Notes 43(1988), 60–66.

\bibitem{C-M} P. Caprace N. Monod {\em Isometry groups of non-positively curved spaces: structure theory}
J. Topology. 2 Nr. 4 (2009) 661�700.
\bibitem{CAS-JUN}
A. Casson and D. Jungreis {\em Convergence groups and Seifert
fibered 3-manifolds}, Invent. Math., 118 (1994), 441-456.


\bibitem{C-K}
Croke, C. B.; Kleiner, B. { \em Spaces with nonpositive curvature
and their ideal boundaries} Topology 39 (2000), no. 3, p. 549--556

\bibitem{CH-SW}  H. Choa and E. Swenson {\em Tits rigidity of CAT(0) group boundaries}
preprint 2014.

\bibitem{DKL} E.A. Dinits,  A.V. Karzanov, M.V. Lomonosov,
{\em On the Structure of a Family of Minimal Weighted Cuts in a Graph},
Studies in Discrete Optimization [Russian], p.290-306, Nauka, Moscow (1976)

\bibitem{D-S} Dunwoody, M. J.; Swenson, E. L. {\em The algebraic torus theorem},
 Invent. Math. 140 (2000), no. 3, 605--637.

 \bibitem{D-Sa} Dunwoody, M. J.; Sageev, M. E.
 {\em JSJ-splittings for finitely presented groups over slender groups}
  Invent. Math. 135 (1999), no. 1, 25--44
  
  \bibitem{FF} T. Fleiner, A.Frank,
\emph{A quick proof for the cactus representation theorem of
mincuts}, preprint 2009.

\bibitem{GAB}
D. Gabai, { \em Convergence groups are Fuchsian groups}, Annals of
Mathematics, 136 (1992), 477-510.

\bibitem{GEA-ONT}
P. Ontaneda and R Geoghegan,{\em Boundaries of cocompact proper CAT(0) spaces}
Topology 46  (2007),  129-137.
\bibitem{GRO}
M. Gromov, { \em Hyperbolic groups},  Essays in group theory (S.
M. Gersten, ed.), MSRI Publ. 8, Springer-Verlag, 1987 pp. 75-263.
\bibitem{GUR}
D. Guralnik, {\em Ends of cusp-uniform groups of locally connected continua - I'}, IJAC vol.15 No.4, August 2005.

\bibitem{Gui} V. Guirardel, {\em Actions of finitely generated groups on $\ mathbb {R} $-trees.} Annales de l'institut Fourier. Vol. 58. No. 1. 2008.

\bibitem{Gu-S}
D. Guralnik, E. Swenson {\em A 'transversal' for minimal invariant sets in the boundary of a CAT (0) group}, 
Transactions of the American Mathematical Society 365.6 (2013): 3069-3095.

\bibitem{HAU} Haulmark, M. {\em Boundary classification and 2-ended splittings of groups with isolated flats}, arXiv preprint arXiv:1704.07937 (2017).

\bibitem{HR} G.C.Hruska, K. Ruane {\em Connectedness properties and splittings of groups with isolated flats}, preprint, 	arXiv:1705.00784.

\bibitem{HOC-YOU} J.Hocking, G.Young {\em Topology}, Dover 1961,
New York.

\bibitem{KAP} M. Kapovich {\em Hyperbolic Manifolds and Discrete Groups}, Birkhaeuser, Boston, 2001

\bibitem{KK} Kapovich M, Kleiner B. {\em The weak hyperbolization conjecture for 3-dimensional CAT (0)-groups}. Groups, Geometry, and Dynamics. 2007;1(1):61-79.

\bibitem{KAR}
A. Karlsson {\em On the dynamics of isometries} Geometry and
Topology 9 (2005) p.2359-2394
\bibitem{Leeb}
B. Leeb
{\em A characterization of irreducible symmetric spaces and Euclidean buildings of higher rank by their asymptotic geometry}
arXiv:0903.0584v1 3 Mar 2009.
\bibitem{Le}
G.Levitt, {\em Non-nesting actions on real trees}, Bull. London
Math. Soc. 30, (1998) p.46-54.
\bibitem{MON} 
N. Monod {\em Superrigidity  for Irreducible Lattices and Geometric Splittings}
Journ. of AMS  19, (2006) p. 781-814.
\bibitem{NAD}  S. Nadler {\em Continuum Theory: An  Introduction}
Pure and Applied Mathematics 54, Marcel Dekker, New York, 1992.


\bibitem{Neu} Neumann, Peter M. {\em The structure of finitary permutation groups.} Archiv der Mathematik 27.1 (1976): 3-17.
\bibitem{ONT}
P. Ontaneda, {\em Cocompact CAT(0) are Almost Geodesically
Complete} Preprint arXiv:math.MG/0312096 v2,  4 Dec 2003.
\bibitem{PAP}
P. Papasoglu, {\em Quasi-isometry invariance of group splittings},
Annals of Math, 161, (2005) p.759-830.
\bibitem{PS}
P. Papasoglu, E.Swenson {\em From continua to $\mathbb{R}$-trees},
 Algebraic and Geometric Topology 6 (2006) 1759-1784 .
 
 \bibitem{PS1}
P. Papasoglu, E.Swenson {\em Boundaries and JSJ decompositions of $CAT(0)$-groups},
GAFA, Vol. 19, No. 2, p. 558-590.

 
 \bibitem{PS2}
P. Papasoglu, E.Swenson {\em The cactus tree of a metric space},
 Algebraic \& Geometric Topology 11, (2011) 2547-2578.
 


\bibitem{RUA}
K. Ruane, {\em Dynamics of the action of a ${\rm CAT}(0)$ group on
the boundary}, Geom. Dedicata 84 (2001), no. 1-3, 81--99.

\bibitem{SCH} V. Schroeder 
{\em Codimension one tori in manifolds of nonpositive curvature}, 
Geom. Dedicata 33, no 3, 251 - 263 (1990)

\bibitem{Se} Z. Sela, {\em Acylindrical accessibility for groups }Invent. Math., 129(3):527-565, 1997.

\bibitem{S-W} M. Sageev, D. Wise {\em The Tits alternative for CAT(0) cubical
complexes }  Bull. London Math. Soc. 37 (2005), no. 5, 706--710.
\bibitem{S-S}
Peter Scott, Gadde A.Swarup, {\em Regular neighbourhoods and
canonical decompositions for groups} Asterisque 289, (2003)

\bibitem{Sw} G.A. Swarup {\em On the cut point conjecture }
Electron. Res. Announc. Am. Math. Soc. 02, No.2,pp. 98-100 (1996)

\bibitem{SWE1}
E.Swenson, {\em A cutpoint tree for a continuum}, Computational and geometric aspects of
modern algebra (Edinburgh, 1998) p.254-265, LMS Lect. Not. Ser. 275, Cambridge Univ.
Press, Cambridge 2000.

\bibitem{SWE}
E. Swenson, {\em A cut point theorem for ${\rm CAT}(0)$ groups},
J. Differential Geom. 53 (1999), no. 2, 327--358

\bibitem{WHY}
G.T.Whyburn, {\em  Concerning the structure of a continuous
curve}, Amer. J. Math., 50 (1928) 167-194.
\end{thebibliography}
\end{document}